\newcommand{\bk}{\Bbbk}
\newcommand{\Z}{\mathbb{Z}}
\newcommand{\C}{\mathbb{C}}
\newcommand{\K}{\mathbb{K}}
\newcommand{\bE}{\mathbb{E}}
\renewcommand{\O}{\mathbb{O}}
\newcommand{\F}{\mathbb{F}}
\newcommand{\ovK}{{\overline{\K}}}
\newcommand{\scO}{\mathscr{O}}
\newcommand{\Gm}{\mathbb{G}_{\mathrm{m}}}
\newcommand{\sA}{\mathscr{A}}
\newcommand{\St}{\mathrm{St}}
\newcommand{\fz}{\mathfrak{z}}
\newcommand{\fh}{\mathfrak{h}}
\newcommand{\fg}{\mathfrak{g}}
\newcommand{\fb}{\mathfrak{b}}
\newcommand{\fs}{\mathfrak{s}}
\newcommand{\ft}{\mathfrak{t}}
\newcommand{\fn}{\mathfrak{n}}
\newcommand{\cN}{\mathcal{N}}
\newcommand{\tcN}{{\widetilde{\mathcal{N}}}}
\newcommand{\BC}{\mathsf{BC}}
\newcommand{\tG}{\widetilde{G}}
\newcommand{\tB}{\widetilde{B}}
\newcommand{\tT}{\widetilde{T}}
\newcommand{\tfg}{\widetilde{\fg}}
\newcommand{\bX}{\mathbf{X}}
\newcommand{\tbX}{\widetilde{\bX}}
\newcommand{\Bruhat}{{\mathrm{Bru}}}
\newcommand{\dom}{\mathsf{dom}}
\newcommand{\convo}{\mathsf{conv}^0}
\newcommand{\conv}{\mathsf{conv}}
\newcommand{\Wext}{W_{\mathrm{ext}}}
\newcommand{\Bext}{B_{\mathrm{ext}}}
\newcommand{\cF}{\mathcal{F}}
\newcommand{\cG}{\mathcal{G}}
\newcommand{\cL}{\mathcal{L}}
\newcommand{\cO}{\mathcal{O}}
\newcommand{\cV}{\mathcal{V}}
\newcommand{\tDex}{\widetilde{\Delta}^{\mathrm{ex}}}
\newcommand{\tnex}{\widetilde{\nabla}^{\mathrm{ex}}}
\newcommand{\tDred}{\widetilde{\mathcal{W}}^{\mathrm{ex}}}
\newcommand{\tnred}{\widetilde{\mathcal{Y}}^{\mathrm{ex}}}
\newcommand{\tirrex}{\widetilde{\mathcal{L}}}
\newcommand{\cJ}{\mathcal{J}}
\newcommand{\IC}{\mathcal{IC}}
\newcommand{\Dred}{\mathcal{W}^{\mathrm{pc}}}
\newcommand{\nred}{\mathcal{Y}^{\mathrm{pc}}}
\newcommand{\Dpc}{\Delta^{\mathrm{pc}}}
\newcommand{\npc}{\nabla^{\mathrm{pc}}}
\newcommand{\Lpc}{\cL^{\mathrm{pc}}}
\newcommand{\lgmod}{\text{-}\mathsf{gmod}}
\newcommand{\Rep}{\mathsf{Rep}}
\newcommand{\Irr}{\mathsf{Irr}}
\newcommand{\Coh}{\mathsf{Coh}}
\newcommand{\Cohmix}{\mathsf{Coh}^{G \times \mathbb{G_{\mathrm{m}}}}}
\newcommand{\Cohtmix}{\mathsf{Coh}^{\tG \times \mathbb{G_{\mathrm{m}}}}}
\newcommand{\ExCoh}{\mathsf{ExCoh}}
\newcommand{\PCoh}{\mathsf{PCoh}}
\newcommand{\Db}{D^{\mathrm{b}}}
\DeclareMathOperator{\Hom}{Hom}
\DeclareMathOperator{\Tor}{Tor}
\DeclareMathOperator{\End}{End}
\DeclareMathOperator{\supp}{supp}
\DeclareMathOperator{\im}{im}
\DeclareMathOperator{\Spec}{Spec}
\DeclareMathOperator{\codim}{codim}
\newcommand{\sH}{\mathsf{H}}
\newcommand{\ad}{\mathrm{ad}}
\newcommand{\id}{\mathrm{id}}
\newcommand{\simto}{\xrightarrow{\sim}}
\newcommand{\la}{\langle}
\newcommand{\ra}{\rangle}
\def\lotimes{\@ifnextchar_{\@lotimessub}{\@lotimesnosub}}
\def\@lotimessub_#1{\mathchoice{\mathbin{\mathop{\otimes}^L}_{#1}}%
  {\otimes^L_{#1}}{\otimes^L_{#1}}{\otimes^L_{#1}}}
\def\@lotimesnosub{\mathbin{\mathop{\otimes}^L}}
\numberwithin{equation}{section}
\newtheorem{thm}{Theorem}[section]
\newtheorem{lem}[thm]{Lemma}
\newtheorem{prop}[thm]{Proposition}
\newtheorem{cor}[thm]{Corollary}
\theoremstyle{definition}
\newtheorem{defn}[thm]{Definition}
\theoremstyle{remark}
\newtheorem{rmk}[thm]{Remark}
\title[Integral exotic sheaves and the modular Lusztig--Vogan bijection]{Integral exotic sheaves and the modular Lusztig--Vogan bijection}
 \author{Pramod N. Achar}
 \address{Department of Mathematics\\
   Louisiana State University\\
   Baton Rouge, LA 70803\\
   U.S.A.}
 \email{pramod@math.lsu.edu}
  \author{William Hardesty}
   \address{School of Mathematics and Statisitics\\
   University of Sydney\\
   Camperdown NSW 2006\\
   Australia}
  \email{william.hardesty@sydney.edu.au}
 \author{Simon Riche}
 \address{Universit\'e Clermont Auvergne, CNRS, LMBP, F-63000 Clermont-Ferrand, France.
 }
 \email{simon.riche@uca.fr}
\thanks{P.A. was supported by NSF Grant Nos.~DMS-1500890 and DMS-1802241. This project has received funding from
the European Research Council (ERC) under the European Union's Horizon 2020 research and
innovation programme (grant agreements No.~677147 and~101002592).}
\begin{document}

\begin{abstract}
Let $G$ be a reductive group over an algebraically closed field $\bk$ of pretty good characteristic.  The \emph{Lusztig--Vogan bijection} is a bijection between the set of dominant weights for $G$ and the set of irreducible $G$-equivariant vector bundles on nilpotent orbits, conjectured by Lusztig and Vogan independently, and constructed in full generality by Bezrukavnikov.  In characteristic $0$, this bijection is related to the theory of $2$-sided cells in the affine Weyl group, and plays a key role in the proof of the Humphreys conjecture on support varieties of tilting modules for quantum groups at a root of unity.

In this paper, we prove that the Lusztig--Vogan bijection is (in a way made precise in the body of the paper) independent of the characteristic of $\bk$. This allows us to extend all of its known properties from the characteristic-$0$ setting to the general case.  We also expect 
this result to be a step towards a proof of the Humphreys conjecture on support varieties of tilting modules for reductive groups in positive characteristic.
\end{abstract}

\maketitle

%%%%%%%%%%%%%%%%%%%%%%%%%%%%%%%%%%%%%%%%%%%%%%%%%%%%%%%%%%%%%%%%%%%%%%%%%%%
\section{Introduction}
%%%%%%%%%%%%%%%%%%%%%%%%%%%%%%%%%%%%%%%%%%%%%%%%%%%%%%%%%%%%%%%%%%%%%%%%%%%

%------------------------------------------------------------------
\subsection{The Lusztig--Vogan bijection}
\label{ss:LV-bijection-intro}
%------------------------------------------------------------------

Let $\bk$ be an algebraically closed field, and let $G_\bk$ be a connected reductive algebraic group over $\bk$.
%, with simply connected derived subgroup.
Assume that the characteristic of $\bk$ is ``pretty good'' for $G_\bk$ (see Section~\ref{sec:notation} for the definition).  Let $\bX^+$ be the set of dominant weights for $G_\bk$, and let $\cN_\bk$ denote its nilpotent cone.  Let
\begin{align*}
\Omega_\bk &:= 
\left\{
(\scO,\cV) \,\Big|\,
\begin{array}{@{}c@{}}
\text{$\scO \subset \cN_\bk$ is a nilpotent orbit, and} \\
\text{$\cV$ is an irreducible $G_\bk$-equivariant vector bundle on $\scO$}
\end{array}
\right\} \\
&\cong
\{
(x,V) \mid
\text{$x \in \cN_\bk$ and $V \in \Irr(Z_{G_\bk}(x))$}\}/(\text{$G_\bk$-conjugacy}).
\end{align*}
In the second line above, $Z_{G_\bk}(x) \subset G_\bk$ is the stabilizer of $x$ for the adjoint action, and $\Irr(Z_{G_\bk}(x))$ is the set of isomorphism classes of irreducible $Z_{G_\bk}(x)$-representa\-tions. The \emph{Lusztig--Vogan bijection} for $G_\bk$ is a certain natural bijection
\begin{equation}\label{eqn:lvbij}
\bX^+ \overset{\sim}{\leftrightarrow}
\Omega_\bk.
\end{equation}

For $\bk = \C$, the existence of this bijection was conjectured independently by Lusztig~\cite[\S 10.8]{lusztig} and, in a rather different framework, by Vogan~\cite[Lecture~8]{vogan}.  Its existence was proved in~\cite{achar-phd} for $G_\C = \mathrm{GL}_n(\C)$ and in~\cite{bez} for arbitrary $G_\C$.  In positive characteristic, the existence of the bijection is a consequence of~\cite[Theorem~6.2]{achar-pcoh} (cf.~\cite[Corollaries~3 and~4]{bez}).
 %The techniques of~\cite{bez} were adapted to positive characteristic in~\cite{achar-pcoh}.

This bijection is much better understood in the case $\bk=\C$. For instance, in this case the bijection was described explicitly in~\cite{achar-RT} in the case $G_\C = \mathrm{GL}_n(\C)$. Moreover, it was proved by Bezrukavnikov that this bijection is compatible with Lusztig's bijection between two-sided cells in affine Weyl groups and nilpotent orbits~\cite{lusztig}, in the sense that the first component of the pair attached to a dominant weight $\lambda$ is the orbit attached to the cell containing the minimal length representative in the double coset of $\lambda$ for the finite Weyl group.\footnote{This fact is not emphasized very explicitly in Bezrukavnikov's paper. In fact it follows from the agreement of the bijection constructed in~\cite{bez} with another such bijection, constructed using different methods in~\cite{bez-cells}, and for which this property is obvious from construction. This agreement is justified in~\cite[Remark~6]{bez-perv}.} On the other hand, essentially nothing is known about the bijection in the case when $\mathrm{char}(\bk)>0$.

%------------------------------------------------------------------
\subsection{Independence of \texorpdfstring{$\bk$}{k}: orbits}
%------------------------------------------------------------------

The goal of this paper is to show that the bijection~\eqref{eqn:lvbij} is ``independent of $\bk$.'' To make sense of this statement, we must first explain how to identify the various sets $\Omega_\bk$ as $\bk$ varies. (Of course, the left-hand side of~\eqref{eqn:lvbij} depends only on the root datum.)  This requires working with an integral version of our group.  Let $\O$ be a complete discrete valuation ring with residue field $\F$ (algebraically closed, of characteristic $p > 0$) and fraction field $\K$ (of characteristic $0$).  Let $G$ be a split connected reductive group over $\O$,
%with simply connected derived subgroup,
and assume that $p$ is pretty good for $G$. We choose an algebraic closure $\ovK$ of $\K$, and denote by $G_\F$, resp.~$G_\ovK$, the base change of $G$ to $\F$, resp.~$\ovK$.

As a first step, the Bala--Carter theorem parametrizes nilpotent orbits using only information from the root datum of $G$, so it gives us a canonical bijection
\begin{equation}\label{eqn:bc-intro}
\{\text{nilpotent orbits for $G_\ovK$}\} \simto
\{\text{nilpotent orbits for $G_\F$}\},
\end{equation}
which will be denoted $\BC$.

As a first compatibility property, one may wonder whether the nilpotent orbits $\scO_\lambda^\ovK$ and $\scO_\lambda^\F$ attached to a dominant weight $\lambda$ by the Lusztig--Vogan bijections for $G_\F$ and $G_\ovK$ match under this bijection. We prove that this fact indeed holds.

\begin{thm}
\label{thm:main-intro-orbits}
For any $\lambda \in \bX^+$, we have $\scO_\lambda^\F = \BC(\scO_\lambda^\ovK)$.
\end{thm}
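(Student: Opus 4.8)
The plan is to prove this by relating the first component of the Lusztig--Vogan bijection to a more geometric invariant that can be compared directly across $\ovK$ and $\F$. Recall that the bijection is constructed (following Bezrukavnikov, and Achar in the modular case) via an equivalence between the bounded derived category of $G$-equivariant coherent sheaves on the nilpotent cone $\cN$ — or rather on the Springer resolution $\tcN$ — and a category of exotic sheaves, under which the tilting/costandard/standard objects are matched up with appropriate exotic objects indexed by $\bX^+$. In this picture, the orbit $\scO_\lambda$ attached to $\lambda$ is characterized by a support condition: it is the open orbit in the support of a certain coherent sheaf (a ``pc-standard'' or ``reduced standard'' object, in the notation $\Dpc_\lambda$, $\Dred_\lambda$ of the macros above) associated to $\lambda$. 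So the theorem reduces to showing that the support of this distinguished coherent sheaf on $\tcN_\ovK$, pushed forward to $\cN_\ovK$, matches under $\BC$ with the corresponding support on $\cN_\F$.

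The key steps, in order, would be: \textbf{(1)} Set up everything over $\O$: construct the integral version $\tcN_\O \to \cN_\O$ of the Springer resolution, the integral exotic $t$-structure, and the integral family of objects $\Dpc_\lambda$ (or whichever family controls the LV orbit) as objects of $\Db\Coh^{G}(\tcN_\O)$ that are $\O$-flat. This is presumably done in the body of the paper; I would invoke it. \textbf{(2)} Prove a base-change / specialization statement: the formation of $\Dpc_\lambda$ commutes with $-\lotimes_\O \ovK$ and with $-\lotimes_\O \F$, so that $(\Dpc_\lambda)_\O$ interpolates between the two characteristic fibers. Flatness over $\O$ is what makes this work. \textbf{(3)} Deduce that the scheme-theoretic support of $(\Dpc_\lambda)_\F$ is the special fiber of the (flat, $\O$-scheme) support of $(\Dpc_\lambda)_\O$, whose generic fiber is the support of $(\Dpc_\lambda)_\ovK$. \textbf{(4)} Finally, identify the ``open orbit in the special fiber of the closure of an orbit $\scO_\O$ over $\O$'' with $\BC$ applied to the generic orbit — i.e., check that the Bala--Carter parametrization is exactly the bookkeeping device that tracks orbit closures through the degeneration over $\O$. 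This is where the characteristic-independence of Bala--Carter (each orbit has a representative defined over $\O$ via an $\mathfrak{sl}_2$-triple / distinguished parabolic, and the dimension of the orbit is given by a root-datum formula independent of $p$, under the very-good hypothesis) gets used.

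The main obstacle I anticipate is step \textbf{(2)}, the base-change statement for the integral exotic objects. Exotic $t$-structures are defined by a recollement/cohomological-degree recipe that is not obviously preserved under $-\lotimes_\O \F$, because the functor $-\lotimes_\O \F$ is only right $t$-exact in general and can create lower cohomology; one must show that the specific objects $\Dpc_\lambda$ (being, in a suitable sense, ``perverse-coherent'' or having a resolution by tilting objects that reduces well) have no Tor-obstruction, i.e. that $\Dpc_\lambda \lotimes_\O \F$ is concentrated in a single degree and equals $\Dpc_{\lambda,\F}$. The standard way around this is to work with tilting exotic sheaves, which form a $\K$-free/$\F$-free family with compatible bases (a ``tilting object over $\O$ specializes to tilting objects''), express $\Dpc_\lambda$ via such tiltings, and bound Tor-dimension using the geometry of $\tcN_\O$. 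A secondary, more bookkeeping-type obstacle is step \textbf{(4)}: one must make sure the notion of ``orbit attached to $\lambda$'' used in the statement of the theorem (via the LV bijection as constructed in the paper) really is the open orbit in the support, rather than, say, a dense orbit detected by a different cohomological vanishing condition — but this should be immediate from the construction recalled in the body of the paper, so I would simply cite it.
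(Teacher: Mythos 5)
There is a genuine gap, and it appears right at the start of your plan. You propose to read off $\scO_\lambda$ as the open orbit in the support of $\Dpc_\lambda$, and then to track that support over $\O$. But the standard perverse-coherent sheaves $\Dpc_\lambda(\bk) = \pi_*\cO_{\tcN}(w_0\lambda)$ have \emph{full} support on $\cN_\bk$ for \emph{every} $\lambda$ — this is precisely why the paper introduces a different family of objects. The orbit $\scO_\lambda^\bk$ is the open orbit in the support of the \emph{simple} perverse-coherent sheaf $\Lpc_\lambda(\bk)$, and simple objects do not specialize well under $-\lotimes_\O\F$ (reduction mod $p$ of a simple need not be simple). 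So neither the $\Dpc$'s (wrong support) nor the $\Lpc$'s (no $\O$-form) can be fed into your steps~(2)--(3). The object that actually works is the \emph{reduced standard} object $\Dred_\lambda(\F) = \pi_*\F(\tirrex_{w_0\lambda}(\O))$: the image of the mod-$p$ reduction of the simple exotic sheaf over $\O$. It lies between $\Dpc_\lambda(\F)$ and $\Lpc_\lambda(\F)$, has small support, and still surjects onto $\Lpc_\lambda(\F)$.

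Even after substituting the right object, your strategy only proves one of the two needed inclusions. The support computation — which the paper carries out via the integral Slodowy slices of Section~\ref{sec:balanced} rather than via a naive ``flat support specializes'' argument, since supports can only grow under reduction mod $p$ (Lemma~\ref{lem:support-K-F}) and one must rule out growth — yields $\supp(\Dred_\lambda(\F)) = \overline{\BC(\scO_\lambda^\ovK)}$ (Proposition~\ref{prop:support}). Combined with the surjection $\Dred_\lambda(\F) \twoheadrightarrow \Lpc_\lambda(\F)$ and Lemma~\ref{lem:support-pcoh}, this gives only $\overline{\scO_\lambda^\F} \subset \overline{\BC(\scO_\lambda^\ovK)}$; a quotient can have strictly smaller support, so equality is not automatic. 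The reverse inclusion is where the paper has to work harder: it runs an inductive counting argument over the closure order on orbits, comparing $\Irr(Z_G(x_j)_\ovK)$ with $\Irr(Z_G(x_j)_\F)$ via the decomposition map and the highest-weight structure on representations of the disconnected reductive quotient (Propositions~\ref{prop:bijection-simples} and~\ref{prop:simples-centralizer}). That half of the argument — which is really most of Section~\ref{sec:proof-main} — has no analogue in your proposal; a pure ``specialization of supports'' scheme cannot detect whether the Lusztig--Vogan bijection over $\F$ sneaks some $\lambda$'s onto smaller orbits than over $\ovK$.
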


%------------------------------------------------------------------
\subsection{Independence of \texorpdfstring{$\bk$}{k}: representations of stabilizers}
%------------------------------------------------------------------

To go further in the comparison of the bijections, we must compare representations of centralizers of nilpotent elements over the two fields.  To do this, we will work with McNinch's notion of \emph{balanced nilpotent sections}, which are certain well-behaved nilpotent elements in the Lie algebra of $G$ (over $\O$). We denote by $x$ such a section, and by $x_\F$, $x_\ovK$ the nilpotent elements in the Lie algebras of $G_\F$ and $G_\ovK$ obtained from $x$. We will also denote by $Z_G(x)$ the centralizer of $x$ in $G$; then the base change $Z_G(x)_\F$ of $Z_G(x)$ to $\F$ is the scheme-theoretic centralizer of $x_\F$ in $G_\F$ and the base change $Z_G(x)_\ovK$ of $Z_G(x)$ to $\ovK$ is the scheme-theoretic centralizer of $x_\ovK$ in $G_\ovK$.

The second author has shown~\cite{har:smcent} that $Z_G(x)$ is a smooth group scheme over $\O$.  In the companion paper~\cite{ahr-disconn}, we study the representation theory of disconnected reductive groups, and we use this study here to establish the following result.

\begin{prop}
\label{prop:intro-decomp}
Let $x$ be a balanced nilpotent section.  There exists a canonical isomorphism of Grothendieck groups
\[
d: \mathsf{K}(Z_{G}(x)_\ovK) \overset{\sim}{\to} \mathsf{K}(Z_{G}(x)_\F).
\]
Moreover, the change-of-basis matrix relating the basis indexed by $\Irr(Z_{G}(x)_\ovK)$ to that indexed by $\Irr(Z_{G}(x)_\F)$ is upper-triangular, so there exists a canonical bijection
\[
\Irr(Z_{G}(x)_\ovK) \overset{\sim}{\leftrightarrow} \Irr(Z_{G}(x)_\F).
\]
\end{prop}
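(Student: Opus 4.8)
The plan is to build the isomorphism $d$ from a decomposition-map construction applied to the (possibly disconnected) smooth reductive $\O$-group scheme $H := Z_G(x)$. Since $H$ is smooth over $\O$ with reductive fibers, its group of connected components $H/H^\circ$ is a finite étale $\O$-group scheme, which (as $\O$ is strictly henselian with algebraically closed residue field) is constant; call it $A$. Thus $H_\ovK$ and $H_\F$ have ``the same'' component group $A$, and the identity component $H^\circ$ is a split connected reductive $\O$-group scheme. For the connected part, the classification of irreducible representations of a split connected reductive group over an algebraically closed field by their highest weights gives canonical index sets $\bX^+(H^\circ)$ independent of the field, and one has the usual decomposition map $\mathsf{K}(\Rep \, H^\circ_\ovK) \to \mathsf{K}(\Rep \, H^\circ_\F)$ which is unitriangular with respect to the highest-weight orderings (standard modular representation theory: reduce a tilting or Weyl module mod $p$). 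For the disconnected group, I would invoke the results of the companion paper~\cite{ahr-disconn} on representation theory of disconnected reductive groups over $\O$: irreducible representations of $H$ over a field are built by Clifford theory from irreducibles of $H^\circ$ together with cocycle data for the $A$-action, and the point of~\cite{ahr-disconn} should be that this Clifford-theoretic bookkeeping can be carried out uniformly over $\O$, yielding compatible parametrizations of $\Irr(H_\ovK)$ and $\Irr(H_\F)$ and a decomposition map $d$ between the Grothendieck groups.

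Concretely, the steps are as follows. First, record that $H = Z_G(x)$ is smooth over $\O$ (by~\cite{har:smcent}), hence $H^\circ$ is a split connected reductive $\O$-group scheme and $A = \pi_0(H)$ is a constant finite group, so $H_\ovK$ and $H_\F$ share the component group $A$ and the ``same'' root datum for $H^\circ$. Second, define $d$ on the connected level as the classical decomposition map and note its unitriangularity. Third, using~\cite{ahr-disconn}, promote this to a decomposition map $d\colon \mathsf{K}(H_\ovK) \to \mathsf{K}(H_\F)$: the key mechanism is that an irreducible $Z_{H_\ovK}(x)$-representation is determined by an $A$-orbit of highest weights of $H^\circ$ together with an irreducible projective representation of a stabilizer subgroup of $A$ (twisted by a $2$-cocycle valued in the relevant Schur multiplier), and all of this data lives over $\O$ — the cocycles are cohomology classes of the finite group $A$ with values in roots of unity, which inject compatibly from $\ovK^\times$ to $\F^\times$ since $p$ does not divide the relevant orders (this is where ``very good'' and the structure of centralizers of balanced sections enters). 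Fourth, deduce that the transition matrix from $\Irr(H_\ovK)$ to $\Irr(H_\F)$ is block-upper-triangular — unitriangular within each block coming from the connected decomposition map and a permutation (in fact identity, after matching the $A$-orbit and cocycle data) across blocks — so it is upper-triangular with invertible diagonal, and therefore the induced map on index sets $\Irr(H_\ovK) \leftrightarrow \Irr(H_\F)$ is a well-defined bijection. Canonicity follows because every choice made (highest-weight order, matching of component groups, matching of cocycle data) is dictated by the root datum and $A$, not by the field.

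The main obstacle will be the disconnected case: for a connected split reductive group over $\O$ the decomposition map and its unitriangularity are completely standard, but for $Z_G(x)$ with nontrivial component group one must ensure that Clifford theory is ``the same'' on both fibers. The delicate points are (a) that the $2$-cocycles / Schur multipliers governing projective representations of the stabilizers in $A$ behave well under reduction mod $p$ — i.e.\ the relevant Schur multiplier is a finite group of order prime to $p$, so that $H^2(A', \ovK^\times) \cong H^2(A', \F^\times)$ and irreducible projective representations biject — and (b) that the action of $A$ on $\bX^+(H^\circ)$, and the ramification data attached to each orbit, are literally the same over $\ovK$ and over $\F$, which should follow from the fact that this action is already defined over $\O$ (it comes from conjugation in $H$). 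I expect~\cite{ahr-disconn} to supply exactly the framework needed to make (a) and (b) precise; given that input, assembling $d$ and checking upper-triangularity is routine. I would also remark that the resulting bijection $\Irr(H_\ovK) \leftrightarrow \Irr(H_\F)$ is \emph{not} claimed to be induced by any functor of representations — only by the Grothendieck-group decomposition map — which is why the unitriangularity statement is the essential content.
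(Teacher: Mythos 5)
Your high-level strategy — use the smoothness of the $\O$-group scheme $Z_G(x)$ and the companion paper on disconnected reductive groups to build a decomposition map with unitriangular transition matrix — is the right one and matches the paper's. But there is a genuine factual error at the start that propagates through the whole argument: you treat $H = Z_G(x)$ as a ``smooth reductive $\O$-group scheme'' with reductive fibers and reductive identity component $H^\circ$. This is false whenever $x \neq 0$: the centralizer of a nonzero nilpotent element has a nontrivial unipotent radical (for $G = GL_2$ and $x$ regular nilpotent, $Z_G(x)^\circ \cong \Gm \times \mathbb{G}_a$). Consequently there is no highest-weight theory for $\Rep(H^\circ_\bk)$, and the ``classical decomposition map for the connected part'' that your whole argument rests on does not exist for $H^\circ$ as you have defined it. What is true is that $\Irr(Z_G(x)_\bk)$ is governed by the \emph{reductive quotient} $Z_G(x)^{\mathrm{red}}_\bk$, and the paper solves the problem of giving a reductive $\O$-model of that quotient by working with the Levi factor $Z_G^{\mathrm{Levi}}(x) := Z_{Z_G(x)}(\varphi)$ (the centralizer of the associated cocharacter), which is smooth over $\O$ with reductive fibers that restrict isomorphically onto the reductive quotients. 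Your argument needs to be run on this group, not on $Z_G(x)$ or its identity component.

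A secondary point: you assert that $\pi_0(H)$ is automatically a constant finite $\O$-group scheme because $\O$ is strictly henselian. The paper does \emph{not} take this for granted; it passes to a finite extension $\O'$ of $\O$ (inside a finite extension $\K' \subset \ovK$ of $\K$) so that the idempotents cutting out the geometric connected components of $Z_G^{\mathrm{Levi}}(x)_\ovK$ become integral, constructs the constant group scheme $A_j$ over $\O'$ explicitly, and verifies (Lemma~\ref{lem:Levi-centralizer}) that the kernel of $\varpi_j$ is reductive. Your henselianity argument might be made to work, but it requires knowing a priori that $\pi_0(Z_G^{\mathrm{Levi}}(x))$ exists as a finite \'etale quotient over $\O$, which is not for free and which the paper sidesteps. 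Finally, the paper does not actually go through Schur multipliers or cocycle bookkeeping in the way you sketch; instead it cites \cite[Theorem 4.4, Lemma 4.7]{ahr-disconn}, which give the invertibility of the decomposition map and the fact that it carries simples to standard objects for the highest-weight structure of \cite[Theorem 3.7]{ahr-disconn} — the unitriangularity then follows purely from the highest-weight formalism rather than from any explicit Clifford-theoretic computation.
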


Implicit in this statement is the assertion that $\Irr(Z_{G}(x)_\ovK)$ and $\Irr(Z_{G}(x)_\F)$ are (partially) ordered, so that ``upper-triangular'' makes sense; in fact, the results of~\cite{ahr-disconn} show that the category of representations of the reductive quotient of $Z_G(x)_\F$ admits a natural highest-weight structure. The map $d$ in Proposition~\ref{prop:intro-decomp} is a ``decomposition map'' in the sense of Serre~\cite{serre-Groth}; it sends irreducible $Z_{G}(x)_\ovK$-modules to ``Weyl modules'' (i.e.~standard objects) for $Z_{G}(x)_\F$.

Since balanced nilpotent sections exist for each nilpotent $G_\F$-orbit
%(and are essentially unique) due
by results of McNinch~\cite{mcninch2016}, combining~\eqref{eqn:bc-intro} and Proposition~\ref{prop:intro-decomp}, we obtain a canonical bijection
\begin{equation}\label{eqn:omega-bij}
\Omega_\ovK \overset{\sim}{\leftrightarrow} \Omega_\F.
\end{equation}
The main result of the paper is the following.

\begin{thm}
\label{thm:intro-main}
Let $G$ be a split connected reductive group over $\O$.  The following diagram commutes:
\[
\begin{tikzcd}[column sep=14.4em, row sep=0em]
& \Omega_\ovK \ar[dd, leftrightarrow, "\eqref{eqn:omega-bij}", "\wr"'] \\
\bX^+ \ar[ur, bend left=10, "\text{\normalfont Lusztig--Vogan bijection~\eqref{eqn:lvbij} for $G_\ovK$}"{above, pos=0.6}, "\sim"']
  \ar[dr, bend right=10, "\text{\normalfont Lusztig--Vogan bijection~\eqref{eqn:lvbij} for $G_\F$}"{below, pos=0.6}, "\sim"] \\
& \Omega_\F.
\end{tikzcd}
\]
\end{thm}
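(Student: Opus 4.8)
The plan is to realize both Lusztig--Vogan bijections inside a single integral framework, namely a theory of ``integral exotic sheaves'' on the Springer resolution $\tcN$ over $\O$, and to use this common framework to compare the two characteristics. Recall that Bezrukavnikov's construction of~\eqref{eqn:lvbij} (and its positive-characteristic adaptation in~\cite{achar-pcoh}) proceeds by equipping the derived category $\Db\Coh^{G\times\Gm}(\tcN)$ with an exotic t-structure, whose heart is a graded highest-weight category $\ExCoh$ with standard objects $\tDex_\lambda$, costandard objects $\tnex_\lambda$, and simple objects $\tirrex_\lambda$ indexed by $\lambda\in\bX^+$; the bijection then sends $\lambda$ to the pair $(\scO,\cV)$ describing the support of $\tirrex_\lambda$ together with the $G$-equivariant vector bundle one reads off along the open orbit in that support. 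So the first step is to set up $\Db\Coh^{G\times\Gm}(\tcN_\O)$ over the ring $\O$, construct the integral exotic t-structure, and define the integral standard, costandard and simple objects; base change along $\O\to\ovK$ and $\O\to\F$ must recover the corresponding objects over the two fields. The key technical inputs are that the relevant $\Ext$- and $\Hom$-groups are free $\O$-modules of the right rank (so that t-structures and highest-weight structures survive base change), which should follow from the cohomology-vanishing results underpinning the exotic t-structure together with the usual semicontinuity/flatness arguments.

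With the integral category in hand, the proof of Theorem~\ref{thm:main-intro-orbits} comes down to identifying $\supp(\tirrex_{\lambda,\F})$ with $\BC(\supp(\tirrex_{\lambda,\ovK}))$. Here the strategy is: the integral simple object $\tirrex_{\lambda,\O}$ is supported on the closure of some $G_\O$-stable subscheme whose generic fiber is a single $G_\ovK$-orbit (its $\ovK$-support) and whose special fiber contains $\scO_\lambda^\F$; conversely, base change cannot increase the support, and one checks using the balanced-section machinery and the Bala--Carter parametrization that the special fiber of the integral support is \emph{irreducible} with dense orbit $\BC(\scO_\lambda^\ovK)$. The Bala--Carter bijection~\eqref{eqn:bc-intro} enters precisely because both orbits are pinned down by the same root-theoretic data (a Levi and a distinguished parabolic therein), so matching the generic and special supports of one integral object forces $\scO_\lambda^\F=\BC(\scO_\lambda^\ovK)$.

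For the full Theorem~\ref{thm:intro-main} one must additionally match the vector-bundle data, and this is where Proposition~\ref{prop:intro-decomp} does the work. Fix a balanced nilpotent section $x$ for the orbit $\scO=\scO_\lambda^\F=\BC(\scO_\lambda^\ovK)$. Restricting $\tirrex_{\lambda,\O}$ (or rather the appropriate shifted standard/costandard object) to the formal/infinitesimal neighborhood of $x$ and passing to the fiber over the section gives an object in the equivariant derived category of a point for $Z_G(x)_\O$, i.e.\ a complex of $Z_G(x)_\O$-representations. Over $\ovK$ this complex computes (up to the known twist) the irreducible $Z_{G_\ovK}(x_\ovK)$-representation $\cV_\lambda^\ovK$, while over $\F$ it computes $\cV_\lambda^\F$; and the decomposition map $d$ of Proposition~\ref{prop:intro-decomp} is exactly the map induced on Grothendieck groups by reduction mod $\varpi$ of such an integral $Z_G(x)_\O$-module. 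The content to be checked is that the integral exotic simple reduces to the simple (not merely to a standard object) along $\F$ -- equivalently that the change-of-basis matrix from $\{\tirrex_{\lambda,\O}\otimes\F\}$ to $\{\tirrex_{\mu,\F}\}$ is unitriangular for the highest-weight order -- and that this unitriangularity is \emph{compatible} with the unitriangularity in Proposition~\ref{prop:intro-decomp} under restriction to $x$. Granting this, the bijection $\Omega_\ovK\leftrightarrow\Omega_\F$ of~\eqref{eqn:omega-bij}, being built from $\BC$ on orbits and from $d$ on $\Irr$ of centralizers, visibly intertwines the two Lusztig--Vogan maps, and the diagram commutes.

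The main obstacle, I expect, is the integral representability and base-change step: showing that the exotic t-structure on $\Db\Coh^{G\times\Gm}(\tcN)$ exists over $\O$ and that standard/costandard/simple objects are defined over $\O$ and behave well under $\otimes_\O\ovK$ and $\otimes_\O\F$. The positive-characteristic construction in~\cite{achar-pcoh} is not literally over a field-independent base, so one has to redo it integrally; the crucial freeness of the graded $\Hom$-spaces between standard and costandard objects (and vanishing of higher $\Ext$) is what makes ``unitriangular reduction'' of the simples possible, and getting this uniformly in $p$ -- using the very-good-characteristic hypothesis -- is the heart of the matter. A secondary, more bookkeeping-heavy obstacle is tracking the normalizations (Tate twists, the precise shift in ``standard object $\leftrightarrow$ Weyl module'') so that the restriction-to-$x$ functor genuinely matches the abstract decomposition map of Proposition~\ref{prop:intro-decomp} on the nose rather than up to an unspecified automorphism.
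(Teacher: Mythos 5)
Your overall roadmap---integral exotic t-structure, base change to $\ovK$ and $\F$, restriction to a balanced nilpotent section, comparison with the decomposition map of Proposition~\ref{prop:intro-decomp}---matches the paper's architecture, and you correctly flag where the technical heavy lifting lies (freeness of $\Hom$-spaces over $\O$; normalization bookkeeping). But the pivotal claim in your third paragraph is wrong: the modular reduction $\F(\tirrex_\lambda(\O))$ of the ``integral simple'' is \emph{not} the simple object $\tirrex_\lambda(\F)$, nor is $[\F(\tirrex_\lambda(\O))]=[\tirrex_\lambda(\F)]$ in $\mathsf{K}$-theory. The paper defines $\F(\tirrex_\lambda(\O))$ to be a new object, the \emph{reduced standard} $\tDred_\lambda(\F)$, which sits strictly between $\tDex_\lambda(\F)$ and $\tirrex_\lambda(\F)$; it merely has $\tirrex_\lambda(\F)$ as its head. ``Reducing to the simple'' and ``unitriangular change of basis'' are not equivalent statements (the former is diagonality, which fails here), and the proof cannot proceed by simply matching simples across reduction. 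The whole point of the reduced standard/costandard machinery is to give a smaller-than-standard object that still surjects onto $\Lpc_\lambda(\F)$, so that supports can be controlled.

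The second gap concerns how you actually pin down the support. Asserting that the special fiber of the $\O$-support of $\tirrex_\lambda(\O)$ is irreducible with dense orbit $\BC(\scO_\lambda^{\ovK})$ is not something that follows from ``base change cannot increase the support'' plus Bala--Carter bookkeeping: support of coherent sheaves behaves subtly under specialization, and one can only directly conclude $\supp(\F(\cF))\supset\overline{\BC(\scO)}$ from $\supp(\ovK(\cF))=\overline{\scO}$. The opposite containment (so that $\supp(\tDred_\lambda(\F))$ is exactly $\overline{\BC(\scO_\lambda^\ovK)}$) is the real content and requires the integral Slodowy slice construction of Section~\ref{sec:balanced}: one pulls $\tilde\pi_*\cF$ back under the flat map $a_x^\bk\colon G_\bk\times\mathcal{S}_x^\bk\to\fg_\bk$ and uses the contracting $\Gm$-action on the slice to force the result to be a free $\O$-module concentrated in one degree (Proposition~\ref{prop:support-general}). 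Even with that in hand, you only get the containment $\overline{\scO_\lambda^\F}\subset\overline{\BC(\scO_\lambda^\ovK)}$, because $\Lpc_\lambda(\F)$ is merely a quotient of $\Dred_\lambda(\F)$; to upgrade this to equality the paper runs a downward induction on orbit closures (Proposition~\ref{prop:simples-centralizer} and \S\ref{ss:statement-orbits}), exploiting that the Lusztig--Vogan parametrization is a bijection: if $\scO_\lambda^\F$ were strictly smaller, then $L_\lambda^\F$ would have to be a simple module for a smaller orbit's centralizer already accounted for by the induction hypothesis, a contradiction. Your sketch has no analogue of this counting step.
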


Note that this result in particular subsumes Theorem~\ref{thm:main-intro-orbits}.

%----------------------------------------------------------------
\subsection{Modular reduction}
%----------------------------------------------------------------

To prove Theorem~\ref{thm:intro-main}, we must dig into the construction of the bijection~\eqref{eqn:lvbij}.  This involves the notion of \emph{perverse-coherent sheaves} on the nilpotent cones $\cN_\ovK$ and $\cN_\F$ of $G_\ovK$ and $G_\F$.  To compare the two, we again need some intermediary that lives over $\O$.  It may be possible to work directly with ``perverse-coherent sheaves'' on an $\O$-form of the nilpotent cone, but this presents certain technical challenges, and this is not the approach we take in the present paper.  Instead, we work with the closely related notion of \emph{exotic sheaves} on the Springer resolution $\tcN$, in part because the foundations needed to define and study them over $\O$ are already available~\cite{br}, and also because the structure of the corresponding t-structure is much more rigid than that of the perverse-coherent t-structure (in fact it is defined by an exceptional sequence, and its heart is a highest weight category).

The technique of using sheaves with coefficients in a local ring is quite classical in the setting of constructible sheaves. Here we have to work with \emph{coherent} sheaves, and even though this technique also makes sense in this context, it turns out to be much harder to use. In particular, there is no elementary analogue over $\O$ of the ``stratification by $G$-orbits'' for $\cN_\ovK$ or $\cN_\F$.  One can study the (coherent) pullback of a complex of coherent sheaves to a point in the nilpotent cone, but this operation is not exact, so it is more difficult to use. To overcome this difficulty, we introduce and work with ``integral versions'' of Slodowy slices, which might be of independent interest.

%----------------------------------------------------------------
\subsection{Reduced standard objects}
%----------------------------------------------------------------

By definition, the closure of the orbit attached to $\lambda$ by the Lusztig--Vogan bijection is the support of a certain simple perverse-coherent sheaf on the nilpotent cone. Every such simple object is a quotient of a certain ``standard'' object---but this is not useful for studying supports, because all standard objects have full support in the nilpotent cone. Instead, we need to construct some ``smaller'' objects which still surject onto our simple perverse-coherent sheaves. These objects, which we call ``reduced standard objects,'' are obtained by ``modular reduction'' from the corresponding simple perverse-coherent sheaf over $\ovK$. (The idea of such a construction goes back at least to work of Cline--Parshall--Scott~\cite{cps}.) These objects are different from the corresponding simple perverse-coherent sheaf, but as a step towards Theorem~\ref{thm:intro-main} we prove that the support of the reduced standard object associated with $\lambda$ is $\BC(\scO_\lambda^\ovK)$.

%----------------------------------------------------------------
\subsection{Motivation}
%----------------------------------------------------------------

Our main motivation for considering the problem studied in the present paper comes from the \emph{Humphreys conjecture} on support varieties of indecomposable tilting modules for $G_\F$~\cite{humphreys}. This conjecture predicts that these support varieties are closures of nilpotent $G_\F$-orbits determined by Lusztig's bijection between two-sided cells and nilpotent orbits. This conjecture was proved in type $\mathbf{A}$ by the second author~\cite{hardesty}, and in~\cite{ahr}, adapting some constructions of Bezrukavnikov (in the analogous setting of quantum groups at a root of unity) we proved it in large characteristic (without an explicit bound); however outside of type $\mathbf{A}$ this conjecture is still open for ``reasonable'' prime characteristics.\footnote{After this paper appeared in preprint form, a proof of the relative version of the Humphreys conjecture for $p$ larger than the Coxeter number was obtained by the first two authors~\cite{ah2}.}  A priori, to a dominant weight $\lambda$ one can attach two nilpotent $G_\F$-orbits which could describe support varieties of tilting modules with a highest weight attached to $\lambda$:
%(in a way we will not try to make precise; see~\cite{ahr} for more details): 
either $\scO_\lambda^\F$ or $\BC(\scO_\lambda^\ovK)$. The orbit that appears ``naturally'' in this problem (at least via the methods of~\cite{ahr}) is $\scO_\lambda^\F$, but the one used by Humphreys is $\BC(\scO_\lambda^\ovK)$. This distinction did not cause any trouble in~\cite{ahr} because for other reasons we had to restrict to large characteristics, but it would play a role in any attempt to prove this conjecture for smaller characteristics. Theorem~\ref{thm:main-intro-orbits} shows that these orbits coincide, thus solving this discrepancy.
%; this provides at least evidence for the conjecture, and hopefully a further tool to attack it.

In~\cite{ahr-conj} we propose some conjectures\footnote{Some of these conjectures have now been proved in~\cite{ah2}.} which aim at putting the Humphreys conjecture in a larger picture, in a hope of clarifying its significance. These conjectures involve only the orbits $\scO_\lambda^\F$, and their compatibility with the Humphreys conjecture is guaranteed by the results of the present paper.

%----------------------------------------------------------------
\subsection{Contents of the paper}
%----------------------------------------------------------------

We begin in Section~\ref{sec:notation} with generalities on reductive groups, the nilpotent cone, and the Springer resolution.  This section also reviews the construction of the Lusztig--Vogan bijection~\eqref{eqn:lvbij} in terms of simple perverse-coherent sheaves.  In Section~\ref{sec:balanced}, we introduce and study an $\O$-analogue of a Slodowy slice to a nilpotent orbit.  This will serve as an important technical tool later in the paper.

In Section~\ref{sec:intexotic}, we define the exotic t-structure and record some of its basic properties. (This construction builds on general results on exceptional collections defined over complete discrete valuation rings, proved in Appendix~\ref{sec:exc-pid}.) As an application, in Section~\ref{sec:reduced}, we use exotic sheaves to define a new class of perverse-coherent sheaves over $\F$, called \emph{reduced standard objects}.   The main result of Section~\ref{sec:reduced} is a kind of first approximation to Theorem~\ref{thm:main-intro-orbits}: it asserts that the supports of reduced standard objects correspond via~\eqref{eqn:bc-intro} to nilpotent orbits arising from the Lusztig--Vogan bijection for $\ovK$. Finally, in Section~\ref{sec:proof-main} we prove Theorem~\ref{thm:intro-main}.

%----------------------------------------------------------------
\subsection{Convention}
%----------------------------------------------------------------

At various points in the paper we consider certain schemes and affine group schemes that could be defined over various base rings. To avoid confusion we use subscripts to specify the base ring of schemes. (However we do not use subscripts for morphisms, since the ring under consideration is always clear from context.) In order to avoid notational clutter, we will sometimes affix a single subscript $\bk$ to some constructions like products or categories of equivariant coherent sheaves, writing e.g.~$\Coh^G(X)_\bk$ instead of $\Coh^{G_\bk}(X_\bk)$.

%----------------------------------------------------------------
\subsection{Acknowledgments}
%----------------------------------------------------------------

P.A. is grateful to David Vogan for suggesting a thesis problem in 1998 that is still keeping him busy 20 years later!

%%%%%%%%%%%%%%%%%%%%%%%%%%%%%%%%%%%%%%%%%%%%%%%%%%%%%%%%%%%%%%%%%%%%%%%%%%%
\section{Notation and preliminaries}
\label{sec:notation}
%%%%%%%%%%%%%%%%%%%%%%%%%%%%%%%%%%%%%%%%%%%%%%%%%%%%%%%%%%%%%%%%%%%%%%%%%%%

%------------------------------------------------------------------
\subsection{Reductive groups}
%------------------------------------------------------------------

Let $\O$ be a complete discrete valuation ring with residue field $\F$, and denote by $\K$ the fraction field of $\O$. We assume throughout that $\F$ is algebraically closed, of characteristic $p>0$, and that $\K$ has characteristic $0$. We also fix an algebraic closure $\ovK$ of $\K$.  

Let $G$ be a split connected reductive group over $\O$, and let $T \subset G$ be a split maximal torus. This group corresponds to some root datum $(\bX, \bX^\vee, R, R^\vee)$, where $\bX$ is the character lattice of $T$, and $\bX^\vee$ is the cocharacter lattice.  Let $W$ be the Weyl group of $R$ (or of $G$), and let $w_0 \in W$ be its longest element.

%The assumption that $G$ has a simply connected derived subgroup corresponds to the assumption that $\bX^\vee/\Z R^\vee$ has no torsion.

Choose, once and for all, a positive subsystem $R^+ \subset R$.  Let $B \subset G$ be the subgroup generated by the maximal torus $T$ and by the root subgroups corresponding to the \emph{negative} roots.  Denote the Lie algebras of $G$, $B$, and $T$ by $\fg$, $\fb$, and $\ft$, respectively.  As an $\O$-module, $\fb$ decomposes as
\[
\fb = \ft \oplus \fn
\qquad\text{where}\qquad
\fn = \bigoplus_{\alpha \in -R^+} \fg_\alpha.
\]

We set
\[
G_\F := \Spec(\F) \times_{\Spec(\O)} G, \quad G_\K := \Spec(\K) \times_{\Spec(\O)} G, \quad G_\ovK := \Spec(\ovK) \times_{\Spec(\O)} G.
\]
Their Lie algebras  can be described as
\[
\fg_\F = \F \otimes_\O \fg, \quad \fg_\K = \K \otimes_\O \fg, \quad \fg_\ovK = \ovK \otimes_\O \fg
\]
respectively, see e.g.~\cite[\S 2.1]{riche-kostant} for details.  Similar notation is used for the groups or Lie algebras obtained from $B$ or $T$ by change of scalars.  

We assume throughout the paper that $p$ is \emph{pretty good} for $G$ in the sense of~\cite[Definition~2.11]{herpel}.  This means that $p$ is good for $G$, and in addition, the abelian groups $\bX/\Z R$ and $\bX^\vee/ \Z R^\vee$ have no $p$-torsion.  By~\cite[Theorem~5.2]{herpel}, this assumption is equivalent to requiring $G_\F$ to be \emph{standard} in the sense of~\cite[\S 4]{mt}.  The following fact is probably known to experts.

\begin{lem}\label{lem:non-degenerate}
There is a $G$-invariant perfect pairing $(-,-): \fg \times \fg \to \O$.
\end{lem}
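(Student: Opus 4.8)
The plan is to construct the pairing by starting from a well-known non-degenerate invariant form in characteristic $0$ and showing it can be rescaled to take values in $\O$ and remain perfect after reduction mod $p$. The key input is that $p$ is very good for each component of $R$, which is precisely the condition ensuring that the Killing form (or an appropriate normalization of it) behaves well in positive characteristic.

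\begin{proof}[Proof sketch]
First I would reduce to the case where $G$ is either a torus or quasi-simple. Indeed, since $G$ has simply connected derived subgroup, the Lie algebra $\fg$ decomposes (as an $\O$-module with $G$-action) as a direct sum $\fz \oplus \bigoplus_i \fg_i$, where $\fz$ is the Lie algebra of the radical torus and each $\fg_i$ is the Lie algebra of a quasi-simple factor of the derived group corresponding to an irreducible component of $R$. It suffices to produce a perfect symmetric invariant pairing on each summand and take the orthogonal direct sum. On $\fz$ one may simply pick any perfect symmetric bilinear form on the free $\O$-module $\fz$ (invariance is automatic since $T$ acts trivially on $\fz$ under the adjoint action in the reductive case, and any splitting of $G$ makes $\fz$ a sum of copies of the Lie algebra of $\Gm$).

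For a quasi-simple factor, I would invoke the standard fact (see e.g. work of Springer--Steinberg, or Garibaldi--Guralnick on invariant bilinear forms, or the normalization used by McNinch) that when $p$ is very good for the corresponding irreducible root system there exists a non-zero $G$-invariant symmetric bilinear form on $\fg_i$ defined over $\O$ whose reduction mod $p$ is non-degenerate. Concretely, one takes the trace form $\Tr(\ad(x)\ad(y))$ of the adjoint representation — or the trace form of a suitable minuscule or small representation in the exceptional cases where the adjoint trace form degenerates — divides by the largest power of the uniformizer dividing all its values on an $\O$-basis, and checks that the resulting integral form has non-degenerate reduction. The very-good hypothesis rules out exactly the small characteristics where this normalization fails; the relevant list of primes and forms is classical. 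Invariance over $\O$ follows from invariance over $\K$ together with $\O$-flatness (a morphism of free $\O$-modules that vanishes after tensoring with $\K$ vanishes). Once the pairing $\fg_i \times \fg_i \to \O$ is $\O$-valued with non-degenerate reduction mod $p$, perfectness over $\O$ follows: the induced map $\fg_i \to \Hom_\O(\fg_i,\O)$ is a map of finite free $\O$-modules of the same rank, and it is an isomorphism iff its reduction mod the maximal ideal is an isomorphism, which is the non-degeneracy mod $p$.

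The main obstacle is the bookkeeping in the exceptional and classical small cases: for some root systems the adjoint trace form is not the right choice even when $p$ is very good (e.g. type $\mathbf{A}_{n-1}$ with $p \mid n$ is excluded by very-goodness, but one should double-check types with a nontrivial center, and the exceptional types $\mathbf{E}, \mathbf{F}, \mathbf{G}$ require case analysis of which representation's trace form to use). Rather than treat each case by hand, I would cite the uniform existence statement for such forms under the very-good hypothesis from the literature on invariant bilinear forms on modular Lie algebras, and then just carry out the elementary $\O$-flatness and reduction-mod-$p$ argument above to upgrade non-degeneracy to perfectness over $\O$. The symmetry is built into the construction (trace forms are symmetric), so no separate argument is needed there.
\end{proof}
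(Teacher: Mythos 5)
Your overall strategy matches the paper's—split $\fg$ into a central piece and quasi-simple pieces, then construct a perfect trace form on each quasi-simple piece—but the sketch treats as automatic the step that carries nearly all the weight in the paper's proof. You write that $\fg = \fz \oplus \bigoplus_i \fg_i$ as an $\O$-module ``since $G$ has simply connected derived subgroup,'' but simple connectedness of the derived subgroup does \emph{not} give this decomposition over $\O$. For example, $\mathrm{GL}_n$ over $\O$ with $p \mid n$ has simply connected derived subgroup $\mathrm{SL}_n$, yet $\O \cdot I$ reduces into $\mathfrak{sl}_n(\F)$ modulo $p$, so $\O\cdot I \oplus \mathfrak{sl}_n(\O)$ is a \emph{proper} $\O$-submodule of $\mathfrak{gl}_n(\O)$. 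The decomposition holds precisely because $p$ is very good, and establishing this is the main work in the paper's proof: it shows that $\bX^\vee/(X_*(Z^\circ_G) \oplus \Z R^\vee) \cong \Hom_\Z(\Z R, \Z)/\Z R^\vee$ has order prime to $p$, tensors with $\O$ to split $\ft$ accordingly, and then splits the short exact sequence $\mathrm{Lie}(Z^\circ_G) \to \fg \to \bar\fg$ as Lie algebras. It also needs a further reduction (to the simply connected cover $\tilde G$, with $\tilde\fg \simto \bar\fg$ again by very-goodness) before reaching the quasi-simple case. Your sketch skips this argument entirely.

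On the quasi-simple pieces you have the case analysis reversed, and your normalization device is not justified. The paper uses the Killing form for \emph{exceptional} types (where Springer--Steinberg show it is already perfect over $\O$ when $p$ is very good) and the trace form of the \emph{defining} representation for \emph{classical} types; the Killing form on a classical Lie algebra can vanish identically mod $p$ even when $p$ is very good---e.g.\ on $\mathfrak{sl}_3$ with $p=2$ the Killing form is $6\,\mathrm{tr}(XY) \equiv 0 \pmod 2$, although $p=2$ is very good for $A_2$. Your proposal---start from the adjoint trace form, divide out the largest uniformizer power dividing all its values, and hope the residue is perfect---happens to recover $\mathrm{tr}(XY)$ in that example, but you offer no argument that this normalization produces a \emph{perfect} pairing in general (non-vanishing of the renormalized form mod $p$ is much weaker than nondegeneracy), and you attribute the need for a small representation to the exceptional rather than the classical cases. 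The clean route, and the one the paper takes, is to cite the perfectness of the appropriate form directly (Springer--Steinberg for the exceptional Killing forms and for the classical defining-representation forms, with Letellier supplying some of the classical computations), rather than attempting a normalization argument that would itself require the very case-by-case check you were hoping to avoid.
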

\begin{proof}
The analogous claim for $\fg_\F$ is proved in~\cite[Proposition~12]{mt1} (see also~\cite[Remark~4.4]{mt}).  Let us briefly review how this proof goes.  (In this paragraph only, we let $G$ denote a connected reductive group over $\F$.). One first treats the easy case in which $G$ is of the form
\begin{equation}\label{eqn:nondeg-easy}
G = S \times H
\qquad\text{where}\qquad
\begin{cases}
\text{$S$ is a torus, and} \\
\text{$H$ is a semisimple, simply-connected group} \\
\text{\qquad for which $p$ is very good.}
\end{cases}
\end{equation}
Next, according to~\cite[Theorem~5.2]{herpel}, any connected reductive group over $\F$ for which $p$ is pretty good can be obtained from a group as in~\eqref{eqn:nondeg-easy} by a finite sequence of the following operations:
\begin{enumerate}
\item Replace $G$ by a Levi subgroup $G'$.
\item 
\label{it:separable-isogeny}
Replace $G$ by a group $G'$ such that there is a separable isogeny $G \to G'$ or $G' \to G$.
\item Replace $G$ by a group $G'$ such that $G$ is isomorphic to a product $S \times G'$ with $S$ a torus.
\end{enumerate}
To finish the proof, one shows that in each of these three operations, if the Lie algebra of $G$ admits an invariant nondegenerate bilinear form, then the same holds for $G'$.

We now return to the setting of the lemma, in which $G$ is a split reductive group over $\O$.  Observe that each of the three operations above also makes sense in the context of split reductive groups over $\O$. (For~\eqref{it:separable-isogeny}, we require that the base change to $\F$ is separable.) Since isomorphism classes of split reductive groups over $\F$ and $\O$ correspond via base change, we can therefore follow the same strategy as above, starting with the easy case in~\eqref{eqn:nondeg-easy}.

\textit{Step 0. Proof for $G$ of the form~\eqref{eqn:nondeg-easy}}.  In this case $G$ is the product of a torus and a number of quasisimple, simply connected groups.  It is enough to treat these factors separately.  Any perfect pairing on the Lie algebra of the torus factor $S$ is $S$- (and hence $G$-) invariant.  Now let $H_1$ be a quasisimple direct factor of $H$, and let $\fh_1$ be its Lie algebra.  If $H_1$ is of exceptional type, then it follows from~\cite[\S\S I.4.8--I.4.9]{spring-stein} that the Killing form on $\fh_1$ is a perfect pairing.  If $H_1$ is of classical type, then it admits a ``defining representation'' $H_1 \to GL_n(\O)$.  Consider the induced map
\[
\rho: \fh_1 \to \mathfrak{gl}_n(\O)
\]
and equip $\fh_1$ with the pairing $(X,Y) \mapsto \mathrm{tr}(\rho(X)\rho(Y))$.  This pairing induces nondegenerate pairings over both $\K$ and $\F$ (see~\cite[\S I.5.3]{spring-stein} and~\cite[Corollary~2.5.8 and Proposition~2.5.10]{letellier}), so it must be a perfect pairing over $\O$.

\textit{Step 1. If the lemma holds for $G$, it holds for any Levi subgroup of $G$.} Let $G'$ be a Levi subgroup of $G$ containing $T$.  Then $G'$ is the centralizer of a subtorus $S \subset T$: namely, $S$ is the subtorus whose cocharacter lattice is the sublattice of $\bX^\vee$ consisting of elements annihilated by all roots of $G'$.  Under the adjoint action of $S$ on $\fg$, the latter decomposes as
\[
\fg = \bigoplus_{\lambda \in X^*(S)} \fg_\lambda,
\]
where $X^*(S)$ is the character lattice of $S$ (a quotient of $\bX$).  Any $G$-invariant perfect pairing on $\fg$ restricts to a perfect pairing $\fg_\lambda \times \fg_{-\lambda} \to \O$.  In particular, it gives a $G'$-invariant perfect pairing $\fg_0 \times \fg_0 \to \O$.  Since $\fg_0$ is the Lie algebra of $G'$, we are done.

\textit{Step 2. If the lemma holds for $G$, it holds for any group related to $G$ by a central isogeny whose reduction modulo $p$ is separable.}  Suppose we have a central isogeny $G \to G'$ whose reduction modulo $p$ is separable.  Then the corresponding map on Lie algebras $\fg_\F \to \fg'_\F$ is an isomorphism.  Since $\fg_\K \to \fg'_\K$ is also an isomorphism, we conclude that $\fg \to \fg'$ is an isomorphism.  Any invariant perfect pairing on $\fg$ can be transferred to $\fg'$ via this isomorphism.  The same reasoning applies if we have a central isogeny in the opposite direction, $G' \to G$.

\textit{Step 3.  If the lemma holds for $G = S \times G'$, where $S$ is a split torus and $G'$ is split reductive, then then it holds for $G'$.}  Let $\fs'$ and $\fg'$ denote the Lie algebras of $S$ and $G'$, respectively.  The existence of a $G$-invariant perfect pairing on $\fg$ implies that there is a $G$-equivariant (and hence $G'$-equivariant) isomorphism between $\fg \cong \fs \oplus \fg'$ and $\fg^* \cong \fs^* \oplus (\fg')^*$.  (Here $\fg^* = \Hom_\O(\fg,\O)$.) Now, $\fs$ and $\fs^*$ are both trivial $G'$-modules that are isomorphic as $\O$-modules.  They are therefore isomorphic as $G'$-modules.

In the category of $G'$-modules that are finitely generated over $\O$, the endomorphism ring of any object is a finitely generated $\O$-module.  Since $\O$ is a complete local ring, it follows that every such endomorphism ring is semiperfect, by~\cite[Example~23.3]{lam}.  Then, by~\cite[Theorem~A.1]{cyz}, the Krull--Schmidt theorem holds in this category.  Since we have $G'$-equivariant isomorphisms $\fs \oplus \fg' \cong \fs^* \oplus (\fg')^*$ and $\fs \cong \fs^*$, we conclude that there is a $G'$-equivariant isomorphism $\fg' \cong (\fg')^*$.  This is equivalent to the existence of a $G'$-invariant perfect pairing on $\fg'$.
\end{proof}

\begin{rmk}
\begin{enumerate}
\item In Step 3 of the preceding proof, the invariant perfect pairing on $\fg'$ is \emph{not} obtained by restricting the invariant pairing on $\fg$.  Indeed, in general, the restriction to $\fg'$ of a perfect pairing on $\fg$ may fail to be perfect.

\item In many examples, one can find a \emph{symmetric} $G$-invariant perfect pairing (cf.~\cite[Remark~4]{mcn1}), but we do not know if this holds for all reductive groups.  The difficulty lies essentially in the lack of an explicit construction in Step~3 of the preceding proof.  We thank G.~McNinch for helpful comments on this point.
\end{enumerate}
\end{rmk}

\begin{lem}
\label{lem:derived-isogeny}
There exists a central isogeny $\sigma: \tG \to G$ such that the following conditions hold:
\begin{enumerate}
\item The derived subgroup of $\tG$ is simply connected.
\item $p$ is pretty good for $\tG$, i.e.~the quotient of the cocharacter lattice of a maximal torus of $\tG$ by the root lattice has no $p$-torsion.
\item The map $\sigma: \tG_\F \to G_\F$ is separable.
\end{enumerate}
Moreover, $\sigma$ identifies the Lie algebra of $\tG$ with $\fg$.
\end{lem}
\begin{proof}
Consider the torsion subgroup $(\bX^\vee/\Z R^\vee)_{\mathrm{tors}} \subset \bX^\vee/\Z R^\vee$.  Choose a complement $F$, so that $\bX^\vee/\Z R^\vee \cong (\bX^\vee/\Z R^\vee)_{\mathrm{tors}} \oplus F$.  Thus, $F$ is a maximal free summand of $\bX^\vee/\Z R^\vee$.  Let $\tbX^\vee \subset  \bX^\vee$ be the preimage of $F$ under the quotient map $\bX^\vee \to \bX^\vee/\Z R^\vee$.

Let $\tbX = \Hom_\Z(\tbX^\vee,\Z)$.  The group $\bX$ is naturally identified with a subgroup of $\tbX$.  The quadruple $(\tbX, \tbX^\vee, R, R^\vee)$ is a root datum.  Let $\tG$ be the corresponding split reductive group over $\O$.  The obvious morphism of root data gives rise to a central isogeny $\tG \to G$.  Because $\tbX^\vee / \Z R^\vee$ is torsion-free, the derived subgroup of $\tG$ is simply connected.  Since $p$ does not divide the order of the finite group $\tbX / \bX$ (which is isomorphic to $(\bX^\vee/\Z R^\vee)_{\mathrm{tors}}$), the map $\sigma: \tG_\F \to G_\F$ is separable.

For the last assertion, let $\tfg$ be the Lie algebra of $\tG$.  Since the induced maps $\tfg_\K \to \fg_\K$ and $\tfg_\F \to \fg_\F$ are isomorphisms, the map $\tfg \to \fg$ is as well.
\end{proof}

%--------------------------------------------------------------------
\subsection{The Springer resolution}
\label{ss:Springer-reolution}
%--------------------------------------------------------------------

Let
\[
\tcN = G \times^B \fn.
\]
The group $G$ acts on this scheme in the obvious way, so we may consider the bounded derived category
\[
\Db\Coh^G(\tcN)
\]
of $G$-equivariant coherent sheaves on $\tcN$.  

\begin{lem}
\label{lem:tcn-alt}
There exists a $G$-equivariant isomorphism $\tcN \cong G \times^B (\fg/\fb)^*$.
\end{lem}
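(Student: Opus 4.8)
The plan is to reduce the statement to an isomorphism of $B$-modules between the two ``fibers'' $\fn$ and $(\fg/\fb)^*$, and then apply the associated-bundle functor $G \times^B (-)$, which carries a $B$-equivariant isomorphism of $B$-modules to a $G$-equivariant isomorphism of the corresponding total spaces. So it suffices to produce a $B$-equivariant isomorphism $\fn \simto (\fg/\fb)^*$, where $B$ acts on $\fg$, $\fb$, $\fn$, and $\fg/\fb$ through the adjoint action and on $(\fg/\fb)^*$ by duality.

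First I would invoke Lemma~\ref{lem:non-degenerate} to obtain a perfect $G$-invariant symmetric pairing $(-,-)\colon \fg \times \fg \to \O$; since it is $G$-invariant, the induced map $\fg \to \fg^*$, $x \mapsto (x,-)$, is an isomorphism of $G$-modules (for the coadjoint action on $\fg^*$), hence of $B$-modules. Next I would analyze the pairing with respect to the $T$-weight decomposition $\fg = \ft \oplus \bigoplus_{\alpha \in R} \fg_\alpha$: because the pairing is a $T$-equivariant map into the trivial module, it must vanish on $\fg_\alpha \otimes \fg_\beta$ whenever $\alpha + \beta \neq 0$ and on $\ft \otimes \fg_\alpha$, and it must restrict to perfect pairings on $\ft$ and on each $\fg_\alpha \times \fg_{-\alpha}$. (This is purely a statement about graded modules over $\O$ equipped with an invariant pairing, so no field-theoretic input is needed.) In particular the orthogonal complement of $\fb = \ft \oplus \bigoplus_{\alpha \in -R^+} \fg_\alpha$ is exactly $\fb^\perp = \bigoplus_{\alpha \in -R^+} \fg_\alpha = \fn$.

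Finally, since $\fb$ is a direct summand of $\fg$ as an $\O$-module (a complement being $\bigoplus_{\alpha \in R^+} \fg_\alpha$) and all the modules in play are finite free over $\O$, the transpose of the surjection $\fg \twoheadrightarrow \fg/\fb$ identifies $(\fg/\fb)^*$ with the annihilator of $\fb$ inside $\fg^*$. Transporting this annihilator through the isomorphism $\fg \simto \fg^*$ gives back precisely $\fb^\perp = \fn$, by perfectness of the pairing. Hence the composite $\fn \hookrightarrow \fg \simto \fg^*$ factors through the inclusion $(\fg/\fb)^* \hookrightarrow \fg^*$ and induces the desired $B$-equivariant isomorphism $\fn \simto (\fg/\fb)^*$.

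The only real care needed is to run these standard linear-algebra manipulations over the base ring $\O$ rather than over a field: the weight-space orthogonality argument, the identification of $(\fg/\fb)^*$ with the annihilator of $\fb$, and the identification of that annihilator with $\fb^\perp$ all rest on $\fg$, $\fb$, $\fn$, and $\fg/\fb$ being finite free over $\O$ together with the \emph{perfectness} (not mere nondegeneracy) of the pairing, which is exactly the content of Lemma~\ref{lem:non-degenerate}. Everything else is formal, so I do not expect any serious obstacle.
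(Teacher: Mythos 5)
Your proof is correct and follows exactly the route the paper takes: invoke Lemma~\ref{lem:non-degenerate} to get a $G$-invariant perfect pairing on $\fg$, use it to produce a $B$-equivariant isomorphism $\fn \simto (\fg/\fb)^*$, and then apply $G \times^B (-)$. The paper's own proof is a one-liner that leaves the weight-space and annihilator bookkeeping implicit; you have simply spelled out those standard details over $\O$, correctly emphasizing that perfectness (rather than mere nondegeneracy) is what makes the identifications work over the base ring.
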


\begin{proof}
Choose a $G$-equivariant perfect pairing $({-},{-}): \fg \times \fg \to \O$ as in Lemma~\ref{lem:non-degenerate}.  This induces a $B$-equivariant isomorphism $\fn \simto (\fg/\fb)^*$.
\end{proof}

Let
\[
\tilde\pi: \tcN \to \fg
\]
be the map given by $\tilde\pi(g,x) = \mathrm{Ad}(g)(x)$. This map is proper, and gives rise to a functor
\[
\tilde{\pi}_* : \Db\Coh^G(\tcN) \to \Db\Coh^G(\fg).
\]
There is also an obvious map $p: \tcN \to G/B$.  Any $B$-representation that is finitely generated over $\O$ gives rise to a $G$-equivariant coherent sheaf on $G/B$ (see, for instance,~\cite[\S\S I.5.8--I.5.9]{jantzen}).  In particular, any $\lambda \in \bX$ defines a $B$-module structure on the free rank-$1$ $\O$-module. The corresponding (invertible) sheaf on $G/B$ will be denoted $\cO_{G/B}(\lambda)$, and we set
\[
\cO_\tcN(\lambda) := p^* \cO_{G/B}(\lambda).
\]

We denote by $\tcN_\F$, $\tcN_\K$, and $\tcN_\ovK$ the schemes obtained from $\tcN$ by change of scalars from $\O$ to $\F$, $\K$, or $\ovK$, respectively. We can then consider the corresponding derived categories of coherent sheaves $\Db\Coh^G(\tcN)_\F$, $\Db\Coh^G(\tcN)_\K$ and $\Db\Coh^G(\tcN)_\ovK$, and the functors $\tilde\pi_*$.
The change-of-scalars functors for coherent sheaves on $\tcN$ will be denoted by
\begin{align*}
\F({-}) &: \Db\Coh^G(\tcN) \to \Db\Coh^G(\tcN)_\F, \\
\K({-}) &: \Db\Coh^G(\tcN) \to \Db\Coh^G(\tcN)_\K, \\
\ovK({-}) &: \Db\Coh^G(\tcN) \to \Db\Coh^G(\tcN)_\ovK.
\end{align*}
These functors commute with the functors $\tilde\pi_*$, in the sense that there exist canonical isomorphisms of functors
\begin{equation}\label{eqn:tildepi-scalar-commute}
\F \circ \tilde\pi_* \cong \tilde\pi_* \circ \F, \qquad \K \circ \tilde\pi_* \cong \tilde\pi_* \circ \K, \qquad \ovK \circ \tilde\pi_* \cong \tilde\pi_* \circ \ovK.
\end{equation}

%---------------------------------------------------------------------
\subsection{The nilpotent cone and perverse-coherent sheaves}
\label{ss:pcoh}
%---------------------------------------------------------------------

In this subsection we fix $\bk \in \{\F,\ovK\}$, and let $\cN_\bk$ denote the variety of nilpotent elements in $\fg_\bk$.  (There are subtleties involved in finding the correct definition of the nilpotent scheme over $\O$.  We will not address those here, and we will work with the nilpotent variety only over an algebraically closed field, where the results of~\cite{jantzen-nilp} are available.)  There are maps
\[
\pi:\tcN_\F \to \cN_\F,
\qquad
\pi:\tcN_\ovK \to \cN_\ovK,
\]
both given by $\pi(g,x) = \mathrm{Ad}(g)(x)$.  Note that for $\bk \in \{\F,\ovK\}$ the map $\tilde\pi$ defined in~\S\ref{ss:Springer-reolution} factors as $\tilde\pi = i \circ \pi$, where $i: \cN_\bk \hookrightarrow \fg_\bk$ is the inclusion map.

The bounded derived category of $G_\bk$-equivariant coherent sheaves on $\cN_\bk$ will be denoted by
\[
\Db\Coh^G(\cN)_\bk.
\]
Since $\pi$ is proper, it gives rise to a functor
\[
\pi_*: \Db\Coh^G(\tcN)_\bk \to \Db\Coh^G(\cN)_\bk.
\]
For $\lambda \in \bX^+$, set
\begin{equation}\label{eqn:dpc-defn}
\Dpc_\lambda(\bk) := \pi_*\cO_{\tcN}(w_0\lambda)
\qquad\text{and}\qquad
\npc_\lambda(\bk) := \pi_*\cO_\tcN(\lambda).
\end{equation}
Let $\scO \subset \cN_\bk$ be a $G_\bk$-orbit.  We define the \emph{star} of this orbit to be the open subvariety
\begin{equation}\label{eqn:star-defn}
\St(\scO) = \bigcup_{\substack{\text{$\scO' \subset \cN_\bk$ a $G_\bk$-orbit} \\ \scO \subset \overline{\scO'}}} \scO'.
\end{equation}
Let $i_\scO: \scO \hookrightarrow \St(\scO)$ be the embedding of $\scO$ as a (reduced) closed subscheme of its star.

The category $\Db\Coh^G(\cN)_\bk$ is equipped with a remarkable t-structure called the \emph{perverse-coherent t-structure}.  Its heart is denoted by
\[
\PCoh(\cN)_\bk,
\]
and objects in the heart are called \emph{perverse-coherent sheaves}.
%For background on this t-structure, the reader is referred to~\cite{ab:pcs,achar}.  
We will not recall the definition of the perverse-coherent t-structure in detail here, but we will review some of its key properties below, following~\cite{achar-pcoh, achar, ab:pcs, bez}.

\begin{rmk}\label{rmk:prettygood-pcoh}
The results on perverse-coherent sheaves in~\cite{achar-pcoh, achar} are stated under the assumption that $G_\F$ has a simply connected derived subgroup. However, using the separable isogeny from Lemma~\ref{lem:derived-isogeny}, it is straightforward to transfer these results to arbitrary groups in pretty good characteristic.
\end{rmk}

Recall the definition of the \emph{support} $\supp(\cF)$ of a complex $\cF$ of coherent sheaves on a scheme, see e.g.~\cite[\S 4.1]{ahr} for references; this support is a closed subset of the underlying topological space of the given scheme. We will also consider the order $\leq$ on $\bX$ such that $\lambda < \mu$ iff $\mu - \lambda$ is a sum of positive roots.
Finally, as in~\S\ref{ss:LV-bijection-intro} we set
\[
\Omega_\bk := 
\left\{
(\scO,\cV) \,\Big|\,
\begin{array}{@{}c@{}}
\text{$\scO \subset \cN_\bk$ is a nilpotent orbit, and} \\
\text{$\cV$ is an irreducible $G_\bk$-equivariant vector bundle on $\scO$}
\end{array}
\right\}.
\]
Then the following properties hold:
\begin{enumerate}
\item The perverse-coherent t-structure is bounded, and every object in the heart $\PCoh(\cN)_\bk$ has finite length.\label{it:pcoh-bounded}
\item If $\scO \subset \cN$ is an orbit that is open in the support of $\cF \in \PCoh(\cN)_\bk$, then $\sH^i(\cF)|_{\St(\scO)}$ vanishes for $i \ne \frac{1}{2}\codim \scO$.\label{it:pcoh-support}
\item The objects $\Dpc_\lambda(\bk)$ and $\npc_\lambda(\bk)$ lie in $\PCoh(\cN)$.  Moreover,\label{it:pcoh-qhered}
\[
\Hom(\Dpc_\lambda(\bk), \npc_\mu(\bk)) \cong
\begin{cases}
\bk & \text{if $\lambda = \mu$,} \\
0 & \text{otherwise.}
\end{cases}
\]
\item 
\label{it:simples-pc}
Fix a nonzero map $c_\lambda: \Dpc_\lambda(\bk) \to \npc_\lambda(\bk)$, and set\label{it:irr-x}
\[
\Lpc_\lambda(\bk) = \im(c_\lambda: \Dpc_\lambda(\bk) \to \npc_\lambda(\bk)).
\]
Then $\Lpc_\lambda(\bk)$ is a simple object in $\PCoh(\cN)$.  Moreover, every simple object is isomorphic to $\Lpc_\lambda(\bk)$ for a unique $\lambda \in \bX^+$, and each composition factor of the kernel of the surjection $\Dpc_\lambda(\bk) \twoheadrightarrow \Lpc_\lambda(\bk)$, resp.~of the cokernel of the embedding $\Lpc_\lambda(\bk) \hookrightarrow \npc_\lambda(\bk)$, is of the form $\Lpc_\mu(\bk)$ with $\mu < \lambda$.
\item Let $\scO \subset \cN_\bk$ be a nilpotent orbit, and let $\cV$ be an irreducible $G_{\bk}$-equivariant vector bundle on $\scO$.  There is unique simple perverse-coherent sheaf\label{it:irr-orbit}
\[
\IC(\scO,\cV)
\]
that is characterized by the following properties: it is supported on $\overline{\scO}$, and $\sH^{\frac{1}{2}\codim\scO}(\IC(\scO,\cV))|_{\St(\scO)} \cong i_{\scO*}\cV$.  Moreover, every simple object is isomorphic to $\IC(\scO,\cV)$ for a unique pair $(\scO,\cV) \in \Omega_\bk$.
\end{enumerate}

Here, items~\eqref{it:pcoh-bounded}, \eqref{it:pcoh-support}, and~\eqref{it:irr-orbit} come from the general theory of perverse-coherent sheaves: see~\cite[\S4.2]{ab:pcs}, as well as~\cite[\S4.5]{achar-pcoh}. Items~\eqref{it:pcoh-qhered} and~\eqref{it:irr-x} come from~\cite[Proposition~6.1 and Theorem~6.2]{achar-pcoh} (see also~\cite[Corollary~3]{bez}).  In view of item~\eqref{it:pcoh-qhered}, the map $c_\lambda$ in item~\eqref{it:irr-x} is unique up to scalar.

Note that items~\eqref{it:irr-x} and~\eqref{it:irr-orbit} give two different classifications of the simple objects in $\PCoh(\cN)_\bk$: one is parametrized by $\bX^+$, and the other by $\Omega_\bk$.

\begin{defn}
\label{defn:lv-bijection}
Let $\bk \in \{\F,\ovK\}$.  The \emph{Lusztig--Vogan bijection} for $G_\bk$ is the bijection
\[
\bX^+ \overset{\sim}{\leftrightarrow} \Omega_\bk
\]
determined by the following condition: $\lambda \in \bX^+$ corresponds to $(\scO,\cV) \in \Omega_\bk$ if $\Lpc_\lambda(\bk) \cong \IC(\scO,\cV)$.
\end{defn}

One can also interpret the Lusztig--Vogan bijection from a slightly different point of view if one chooses, for any $G_\bk$-orbit $\mathscr{O} \subset \cN_\bk$, a representative $x_\scO \in \scO$. Let us denote by $Z_{G_\bk}(x_\scO)$ the centralizer of $x_\scO$, and by $Z_{G_\bk}^{\mathrm{red}}(x_\scO)$ its reductive quotient (i.e. the quotient of $Z_{G_\bk}(x_\scO)$ by its unipotent radical). Our assumptions imply that the natural morphism $G_\bk / Z_{G_\bk}(x_\scO) \simto \scO$ is an isomorphism of varieties (see e.g.~\cite[Proposition~4.2]{mt}); hence pullback along the embedding $\{x_\scO\} \hookrightarrow \scO$ defines an equivalence of categories
\[
\Coh^{G_\bk}(\scO) \simto \Rep(Z_{G_\bk}(x_\scO)),
\]
where $\Rep(Z_{G_\bk}(x_\scO))$ is the category of finite-dimensional $Z_{G_\bk}(x_\scO)$-representations.  In particular, we deduce a bijection between the sets of simple objects in these two categories.  Since every irreducible $Z_{G_\bk}(x_\scO)$-module factors through the quotient map $Z_{G_\bk}(x_\scO) \to Z_{G_\bk}^{\mathrm{red}}(x_\scO)$, we obtain a bijection between the set of isomorphism classes of irreducible $G_\bk$-equivariant vector bundles on $\scO$ and the set of isomorphism classes of simple $Z_{G_\bk}^{\mathrm{red}}(x_\scO)$-modules.  Thus, $\Omega_\bk$ gets identified with the set
\[
\Omega'_\bk := 
\left\{
(\scO,L) \,\Big|\,
\begin{array}{@{}c@{}}
\text{$\scO \subset \cN$ is a nilpotent orbit, and} \\
\text{$L$ is an irreducible $Z_{G_\bk}^{\mathrm{red}}(x_\scO)$-module}
\end{array}
\right\},
\]
and the Lusztig--Vogan bijection for $G_\bk$ can be thought of as a bijection
\begin{equation}
\label{eqn:LV-bij}
\bX^+ \overset{\sim}{\leftrightarrow} \Omega_\bk'.
\end{equation}
The image of $\lambda \in \bX^+$ under this bijection will be denoted $(\scO_\lambda^\bk, L_\lambda^\bk)$.

%---------------------------------------------------------------------
\subsection{Graded versions}
\label{ss:grading}
%---------------------------------------------------------------------

We will denote by $\Gm$ the multiplicative group over $\O$, and by $(\Gm)_\F$ and $(\Gm)_\ovK$ its base change to $\F$ and $\ovK$ respectively. Then
$\Gm$ acts on $\fg$ by $z \cdot x = z^{-2}x$.  This makes the coordinate ring $\cO(\fg)$ into a graded ring concentrated in even, nonnegative degrees.  This $\Gm$-action preserves $\fn \subset \fg$, and it induces a $\Gm$-action on $\tcN$.  The $\Gm$-actions on $\tcN$ and on $\fg$ commute with the actions of $G$.  The map $\tilde\pi: \tcN \to \fg$ is $G \times \Gm$-equivariant, so one may consider the functor
\[
\tilde\pi_*: \Db\Cohmix(\tcN) \to \Db\Cohmix(\fg).
\]
Similar remarks apply to the $\F$- and $\ovK$-versions of these spaces. Moreover, we also have change-of-scalars functors $\F : \Db\Cohmix(\fg) \to \Db\Cohmix(\fg)_\F$ and $\ovK : \Db\Cohmix(\fg) \to \Db\Cohmix(\fg)_\ovK$, and they commute with $\tilde\pi_*$ as in~\eqref{eqn:tildepi-scalar-commute}. For $\bk \in \{\F,\ovK\}$, the $(\Gm)_\bk$-action on $\fg_\bk$ preserves $\cN_\bk$, so we also have functors
\[
\pi_*: \Db\Cohmix(\tcN)_\bk \to \Db\Cohmix(\cN)_\bk.
\]

We write $\cF \mapsto \cF\la 1\ra$ for the autoequivalence of any of these categories that twists the $\Gm$-equivariant structure by the tautological character of $\Gm$.  In the $\Gm$-equivariant setting, we modify~\eqref{eqn:dpc-defn} to include a normalization of the $(\Gm)_\bk$-action, as follows: we set
\[
\Dpc_\lambda(\bk) := \pi_*\cO_{\tcN}(w_0\lambda)\la \delta_{w_0\lambda}\ra
\qquad\text{and}\qquad
\npc_\lambda(\bk) := \pi_*\cO_\tcN(\lambda)\la -\delta_{w_0\lambda}\ra,
\]
where $\delta_\lambda$ is the length of a minimal element $v$ of the Weyl group such that $v\lambda \in -\bX^+$.  (See~\cite[\S2.3]{achar}.)

The definition of the perverse-coherent t-structure carries over to the $(\Gm)_\bk$-equiva\-riant setting.  The heart of the resulting t-structure is denoted by
\[
\PCoh^{\Gm}(\cN)_\bk \subset \Db\Cohmix(\cN)_\bk.
\]
This category is stable under $\la 1\ra$.  Properties~\eqref{it:pcoh-bounded} and~\eqref{it:pcoh-support} from~\S\ref{ss:pcoh} remain true as stated for $\PCoh^{\Gm}(\cN)_\bk$, but the remaining properties must be modified as follows:
\begin{enumerate}
\setcounter{enumi}{2}
\item The objects $\Dpc_\lambda(\bk)$ and $\npc_\lambda(\bk)$ lie in $\PCoh^{\Gm}(\cN)_\bk$.  Moreover,
\[
\Hom(\Dpc_\lambda(\bk), \npc_\mu(\bk)\la k\ra) \cong
\begin{cases}
\bk & \text{if $\lambda = \mu$ and $k = 0$,} \\
0 & \text{otherwise.}
\end{cases}
\]
\item Fix a nonzero map $c_\lambda: \Dpc_\lambda(\bk) \to \npc_\lambda(\bk)$, and set\label{it:irr-x-gr}
\[
\Lpc_\lambda(\bk) = \im(c_\lambda: \Dpc_\lambda(\bk) \to \npc_\lambda(\bk)).
\]
Then $\Lpc_\lambda(\bk)$ is a simple object in $\PCoh(\cN)_\bk$.  Moreover, every simple object is isomorphic to $\Lpc_\lambda(\bk)\la k\ra$ for a unique pair $(\lambda,k) \in \bX^+ \times \Z$, and each composition factor of the kernel of the surjection $\Dpc_\lambda(\bk) \twoheadrightarrow \Lpc_\lambda(\bk)$, resp.~of the cokernel of the embedding $\Lpc_\lambda(\bk) \hookrightarrow \npc_\lambda(\bk)$, is of the form $\Lpc_\mu(\bk) \langle m \rangle$ with $\mu < \lambda$.
\item Let $\scO \subset \cN_\bk$ be a nilpotent orbit, and let $\cV$ be an irreducible $(G \times \Gm)_\bk$-equivariant vector bundle on $\scO$.  There is a unique simple perverse-coherent sheaf\label{it:irr-orbit-gr}
\[
\IC(\scO,\cV)
\]
that is characterized by the following properties: it is supported on $\overline{\scO}$, and $\sH^{\frac{1}{2}\codim\scO}(\IC(\scO,\cV))|_{\St(\scO)} \cong i_{\scO*}\cV$.  Moreover, every simple object is isomorphic to $\IC(\scO,\cV)$ for a unique pair $(\scO,\cV)$.
\end{enumerate}
See~\S\ref{ss:pcoh} for references to~\cite{achar-pcoh, ab:pcs, bez} for these statements (see also Remark~\ref{rmk:prettygood-pcoh}). Note that in part~\eqref{it:irr-orbit-gr}, the simple objects are parametrized not by $\Omega_\bk$, but instead by the larger set $\Omega_\bk^{\Gm}$ consisting of pairs $(\scO,\cV)$ where $\cV$ is a $(G \times \Gm)_\bk$-equivariant vector bundle, rather than a $G$-equivariant vector bundle.  Comparing parts~\eqref{it:irr-x-gr} and~\eqref{it:irr-orbit-gr}, we see that there is a \emph{graded Lusztig--Vogan bijection}
\[
\bX^+ \times \Z \overset{\sim}{\leftrightarrow} \Omega_\bk^{\Gm}.
\]
The extra $\Gm$-action is crucial for some applications, but for most of this paper, we will work \emph{without} this $\Gm$-action.  (One exception is the proof of Proposition~\ref{prop:support}, where the $\Gm$-action plays an important role.)  However, there is no loss in doing so: as explained in~\cite[\S3]{ah}, the graded Lusztig--Vogan bijection is completely determined by the ordinary (ungraded) Lusztig--Vogan bijection.  In particular, the main theorem of this paper implies that the graded Lusztig--Vogan bijection is also independent of $\bk$.

%%%%%%%%%%%%%%%%%%%%%%%%%%%%%%%%%%%%%%%%%%%%%%%%%%%%%%%%%%%%%%%%%%%%%%%%%%%
\section{Balanced nilpotent sections and associated Slodowy slices}
\label{sec:balanced}
%%%%%%%%%%%%%%%%%%%%%%%%%%%%%%%%%%%%%%%%%%%%%%%%%%%%%%%%%%%%%%%%%%%%%%%%%%%

%-----------------------------------------------------------------------
\subsection{Balanced nilpotent sections and their centralizers}
\label{ss:balanced}
%-----------------------------------------------------------------------

Elements of $\fg$ are in a canonical bijection with the $\O$-points of the $\O$-scheme $\fg$. Following~\cite{mcninch2016}, such points will be called \emph{sections}. Any section $x \in \fg$, considered as a morphism $\Spec(\O) \to \fg$, determines by base change an $\F$-point $x_\F$ of $\fg_\F$ (in other words, an element of the $\F$-vector space $\fg_\F$) and a $\K$-point $x_\K$ of $\fg_\K$ (in other words, an element of the $\K$-vector space $\fg_\K$). The image of $x_\K$ in $\fg_\ovK$ will be denoted $x_\ovK$.

We will denote by
\[
Z_G(x), \quad Z_G(x)_\F, \quad Z_G(x)_\K, \quad Z_G(x)_\ovK
\]
the scheme-theoretic centralizer of $x$ in $G$, of $x_\F$ in $G_\F$, of $x_\K$ in $G_\K$, and of $x_\ovK$ in $G_\ovK$, respectively. We then have canonical identifications
\begin{align*}
Z_G(x)_\F &= \Spec(\F) \times_{\Spec(\O)} Z_G(x), \\
Z_G(x)_\K &= \Spec(\K) \times_{\Spec(\O)} Z_G(x), \\
Z_G(x)_\ovK &= \Spec(\ovK) \times_{\Spec(\O)} Z_G(x).
\end{align*}
Note also that if we set
\begin{align*}
\fz_\fg(x)_\F &:= \{ y \in \fg_\F \mid [x_\F,y] = 0\}, \\
\fz_\fg(x)_\K &:= \{ y \in \fg_\K \mid [x_\K,y] = 0\}, \\
\fz_\fg(x)_\ovK &:= \{ y \in \fg_\ovK \mid [x_\ovK,y] = 0\}
\end{align*}
then by~\cite[Equation~I.7.2(7)]{jantzen} we have
\begin{equation}
\label{eqn:Lie-centralizer}
\mathrm{Lie}(Z_G(x)_\F) = \fz_\fg(x)_\F, \quad
\mathrm{Lie}(Z_G(x)_\K) = \fz_\fg(x)_\K, \quad
\mathrm{Lie}(Z_G(x)_\ovK) = \fz_\fg(x)_\ovK.
\end{equation}
Finally, by~\cite[Proposition~4.2]{mt}, $Z_G(x)_\F$ is a smooth group scheme. (Of course, $Z_G(x)_\K$ and $Z_G(x)_\ovK$ are also smooth.)

Following~\cite[Definition~1.4.1]{mcninch2016}, a section $x$ will be called \emph{balanced} if 
\[
\dim(Z_G(x)_\F)=\dim(Z_G(x)_\K).
\]
(The remarks above show that our terminology is indeed compatible with that in~\cite{mcninch2016}.)
On the other hand,
a section $x$ will be called \emph{nilpotent} if $x_\K$ is a nilpotent element in $\fg_\K$. By~\cite[Lemma~3.2.1]{mcninch2016}, if $x$ is nilpotent then $x_\F$ is a nilpotent element in $\fg_\F$. The sections which we will be mostly interested in are the \emph{balanced nilpotent sections}.

\begin{rmk}
By invariance of the dimension under field extensions (see e.g.~\cite[Proposition~5.38]{gw}) we have $\dim(Z_G(x)_\ovK)=\dim(Z_G(x)_\K)$. Hence the ``balanced'' condition can also be stated purely in terms of algebraically closed fields by requiring that $\dim(Z_G(x)_\F)=\dim(Z_G(x)_\ovK)$.
\end{rmk}

We refer e.g.~to~\cite[Definition~3.3.1]{mcninch2016} or~\cite[Definition~5.3]{jantzen-nilp} for the definition of a cocharacter associated with a nilpotent element. Recall also that under our assumptions there exists a canonical bijection from the set of nilpotent $G_\ovK$-orbits in $\fg_\ovK$ to the set of nilpotent $G_\F$-orbits in $\fg_\F$, see~\cite[\S 4.1]{ahr} for references. This bijection will be called the \emph{Bala--Carter bijection}, and denoted $\BC$. It satisfies $\dim(\scO)=\dim(\BC(\scO))$ for any $G_\ovK$-orbit $\scO$. By work of Spaltenstein (see again~\cite[\S 4.1]{ahr} for references), it is known also that $\BC$ is a bijection of posets, for the orders given by inclusions of closures of nilpotent orbits.

The main properties of balanced nilpotent sections from~\cite[Theorem~3.4.5]{mcninch2016} that we will need are summarized in the following theorem.

\begin{thm}[McNinch]
\label{thm:balanced-sections}
If $y \in \fg_\F$ is a nilpotent element, then there exists a balanced nilpotent section $x$ and a cocharacter $\varphi : \Gm \to G$ such that 
\begin{enumerate}
\item
\label{it:balanced-1}
$y=x_\F$;
\item
\label{it:balanced-2}
$\Spec(\F) \times_{\Spec(\O)} \varphi$ is a cocharacter associated with $x_\F$;
\item
\label{it:balanced-3}
$\Spec(\ovK) \times_{\Spec(\O)} \varphi$ is a cocharacter associated with $x_\ovK$;
\item
\label{it:balanced-4}
$\BC(G_\ovK \cdot x_\ovK) = G_\F \cdot x_\F$.
\end{enumerate}
%Moreover, if $x'$ is another balanced nilpotent section such that $x'_\F$ is $G_\F$-conjugate to $y$, then there exists $g \in G(\O)$ such that $x'=g \cdot x$.
\end{thm}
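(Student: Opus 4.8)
The plan is to obtain everything from McNinch's results~\cite{mcninch2016}: I would construct $x$ and $\varphi$ from a choice of associated cocharacter for $y$, deduce~\eqref{it:balanced-4} from the description of $\BC$ via associated cocharacters, and prove the uniqueness clause by a Hensel-type argument resting on the smoothness of $Z_G(x)$ over $\O$.

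\emph{Existence and~\eqref{it:balanced-1}--\eqref{it:balanced-3}.} First I would use that $G_\F$ is standard (Lemma~\ref{lem:standard}) and $p$ very good to get an associated cocharacter for $y$; after $G(\F)$-conjugacy it lands in $T_\F$, hence is the reduction of a cocharacter $\varphi\colon\Gm\to T\subseteq G$ (write $\varphi_\F,\varphi_\ovK$ for its base changes). Let $\fg(\varphi;n)\subseteq\fg$ be the summand of $\varphi$-weight $n$; it is a direct summand, so its reductions are $\fg_\F(\varphi;n)$ and $\fg_\ovK(\varphi;n)$. Inside the affine $\O$-space $\fg(\varphi;2)$ let $U$ be the open locus on which $\dim Z_G(\cdot)$ is minimal in the ambient fibre. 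Its special fibre contains $y$ (since $\varphi_\F$ is associated to $y$), so it is nonempty; a nonempty open subscheme of $\fg(\varphi;2)$ meeting the special fibre also meets the generic fibre, and $U$ is smooth over the Henselian local ring $\O$, so I can choose $x\in U(\O)$ with $x_\F=y$. Then $x$ is a nilpotent section, $\varphi_\F$ is associated to $x_\F=y$, and $x_\ovK\in U_\ovK$ has minimal centraliser dimension in $\fg_\ovK(\varphi;2)$, so $\varphi_\ovK$ is associated to $x_\ovK$. The one nontrivial point — that this $x$ is \emph{balanced}, i.e.\ $\dim Z_G(x)_\F=\dim Z_G(x)_\ovK$ — is McNinch's theorem that in very good characteristic the minimal value of $\dim Z_G(\cdot)$ on $\fg(\varphi;2)$ does not change under reduction mod $p$; I expect this to be the real obstacle, with everything else being formal.

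\emph{Part~\eqref{it:balanced-4}.} Since $\varphi$ is associated to $x_\F$ over $\F$ and to $x_\ovK$ over $\ovK$, and the Bala--Carter label of a nilpotent element is determined by the conjugacy class of an associated cocharacter — and the combinatorial cocharacter $\varphi$ has conjugate reductions over the two fields — the orbits $G_\F\cdot x_\F$ and $G_\ovK\cdot x_\ovK$ carry the same Bala--Carter label. As $\BC$ matches orbits with equal labels by construction~\cite[\S4.1]{ahr}, this yields $\BC(G_\ovK\cdot x_\ovK)=G_\F\cdot x_\F$.

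\emph{Uniqueness.} Let $x'$ be another balanced nilpotent section with $x'_\F$ conjugate to $y$. I would first lift a conjugating element of $G(\F)$ to $G(\O)$ (possible since $G$ is smooth over $\O$) and replace $x'$ by its translate, reducing to $x'_\F=x_\F=y$. Applying the above to $x'$ — whose associated cocharacter over $\F$ may be taken to be $\varphi$, since associated cocharacters of $y$ are $Z_{G_\F}(y)$-conjugate and such a conjugation lifts to $G(\O)$ — the balanced hypothesis and the quoted field-independence force $\varphi$ to be associated to $x'_\ovK$ as well, so $x_\ovK$ and $x'_\ovK$ lie in the same ($\varphi$-attached) $G_\ovK$-orbit. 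Next I would factor the orbit morphism $G\to\fg$, $g\mapsto\mathrm{Ad}(g)x$, as $G\twoheadrightarrow G/Z_G(x)\hookrightarrow\fg$, where $G\to G/Z_G(x)$ is a $Z_G(x)$-torsor and $G/Z_G(x)\to\fg$ is a locally closed immersion — using that $Z_G(x)$ is a smooth $\O$-group scheme~\cite{har:smcent} and that $\mathrm{Lie}(Z_G(x))=\fz_\fg(x)$ in each fibre by~\eqref{eqn:Lie-centralizer}, which makes the orbit morphism separable fibrewise. The image $Y\subseteq\fg$ of this immersion satisfies $x'_\F\in Y_\F$ and $x'_\ovK\in Y_\ovK$, so (as $\Spec(\O)$ has only its closed and its generic point) the morphism $x'\colon\Spec(\O)\to\fg$ factors through $Y\cong G/Z_G(x)$, giving a point of $(G/Z_G(x))(\O)$ that reduces over $\F$ to the base point. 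Pulling the torsor $G\to G/Z_G(x)$ back along this point produces a smooth $Z_G(x)$-torsor over $\Spec(\O)$ with nonempty special fibre, which therefore has an $\O$-point by Hensel's lemma; any such point is an element $g\in G(\O)$ with $x'=g\cdot x$.
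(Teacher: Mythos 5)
The paper's own proof is essentially a three-line citation to McNinch (\cite[Theorem 4.5.2, Corollary 9.2.2, Corollary 7.3.2]{mcninch2016}, plus surjectivity of $G(\O)\to G(\F)$ from EGA~IV). You instead try to reconstruct McNinch's arguments from scratch, which is more ambitious but also much harder to get right. Your outline of the existence step (lift the associated cocharacter to $\varphi\colon\Gm\to T$, look at the open locus in the $\O$-affine space $\fg(\varphi;2)$ where the centralizer dimension is minimal, and invoke Henselianity) and of~\eqref{it:balanced-4} (read off the Bala--Carter label from $\varphi$) matches the spirit of McNinch's construction, and you are honest that the fibre-independence of the minimal centralizer dimension is the core input you are taking from his paper. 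The $Z_G(x)$-torsor-plus-Hensel argument in the uniqueness clause is also the right mechanism (and is in essence how the cited Corollary~7.3.2 works), given the smoothness of $Z_G(x)$.

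The gap is in the step of the uniqueness proof where you claim ``the balanced hypothesis and the quoted field-independence force $\varphi$ to be associated to $x'_\ovK$ as well, so $x_\ovK$ and $x'_\ovK$ lie in the same $G_\ovK$-orbit.'' A balanced nilpotent section $x'$ with $x'_\F=y$ is not constrained to lie in $\fg(\varphi;2)$ over $\O$: one only knows $x'_\F\in\fg_\F(\varphi;2)$, which says the components of $x'$ in the other $\varphi$-weight spaces are divisible by the uniformizer, not that they vanish. So $x'_\ovK$ need not lie in $\fg_\ovK(\varphi;2)$ at all, and nothing you have cited lets you conclude that $\varphi_\ovK$ is an associated cocharacter for $x'_\ovK$. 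The balanced hypothesis only gives $\dim Z_{G_\ovK}(x'_\ovK)=\dim Z_{G_\F}(y)$, and there can be several $G_\ovK$-orbits with that centralizer dimension, so this alone does not pin down the generic-fibre orbit. Without $x'_\ovK\in G_\ovK\cdot x_\ovK$ your map $x'\colon\Spec(\O)\to\fg$ does not factor through $G/Z_G(x)$ on the generic fibre, and the whole torsor argument collapses: the fibre product $T=G\times_\fg\Spec(\O)$ would have empty generic fibre and hence could not be flat (let alone smooth) over $\O$, so Hensel is inapplicable. Filling this hole requires a genuine additional argument — e.g.\ a degeneration/closure statement comparing $G_\F\cdot x'_\F$ with $\BC(G_\ovK\cdot x'_\ovK)$ plus Spaltenstein's poset compatibility of $\BC$, or else showing that $x'$ can be $G(\O)$-conjugated into some $\fg(\psi;2)$ — which is exactly the content of the McNinch result the paper cites and which you have not reproduced.

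A smaller point about the existence step: conjugating the associated cocharacter of $y$ into $T_\F$ by some $g\in G(\F)$ produces a cocharacter associated to $g\cdot y$, not to $y$, so the $x\in U(\O)$ you construct satisfies $x_\F=g\cdot y$ rather than $x_\F=y$. You then need to lift $g^{-1}$ to $G(\O)$ (using smoothness of $G$ and Henselianity of $\O$) and conjugate $x$ and $\varphi$ back; as written, \eqref{it:balanced-1} is not quite achieved. This is easily repaired, but worth flagging since the theorem asks for equality $y=x_\F$, not just conjugacy.
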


%\begin{proof}
%The existence of $x$ and $\varphi$ follow from~\cite[Theorem~3.4.5]{mcninch2016} and the arguments in~\cite[Proof of Corollary~9.2.2]{mcninch2016}. Concerning uniqueness, since the morphism $G(\O) \to G(\F)$ is surjective (by~\cite[Th\'eor\`eme~18.5.17]{ega4}) we can assume that $x'_\F=y$. In this case the claim is proved in~\cite[Corollary~7.3.2]{mcninch2016}.
%\end{proof}

The other important property we will use is the following.

\begin{thm}
\label{thm:flatness}
Let $x \in \fg$ be a balanced nilpotent section. Then the $\O$-group scheme $Z_G(x)$ is smooth over $\O$. Moreover, the groups of connected components of the algebraic groups $\Spec(\F) \times_{\Spec(\O)} Z_G(x)$ and $\Spec(\ovK) \times_{\Spec(\O)} Z_G(x)$ have the same cardinality.
\end{thm}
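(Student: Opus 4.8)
The smoothness of $Z_G(x)$ over $\O$ is already stated in the introduction (attributed to~\cite{har:smcent}), so the plan is to recover it quickly and then concentrate on the statement about component groups. First I would recall that $Z_G(x)_\F$ is smooth over $\F$ (this was observed above, using Lemma~\ref{lem:standard} and~\cite[Proposition~4.2]{mt}), and that $Z_G(x)_\K$ is automatically smooth since $\K$ has characteristic $0$. Because $x$ is balanced, $\dim Z_G(x)_\F = \dim Z_G(x)_\K$; combined with~\eqref{eqn:Lie-centralizer} and the fact that $Z_G(x)$ is a finitely presented (indeed affine) group scheme over the DVR $\O$, fibrewise smoothness of constant relative dimension plus flatness gives smoothness over $\O$. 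The flatness itself is where one must be a little careful: $Z_G(x)$ is a closed subscheme of the smooth affine $\O$-group $G$, cut out by the vanishing of $\mathrm{Ad}(g)(x) - x$, and one checks flatness over the DVR $\O$ by verifying that it has no $\O$-torsion, equivalently that the generic and special fibres have the same dimension (which is exactly the balanced hypothesis) together with equidimensionality of $Z_G(x)_\F$ (true since it is smooth). This is essentially~\cite[Theorem~4.5.1 or Corollary~9.2.?]{mcninch2016} / \cite{har:smcent}; I would cite those rather than redo it.

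Granting smoothness and flatness of $Z_G(x)$ over $\O$, the heart of the matter is the comparison of $\pi_0\bigl(Z_G(x)_\F\bigr)$ and $\pi_0\bigl(Z_G(x)_\ovK\bigr)$. The natural strategy is to pass to the identity component fibrewise: let $Z_G(x)^\circ \subset Z_G(x)$ be the open subscheme whose fibre over each point of $\Spec(\O)$ is the identity component of the corresponding fibre. Since $Z_G(x)$ is smooth with geometrically connected-dimension-constant fibres over a \emph{two-point} base $\Spec(\O) = \{\text{generic},\text{special}\}$, one can show $Z_G(x)^\circ$ is an open subgroup scheme, flat over $\O$, and that the quotient
\[
\pi_0(Z_G(x)) := Z_G(x)/Z_G(x)^\circ
\]
exists as a finite flat (hence finite \'etale, as $\O$ is a DVR with residue characteristic $p$ and we are in the very-good setting where these component groups have order prime to $p$) $\O$-group scheme. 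A finite \'etale group scheme over the strictly henselian — indeed complete — local ring $\O$ is constant, so its generic and special fibres are canonically identified as abstract finite groups; in particular they have the same cardinality. This would give the result, and in fact something stronger (a canonical isomorphism of component groups), which is presumably what is used later.

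The one genuine obstacle is the claim that the component-group quotient is \emph{\'etale} over $\O$, i.e. that $p \nmid \#\pi_0(Z_G(x)_\F)$. Over a field this is classical under the standard/very-good hypotheses (component groups of centralizers of nilpotents are products of symmetric groups and abelian $2$-groups etc., with orders prime to $p$ when $p$ is very good), but I would want to invoke it as an input — either from~\cite{mcninch2016} directly, or from the general theory of nilpotent centralizers in good characteristic (e.g.~Premet, or~\cite{jantzen-nilp}). If one did not want to rely on $p$-coprimality, an alternative is to argue that $\pi_0(Z_G(x))$ is finite flat over $\O$ and use that $x$ is obtained, via McNinch's construction (Theorem~\ref{thm:balanced-sections}), from a cocharacter $\varphi$ defined over $\O$: the reductive quotient $Z_G^{\mathrm{red}}(x)$ then has a $\Gm$-equivariant structure that is "spread out" over $\O$, forcing the component group to be a constant group scheme by a specialization/valuative argument. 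I expect the clean route, and the one consistent with how the rest of the paper is organized, is simply: $Z_G(x)$ flat of finite type over $\O$ with smooth connected-dimension-constant fibres $\Rightarrow$ $Z_G(x)^\circ$ open and $\O$-flat $\Rightarrow$ $\pi_0$ finite \'etale over the complete local ring $\O$ $\Rightarrow$ constant $\Rightarrow$ the two fibres have equal cardinality. The main thing to get right is the bookkeeping that "identity component" behaves well in families here, which works because the base has only two points; I would phrase this via~\cite[Exp.~VI$_{\mathrm B}$]{sga3} or an ad hoc lemma.
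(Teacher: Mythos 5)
The paper's own proof of this theorem is a one-line citation to~\cite[Theorem~1.6, Theorem~1.8]{har:smcent}, supplemented in~\S\ref{ss:flatness} by an alternative argument for the smoothness claim (only), via the integral Slodowy slice. Your decision to cite \cite{mcninch2016} and \cite{har:smcent} for smoothness is therefore consistent with the paper. The detailed flatness sketch you offer along the way (``no $\O$-torsion, equivalently same fibre dimensions plus equidimensionality'') is not correct as stated — equal fibre dimensions do not rule out $\varpi$-torsion in general, and one must use the reducedness/smoothness of the special fibre in an essential way — but since you fall back to the references there, that imprecision is not load-bearing.

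The sketch for the component-group statement, however, has a genuine circularity. You assert that ``one can show \dots\ the quotient $\pi_0(Z_G(x)) := Z_G(x)/Z_G(x)^\circ$ exists as a finite flat $\O$-group scheme.'' But a finite flat $\O$-module is free of constant rank, so finite flatness of $\pi_0(Z_G(x))$ over the DVR $\O$ already \emph{is} the statement that the two fibre component groups have the same order. And this is not a consequence of the general hypotheses you invoke: for $G=\Spec(\O\times\K)$ with the group law making the generic fibre the constant group $\Z/2\Z$ and the special fibre trivial, $\O\times\K$ is a finitely generated $\O$-algebra, $G$ is smooth, affine and flat over $\O$ with all fibres smooth of dimension $0$, yet $\pi_0(G_\K)$ has order $2$ while $\pi_0(G_\F)$ is trivial; here $G^\circ=\Spec\O$ is open but not closed, and $\pi_0(G)\cong G$ is flat and \'etale but \emph{not finite} over $\O$. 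So finiteness of $\pi_0(Z_G(x))$ must be earned using something specific to centralizers, and your argument assumes it. (Your $p$-coprimality observation is correct and needed to pass from finite flat to finite \'etale, but it comes after the step that is actually in doubt.) The mechanism in \cite{har:smcent} — replayed at the relevant level in~\S\ref{ss:Levi-factor} of this paper for $Z_G^{\mathrm{Levi}}(x_j)_{\O'}$ — is more concrete: pass to the Levi subgroup $Z_G^{\mathrm{Levi}}(x)$, the centralizer of $\varphi(\Gm)$ in $Z_G(x)$, prove it is a smooth closed $\O$-subgroup scheme (Lemma~\ref{lem:Levi-smooth}), and then show \emph{directly} that the idempotents of $\cO(Z_G^{\mathrm{Levi}}(x)_\ovK)$ cutting out the $\ovK$-connected components already lie in $\cO(Z_G^{\mathrm{Levi}}(x)_{\O'})$ for some finite extension $\O\subset\O'$, yielding a map onto an honestly constant finite $\O'$-group scheme $A_j$ with connected reductive kernel. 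Your fallback paragraph (exploit the integrally-defined cocharacter $\varphi$) gestures in precisely this direction, but as written it still takes ``$\pi_0(Z_G(x))$ is finite flat over $\O$'' as a starting point rather than as the thing to be proved.
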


\begin{proof}
These claims are proved in~\cite[Theorem~1.6, Theorem~1.8]{har:smcent}. A different argument for smoothness is also given in~\S\ref{ss:flatness} below.
\end{proof}

%-----------------------------------------------------------------------
\subsection{Integral Slodowy slices for balanced nilpotent sections}
\label{ss:slodowy}
%-----------------------------------------------------------------------

We continue with the setting of~\S\ref{ss:balanced}, and fix a balanced nilpotent section $x$ and a cocharacter $\varphi : \Gm \to G$ such that $\Spec(\F) \times_{\Spec(\O)} \varphi$ is a cocharacter associated with $x_\F$, and $\Spec(\ovK) \times_{\Spec(\O)} \varphi$ is a cocharacter associated with $x_\ovK$. Our goal is to define a ``Slodowy slice'' in $\fg$ attached to $x$.

\begin{lem}
\label{lem:summand}
The $\O$-submodule $[x,\fg] \subset \fg$ is a direct summand in $\fg$.
\end{lem}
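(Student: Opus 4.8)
The statement is that $[x,\fg]$ is an $\O$-direct summand of $\fg$; equivalently (since $\fg$ is free, hence finitely generated, over the DVR $\O$), that the quotient $\fg/[x,\fg]$ is free, i.e. torsion-free. The plan is to argue by counting ranks after reduction. First I would observe that $[x,\fg] = \im(\ad x : \fg \to \fg)$, and since $\fg$ is a finitely generated torsion-free $\O$-module, a submodule is a direct summand precisely when the quotient is torsion-free, which for a DVR is equivalent to the quotient $\fg/[x,\fg]$ having the same rank as its reduction $\fg_\F / [x_\F,\fg_\F]$; dually, it suffices to show that the natural surjection $\F \otimes_\O [x,\fg] \twoheadrightarrow [x_\F,\fg_\F]$ is an isomorphism, i.e. that $\dim_\F [x_\F,\fg_\F] = \mathrm{rank}_\O [x,\fg]$.

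The key input is the balanced condition. By the rank-nullity statement for $\ad x$ over the fraction field, $\mathrm{rank}_\O [x,\fg] = \dim_\K [x_\K,\fg_\K] = \dim_\K \fg_\K - \dim_\K \fz_\fg(x)_\K$, and by~\eqref{eqn:Lie-centralizer} together with smoothness of $Z_G(x)_\K$ this equals $\dim \fg - \dim Z_G(x)_\K$. Similarly, over $\F$ we have $\dim_\F [x_\F,\fg_\F] = \dim \fg_\F - \dim_\F \fz_\fg(x)_\F = \dim \fg - \dim Z_G(x)_\F$, using~\eqref{eqn:Lie-centralizer} and the smoothness of $Z_G(x)_\F$ (which holds because $x$ is a balanced nilpotent section, by Theorem~\ref{thm:flatness}, or already because $G_\F$ is standard). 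Since $x$ is balanced, $\dim Z_G(x)_\F = \dim Z_G(x)_\K$, so the two dimensions agree. Combined with the general inequality $\dim_\F [x_\F,\fg_\F] \le \mathrm{rank}_\O[x,\fg]$ coming from the surjection $\F \otimes_\O [x,\fg] \twoheadrightarrow [x_\F,\fg_\F]$ (modular reduction can only drop the rank of an image), equality forces this surjection to be an isomorphism, hence $[x,\fg]$ is a direct summand.

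The point I expect to need the most care is the reduction step: translating ``$\fg/[x,\fg]$ is $\O$-free'' into a numerical comparison of $\dim_\F$ and $\dim_\K$ of the images of $\ad x$. The clean way to organize this is: pick an $\O$-basis adapting a generating set of $[x,\fg]$, present $\ad x$ by a matrix over $\O$, and note that the rank of $[x,\fg]$ is the number of nonzero elementary divisors while $\dim_\F[x_\F,\fg_\F]$ is the number of elementary divisors that are units; these coincide iff no elementary divisor lies in the maximal ideal, iff $\fg/[x,\fg]$ is torsion-free, iff $[x,\fg]$ is a direct summand. The balanced hypothesis is exactly what rules out non-unit elementary divisors. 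Everything else (smoothness of the centralizers, the identification of Lie algebras with scheme-theoretic centralizers, invariance of dimension under the extension $\K \subset \ovK$) is quoted from the material already set up in~\S\ref{ss:balanced} and Theorem~\ref{thm:flatness}.
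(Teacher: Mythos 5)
Your argument is correct and is essentially the paper's own: both reduce to showing $\fg/[x,\fg]$ is torsion-free by comparing $\dim Z_G(x)_\F$ with $\dim Z_G(x)_\K$ via the balanced hypothesis, smoothness of the centralizers, and the identification~\eqref{eqn:Lie-centralizer}. The paper phrases the count in terms of the cokernel $\fg/[x,\fg]$ rather than the image $[x,\fg]$ and does not spell out the elementary-divisor translation, but these are cosmetic differences.
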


\begin{proof}
Consider the right exact sequence of $\O$-modules $\fg \xrightarrow{\ad(x)} \fg \to \fg/[x,\fg] \to 0$.  After tensoring with $\F$ or $\K$, one obtains analogous right exact sequences over those fields.  In particular, we have
\begin{align*}
\dim \F \otimes_\O (\fg/[x,\fg]) &= \dim \fg_\F - \mathrm{rank}(\ad(x_\F)) = \dim \fz_\fg(x)_\F, \\
\dim \K \otimes_\O (\fg/[x,\fg]) &= \dim \fg_\K - \mathrm{rank}(\ad(x_\K)) = \dim \fz_\fg(x)_\K.
\end{align*}
Since $Z_G(x)_\F$ and $Z_G(x)_\K$ are smooth (so that their dimension coincides with that of their Lie algebra), and since $x$ is balanced, using~\eqref{eqn:Lie-centralizer} we see that these dimensions are equal.  Therefore, $\fg/[x,\fg]$ is a torsion-free $\O$-module, and the lemma follows.
\end{proof}

We now consider the $\Gm$-action on $\fg$ determined by $\varphi$ via the adjoint action. Then $x$ has weight $2$ for this action (because this is true by assumption over $\ovK$), and the submodule $[x,\fg] \subset \fg$ is $\Gm$-stable. We fix a $\Gm$-stable complement $M \subset \fg$ for this submodule (which exists by Lemma~\ref{lem:summand}).

\begin{lem}
\label{lem:weights-Slodowy}
All the $\Gm$-weights on $M$ are nonpositive.
\end{lem}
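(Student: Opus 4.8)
The plan is to reduce the statement to the well-understood $\mathfrak{sl}_2$-theory over $\ovK$, using the fact that the decomposition $\fg = [x,\fg] \oplus M$ is $\Gm$-equivariant and that over $\ovK$ the cocharacter $\varphi_\ovK$ is associated with the nilpotent $x_\ovK$. The key point is that for an associated cocharacter in the sense of Jantzen, one has a concrete description of the weights: by general $\mathfrak{sl}_2$-theory (or the Jacobson--Morozov theorem, available over $\ovK$), there is an $\mathfrak{sl}_2$-triple $(x_\ovK, h_\ovK, y_\ovK)$ with $h_\ovK = d\varphi_\ovK(1)$, and $\ad(x_\ovK) : \fg_\ovK \to \fg_\ovK$ restricted to each irreducible $\mathfrak{sl}_2$-submodule is surjective onto all weight spaces of weight $> -2$, with kernel precisely the lowest weight line (of weight $\leq 0$, in fact $-m$ for an $(m+1)$-dimensional irreducible). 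Hence $\fz_\fg(x)_\ovK = \bigoplus_{i \leq 0} (\fz_\fg(x)_\ovK)_i$ has only nonpositive weights, and dually $[x_\ovK,\fg_\ovK] \supset \bigoplus_{i \geq 1} (\fg_\ovK)_i$; combined with the dimension count this gives that $M_\ovK := \ovK \otimes_\O M$, being $\Gm$-equivariantly isomorphic to $\fg_\ovK/[x_\ovK,\fg_\ovK] \cong \fz_\fg(x)_\ovK$, has all weights $\leq 0$.

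First I would make precise that $M_\ovK \cong \fg_\ovK/[x_\ovK,\fg_\ovK]$ as $\Gm$-modules: this is immediate since $M$ is a $\Gm$-stable complement to $[x,\fg]$ in $\fg$, so tensoring with $\ovK$ gives $\fg_\ovK = [x,\fg]_\ovK \oplus M_\ovK$, and $[x,\fg]_\ovK = [x_\ovK, \fg_\ovK]$ because $\ad(x)$ commutes with base change (the right exact sequence in the proof of Lemma~\ref{lem:summand} stays right exact, and $[x,\fg]$ is a direct summand so it stays a submodule after base change). Next I would invoke the standard description of $\fg_\ovK/[x_\ovK,\fg_\ovK]$ recalled above to conclude its $\Gm$-weights (for the action via $\varphi_\ovK$) are all nonpositive. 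Finally, since the $\Gm$-weights on the free $\O$-module $M$ are the same as those on $M_\ovK$ (base change along $\O \to \ovK$ does not change the decomposition into weight spaces, as $M$ is already a direct sum of its $\Gm$-weight submodules, each a free $\O$-module), the claim follows.

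The main obstacle — really the only substantive point — is pinning down the correct reference or argument for the nonpositivity of the weights of $\fz_\fg(x_\ovK)$ with respect to an \emph{associated} cocharacter over an algebraically closed field of characteristic zero. This is classical: it is exactly the statement that in the Slodowy slice picture $x + \fz_\fg(y)$ the transverse slice sits in nonpositive weights, equivalently that associated cocharacters contract the Slodowy slice; see~\cite[\S5]{jantzen-nilp} or the $\mathfrak{sl}_2$-representation-theory argument via $\ad(x)$ being surjective in positive weights. I would cite this and spend at most a sentence recalling the $\mathfrak{sl}_2$-module argument. No delicate integrality is needed here, precisely because we only need the statement over $\ovK$ and then transport it down via the $\Gm$-equivariant splitting over $\O$.
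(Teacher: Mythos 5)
Your approach is essentially the same as the paper's: reduce to the base change over $\ovK$ (harmless since $M$ is a direct sum of free $\O$-weight modules), then use that an associated cocharacter forces $[x_\ovK,\fg_\ovK] \supseteq \bigoplus_{i > 0} (\fg_\ovK)_i$, so any $\Gm$-stable complement lies in weights $\le 0$. The paper does exactly this, citing~\cite[Lemma~5.7 and Proposition~5.8]{jantzen-nilp} for the inclusion.

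There is, however, a sign error in your $\mathfrak{sl}_2$ bookkeeping that is worth flagging even though it does not undermine the argument. With the convention that $\varphi_\ovK$ associated with $x_\ovK$ means $\mathrm{Ad}(\varphi_\ovK(t))x_\ovK = t^2 x_\ovK$, the element $x_\ovK$ is the \emph{raising} operator in the $\mathfrak{sl}_2$-triple $(x_\ovK,h_\ovK,y_\ovK)$, so the kernel $\fz_\fg(x)_\ovK = \ker(\ad x_\ovK)$ is spanned by highest-weight vectors and has only \emph{nonnegative} weights, not nonpositive; and it is not $\Gm$-equivariantly isomorphic to the cokernel $\fg_\ovK/[x_\ovK,\fg_\ovK]$ — the two are $\Gm$-dual. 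What you actually want is $\fz_\fg(y)_\ovK$, the centralizer of the lowering operator, which is the thing appearing in the Slodowy slice $x + \fz_\fg(y)$ and which genuinely sits in nonpositive weights. Relatedly, on an irreducible of highest weight $m$ the image of $\ad(x_\ovK)$ is the span of weights $\ge -m+2$, so ``surjective onto all weight spaces of weight $> -2$'' is not right as stated. None of this matters in the end because you also (correctly) record $[x_\ovK,\fg_\ovK]\supseteq\bigoplus_{i\ge 1}(\fg_\ovK)_i$, and that single inclusion plus $\Gm$-stability of $M_\ovK$ already gives the conclusion; the dimension count and the chain of isomorphisms are unnecessary detours.
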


\begin{proof}
It is sufficient to prove a similar claim for the $(\Gm)_\ovK$-weights on $\ovK \otimes_\O M$. Now by assumption $\Spec(\ovK) \times_{\Spec(\O)} \varphi$ is a cocharacter of $G_\ovK$ associated with $x_\ovK$. By~\cite[Lemma~5.7 and Proposition~5.8]{jantzen-nilp}, we see that $[x_\ovK,\fg_\ovK]$ contains all the $(\Gm)_\ovK$-weight spaces of $\fg_\ovK$ of positive weight, which implies our claim.
\end{proof}

We set
\[
\mathcal{S}_x := x + M \,\, \subset \,\, \fg.
\]
(Contrary to what the notation might suggest, this scheme depends not only on $x$, but of course also on $\varphi$ and $M$.)
If we define a $\Gm$-action on $\fg$ via $z \cdot y = z^{-2} \varphi(z) \cdot y$, then $\mathcal{S}_x$ is a $\Gm$-stable closed subscheme of $\fg$, and in view of Lemma~\ref{lem:weights-Slodowy} the weights of $\Gm$ on $\mathcal{O}(\mathcal{S}_x)$ are nonnegative, the weight-$0$ subspace consisting of the constants $\O \subset \mathcal{O}(\mathcal{S}_x)$. We will also denote by
\[
a_x : G \times_{\Spec(\O)} \mathcal{S}_x \to \fg
\]
the morphism induced by the adjoint action.

%-----------------------------------------------------------------------
\subsection{Some properties of Slodowy slices over fields}
\label{ss:slodowy-fields}
%-----------------------------------------------------------------------

We continue with the setting of~\S\ref{ss:slodowy}, and let $\bk$ be either $\F$ or $\ovK$.
We will consider the affine subspace
\[
\mathcal{S}_x^\bk := x_\bk + (\bk \otimes_\O M) \subset \fg_\bk
\]
(which is a variant of the \emph{Slodowy slices} constructed in~\cite{slodowy}). This variety is endowed with an action of $(\Gm)_\bk$ (induced by the $\Gm$-action on $\mathcal{S}_x$ considered above) which contracts it to $\{x_\bk\}$. We will denote by $a_x^\bk : G_\bk \times \mathcal{S}_x^\bk \to \fg_\bk$ the base change of $a_x$ to $\bk$, i.e.~the morphism induced by the adjoint action.

\begin{prop}
\label{prop:Slodowy-flat}
The morphism $a_x^\bk$ is smooth (and hence, in particular, flat).
\end{prop}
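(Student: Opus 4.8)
The plan is to reduce the smoothness of $a_x^\bk$ to an infinitesimal/tangent-space computation at a single point, exploiting the $(\Gm)_\bk$-contraction to propagate the result. First I would recall the classical fact (essentially due to Slodowy, and available over a field with the hypotheses recalled in~\S\ref{ss:slodowy}) that the map $a_x^\bk$ is submersive at the base point $(e, x_\bk) \in G_\bk \times \mathcal{S}_x^\bk$: indeed, the differential at $(e,x_\bk)$ sends $(\xi, m) \in \fg_\bk \oplus (\bk\otimes_\O M)$ to $[\xi, x_\bk] + m = -[x_\bk,\xi] + m$, and by the very definition of $M$ as a complement to $[x,\fg]$ in $\fg$ (Lemma~\ref{lem:summand}), after base change $[x_\bk,\fg_\bk] + (\bk\otimes_\O M) = \fg_\bk$; so the differential is surjective there. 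Hence $a_x^\bk$ is smooth in a neighborhood of $(e,x_\bk)$, and by $G_\bk$-equivariance (translating in the first coordinate) it is smooth in a neighborhood of $G_\bk \times \{x_\bk\}$.

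Next I would use the $(\Gm)_\bk$-action to spread this open locus over all of $G_\bk \times \mathcal{S}_x^\bk$. Equip $G_\bk \times \mathcal{S}_x^\bk$ with the $(\Gm)_\bk$-action that is trivial on $G_\bk$ and is the contracting action on $\mathcal{S}_x^\bk$, and equip $\fg_\bk$ with the action $z\cdot y = z^{-2}\varphi(z)\cdot y$; then $a_x^\bk$ is $(\Gm)_\bk$-equivariant. Since the smooth locus of a morphism is open and stable under any group acting compatibly on source and target, the smooth locus $U \subseteq G_\bk \times \mathcal{S}_x^\bk$ of $a_x^\bk$ is a $(\Gm)_\bk$-stable open set containing $G_\bk \times \{x_\bk\}$. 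But $\{x_\bk\}$ is the unique fixed point of the contracting $(\Gm)_\bk$-action on $\mathcal{S}_x^\bk$, and the orbit of every point of $\mathcal{S}_x^\bk$ has $x_\bk$ in its closure; hence any $(\Gm)_\bk$-stable open subset meeting $G_\bk \times \{x_\bk\}$ must be all of $G_\bk \times \mathcal{S}_x^\bk$. Therefore $U = G_\bk \times \mathcal{S}_x^\bk$ and $a_x^\bk$ is smooth. Flatness is then automatic, since smooth morphisms are flat.

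I expect the main technical point to be the verification that the contracting action on $\mathcal{S}_x^\bk$ really forces a $(\Gm)_\bk$-stable open set containing the fixed point to be everything: one phrases this as saying that for any $s \in \mathcal{S}_x^\bk$ the limit $\lim_{z\to 0} z\cdot s$ exists and equals $x_\bk$ (which holds because the weights of $(\Gm)_\bk$ on $\mathcal{O}(\mathcal{S}_x)$ are nonnegative, with only the constants in weight $0$, as recorded in~\S\ref{ss:slodowy}), so the complement of $U$ in $\mathcal{S}_x^\bk$, being a closed $(\Gm)_\bk$-stable set not containing $x_\bk$, would have to be empty. The rest is a straightforward assembly of standard facts about smoothness (openness of the smooth locus, $G_\bk$-equivariance, computation of the differential) together with the defining property of $M$ from Lemma~\ref{lem:summand}.
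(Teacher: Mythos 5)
Your proof takes essentially the same approach as the paper: compute the differential of $a_x^\bk$ at $(e,x_\bk)$, note its surjectivity using Lemma~\ref{lem:summand}, and then spread the (open) smooth locus over all of $G_\bk \times \mathcal{S}_x^\bk$ via a group action combined with the contracting property of the $(\Gm)_\bk$-action on $\mathcal{S}_x^\bk$.

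There is, however, one slip in the equivariance step. You claim that $a_x^\bk$ is $(\Gm)_\bk$-equivariant when $\Gm$ acts trivially on $G_\bk$, by $z^{-2}\varphi(z)$ on $\mathcal{S}_x^\bk$, and by $z^{-2}\varphi(z)$ on $\fg_\bk$. This is not true: one computes
\[
a_x^\bk\bigl(g,\, z^{-2}\mathrm{Ad}(\varphi(z))(s)\bigr) = z^{-2}\mathrm{Ad}(g\varphi(z))(s),
\qquad
z^{-2}\mathrm{Ad}(\varphi(z))\bigl(a_x^\bk(g,s)\bigr) = z^{-2}\mathrm{Ad}(\varphi(z)g)(s),
\]
and these disagree since $\mathrm{Ad}(g)$ and $\mathrm{Ad}(\varphi(z))$ need not commute. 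The paper sidesteps this by using the twisted $(G\times\Gm)_\bk$-action $(g,z)\cdot(h,y)=(gh\varphi(z)^{-1},\, z^{-2}\varphi(z)\cdot y)$ on the source, for which $a_x^\bk$ is honestly equivariant (with target action $(g,z)\cdot w = z^{-2}\mathrm{Ad}(g)(w)$). Your argument is easily repaired: having already established that the smooth locus $U$ is $G_\bk$-stable (by translation in the first factor), stability of $U$ under the twisted action above implies, after post-composing with a $G_\bk$-translation by $\varphi(z)$, stability under the simpler action $(h,y)\mapsto(h,\,z^{-2}\varphi(z)\cdot y)$ that you wanted. The rest of your contraction argument then goes through verbatim. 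So the structure of the proof is correct and matches the paper's; just replace the false equivariance claim with the twisted action (or invoke the already-established $G_\bk$-stability to justify your simpler action).
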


\begin{proof}
We observe that the differential of $a_x^\bk$ at the point $(1,x_\bk)$ identifies with the morphism $\fg_\bk \times (\bk \otimes_\O M) \to \fg_\bk$ sending $(y,y')$ to $[x_\bk,y] + y'$. This differential is surjective since $\bk \otimes_\O M$ is a complement to $[x_\bk,\fg_\bk]$ in $\fg_\bk$, which proves that $a_x^\bk$ is smooth at $(1,x_\bk)$. Since the locus of points of $G_\bk \times \mathcal{S}_x^\bk$ where this map is smooth is open, and stable under the $(G \times \Gm)_\bk$-action defined by $(g,z) \cdot (h,y) = (gh \varphi(z)^{-1}, z^{-2} \varphi(z) \cdot y)$, this locus must then be the whole of $G_\bk \times \mathcal{S}_x^\bk$.
\end{proof}

Since $a_x^\bk$ is flat, it is open (see e.g.~\cite[Theorem~14.33]{gw}).  Let
\[
V_x^\bk := (a_x^\bk)(G_\bk \times \mathcal{S}_x^\bk)
\]
be its image (an open subset of $\fg_\bk$).

\begin{lem}
\label{lem:intersection-Slodowy-orbit}
The following square is Cartesian, where the vertical maps are the closed embeddings and the horizontal maps are induced by the adjoint action:
\[
\begin{tikzcd}
G_\bk \times \{x_\bk\} \ar[r] \ar[d, hook] & G_\bk \cdot x_\bk \ar[d, hook] \\
G_\bk \times \mathcal{S}_x^\bk \ar[r] & V_x^\bk.
\end{tikzcd}
\]
\end{lem}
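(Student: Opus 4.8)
The plan is to restate the claim as an equality of closed subschemes of $G_\bk \times \mathcal{S}_x^\bk$. Since $a_x^\bk$ is smooth by Proposition~\ref{prop:Slodowy-flat}, and since $G_\bk \cdot x_\bk$ will turn out to be closed in $V_x^\bk$ (with its reduced structure, which agrees with the one coming from $G_\bk/Z_{G_\bk}(x_\bk)$ because $Z_{G_\bk}(x_\bk)$ is smooth, cf.~\S\ref{ss:pcoh}), the square is Cartesian exactly when the scheme-theoretic preimage $(a_x^\bk)^{-1}(G_\bk \cdot x_\bk)$ equals $G_\bk \times \{x_\bk\}$. I would therefore establish three things: that $G_\bk \cdot x_\bk$ is closed in $V_x^\bk$; that $\mathcal{S}_x^\bk \cap (G_\bk \cdot x_\bk) = \{x_\bk\}$ as sets (which yields the equality of the underlying sets of the two subschemes, since $(g,s)$ lies over $G_\bk\cdot x_\bk$ iff $s\in \mathrm{Ad}(g^{-1})(G_\bk\cdot x_\bk)\cap\mathcal{S}_x^\bk = (G_\bk\cdot x_\bk)\cap\mathcal{S}_x^\bk$); and that both subschemes are reduced, which upgrades the set equality to a scheme equality.

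The first two points come out of a single dimension count. First, $\dim \mathcal{S}_x^\bk = \dim \fg_\bk - \dim [x_\bk, \fg_\bk] = \dim \fz_\fg(x)_\bk = \dim Z_{G_\bk}(x_\bk)$, using Lemma~\ref{lem:summand} (so that $\bk\otimes_\O[x,\fg] = [x_\bk,\fg_\bk]$), the rank--nullity identity for $\mathrm{ad}(x_\bk)$, \eqref{eqn:Lie-centralizer}, and smoothness of the centralizer. Hence $a_x^\bk \colon G_\bk \times \mathcal{S}_x^\bk \to V_x^\bk$, being smooth and surjective with $\dim G_\bk = \dim \fg_\bk$, has all fibres of dimension $\dim Z_{G_\bk}(x_\bk)$. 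Now let $\scO \subseteq V_x^\bk$ be any $G_\bk$-orbit; since $V_x^\bk = G_\bk \cdot \mathcal{S}_x^\bk$, any $v\in\scO$ can be written $\mathrm{Ad}(g)(s)$ with $s\in\mathcal{S}_x^\bk$, so $\scO$ meets $\mathcal{S}_x^\bk$. Computing $\dim (a_x^\bk)^{-1}(\scO)$ in two ways --- as $\dim \scO + \dim Z_{G_\bk}(x_\bk)$ (base change of the smooth morphism $a_x^\bk$), and as $\dim G_\bk + \dim(\scO \cap \mathcal{S}_x^\bk)$ (the fibre of the first projection over $g$ is $\mathrm{Ad}(g^{-1})(\scO) \cap \mathcal{S}_x^\bk = \scO \cap \mathcal{S}_x^\bk$) --- and using $\dim G_\bk - \dim Z_{G_\bk}(x_\bk) = \dim (G_\bk \cdot x_\bk)$, I obtain
\[
\dim(\scO \cap \mathcal{S}_x^\bk) = \dim \scO - \dim(G_\bk \cdot x_\bk).
\]
In particular $\dim \scO \geq \dim(G_\bk \cdot x_\bk)$ for every orbit in $V_x^\bk$, so $G_\bk \cdot x_\bk$ has minimal dimension among orbits in $V_x^\bk$ and is hence closed there; and taking $\scO = G_\bk \cdot x_\bk$ shows $\mathcal{S}_x^\bk \cap (G_\bk \cdot x_\bk)$ is finite.

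To see that this finite set is $\{x_\bk\}$, I would invoke the $(\Gm)_\bk$-action $z \cdot y = z^{-2} \mathrm{Ad}(\varphi(z))(y)$ of~\S\ref{ss:slodowy-fields}. It preserves $\mathcal{S}_x^\bk$ by construction, and it preserves $G_\bk \cdot x_\bk$: since $\varphi$ is associated with $x_\bk$ we have $\mathrm{Ad}(\varphi(z))(x_\bk) = z^2 x_\bk$, so $z \cdot \mathrm{Ad}(g)(x_\bk) = z^{-2}\mathrm{Ad}(\varphi(z)g)(x_\bk) = \mathrm{Ad}(\varphi(z) g \varphi(z)^{-1})(x_\bk) \in G_\bk \cdot x_\bk$. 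Thus $\mathcal{S}_x^\bk \cap (G_\bk \cdot x_\bk)$ is a finite $(\Gm)_\bk$-stable set, hence (as $\Gm$ is connected) consists of fixed points; but the nonpositivity of the $\varphi$-weights on $M$ (\S\ref{ss:slodowy}) forces $x_\bk$ to be the unique $(\Gm)_\bk$-fixed point of $\mathcal{S}_x^\bk$, so $\mathcal{S}_x^\bk \cap (G_\bk \cdot x_\bk) = \{x_\bk\}$. Consequently $(a_x^\bk)^{-1}(G_\bk \cdot x_\bk)$ and $G_\bk \times \{x_\bk\}$ have the same underlying closed set. Finally, $(a_x^\bk)^{-1}(G_\bk \cdot x_\bk)$ is smooth over the smooth $\bk$-variety $G_\bk \cdot x_\bk$ (base change of the smooth morphism $a_x^\bk$), hence reduced, and $G_\bk \times \{x_\bk\} \cong G_\bk$ is reduced; so these two closed subschemes of $G_\bk \times \mathcal{S}_x^\bk$ coincide. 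Unwinding the identification, its structure maps are the closed embedding into $G_\bk \times \mathcal{S}_x^\bk$ and the adjoint-action map to $G_\bk \cdot x_\bk$ (the restriction of $a_x^\bk$), which is precisely the assertion that the square is Cartesian.

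The step I expect to require the most care is the dimension count, in particular the base-change/relative-dimension bookkeeping and the verification that every $G_\bk$-orbit in $V_x^\bk$ genuinely meets the slice: it is this step that simultaneously delivers the closedness of $G_\bk \cdot x_\bk$ in $V_x^\bk$ and the finiteness that makes the $\Gm$-argument bite, after which the contraction argument and the reducedness upgrade are formal.
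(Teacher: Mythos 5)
Your proof is correct, and it reaches the same conclusion by a noticeably different middle step.  The paper invokes Slodowy's equivariance lemma (\cite[Lemma~4, p.~26]{slodowy}) to identify the scheme-theoretic fiber product $(G_\bk \times \mathcal{S}_x^\bk) \times_{V^\bk_x} (G_\bk \cdot x_\bk)$ directly with $G_\bk \times \bigl(\mathcal{S}_x^\bk \cap (G_\bk \cdot x_\bk)\bigr)$, and then reads off that the scheme-theoretic intersection is smooth of dimension $0$; the $(\Gm)_\bk$-contraction then pins it to $\{x_\bk\}$ with reduced structure, and the Cartesian claim follows at once.  You instead carry out a set-theoretic dimension count over an arbitrary orbit $\scO \subseteq V_x^\bk$ (computing $\dim (a_x^\bk)^{-1}(\scO)$ two ways to get $\dim(\scO \cap \mathcal{S}_x^\bk) = \dim\scO - \dim(G_\bk\cdot x_\bk)$), deduce finiteness of the intersection and, as a bonus, closedness of $G_\bk \cdot x_\bk$ in $V_x^\bk$, then conclude by the same $\Gm$-contraction and a reducedness upgrade (smooth over smooth is reduced, so the two closed subschemes with the same underlying set coincide).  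Your route is a little longer and substitutes the ``both subschemes are reduced'' argument for the direct scheme-theoretic identification, but it has the advantage of making explicit that $G_\bk\cdot x_\bk$ is closed in $V_x^\bk$ (and more generally that it is dimension-minimal among orbits meeting $V_x^\bk$), a fact the paper uses implicitly in Corollary~\ref{cor:fiber-product-Slodowy} without spelling it out.  One minor slip: the smoothness of $Z_{G_\bk}(x_\bk)$ over a field is recorded in~\S\ref{ss:balanced} (and its Lie-algebra consequence in~\eqref{eqn:Lie-centralizer}), not in~\S\ref{ss:pcoh}; the latter only records the consequence $G_\bk/Z_{G_\bk}(x_\scO) \simto \scO$.
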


\begin{proof}
By smoothness of $a_x^\bk$ (see Proposition~\ref{prop:Slodowy-flat}), the fiber product $(G_\bk \times \mathcal{S}_x^\bk) \times_{V^\bk_x} (G_\bk \cdot x_\bk)$ is smooth over $G_\bk \cdot x_\bk$, and hence a smooth variety, of dimension $\dim(G_\bk)$.
By $G_\bk$-equivariance, we have
\[
(G_\bk \times \mathcal{S}_x^\bk) \times_{V^\bk_x} (G_\bk \cdot x_\bk) = G_\bk \times \bigl( \mathcal{S}_x^\bk \cap (G_\bk \cdot x_\bk) \bigr),
\]
where on the right-hand side we consider the scheme-theoretic intersection. (This follows e.g.~from~\cite[Lemma~4 on p.~26]{slodowy} applied to the composition $(G_\bk \times \mathcal{S}_x^\bk) \times_{V^\bk_x} (G_\bk \cdot x_\bk) \to G_\bk \times \mathcal{S}_x^\bk \to G_\bk$, where the second map is the projection.) It follows in particular that the right-hand side is smooth. Since the projection $$G_\bk \times \bigl( \mathcal{S}_x^\bk \cap (G_\bk \cdot x_\bk) \bigr) \to \mathcal{S}_x^\bk \cap (G_\bk \cdot x_\bk)$$ is smooth, using~\cite[Tag 02K5]{stacks-project} we deduce that $\mathcal{S}_x^\bk \cap (G_\bk \cdot x_\bk)$ is smooth and of dimension $0$, and hence a disjoint union of points. On the other hand this variety admits a $(\Gm)_\bk$-action which contracts it to $x_\bk$; we deduce that $\mathcal{S}_x^\bk \cap (G_\bk \cdot x_\bk) = \{x_\bk\}$, which implies the desired identification.
\end{proof}

\begin{cor}
\label{cor:fiber-product-Slodowy}
In the following diagram, every square is cartesian:
\[
\begin{tikzcd}
G_\bk \times \{x_\bk\} \ar[r] \ar[d] &
  (G_\bk \times \mathcal{S}_x^\bk) \times_{\fg_\bk} \cN_\bk \ar[r] \ar[d] &
  G_\bk \times \mathcal{S}_x^\bk \ar[d] \\
G_\bk \cdot x_\bk \ar[r] &
  \St(G_\bk \cdot x_\bk) \ar[r] \ar[d] & V^\bk_x \ar[d] \\
& \cN_\bk \ar[r] & \fg_\bk.
\end{tikzcd}
\]
The vertical maps between the top two rows are smooth and surjective.  The vertical maps between the bottom two rows are open embeddings.
\end{cor}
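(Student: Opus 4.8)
The plan is to reduce everything to the single geometric identity
\[
\St(G_\bk \cdot x_\bk) = \cN_\bk \cap V^\bk_x,
\]
an equality of open subschemes of $\cN_\bk$; granting it, all four cartesian squares and the stated properties of the vertical maps follow formally from Lemma~\ref{lem:intersection-Slodowy-orbit}, Proposition~\ref{prop:Slodowy-flat}, and the stability of smoothness, surjectivity and cartesian squares under composition and base change.

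To prove the identity, recall from the discussion preceding Lemma~\ref{lem:intersection-Slodowy-orbit} that $V^\bk_x$ is open in $\fg_\bk$, and note that $V^\bk_x = G_\bk \cdot \mathcal{S}_x^\bk$ is $G_\bk$-stable and contains $x_\bk = a_x^\bk(1,x_\bk)$. Hence $\cN_\bk \cap V^\bk_x$ is a $G_\bk$-stable open subset of $\cN_\bk$ containing $x_\bk$; any such subset that contains an orbit $\scO$ also contains every orbit $\scO'$ with $\scO \subseteq \overline{\scO'}$ (it meets $\overline{\scO'}$, hence the dense subset $\scO'$, hence all of $\scO'$ by $G_\bk$-stability), so $\St(G_\bk\cdot x_\bk) \subseteq \cN_\bk \cap V^\bk_x$. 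For the reverse inclusion $\cN_\bk \cap V^\bk_x \subseteq \St(G_\bk\cdot x_\bk)$, take $y \in \cN_\bk \cap V^\bk_x$ and write $y = \mathrm{Ad}(g)(s)$ with $g \in G_\bk$ and $s \in \mathcal{S}_x^\bk$; then $s$ is nilpotent. Choosing a cocharacter $\psi$ associated with $s$ (see e.g.~\cite{jantzen-nilp}), we have $\mathrm{Ad}(\psi(z))(s) = z^2 s$, so $G_\bk \cdot s$ is stable under the scaling action $y' \mapsto z^{-2} y'$ of $(\Gm)_\bk$ on $\fg_\bk$ (using that every element of $\bk^\times$ is a square). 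Under the contracting $(\Gm)_\bk$-action on $\mathcal{S}_x^\bk$, the element $z$ sends $s$ to $z^{-2}\mathrm{Ad}(\varphi(z))(s)$, which lies in $G_\bk\cdot s$ because $\mathrm{Ad}(\varphi(z))(s) \in G_\bk\cdot s$ and $G_\bk\cdot s$ is scaling-stable; hence the whole $(\Gm)_\bk$-orbit of $s$ lies in $G_\bk \cdot s$, and so does its closure, which contains $x_\bk$ by the contracting property. Therefore $G_\bk\cdot x_\bk \subseteq \overline{G_\bk\cdot s}$, so $s$, and hence $y$, lies in $\St(G_\bk\cdot x_\bk)$. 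This proves the identity; since $\cN_\bk$ is a variety (hence reduced), it is in fact an equality of schemes $\cN_\bk \times_{\fg_\bk} V^\bk_x = \cN_\bk \cap V^\bk_x = \St(G_\bk\cdot x_\bk)$.

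The diagram now falls into place. The bottom-right square is cartesian by this equality of schemes, and its vertical maps $\St(G_\bk\cdot x_\bk)\hookrightarrow\cN_\bk$ and $V^\bk_x\hookrightarrow\fg_\bk$ are open embeddings. Since $a_x^\bk$ factors through $V^\bk_x$, we get
\[
(G_\bk \times \mathcal{S}_x^\bk) \times_{\fg_\bk} \cN_\bk = (G_\bk \times \mathcal{S}_x^\bk) \times_{V^\bk_x}\bigl(V^\bk_x \times_{\fg_\bk} \cN_\bk\bigr) = (G_\bk \times \mathcal{S}_x^\bk) \times_{V^\bk_x} \St(G_\bk\cdot x_\bk),
\]
so the top-right square is cartesian; its right-hand vertical map is $a_x^\bk$, which is smooth by Proposition~\ref{prop:Slodowy-flat} and surjective onto $V_x^\bk$ by construction, and its middle vertical map is the base change of $a_x^\bk$ along $\St(G_\bk\cdot x_\bk)\hookrightarrow V_x^\bk$, hence again smooth and surjective. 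Finally, the outer rectangle obtained by composing the two top squares horizontally is precisely the cartesian square of Lemma~\ref{lem:intersection-Slodowy-orbit} (the composite $G_\bk\times\{x_\bk\}\to G_\bk\times\mathcal{S}_x^\bk$ is the evident closed embedding and the composite $G_\bk\cdot x_\bk\to V_x^\bk$ is the inclusion), so by the pasting lemma the top-left square is cartesian too; its left-hand vertical map $G_\bk\times\{x_\bk\}\to G_\bk\cdot x_\bk$ is, again by Lemma~\ref{lem:intersection-Slodowy-orbit}, the base change of $a_x^\bk$ along $G_\bk\cdot x_\bk\hookrightarrow V_x^\bk$, hence smooth and surjective.

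The only step with real content is the identity $\St(G_\bk\cdot x_\bk) = \cN_\bk\cap V^\bk_x$, and in particular the inclusion $\cN_\bk\cap V^\bk_x \subseteq \St(G_\bk\cdot x_\bk)$: this is where the contracting $(\Gm)_\bk$-action on the Slodowy slice genuinely enters, together with the elementary fact that $G_\bk$-orbits of nilpotent elements are stable under scaling. Everything else is bookkeeping with cartesian squares.
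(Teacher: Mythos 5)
Your proof is correct. The paper states this result as a corollary with no accompanying proof, so you are supplying the implicit argument, and you have done so in what is surely the intended way: the only nonformal step is the set-theoretic identity $\St(G_\bk\cdot x_\bk)=\cN_\bk\cap V^\bk_x$, and you establish it exactly as Slodowy does in the classical case, using the contracting $(\Gm)_\bk$-action on the slice together with scale-invariance of nilpotent orbits (for which the associated-cocharacter argument is one standard route); once that identity is in hand, the three cartesian squares and the properties of the vertical maps all follow by flat base change and the pasting lemma from Lemma~\ref{lem:intersection-Slodowy-orbit} and Proposition~\ref{prop:Slodowy-flat}, as you say. One small wording slip worth fixing: the closure of the $(\Gm)_\bk$-orbit of $s$ need not lie in $G_\bk\cdot s$ itself (only in $\overline{G_\bk\cdot s}$), so the phrase ``and so does its closure'' should read that the closure lies in $\overline{G_\bk\cdot s}$; your very next sentence draws the correct conclusion $G_\bk\cdot x_\bk\subseteq\overline{G_\bk\cdot s}$, so this is purely a matter of phrasing and does not affect the argument.
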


\begin{proof}
If a nilpotent orbit $\scO'$ intersects $V_x^\bk$, then it must contain an element in $\mathcal{S}^\bk_x$. Since the $(\Gm)_\bk$-action is contacting to $x$, and preserves the nilpotent orbits (as follows from~\cite[Lemma~2.10]{jantzen-nilp}), this implies that $x_\bk \in \overline{\scO'}$, so that $\scO' \subset \St(G_\bk \cdot x_\bk)$. Conversely, if $\scO'$ contains $x_\bk$ in its closure, then it must intersect the open subset $V_x^\bk$, hence be contained in it. We have finally proved that $V^\bk_x \cap \cN_\bk = \St(G_\bk \cdot x_\bk)$, i.e.~that the bottom square is Cartesian. Since the square formed by the the second and third schemes on the first line and the bottom line is Cartesian by definition, this implies that the upper right square is Cartesian. Then, using the same argument and Lemma~\ref{lem:intersection-Slodowy-orbit}, we deduce that the upper left square is Cartesian.

The claim about smoothness is then clear from the smoothness of $a_x^\bk$ (see Proposition~\ref{prop:Slodowy-flat}), and the final claim follows from the definitions.
\end{proof}

%----------------------------------------------------------
\subsection{Smoothness of centralizers}
\label{ss:flatness}
%----------------------------------------------------------

In this subsection we sketch a different proof of the smoothness claim in Theorem~\ref{thm:flatness}. No details of this proof will be used in the rest of the paper. We first remark that the smoothness of $Z_G(x)$ follows easily once we know that this group scheme is flat over~$\O$; see~\cite[Lemma~1.5]{har:smcent} for details.

By Theorem~\ref{thm:balanced-sections}, there exists a cocharacter $\varphi : \Gm \to G$ such that $\Spec(\F) \times \varphi$ is associated with $x_\F$ and $\Spec(\ovK) \times \varphi$ is associated with $x_\ovK$.  Then we can consider an ``integral Slodowy slice'' $\mathcal{S}_x$ as constructed in~\S\ref{ss:slodowy}. By a variant of~\cite[Lemma~4.1.1]{riche-kostant} (for discrete valuation rings instead of localizations of $\Z$) one can deduce from Proposition~\ref{prop:Slodowy-flat} (in the case $\bk=\F$) that $a_x$ is flat. Hence to conclude it suffices to prove that the following diagram is Cartesian, where the horizontal maps are induced by the adjoint action and the vertical maps are the closed embeddings:
\[
\begin{tikzcd}
Z_G(x) \times \{x\} \ar[r] \ar[d,hook] & \{x\} \ar[d,hook] \\
G \times \mathcal{S}_x \ar[r] & \fg. 
\end{tikzcd}
\]
However, it follows from Lemma~\ref{lem:intersection-Slodowy-orbit} that the base change of this diagram to $\ovK$ is Cartesian. Since the $\O$-scheme $(G \times \mathcal{S}_x) \times_\fg \{x\}$ is flat by flatness of $a_x$, this implies that the composition
\[
(G \times \mathcal{S}_x) \times_\fg \{x\} \to G \times \mathcal{S}_x \to \mathcal{S}_x
\]
(where the second map is the projection) factors through the embedding $\{x\} \hookrightarrow \mathcal{S}_x$. Hence we have
\[
(G \times \mathcal{S}_x) \times_\fg \{x\} = (G \times \{x\}) \times_\fg \{x\} = Z_G(x),
\]
which finishes the proof.

%%%%%%%%%%%%%%%%%%%%%%%%%%%%%%%%%%%%%%%%%%%%%%%%%%%%%%%%%%%%%%%%%%%%%%%%%%%
\section{Integral exotic sheaves}
\label{sec:intexotic}
%%%%%%%%%%%%%%%%%%%%%%%%%%%%%%%%%%%%%%%%%%%%%%%%%%%%%%%%%%%%%%%%%%%%%%%%%%%

In this section, we will work with the category $\Db\Cohmix(\tcN)$ (cf.~\S\ref{ss:grading}), rather than $\Db\Coh^G(\tcN)$, mainly because we anticipate that this may be useful for future applications. However, the $\Gm$-action plays almost no role in any of the arguments. Appropriate analogues of the statements in this section hold for $\Db\Coh^G(\tcN)$, and we will use these versions elsewhere in the paper.

The goal of this section is to construct a t-structure on $\Db\Cohmix(\tcN)$ (as well as on the $\F$- and $\ovK$-versions), called the \emph{exotic t-structure}, using the machinery from Appendix~\ref{sec:exc-pid}.  In the case of field coefficients, this t-structure has been extensively studied in the literature~\cite{achar, arider, bezru, mr}, but over $\O$, some of the statements in this section are new.

%-----------------------------------------------------------------
\subsection{Passage to a group with simply connected derived subgroup}
%-----------------------------------------------------------------

In the arguments below, we will need some results from~\cite{br,mr} that only apply to groups with a simply connected derived subgroup.  To accommodate these results, for the remainder of this section, we fix a central isogeny
\[
\sigma: \tG \to G
\]
as in Lemma~\ref{lem:derived-isogeny}.  Let $\tT \subset \tB \subset \tG$ be the maximal torus and the Borel subgroup obtained as the preimages of $T$ and $B$ along $\sigma$.  According to Lemma~\ref{lem:derived-isogeny}, $\sigma$ lets us identify the Lie algebra of $\tG$ with $\fg$.  This yields an identification of their Springer resolutions as well, as the obvious map
\[
\tG \times^{\tB} \fn \to G \times^B \fn
\]
is an isomorphism.  We may thus speak of both $G$- and $\tG$-equivariant sheaves on $\tcN$, and there is an obvious functor
\begin{equation}\label{eqn:tg-embed}
\Db\Cohmix(\tcN) \to \Db\Cohtmix(\tcN).
\end{equation}
Any $G$-invariant perfect pairing on $\fg$ is also $\tG$-invariant, so the isomorphism in Lemma~\ref{lem:tcn-alt} is also compatible with passage from $G$ to $\tG$.  The results we need from~\cite{br,mr} are usually applicable to $\tG \times \Gm$-equivariant sheaves on $\tG \times^{\tB} (\fg/\fb)^*$, and hence to $\Db\Cohtmix(\tcN)$.  In order to extract useful information in $\Db\Cohmix(\tcN)$, we need to understand the functor~\eqref{eqn:tg-embed}.

Let $\tbX$ be the character lattice of $\tT$ (cf.~the proof of Lemma~\ref{lem:derived-isogeny}).  This group contains $\bX$ as a subgroup.  The quotient $\tbX/\bX$ is finite and of order coprime to $p$.  Let $F$ be the kernel of $\sigma: \tG \to G$.  This is a diagonalizable smooth finite group scheme whose character group is identified with $\tbX/\bX$.

Note that any coherent sheaf $\cF \in \Cohtmix(\tcN)$ comes equipped with a canonical (in particular, functorial) decomposition
\begin{equation}\label{eqn:f-decomp}
\cF = \bigoplus_{\nu \in \tbX/\bX} \cF^\nu,
\end{equation}
according to the action of $F$.  To see this decomposition more concretely, we can pass through the ``induction'' equivalence
\[
\Cohtmix(\tcN) \cong \Coh^{\tB \times \Gm}(\fn), 
\]
and identify the right-hand side with the category of finitely generated $\tB$-equiva\-riant graded modules over the ring $\cO(\fn) = \mathrm{Sym}_\O(\fn^*)$.  If we regard $\cF$ as such a module, then it is in particular a rational $F$-module, so it admits a canonical decomposition~\eqref{eqn:f-decomp} as an $F$-module.  Since $F$ is in the center of $\tB$ and acts trivially on $\cO(\fn)$, each summand on the right-hand side of~\eqref{eqn:f-decomp} is stable under the actions of $\tB$ and $\cO(\fn)$.  In other words, the decomposition~\eqref{eqn:f-decomp} takes place in $\Cohtmix(\tcN)$.

For $\nu \in \tbX/\bX$, let $\Cohtmix(\tcN)^\nu$ be the full subcategory of $\Cohtmix(\tcN)$ consisting of objects $\cF$ that satisfy $\cF = \cF^\nu$.  An immediate consequence of~\eqref{eqn:f-decomp} is that we have a categorical decomposition
\begin{equation}\label{eqn:cohtmix-decomp}
\Db\Cohtmix(\tcN) = \bigoplus_{\nu \in \tbX/\bX} \Db\Cohtmix(\tcN)^\nu.
\end{equation}
At the level of abelian categories, it is clear that the natural functor $\Cohmix(\tcN) \to \Cohtmix(\tcN)$ is fully faithful.  Its image is the subcategory $\Cohtmix(\tcN)^0$, so the decomposition above gives us an identification
\[
\Db\Cohmix(\tcN) \cong \Db\Cohtmix(\tcN)^0.
\]
For the remainder of this section, we identify $\Db\Cohmix(\tcN)$ with a subcategory of $\Db\Cohtmix(\tcN)$ in this way.

We conclude this subsection with two lemmas on generators for these derived categories.

\begin{lem}
\label{lem:line-generate}
For any $\bE \in \{\ovK,\O,\F\}$, the category $\Db\Cohmix(\tcN)_\bE$ is generated as a triangulated category by objects of the form $\cO_{\tcN_\bE}(\lambda)\la k\ra$ with $\lambda \in \bX$ and $k \in \Z$.
\end{lem}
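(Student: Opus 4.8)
The plan is to move the question to commutative algebra over a polynomial ring. Using the induction equivalence $\Db\Cohmix(\tcN)_\bE \simeq \Db\mathsf{Coh}^{B\times\Gm}(\fn)_\bE$ (available for each of $\F$, $\O$, $\ovK$ since $\tcN = G\times^B\fn$, cf.~\cite{br}), under which $\cO_{\tcN_\bE}(\lambda)\la k\ra$ corresponds to $\cO_{\fn_\bE}(\lambda)\la k\ra := \Sym(\fn^*_\bE)\otimes_\bE\bE_\lambda\la k\ra$ (up to the convention-dependent replacement $\lambda\mapsto w_0\lambda$, which is immaterial as $\lambda$ ranges over all of $\bX$), it is enough to prove that the triangulated subcategory $\mathcal{D}\subseteq\Db\mathsf{Coh}^{B\times\Gm}(\fn)_\bE$ generated by the $\cO_{\fn_\bE}(\lambda)\la k\ra$ is everything. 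Since the standard t-structure is bounded, it suffices to show that each coherent sheaf $\cF$ lies in $\mathcal{D}$, i.e.\ each finitely generated $(B\times\Gm)$-equivariant $\Sym(\fn^*_\bE)$-module. I would first treat the two field cases, and then deduce the case $\bE=\O$: there the $\O$-torsion submodule $\cF_{\mathrm{tors}}$ is an equivariant subsheaf with $\O$-flat quotient, and its $\varpi$-adic filtration presents it as a finite iterated extension of objects pushed forward along $\fn_\F\hookrightarrow\fn_\O$ from $\Db\mathsf{Coh}^{B\times\Gm}(\fn)_\F$; as the pushforward of $\cO_{\fn_\F}(\lambda)\la k\ra$ is $\mathrm{Cone}(\varpi\colon\cO_{\fn_\O}(\lambda)\la k\ra\to\cO_{\fn_\O}(\lambda)\la k\ra)$, the already-settled case $\bE=\F$ gives $\cF_{\mathrm{tors}}\in\mathcal{D}$. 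So in every case one is left with $\cF$ that is an $\bE$-vector space (for $\bE$ a field) or $\O$-flat (for $\bE=\O$).

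For such $\cF$ I would build an equivariant resolution $\cdots\to\cO_{\fn_\bE}\otimes_\bE V_1\to\cO_{\fn_\bE}\otimes_\bE V_0\to\cF\to 0$: choose a finitely generated $(B\times\Gm)$-equivariant $\bE$-submodule $V_0\subseteq\cF$ generating $\cF$ over $\Sym(\fn^*_\bE)$ (finite-dimensional if $\bE$ is a field, and free over $\O$ if $\cF$ is $\O$-flat, being then a submodule of an $\O$-flat module), take the kernel of the induced surjection, and iterate. Each term $\cO_{\fn_\bE}\otimes_\bE V_i$ is free over $\Sym(\fn^*_\bE)$ and lies in $\mathcal{D}$, because any such $V_i$ has a finite equivariant filtration with one-dimensional layers $\bE_\lambda\la k\ra$: decompose it by $\Gm$-weight, then filter each $B$-representation using that its $U$-invariants form a direct summand (classical over a field; over $\O$ one peels off the $\mathbb{G}_{\mathrm{a}}$-layers of $U$, using that for a $\mathbb{G}_{\mathrm{a}}$-module free over $\O$ the invariants are an $\O$-direct summand since the quotient is torsion-free). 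Since $\Sym(\fn^*_\bE)$ is a polynomial $\bE$-algebra it has finite global dimension (equal to $\dim\fn$ if $\bE$ is a field and to $\dim\fn+1$ if $\bE=\O$); truncating the resolution at that length, a dimension-shift shows the top syzygy $Z$ is projective over $\Sym(\fn^*_\bE)$, and from the truncated resolution $\cF\in\mathcal{D}$ if and only if $Z\in\mathcal{D}$.

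The crux is therefore to show that a finitely generated $(B\times\Gm)$-equivariant $\Sym(\fn^*_\bE)$-module $Z$ that is \emph{projective} over $\Sym(\fn^*_\bE)$ lies in $\mathcal{D}$. By graded Nakayama --- the ring is non-negatively graded with degree-zero part $\bE$, and the quotient of $Z$ by $\Sym(\fn^*_\bE)_{>0}\cdot Z$ is $\bE$-projective, hence free --- the module $Z$ is graded-free; choose a homogeneous basis and filter $Z$ by the submodules generated by the basis elements of $\Gm$-degree at most a given bound. Because the $B$-coaction commutes with the $\Gm$-action and $\Sym(\fn^*_\bE)$ lives in non-negative $\Gm$-degrees, this filtration is $(B\times\Gm)$-stable, and on each successive quotient $B$ acts by matrices with entries in $\Sym(\fn^*_\bE)_0=\bE$; hence each such quotient is of the form $\cO_{\fn_\bE}\otimes_\bE W\la k\ra$ for a genuine finite-dimensional $(B\times\Gm)$-representation $W$, and these lie in $\mathcal{D}$ by the previous paragraph. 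Therefore $Z\in\mathcal{D}$, which finishes the proof. I expect this ``triangularity'' step to be the real obstacle: a $(B\times\Gm)$-equivariant $\Sym(\fn^*_\bE)$-module that is free as a module need not be induced from a representation of $B$ (this already fails for $\mathrm{SL}_2$), so one genuinely must use the grading to decompose $Z$ into a filtration by line bundles rather than hoping for a single induced module; the points specific to the base $\O$ (the shift in global dimension, the need to avoid Quillen--Suslin in favour of graded Nakayama, and the $\mathbb{G}_{\mathrm{a}}$-layer argument) are the other places requiring care.
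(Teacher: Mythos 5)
Your proposal is correct, and it follows the same overall strategy as the paper: pass to $\Db\Coh^{B\times\Gm}(\fn)_\bE$, resolve a module by $(B\times\Gm)$-equivariant free $\cO(\fn_\bE)$-modules, truncate at the (finite) global dimension to obtain a projective syzygy, use graded Nakayama to see it is graded-free, and then show a free equivariant module is filtered by rank-one pieces $\cO(\fn_\bE)(\lambda)\la k\ra$. There are two tactical departures worth noting. First, for $\bE=\O$ you peel off the $\O$-torsion submodule via its $\varpi$-filtration and dispose of it using the already-settled case $\bE=\F$ plus the identification $i_*\cO_{\fn_\F}(\lambda)\cong\mathrm{Cone}(\varpi)$, whereas the paper handles torsion uniformly by appealing to Serre's propositions to produce an $\O$-free equivariant generating submodule directly; your version is a harmless detour and arguably isolates the flat case cleanly. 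Second, and this is the real crux, you extract $B$-stable lines from the free module by a two-level filtration: first by $\Gm$-degree of a homogeneous basis (producing layers $\cO(\fn_\bE)\otimes W$ for genuine $B$-representations $W$), then by taking $U$-invariants of $W$ layer by layer through a $\mathbb{G}_\mathrm{a}$-composition series. The paper instead picks a weight vector $v$ of lowest $\Gm$-degree and $\leq$-minimal $T$-weight, and shows $\O\cdot v$ is a $\mathrm{Dist}(B)$-submodule, hence a $B$-submodule by Jantzen's Lemma I.7.15; this does both of your filtration steps in one stroke. Both work; your route requires the extra (true, but unstated) observation that $V^U\neq 0$ when $V\neq 0$ is a free $\O$-module with $U$-action — e.g.\ because $V^U\otimes_\O\K\cong(V_\K)^{U_\K}$ is nonzero, or by noting that the top nonvanishing divided-power operator $\gamma_N$ of a $\mathbb{G}_\mathrm{a}$-layer has image inside the invariants. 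You would want to record that step explicitly, but it does not affect the correctness of the argument.
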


\begin{proof}
In the case where $\bE$ is a field, this is proved in~\cite[Corollary~5.8]{achar-pcoh} or~\cite[Corollary~2.7]{mr}.  Here, therefore, we will only treat the case where $\bE = \O$.  (However, the reader can easily modify this argument to handle the field case as well.)  As in the discussion above, we can replace $\Db\Cohmix(\tcN)$ with $\Db\Coh^{B \times \Gm}(\fn)$, and we will think of objects $M \in \Coh^{B \times \Gm}(\fn)$ as $B$-equivariant graded $\cO(\fn)$-modules.  From now on, all $\cO(\fn)$-modules will implicitly be assumed to be finitely generated.

Let $M \in \Coh^{B \times \Gm}(\fn)$. By~\cite[Proposition~2]{serre-Groth}, there exists a $(B \times \Gm)$-stable $\O$-submodule $M' \subset M$ which is of finite type over $\O$ and which generates $M$ as an $\cO(\fn)$-module. Then by~\cite[Proposition~3]{serre-Groth} there exists a $(B \times \Gm)$-module $M''$ which is free over $\O$ and surjects to $M'$; in this way we see that $M$ is a quotient of a $(B\times \Gm)$-equivariant free $\cO(\fn)$-module, and then that $M$ admits a resolution
\[
\cdots \to P^2 \to P^1 \to P^0 \to M \to 0
\]
where each $P^k$ is a $(B \times \Gm)$-equivariant free $\cO(\fn)$-module. Since $\cO(\fn)$ is isomorphic to a ring of polynomials (in $\mathrm{rk}(\fn)$ many variables) with coefficients in $\O$, it has finite global dimension, say $d$.  Let $Q$ be the kernel of $P^{d-1} \to P^{d-2}$, and consider the exact sequence
\[
0 \to Q \to P^{d-1} \to \cdots \to P^0 \to M \to 0.
\]
A routine homological algebra argument shows that as a (graded) $\cO(\fn)$-module, $Q$ must be projective.  Since $\cO(\fn)$ is a graded polynomial ring over a noetherian local ring, a suitable variant of Nakayama's lemma implies that every projective graded $\cO(\fn)$-module is free.\footnote{When invoking the graded Nakayama lemma, instead of using the grading coming from the $\Gm$-action, we could instead use the grading given by a strictly dominant cocharacter of $T \subset B$.  The latter version also makes sense in the setting of $\Coh^G(\tcN) \cong \Coh^B(\fn)$.}  We have thus shown that $M$ admits a finite resolution by $B$-equivariant free graded $\cO(\fn)$-modules.
%, and hence that $\Db\Cohmix(\tcN)$ is at least generated by  $(G \times \Gm)$-equivariant vector bundles.

Let $M$ be a $B$-equivariant free graded $\cO(\fn)$-module.  We will show (by induction on the rank of $M$ over $\cO(\fn)$) that $M$ admits a filtration whose subquotients are $B$-equivariant free graded $\cO(\fn)$-modules of rank~$1$.  Such objects correspond to $(G \times \Gm)$-equivariant line bundles, so this claim will prove the lemma.

Consider the quotient map $M \to M/\fn M$.  Then $M/\fn M$ is a free $\O$-module of finite rank (equal to the rank of $M$ as an $\cO(\fn)$-module).  The quotient map admits a $(T \times \Gm)$-equivariant splitting $M/\fn M \to M$.  Choose such a splitting, and let $M_0$ be its image.  Then any $\O$-basis for $M_0$ is a $\cO(\fn)$-basis for $M$.  %Write the grading on $M$ as $M = \bigoplus_{k \in \Z} M_k$. Assume without loss of generality that $M_0 \ne 0$, and that $M_k = 0$ for $k < 0$.  Then $M_0$ is a $B$-representation and a free $\O$-module of finite rank.  
As a $T$-representation, $M_0$ decomposes as a direct sum
\[
M_0 = \bigoplus_{\lambda \in \bX} (M_0)_\lambda,
\]
where each $(M_0)_\lambda$ is again a free $\O$-module of finite rank.  Choose some $\lambda$ such that $(M_0)_\lambda \ne 0$, and such that $\lambda$ is minimal for this property with respect to the partial order $\leq$ on $\bX$ considered in~\S\ref{ss:pcoh}.  Then choose an element $v \in (M_0)_\lambda$ that is part of some $\O$-basis for $(M_0)_\lambda$ consisting of vectors homogeneous with respect to the $\Gm$-action.  Then the $\O$-span $\O \cdot v$ is a $\mathrm{Dist}(B)$-submodule of $M_0$, and hence also a $B$-submodule by~\cite[Lemma~I.7.15]{jantzen}. (This statement is applicable here since $B$ is smooth, and hence infinitesimally flat; see~\cite[\S I.10.11]{jantzen}.) Let $M' \subset M$ be the $\cO(\fn)$-submodule generated by $v$.  This is a $(B \times \Gm)$-equivariant submodule of $M$ that is free over $\cO(\fn)$ of rank~$1$.

%Because $v$ was chosen to be of minimal degree with respect to the grading on $M$, it is easy to see that it must also be part of some $\cO(\fn)$-basis for $M$.  In other words, 
Since $v$ is part of an $\cO(\fn)$-basis for $M$,
the quotient $M'' := M/M'$ is again a $B$-equivariant free graded $\cO(\fn)$-module (of rank lower than that of $M$).  By induction, $M''$ admits a $B$-equivariant filtration whose subquotients are free over $\cO(\fn)$ of rank~$1$, and hence so does $M$.
\end{proof}

\begin{lem}
\label{lem:line-generate-2}
Let $\nu \in \tbX/\bX$.  For any $\bE \in \{\ovK,\O,\F\}$, the category $\Db\Cohtmix(\tcN)^\nu_\bE$ is generated as a triangulated category by objects of the form $\cO_{\tcN_\bE}(\lambda)\la k\ra$ with $\lambda \in \tbX$, $\lambda + \bX = \nu$, and $k \in \Z$.
\end{lem}
\begin{proof}
By Lemma~\ref{lem:line-generate} applied to $\tG$, the category $\Db\Cohtmix(\tcN)_\bE$ as a whole is generated by the line bundles $\cO_{\tcN_\bE}(\lambda)\la k\ra$ with $\lambda \in \tbX$ and $k \in \Z$.  Since each $\cO_{\tcN_\bE}(\lambda)\la k\ra$ lies in one summand on the right-hand side of~\eqref{eqn:cohtmix-decomp} (namely, the summand labeled by $\nu = \lambda + \bX$), we see that each $\Db\Cohtmix(\tcN)^\nu_\bE$ is generated by those line bundles $\cO_{\tcN_\bE}(\lambda)\la k\ra$ that it contains.
\end{proof}

%-----------------------------------------------------------------
\subsection{Exotic generators for the derived category}
%-----------------------------------------------------------------

We will now define a new set of generators, using the extended affine braid group action constructed in~\cite{br}.  

Let $\Wext := W \ltimes \tbX$ be the extended affine Weyl group for $\tG$. Recall that in general this group is not a Coxeter group, but it is endowed with a natural length function and with a ``Bruhat order''; see~\cite{mr} for details and references. Recall also that every element $w \in \Wext$ determines an element of the extended affine braid group $\Bext$, denoted by $T_w$.  The main result of~\cite{br} associates to $T_w$ a certain autoequivalence of $\Db\Cohmix(\tcN)$, denoted by $\cJ_{T_w}$.  Given $\lambda \in \tbX$, regard it as an element of $\Wext$, and consider its right coset for the finite Weyl group $W\lambda \subset \Wext$.  Let $w_\lambda$ be the unique element of minimal length in $W\lambda$. Following~\cite[\S3.3]{mr}, for $\bE \in \{\ovK,\O,\F\}$, we set
\[
\tnex_\lambda(\bE) := \cJ_{T_{w_\lambda}}(\cO_{\tcN_\bE})
\qquad\text{and}\qquad
\tDex_\lambda(\bE) := \cJ_{(T_{w_\lambda^{-1}})^{-1}}(\cO_{\tcN_\bE}).
\]
These are objects in $\Db\Cohtmix(\tcN)_\bE$, but it can deduced from~\cite{mr} that some of them lie in $\Db\Cohmix(\tcN)_\bE$.  To explain this, we need some more notation.  Let us temporarily regard $\tbX$ as a subset of the real vector space $\mathbb{R} \otimes_\Z \tbX$.  In the latter, it makes sense to take the convex hull of any finite set of elements.  For $\lambda \in \tbX$, we set
\begin{align*}
\conv(\lambda) &= (\lambda + \Z R) \cap (\text{convex hull of $W \cdot \lambda$ in $\mathbb{R} \otimes_\Z \tbX$}), \\
\convo(\lambda) &= \conv(\lambda) \smallsetminus W \cdot \lambda.
\end{align*}
(Recall that $R$ is the root system of $G$.)  It is well known that when $\lambda$ and $\mu$ are dominant, we have $\conv(\lambda) \subset \conv(\mu)$ if and only if $\lambda \le \mu$.  As a consequence, for arbitrary $\lambda,\mu \in \tbX$, we have
\begin{equation}\label{eqn:conv-preorder}
\mu \in \convo(\lambda)
\qquad\Longrightarrow\qquad
\lambda \notin \conv(\mu).
\end{equation}

Consider the preorder $\preceq$ on $\tbX$ given by $\mu \preceq \lambda$ if $\mu \in \conv(\lambda)$.  By~\eqref{eqn:conv-preorder}, the equivalence classes for this preorder are precisely the $W$-orbits in $\tbX$.  For $\lambda \in \tbX$, let $D_{\conv(\lambda)}$, resp.~$D_{\convo(\lambda)}$, be the full triangulated subcategory of $\Db\Cohmix(\tcN)$ generated by objects of the form $\cO_\tcN(\mu)\la k\ra$ with $k \in \Z$ and $\mu \in \conv(\lambda)$, resp.~$\mu \in \convo(\lambda)$.  The categories $D_{\conv(\lambda)}$ and $D_{\convo(\lambda)}$ are both contained in a single summand of the right-hand side of~\eqref{eqn:cohtmix-decomp} (namely, the one corresponding to $\nu = \lambda + \bX$).  It can be deduced from~\cite[Lemma~3.1]{mr} (see also the proof of~\cite[Proposition~3.7]{mr})\footnote{Although~\cite{mr} works with field coefficients, the specific statements cited here are essentially minor variations on~\cite[Lemma~1.11.3]{br}, which holds for $\bE = \O$ as well.  It is left to the reader to check that the arguments we need go through for general $\bE$.} that for all $\lambda \in \tbX$, we have
\begin{equation}
\label{eqn:tnex-quotient}
\tnex_\lambda(\bE) \cong \cO_\tcN(\lambda)\la \delta_\lambda\ra \pmod {D_{\convo(\lambda)}},
\end{equation}
where $\delta_\lambda$ is the length of a minimal element $v \in W$ such that $v\lambda$ is dominant. In particular, this implies that
\[
\tnex_\lambda(\bE) \in D_{\conv(\lambda)} \subset \Db\Cohtmix(\tcN)^\nu_\bE,
\qquad\text{where $\nu = \lambda + \bX$.}
\]
In particular, if $\lambda \in \bX$, then $\tnex_\lambda(\bE) \in \Db\Cohmix(\tcN)_\bE$.  We will see later that similar claims hold for $\tDex_\lambda(\bE)$.

\begin{lem}\label{lem:exotic-scalar}
For all $\lambda \in \tbX$, we have
\begin{align*}
\ovK(\tnex_\lambda(\O)) &\cong \tnex_\lambda(\ovK), &
\F(\tnex_\lambda(\O)) &\cong \tnex_\lambda(\F), \\
\ovK(\tDex_\lambda(\O)) &\cong \tDex_\lambda(\ovK), &
\F(\tDex_\lambda(\O)) &\cong \tDex_\lambda(\F).
\end{align*}
\end{lem}
\begin{proof}
This follows from the fact that the functors $\cJ_{T_w}$ commute with change of scalars (see~\cite[\S1.2]{br}).
\end{proof}

\begin{lem}\label{lem:tnex-generate}
For any $\bE \in \{\ovK,\O,\F\}$, the category $\Db\Cohmix(\tcN)_\bE$ is generated as a triangulated category by objects of the form $\tnex_\lambda(\bE)\la k\ra$ with $\lambda \in \bX$ and $k \in \Z$.\end{lem}
\begin{proof}
By Lemma~\ref{lem:line-generate}, $\Db\Cohmix(\tcN)$ is the union of the $D_{\conv(\lambda)}$ with $\lambda \in \bX$. 
It is therefore enough to prove that each $D_{\conv(\lambda)}$ is generated by the objects $\tnex_\mu(\bE)\la k\ra$ with $\mu \in \conv(\lambda)$.  We will prove this by induction with respect to the preorder $\preceq$ on $\lambda$.  The base case is that in which $\convo(\lambda) = \varnothing$.  In this case, the claim follows from~\eqref{eqn:tnex-quotient}.

In general, the property~\eqref{eqn:tnex-quotient} means that there exists an object $\cF$ and a diagram
\[
\tnex_\lambda(\bE) \xleftarrow{f} \cF \xrightarrow{g} \cO_\tcN(\lambda)\la \delta_\lambda\ra
\]
such that the cones of both $f$ and $g$ lie in $D_{\convo(\lambda)}$.  Next,~\eqref{eqn:conv-preorder} implies that $\convo(\lambda) = \bigcup_{\nu \in \convo(\lambda)} \conv(\nu)$.  By induction, $D_{\convo(\lambda)}$ is therefore generated by the objects $\tnex_\nu(\bE)\la k\ra$ with $\nu \in \convo(\lambda)$.  The diagram above then shows that the subcategory generated by the $\tnex_\mu(\bE)\la k\ra$ with $\mu \in \conv(\lambda)$ contains $D_{\convo(\lambda)}$ and all the $\cO_\tcN(v\lambda)\la k\ra$ for $v \in W$.  The result follows.
\end{proof}

(The preceding proof shows more generally that for any $\nu \in \tbX/\bX$, the category $\Db\Cohtmix(\tcN)^\nu_\bE$ is generated by objects of the form $\tnex_\lambda(\bE)\la k\ra$ with $\lambda + \bX = \nu$ and $k \in \Z$.)

%-----------------------------------------------------
\subsection{Exceptional sequences}
%-----------------------------------------------------

Let $\le_\Bruhat$ be the \emph{Bruhat order} on $\tbX$, i.e., the order defined so that $\lambda \le_\Bruhat \mu$ iff $w_\lambda$ is smaller than $w_\mu$ is the Bruhat order of $\Wext$.  Let $\le'$ be any refinement of $\le_\Bruhat$ to a total order such that $(\tbX,\le')$ is isomorphic to $(\Z_{\ge 0},\le)$, and such that
\[
\lambda \in \convo(\mu)
\qquad\Longrightarrow\qquad
\lambda <' \mu.
\]
(This last condition makes sense by~\eqref{eqn:conv-preorder}.)  Then the subset $(\bX,\le')$ is also isomorphic to $(\Z_{\ge 0}, \le)$, 

\begin{prop}
\label{prop:exotic-exc}
For $\bE \in \{\ovK,\O,\F\}$, the category $\Db\Cohmix(\tcN)_\bE$ is graded $\Hom$-finite, the collection $\{\tnex_\lambda(\bE)\}_{\lambda \in \bX}$ is a graded exceptional sequence (with respect to the order $\le'$ above), and the collection $\{\tDex_\lambda(\bE)\}_{\lambda \in \bX}$ is a dual sequence.
\end{prop}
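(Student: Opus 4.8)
The plan is to deduce everything from three inputs: the generation statement of Lemma~\ref{lem:tnex-generate}, the behavior of the functors $\cJ_{T_w}$ with respect to the Bruhat order (coming from \cite{br}), and the general machinery of Appendix~\ref{sec:exc-pid} on exceptional collections over a complete discrete valuation ring. Concretely, the defining properties of a graded exceptional sequence are: (a) $\Hom(\tnex_\lambda(\bE), \tnex_\lambda(\bE)\la k\ra) = \bE$ for $k=0$ and vanishes for $k \neq 0$, together with $\Hom$ in negative cohomological degrees vanishing; (b) $\Hom(\tnex_\mu(\bE), \tnex_\lambda(\bE)\la k\ra[n]) = 0$ for all $k,n$ whenever $\mu >' \lambda$; and (c) the objects generate the category, which is exactly Lemma~\ref{lem:tnex-generate}. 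So the work is in (a) and (b), plus the dual-sequence statement for $\{\tDex_\lambda(\bE)\}$.

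First I would record the reduction modulo lower terms: from \cite[Lemma~3.1]{mr} (cf.~the footnote in the proof of Lemma~\ref{lem:tnex-generate}), one has $\tnex_\lambda(\bE) \cong \cO_\tcN(\lambda)\la\delta_\lambda\ra \pmod{D_{\convo(\lambda)}}$, and a dual statement $\tDex_\lambda(\bE) \cong \cO_\tcN(\lambda)\la\delta_\lambda\ra \pmod{D_{\convo(\lambda)}}$ after applying the appropriate $\cJ$. The key structural fact is that the autoequivalences $\cJ_{T_w}$ are ``triangular'' with respect to the Bruhat order: for $w_\mu >_\Bruhat w_\lambda$ (equivalently $\mu >_\Bruhat \lambda$), composing the relevant braid generators sends $\cO_{\tcN_\bE}$ into the subcategory generated by $\tnex_\nu(\bE)\la k\ra$ with $\nu \le_\Bruhat$ something strictly smaller, and dually $\tDex_\mu(\bE)$ is built from $\cO_\tcN(\nu)$ with $\nu \le_\Bruhat \mu$. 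Using this together with the vanishing $\Hom(\cO_\tcN(\mu)\la j\ra, \cO_\tcN(\lambda)\la k\ra[n]) = 0$ for $\mu$ ``not $\le$'' $\lambda$ in a suitable sense (which follows from the projectivity/Kempf-vanishing-type computations available over $\O$, since $\tcN \to G/B$ is an affine bundle and $\cO(\fn)$ is a polynomial ring over $\O$), one gets the orthogonality (b). The endomorphism computation (a) should follow from the fact that $\cJ_{T_{w_\lambda}}$ is an equivalence, so $\End(\tnex_\lambda(\bE)) \cong \End(\cO_{\tcN_\bE})$, and the latter is $\Gamma(\tcN_\bE, \cO) = \cO(\fn/B\text{-invariants})$ concentrated in degree $0$, which is $\bE$ in internal degree $0$ — here one invokes that $\Db\Cohmix(\tcN)_\bE$ is graded $\Hom$-finite, itself a consequence of $\tcN$ being (quasi-)projective over the affine $\fg$ with $\tilde\pi$ proper, so that $\Hom$-spaces are finitely generated over $\bE$.

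Once (a), (b), (c) are in place, the dual-sequence assertion is formal from the appendix: given a graded exceptional sequence $\{\tnex_\lambda(\bE)\}$ in a graded $\Hom$-finite triangulated category that is idempotent-complete (which it is, being a bounded derived category of coherent sheaves), there is a canonically associated dual exceptional sequence characterized by the biorthogonality $\Hom(\tDex_\lambda(\bE), \tnex_\mu(\bE)\la k\ra[n]) = \bE$ if $\lambda=\mu$, $k=n=0$, and $0$ otherwise; so it suffices to check that the objects $\tDex_\lambda(\bE) := \cJ_{(T_{w_\lambda^{-1}})^{-1}}(\cO_{\tcN_\bE})$ satisfy this, which again reduces via the ``mod lower terms'' description and the triangularity of $\cJ$ to the line-bundle $\Hom$-computations. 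The main obstacle I anticipate is verifying the required $\Hom$-vanishing and the reduction-mod-$D_{\convo(\lambda)}$ statements \emph{integrally}, i.e.~for $\bE = \O$: the cited results in \cite{mr} are stated over a field, and while the footnote asserts they are ``minor variations'' on \cite[Lemma~1.11.3]{br}, one genuinely needs that the relevant cohomology groups are $\O$-free (no higher Tor against $\F$) so that $\F(-)$ and $\ovK(-)$ commute with the constructions — this is where Lemma~\ref{lem:exotic-scalar} and the torsion-freeness arguments in the style of Lemma~\ref{lem:line-generate} (induced modules, Kempf vanishing over $\O$) do the real work.
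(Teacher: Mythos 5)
Your overall decomposition (generation $+$ orthogonality $+$ graded $\Hom$-finiteness $+$ the duality criterion of Lemma~\ref{lem:dual-crit}) matches the paper's, and you correctly reduce the dual-sequence claim to checking biorthogonality for the specific objects $\tDex_\lambda(\bE)$. But the two sub-arguments you propose differ from the paper's, and one of them has a gap.

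For the orthogonality, the paper simply cites~\cite[Proposition~5.4]{mr2}, where the isomorphism $\Hom(\tDex_\lambda(\O), \tnex_\mu(\O)[n]\la k\ra) \cong \O$ or $0$ is already established over a discrete valuation ring (modulo the harmless change from a localization of $\Z$ to $\O$), and then says ``similar arguments'' give the exceptional-sequence conditions on $\Hom(\tnex_\mu,\tnex_\lambda)$. You instead sketch a re-derivation from the Bruhat-triangularity of the $\cJ_{T_w}$ plus Kempf-type vanishing for line bundles over $\O$; this is essentially an outline of how~\cite{mr2} proceeds, and you are right to flag that the integrality (torsion-freeness of the relevant cohomology) is the delicate point --- but that delicacy is precisely what the citation of~\cite{mr2} absorbs, so re-proving it is more work than needed.

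For graded $\Hom$-finiteness, your proposed argument --- that it follows from properness of $\tilde\pi: \tcN \to \fg$ --- is the wrong direction of implication and is incomplete as stated. Properness only gives that $\bigoplus_{n,k}\Hom(X,Y[n]\la k\ra)$ is finitely generated over $\cO(\fg)^{G\times\Gm}$, which is a polynomial ring over $\bE$, not a finitely generated $\bE$-module; you would still need to argue that this module is scheme-theoretically supported at the origin of $\ft/W$ (using the positivity of the $\Gm$-grading and the fact that all objects of $\Db\Cohmix(\tcN)$ are supported over $\cN$) to conclude $\bE$-finiteness. The paper avoids all of this: once one has the biorthogonality and the generation statement, Remark~\ref{rmk:Hom-finite} makes graded $\Hom$-finiteness a \emph{formal consequence} of the exceptional-sequence data, with no geometry needed. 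So the logical order in the paper is orthogonality first, then finiteness --- the reverse of what you propose, and the reversal is what closes the gap.

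Finally, a small caution: the appendix does not assert that every graded exceptional sequence is dualizable (Lemma~\ref{lem:dual-crit} is only a recognition criterion), so the phrase ``there is a canonically associated dual exceptional sequence'' overstates what is available. You do use the criterion correctly in the end, but the existence of the candidate objects $\tDex_\lambda(\bE)$ really does come from the braid-group construction, not from abstract nonsense.
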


In particular, this proposition says that $\tDex_\lambda(\bE) \in \Db\Cohmix(\tcN)_\bE$ for any $\lambda \in \bX$.

\begin{proof}
Using our identification of $\Db\Cohmix(\tcN)_\bE$ with a direct summand of $\Db\Cohtmix(\tcN)_\bE$, it is easy to see that this proposition would follow from the analogous claim in which $G$ and $\bX$ are replaced by $\tG$ and $\tbX$. (Here the fact that $\tDex_\lambda(\bE)$ belongs to $\Db\Cohmix(\tcN)_\bE$ when $\lambda \in \bX$ will follow from the fact that this object is indecomposable and admits a nonzero morphism to $\tnex_\lambda(\bE)$.) For the remainder of the proof, we work in the latter setting.

In the case where $\bE$ is a field, the claim follows from the discussion in~\cite[\S2.5]{mr}. Let us now consider the case where $\bE = \O$. First, the objects $\tnex_\lambda(\bE)$ generate $\Db\Cohtmix(\tcN)_\bE$ by Lemma~\ref{lem:tnex-generate}. It is proved in~\cite[Proposition~5.4]{mr2} that we have
\[
\Hom(\tDex_\lambda(\O), \tnex_\mu(\O)[n]\la k\ra)
\cong \begin{cases}
\O & \text{if $\lambda = \mu$ and $n = k=  0$,} \\
0 & \text{otherwise.}
\end{cases}
\]
(More precisely, in~\cite{mr2} the coefficients considered are a localization of $\Z$; the present setting is completely analogous.) Similar arguments show that the collection $\{\tnex_\lambda(\bE)\}_{\lambda \in \tbX}$ satisfies the conditions on $\Hom$-groups that define graded exceptional sequences. Then Remark~\ref{rmk:Hom-finite} ensures that $\Db\Cohtmix(\tcN)_\bE$ is graded $\Hom$-finite, and Lemma~\ref{lem:dual-crit} tells us that the sequence $\{\tDex_\lambda(\bE)\}_{\lambda \in \tbX}$ is dual to $\{\tnex_\lambda(\bE)\}_{\lambda \in \tbX}$.
\end{proof}

%---------------------------------------------------
\subsection{Exotic t-structures}
%---------------------------------------------------

In view of Proposition~\ref{prop:exotic-exc}, using Theo\-rem~\ref{thm:exc-tstruc} we obtain a t-structure on $\Db\Cohmix(\tcN)_\bE$, called the \emph{exotic t-structure}. The heart of this t-structure will be denoted by
\[
\ExCoh^{\Gm}(\tcN)_\bE,
\qquad \bE \in \{\ovK,\O,\F \}.
\]
In accordance with the conventions in the rest of the paper, when $\bE = \O$, we usually omit the subscript and denote the category simply by $\ExCoh^{\Gm}(\tcN)$.
 (Note that although we need to choose a total order $\le'$ that refines $\le_\Bruhat$ in order to invoke Theorem~\ref{thm:exc-tstruc}, the resulting t-structure is independent of that choice; see Remark~\ref{rmk:thm-t-structure}\eqref{it:independence}.)

\begin{lem}
\label{lem:Hom-DbCoh}
Let $\cF,\cG$ in $\Db\Cohmix(\tcN)_\O$.
\begin{enumerate}
\item
\label{it:Hom-DbCoh-K}
The functor $\K(-)$ induces an isomorphism
\[
\K \otimes_\O \Hom(\cF,\cG) \simto \Hom(\K(\cF), \K(\cG)).
\]
\item
\label{it:Hom-DbCoh-F}
There exists a natural short exact sequence
\[
\F \otimes_\O \Hom(\cF,\cG) \hookrightarrow \Hom(\F(\cF), \F(\cG)) \twoheadrightarrow \Tor^\O_1(\F,\Hom(\cF,\cG[1]))
\]
where the first map induced by the functor $\F(-)$.
\end{enumerate}
\end{lem}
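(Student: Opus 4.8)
The statement is a standard universal-coefficients-type comparison, and the natural approach is to relate $\Hom$-groups in $\Db\Cohmix(\tcN)_\O$ to derived tensor products with $\K$ and $\F$ over $\O$, using that these change-of-scalars functors are obtained by applying $\K \lotimes_\O (-)$ and $\F \lotimes_\O (-)$. The plan is to first reduce to a statement about a single $\O$-module, namely the total $\Hom$-complex. Concretely, for $\cF, \cG \in \Db\Cohmix(\tcN)_\O$, the graded $\O$-module $\bigoplus_n \Hom(\cF,\cG[n]\la k\ra)$ is finitely generated over $\O$ in each bidegree (by graded $\Hom$-finiteness, Proposition~\ref{prop:exotic-exc}, together with Remark~\ref{rmk:Hom-finite}), and since $\O$ is a discrete valuation ring each such module is a direct sum of a free part and a torsion part $\O/\mathfrak{m}^r$. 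The key input is a base-change isomorphism: for $\bk \in \{\K,\F\}$ one has a natural isomorphism
\[
R\Hom_{\Db\Cohmix(\tcN)_\bk}(\bk(\cF),\bk(\cG)) \cong \bk \lotimes_\O R\Hom_{\Db\Cohmix(\tcN)_\O}(\cF,\cG)
\]
in the derived category of $\bk$-vector spaces, compatibly with the gradings. This holds because $\bk(-)$ is $\bk \lotimes_\O (-)$ applied objectwise and the $\O$-linear derived category of coherent sheaves on $\tcN$ behaves well under flat, resp.\ arbitrary, base change; for the $\F$ case one uses that $\tcN$ is flat over $\O$.

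Granting this base-change isomorphism, part~\eqref{it:Hom-DbCoh-K} is immediate: $\K$ is flat over $\O$, so $\K \lotimes_\O (-) = \K \otimes_\O (-)$, and taking cohomology of the $R\Hom$-complex commutes with the exact functor $\K \otimes_\O (-)$, giving $\K \otimes_\O \Hom(\cF,\cG) \simto \Hom(\K(\cF),\K(\cG))$, with the map visibly induced by $\K(-)$. For part~\eqref{it:Hom-DbCoh-F}, I would apply the universal coefficient spectral sequence (or, since $\O$ has global dimension $1$, simply the short exact sequence)
\[
0 \to \F \otimes_\O \sH^n(C) \to \sH^n(\F \lotimes_\O C) \to \Tor_1^\O(\F, \sH^{n+1}(C)) \to 0
\]
for $C = R\Hom_{\Db\Cohmix(\tcN)_\O}(\cF,\cG)$, evaluated in degree $n = 0$. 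Here $\sH^0(C) = \Hom(\cF,\cG)$ and $\sH^1(C) = \Hom(\cF,\cG[1])$, and $\sH^0(\F \lotimes_\O C) = \Hom(\F(\cF),\F(\cG))$ by the base-change isomorphism, so this yields exactly the asserted short exact sequence, with the first map induced by $\F(-)$ by naturality. I would note that naturality in $\cF$ and $\cG$ follows from the functoriality of all the constructions involved.

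The main obstacle is establishing the base-change isomorphism for $R\Hom$ with genuine naturality and $\Gm$-equivariance, rather than just an isomorphism of abstract vector spaces in each degree; one wants the full derived statement so that the universal coefficient sequence applies. The cleanest route is probably to observe that $\Db\Cohmix(\tcN)_\O$ is generated (Lemma~\ref{lem:line-generate}) by the line bundles $\cO_{\tcN}(\lambda)\la k\ra$, reduce the base-change claim to the case where $\cF$ is such a line bundle, and then compute $R\Hom(\cO_\tcN(\lambda)\la k\ra, \cG) \cong R\Gamma(\tcN, \cG(-\lambda)\la -k\ra)$ in the equivariant sense; the compatibility of $R\Gamma$ (equivalently, of $\tilde\pi_*$ composed with pushforward to a point, cf.~\eqref{eqn:tildepi-scalar-commute}) with flat base change along $\Spec(\K) \to \Spec(\O)$, and with arbitrary base change along $\Spec(\F) \to \Spec(\O)$ using flatness of $\tcN/\O$, is then the standard cohomology-and-base-change statement. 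Once this is in hand the rest is bookkeeping.
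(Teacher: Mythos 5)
Your proposal is correct and takes essentially the same approach as the paper: both reduce to the derived base-change isomorphism $\bk \lotimes_\O R\Hom(\cF,\cG) \simto R\Hom(\bk(\cF),\bk(\cG))$ (which the paper imports from the proof of~\cite[Proposition~5.4]{mr2}), and both extract the asserted short exact sequence by standard homological algebra over the DVR $\O$---you invoke the universal coefficient sequence directly, while the paper rederives it by hand from the truncation triangle for $R\Hom(\cF,\cG)$; the content is the same.
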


\begin{proof}
We explain the proof of~\eqref{it:Hom-DbCoh-F}; the proof of~\eqref{it:Hom-DbCoh-K} is similar and easier. As in~\cite[Proof of Proposition~5.4]{mr2}, the functor $\F$ induces an isomorphism
\[
\F \lotimes_\O R\Hom(\cF,\cG) \simto R\Hom(\F(\cF), \F(\cG)).
\]
Applying the functor $\F \lotimes_\O (-)$ to the truncation triangle $\tau_{\leq 0} R\Hom(\cF,\cG) \to R\Hom(\cF,\cG) \to \tau_{\geq 1} R\Hom(\cF,\cG) \xrightarrow{[1]}$ and using this isomorphism we deduce a distinguished triangle
\[
\F \lotimes_\O \tau_{\leq 0} R\Hom(\cF,\cG) \to R\Hom(\F(\cF),\F(\cG)) \to \F \lotimes_\O \tau_{\geq 1} R\Hom(\cF,\cG) \xrightarrow{[1]},
\]
which induces an exact sequence
\begin{multline*}
\mathsf{H}^{-1} (\F \lotimes_\O \tau_{\geq 1} R\Hom(\cF,\cG)) \to \mathsf{H}^0(\F \lotimes_\O \tau_{\leq 0} R\Hom(\cF,\cG)) \to \Hom(\F(\cF),\F(\cG)) \\
\to \mathsf{H}^0(\F \lotimes_\O \tau_{\geq 1} R\Hom(\cF,\cG)) \to \mathsf{H}^1(\F \lotimes_\O \tau_{\leq 0} R\Hom(\cF,\cG)).
\end{multline*}
in cohomology. Since the functor $\F \lotimes_\O (-)$ is right exact the fifth term in this sequence vanishes, and the second one identifies with $\F \otimes_\O \Hom(\cF,\cG)$. And since $\mathsf{H}^j(\F \lotimes_\O M)=0$ for $j \leq -2$ and any $\O$-module $M$, the first term vanishes and the fourth one identifies with $\Tor^\O_1(\F,\Hom(\cF,\cG[1]))$. We therefore obtain the desired short exact sequence.
\end{proof}

Thanks to Lemma~\ref{lem:exotic-scalar} and Lemma~\ref{lem:Hom-DbCoh}, we are in the setting of~\S\ref{ss:exc-scalar}. We will invoke some results from that section below.

In the lemma below we mention the notion of highest weight categories. For the definition of this notion (due, in slightly different terms, to Cline--Pashall--Scott and Be{\u\i}linson--Ginzburg--Soergel), we refer e.g.~to~\cite[\S 3.5]{ahr-disconn}.

\begin{lem}
\phantomsection
\begin{enumerate}
\item For any $\bE \in \{\ovK,\O,\F\}$, the category $\ExCoh^{\Gm}(\tcN)_\bE$ is noetherian.  If $\bE$ is a field, it is also artinian.\label{it:excoh-noeth}
\item For any $\bE \in \{\ovK,\O,\F\}$, the objects $\tnex_\lambda(\bE)$ and $\tDex_\lambda(\bE)$ lie in $\ExCoh^{\Gm}(\tcN)_\bE$.\label{it:dn-heart}
\item When $\bE$ is a field, $\ExCoh^{\Gm}(\tcN)_\bE$ is a highest-weight category.\label{it:field-hwc}
\item In $\ExCoh^{\Gm}(\tcN)$, the objects $\tnex_\lambda(\O)$ and $\tDex_\lambda(\O)$ are torsion-free.\label{it:dn-torfree}
\end{enumerate}
\end{lem}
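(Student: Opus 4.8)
The plan is to obtain~\eqref{it:excoh-noeth},~\eqref{it:dn-heart} and~\eqref{it:field-hwc} as formal consequences of Proposition~\ref{prop:exotic-exc} and the general machinery of Appendix~\ref{sec:exc-pid}, and to deduce~\eqref{it:dn-torfree} from the compatibility of the exotic objects with change of scalars recorded in Lemma~\ref{lem:exotic-scalar}.

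For~\eqref{it:excoh-noeth}--\eqref{it:field-hwc} I would invoke Theorem~\ref{thm:exc-tstruc}, applied to the graded exceptional sequence $\{\tnex_\lambda(\bE)\}_{\lambda \in \bX}$ and its dual $\{\tDex_\lambda(\bE)\}_{\lambda \in \bX}$ supplied by Proposition~\ref{prop:exotic-exc}. This produces the exotic t-structure, exhibits $\tnex_\lambda(\bE)$ and $\tDex_\lambda(\bE)$ as the costandard and standard objects of its heart --- so that in particular they lie in the heart, which proves~\eqref{it:dn-heart} --- and shows that the heart is noetherian, and moreover artinian and a highest weight category when $\bE$ is a field, which gives~\eqref{it:excoh-noeth} and~\eqref{it:field-hwc}. (In the field cases one can alternatively quote~\cite{achar, mr}.)

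For~\eqref{it:dn-torfree}, fix a uniformizer $\varpi$ of $\O$, and recall that an object $\cF \in \ExCoh^{\Gm}(\tcN)$ is torsion-free exactly when $\varpi\cdot\id_\cF$ is a monomorphism. The key point is that, using the resolution $0 \to \O \xrightarrow{\varpi} \O \to \F \to 0$, one identifies $\F(\cF)$ --- viewed in $\Db\Cohmix(\tcN)$ via pushforward along $\tcN_\F \hookrightarrow \tcN_\O$ --- with the cone of $\varpi\cdot\id_\cF$; hence for $\cF$ in the heart the long exact sequence of exotic cohomology of the triangle $\cF \xrightarrow{\varpi} \cF \to \F(\cF) \xrightarrow{[1]}$ degenerates to
\[
0 \to \sH^{-1}_{\mathrm{ex}}(\F(\cF)) \to \cF \xrightarrow{\varpi} \cF \to \sH^{0}_{\mathrm{ex}}(\F(\cF)) \to 0,
\]
so that $\cF$ is torsion-free if and only if $\sH^{-1}_{\mathrm{ex}}(\F(\cF)) = 0$, i.e.\ if and only if $\F(\cF)$ lies in $\ExCoh^{\Gm}(\tcN)_\F$. (This is the kind of statement recorded in~\S\ref{ss:exc-scalar}.) Granting it, I would conclude by applying the criterion to $\cF = \tnex_\lambda(\O)$ and $\cF = \tDex_\lambda(\O)$, which lie in the heart by~\eqref{it:dn-heart}: by Lemma~\ref{lem:exotic-scalar} their images under $\F(-)$ are $\tnex_\lambda(\F)$ and $\tDex_\lambda(\F)$, which lie in $\ExCoh^{\Gm}(\tcN)_\F$ by~\eqref{it:dn-heart} again.

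The step I expect to be the main obstacle is the passage between ``$\F(\cF)$ lies in the heart over $\F$'' and ``its pushforward lies in the heart over $\O$'', that is, the t-exactness of the pushforward along $\tcN_\F \hookrightarrow \tcN_\O$ for the exotic t-structures (equivalently, the good behaviour of $\F(-)$ on the exotic heart). This is exactly what~\S\ref{ss:exc-scalar} is designed to provide --- through Lemma~\ref{lem:exotic-scalar} and the orthogonality characterisations of the exotic $D^{\le 0}$ and $D^{\ge 0}$ --- and once it is in hand, part~\eqref{it:dn-torfree} is the integral avatar of the Cline--Parshall--Scott reduction alluded to in the introduction, with the displayed four-term sequence as its mechanism. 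Everything else is a routine assembly of Proposition~\ref{prop:exotic-exc}, Appendix~\ref{sec:exc-pid} and Lemma~\ref{lem:exotic-scalar}.
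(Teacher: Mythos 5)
Your plan for parts~\eqref{it:excoh-noeth}, \eqref{it:field-hwc}, and~\eqref{it:dn-torfree} is sound and close to the paper's.  For~\eqref{it:dn-torfree} you propose to run the criterion from Lemma~\ref{lem:exc-scalar}\eqref{it:exc-scalar-f} for \emph{both} $\tnex_\lambda(\O)$ and $\tDex_\lambda(\O)$, whereas the paper handles $\tnex_\lambda(\O)$ via Lemma~\ref{lem:li-tf} (an object of the heart is torsion-free as soon as it equals its own $H^0$ and $\nabla_i$, $\bar\nabla_i$ both lie in $\sT^{\ge 0}$); either route works, and yours is a legitimate simplification once~\eqref{it:dn-heart} is in hand.

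The gap is in~\eqref{it:dn-heart}.  You assert that Theorem~\ref{thm:exc-tstruc} ``exhibits $\tnex_\lambda(\bE)$ and $\tDex_\lambda(\bE)$ as the costandard and standard objects of its heart,'' i.e.\ that membership in the heart is a formal output of the appendix machinery.  It is not.  The theorem only produces a t-structure in which $\Delta_i \in \sT^{\le 0}$ and $\nabla_i \in \sT^{\ge 0}$; that $\Delta_i$ also lies in $\sT^{\ge 0}$ (and $\nabla_i$ in $\sT^{\le 0}$) is an ``important additional property'' flagged as such in Remark~\ref{rmk:thm-t-structure} precisely because it does not follow from the general construction.  Over a field this is a known but nontrivial fact (the paper cites \cite{mr} and \cite{arider}), and over $\O$ it needs a fresh argument: the paper shows $\tnex_\lambda(\O)$ lies in the heart by reducing the vanishing $\Hom(\tnex_\lambda(\O),\tnex_\mu(\O)[n]\la k\ra)=0$ for $n<0$ to the corresponding statement over $\F$ (via the $\Hom$-comparison of Lemma~\ref{lem:Hom-DbCoh}), and shows $\tDex_\lambda(\O)\cong H^0(\tDex_\lambda(\O))$ by applying $\F(-)$ to the truncation triangle and using that $\F(-)$ kills no nonzero object.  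Without some argument of this kind, your derivation of~\eqref{it:dn-heart} --- and hence of~\eqref{it:dn-torfree}, which presupposes it --- is incomplete.
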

\begin{proof}
\eqref{it:excoh-noeth}~This is immediate from Theorem~\ref{thm:exc-tstruc}.

\eqref{it:dn-heart}~When $\bE$ is a field, this is proved in~\cite[\S3.4]{mr} (see also~\cite[Proposition~8.5]{arider}).  Suppose now that $\bE = \O$.  To prove that $\tnex_\lambda(\O)$ lies in the heart, it is enough to show that
\[
\Hom(\tnex_\lambda(\O), \tnex_\mu(\O)[n]\la k\ra) = 0
\]
for all $n < 0$ and all $\mu \in \bX$.
If this were nonzero, the same considerations as in~\cite[Proof of Proposition~5.4]{mr2} would tell us that $\Hom(\tnex_\lambda(\F), \tnex_\mu(\F)[n]\la k\ra)$ is also nonzero for some $n < 0$, contradicting the fact that $\tnex_\lambda(\F) \in \ExCoh^{\Gm}(\tcN)_\F$.

Consider now the object $\mathsf{H}^0(\tDex_\lambda(\O))$.  Since $\F({-})$ is right t-exact, we have
\[
\mathsf{H}^0(\F(H^0(\tDex_\lambda(\O)))) \cong \mathsf{H}^0(\F(\tDex_\lambda(\O))) \cong \mathsf{H}^0(\tDex_\lambda(\F)) \cong \tDex_\lambda(\F).
\]
In other words, after applying $\F({-})$ to the distinguished triangle
\[
\tau^{\le -1}\tDex_\lambda(\O) \to \tDex_\lambda(\O) \to \mathsf{H}^0(\tDex_\lambda(\O)) \xrightarrow{[1]},
\]
the second and third terms become isomorphic, so we must have $\F(\tau^{\le -1}\tDex_\lambda(\O)) = 0$.  But it is easily checked that $\F({-})$ kills no nonzero object, so $\tau^{\le -1}\tDex_\lambda(\O) = 0$, and $\tDex_\lambda(\O) \cong \mathsf{H}^0(\tDex_\lambda(\O))$ belongs to $\ExCoh^{\Gm}(\tcN)$, as desired.

\eqref{it:field-hwc}~See~\cite[\S3.5]{mr} or~\cite[Proposition~8.5]{arider}.

\eqref{it:dn-torfree}~For $\tnex_\lambda(\O)$, this follows from part~\eqref{it:dn-heart} and Lemma~\ref{lem:li-tf}.  For $\tDex_\lambda(\O)$, this follows from Lemma~\ref{lem:exc-scalar}\eqref{it:exc-scalar-f} and the fact that $\F(\tDex_\lambda(\O)) \cong \tDex_\lambda(\F)$ lies in $\ExCoh^{\Gm}(\tcN)_\F$.
\end{proof}

%---------------------------------------------------
\subsection{Simple objects and their \texorpdfstring{$\O$}{O}-versions}
%---------------------------------------------------

For each $\lambda \in \bX$, fix some map
\[
c_\lambda = c_\lambda(\O): \tDex_\lambda(\O) \to \tnex_\lambda(\O)
\]
that is a generator of the free $\O$-module $\Hom(\tDex_\lambda(\O), \tnex_\lambda(\O))$.  Such a map becomes an isomorphism after passage to the quotient category
\[
\Db\Cohmix(\tcN)_{\O, \le'\lambda}/\Db\Cohmix(\tcN)_{\O, <'\lambda}.
\]
We denote the base change of this map to $\ovK$ or $\F$ by $c_\lambda(\ovK)$ or $c_\lambda(\F)$, respectively.  As in~\S\ref{ss:special-heart}, we set
\[
\tirrex_\lambda(\bE) := \im(c_\lambda(\bE): \tDex_\lambda(\bE) \to \tnex_\lambda(\bE))
\qquad\text{for $\bE \in \{\ovK,\O,\F\}$}
\]
and
\[
\tirrex^+_\lambda(\O) =
\begin{array}{c}
\text{the unique maximal subobject of $\tnex_\lambda(\O)$ containing $\tirrex_\lambda(\O)$} \\
\text{and such that $\tirrex^+_\lambda(\O)/\tirrex_\lambda(\O)$ is a torsion object.}
\end{array}
\]

As explained in~\S\ref{ss:special-heart}, if $\bE \in \{\ovK,\F\}$ the objects $\tirrex_\lambda(\bE)$ are simple, and the assignment $(\lambda,n) \mapsto \tirrex_\lambda(\bE) \langle n \rangle$ induces a bijection between $\bX \times \Z$ and the set of isomorphism classes of simple objects in $\ExCoh^{\Gm}(\tcN)_\bE$.

Concerning the case $\bE=\O$, Lemma~\ref{lem:li-change} tells us that
\[
\K(\tirrex_\lambda(\O)) \cong \K(\tirrex^+_\lambda(\O))\cong \tirrex_\lambda(\K). 
\]
That lemma also tells us that $\tirrex_\lambda(\O)$ and $\tirrex^+_\lambda(\O)$ are torsion-free objects. We define the \emph{reduced standard and reduced costandard objects for $\tcN_{\F}$} by
\[
\tDred_\lambda(\F) = \F(\tirrex_\lambda(\O))
\qquad\text{and}\qquad
\tnred_\lambda(\F) = \F(\tirrex^+_\lambda(\O)),
\]
respectively. These objects belong to $\ExCoh^{\Gm}(\tcN)_\F$, and there is a sequence of canonical maps
\begin{equation}\label{eqn:canonical-sequence}
\tDex_\lambda(\F) \twoheadrightarrow \tDred_\lambda(\F) \twoheadrightarrow
\tirrex_\lambda(\F)
\hookrightarrow \tnred_\lambda(\F) \hookrightarrow \tnex_\lambda(\F).
\end{equation}
Here the first (resp.~fourth) morphism is obtained from the morphism $\tDex_\lambda(\O) \to \tirrex_\lambda(\O)$, resp.~$\tirrex^+_\lambda(\O) \to \tnex_\lambda(\O)$ by application of the functor $\F$. The surjectivity, resp.~injectivity, of this morphism is checked in the proof of Lemma~\ref{lem:li-change}. The second, resp.~third, morphism in~\eqref{eqn:canonical-sequence} is the projection to the top, resp.~embedding of the socle; see again Lemma~\ref{lem:li-change}. Moreover, $\tirrex_\lambda(\F)$ is also the top of $\tDex_\lambda(\F)$, resp.~the socle of $\tnex_\lambda(\F)$.

%%%%%%%%%%%%%%%%%%%%%%%%%%%%%%%%%%%%%%%%%%%%%%%%%%%%%%%%%%%%%%%%%%%%%%%%%%%
\section{Supports of reduced standard objects}
\label{sec:reduced}
%%%%%%%%%%%%%%%%%%%%%%%%%%%%%%%%%%%%%%%%%%%%%%%%%%%%%%%%%%%%%%%%%%%%%%%%%%%

%-----------------------------------------------------------
\subsection{Reduced standard and costandard perverse-coherent sheaves}
%-----------------------------------------------------------

Below we will need the following fact, proved in~\cite[Proposition~2.6]{achar}.  In this statement, $\dom(\lambda)$ denotes the unique dominant weight in the $W$-orbit of a weight $\lambda \in \bX$.

\begin{lem}\label{lem:pi-texact}
Let $\bk \in \{\F,\ovK\}$.  The functor $\pi_*: \Db\Coh^G(\tcN)_\bk \to \Db\Coh^G(\cN)_\bk$ is t-exact for the exotic and perverse-coherent t-structures. For $\lambda \in \bX$, we have
\[
\begin{aligned}
\pi_*\tDex_\lambda(\bk) &\cong \Dpc_{\dom(\lambda)}(\bk), \\
\pi_*\tnex_\lambda(\bk) &\cong \npc_{\dom(\lambda)}(\bk),
\end{aligned}
\qquad
\pi_* \tirrex_\lambda(\bk) = \begin{cases}
\Lpc_{w_0(\lambda)}(\bk) & \text{if $\lambda \in -\bX^+$;} \\
0 & \text{otherwise.}
\end{cases}
\]
\end{lem}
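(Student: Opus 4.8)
The plan is to reduce the whole statement to the two identifications of $\pi_*$ on (co)standard exotic objects, and to extract those from the braid group action of~\cite{br}. Once $\pi_*\tDex_\lambda(\bk)\cong\Dpc_{\dom(\lambda)}(\bk)$ and $\pi_*\tnex_\lambda(\bk)\cong\npc_{\dom(\lambda)}(\bk)$ are known, both the t-exactness and the formula for $\pi_*\tirrex_\lambda(\bk)$ follow by formal arguments, so essentially all of the content sits in those two isomorphisms.

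I would establish the (co)standard computation as follows. When $\lambda$ is dominant, the minimal-length element $w_\lambda$ of the coset $W\lambda$ is the translation $t_\lambda$, so $\tnex_\lambda(\bk)=\cJ_{T_{t_\lambda}}(\cO_{\tcN_\bk})=\cO_{\tcN_\bk}(\lambda)$, and hence $\pi_*\tnex_\lambda(\bk)=\npc_\lambda(\bk)$ holds by the very definition~\eqref{eqn:dpc-defn}; dually $\pi_*\tDex_\lambda(\bk)=\Dpc_{\dom(\lambda)}(\bk)$ when $\lambda\in-\bX^+$. For a general $\lambda$ I would move along the $W$-orbit one simple reflection at a time: using the wall-crossing relations for the exotic (co)standards (see~\cite{mr}), for a simple reflection $s\in W$ one has $\tnex_{s\lambda}(\bk)\cong\cJ_{T_s^{\varepsilon}}(\tnex_\lambda(\bk))$ and $\tDex_{s\lambda}(\bk)\cong\cJ_{T_s^{-\varepsilon}}(\tDex_\lambda(\bk))$ for a suitable sign $\varepsilon=\pm1$. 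The crucial input is then that $\pi_*$ \emph{trivializes} the finite reflection functors, i.e.\ $\pi_*\circ\cJ_{T_s}\cong\pi_*\cong\pi_*\circ\cJ_{T_s}^{-1}$ for every simple $s\in W$; this follows because $\pi\colon\tcN\to\cN$ factors through the partial Springer resolution $\widetilde{\cN}_{P_s}$ attached to the minimal parabolic $P_s\supset B$, and $\cJ_{T_s}$ is the wall-crossing autoequivalence of that $\mathbb{P}^1$-fibration, which becomes isomorphic to the identity after push-forward to $\widetilde{\cN}_{P_s}$ (cf.~\cite{br}). Iterating, $\pi_*\tnex_\lambda(\bk)\cong\pi_*\tnex_{\dom(\lambda)}(\bk)=\npc_{\dom(\lambda)}(\bk)$, and likewise for $\tDex_\lambda(\bk)$.

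Granting this, t-exactness is formal: by the construction of the exotic t-structure from the exceptional collection $\{\tnex_\lambda(\bk)\}$ with dual collection $\{\tDex_\lambda(\bk)\}$ (Theorem~\ref{thm:exc-tstruc}), its aisle $D^{\le0}$ is generated under extensions and non-negative shifts by the objects $\tDex_\lambda(\bk)$, and its coaisle $D^{\ge0}$ by the objects $\tnex_\lambda(\bk)[k]$ with $k\le0$; since $\pi_*$ is triangulated and, by the previous step, sends each $\tDex_\lambda(\bk)$ and each $\tnex_\lambda(\bk)$ into the heart $\PCoh(\cN)_\bk$ (see~\S\ref{ss:pcoh}), it carries the exotic aisle into the perverse-coherent aisle and the exotic coaisle into the perverse-coherent coaisle. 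For $\pi_*\tirrex_\lambda(\bk)$ I would apply the now t-exact $\pi_*$ to the presentation $\tirrex_\lambda(\bk)=\im\bigl(c_\lambda(\bk)\colon\tDex_\lambda(\bk)\to\tnex_\lambda(\bk)\bigr)$; since exact functors preserve images, $\pi_*\tirrex_\lambda(\bk)=\im\bigl(\pi_*c_\lambda(\bk)\colon\Dpc_{\dom(\lambda)}(\bk)\to\npc_{\dom(\lambda)}(\bk)\bigr)$, and as $\Hom(\Dpc_\mu(\bk),\npc_\mu(\bk))$ is one-dimensional (\S\ref{ss:pcoh}) this image is either $\Lpc_{\dom(\lambda)}(\bk)$ (if $\pi_*c_\lambda(\bk)\neq0$) or $0$. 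It remains to decide which: for $\lambda\in-\bX^+$ one has $\dom(\lambda)=w_0\lambda$, and a direct computation of $\pi_*$ on $c_\lambda(\bk)$ — using $w_\lambda=t_{w_0\lambda}w_0$, the braid relations and $\pi_*\circ\cJ_{T_{w_0}}\cong\pi_*$ — shows that $\pi_*c_\lambda(\bk)$ is a generator of $\Hom(\Dpc_{w_0\lambda}(\bk),\npc_{w_0\lambda}(\bk))$, so $\pi_*\tirrex_\lambda(\bk)=\Lpc_{w_0\lambda}(\bk)$; for $\lambda\notin-\bX^+$ one shows instead that $\tirrex_\lambda(\bk)$ lies in $\ker\pi_*$, which can be read off from the fact that $\pi_*$ realizes $\PCoh(\cN)_\bk$ as a quotient of the exotic heart, forcing the surviving exotic simples to be exactly one per $W$-orbit (namely $\tirrex_{w_0\dom(\lambda)}(\bk)$ in the orbit of $\lambda$).

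The main obstacle is the second step: the braid-group bookkeeping relating $\tnex_\lambda(\bk)$ to $\tnex_{\dom(\lambda)}(\bk)$, and above all the assertion that $\pi_*$ trivializes the finite reflection functors. The determination in the last step of \emph{which} of $\Lpc_{w_0\lambda}(\bk)$ or $0$ occurs is also somewhat delicate (it is cleanest to phrase it via the quotient-functor picture for $\pi_*$), but everything else — the t-exactness and the passage from images of morphisms — is purely formal.
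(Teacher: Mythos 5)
There is a concrete gap at the step you yourself flag as the main obstacle. The assertion that $\pi\colon\tcN\to\cN$ factors through $T^*(G/P_s)$ is false: there is no morphism $\tcN\to T^*(G/P_s)$, because the $B$-module surjection $(\fg/\fb)^*\twoheadrightarrow(\mathfrak{p}_s/\fb)^*$ does not split (equivalently, $\fg_{-\alpha_s}$ is not a $\fb$-submodule of $\fn$ once the semisimple rank is at least $2$), so there is no $B$-equivariant retraction of $(\fg/\fb)^*$ onto $(\fg/\mathfrak{p}_s)^*$. What does exist is a factorization $\pi=\bar\pi_s\circ\varrho_s$ through $Y_s:=G\times^{P_s}(\cN\cap\mathfrak{p}_s)$, but $\varrho_s\colon\tcN\to Y_s$ is a \emph{small} proper birational map (generic fibre a point, exceptional fibres $\mathbb{P}^1$), not a $\mathbb{P}^1$-fibration. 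The kernel of $\cJ_{T_s}$ is indeed supported on $\tcN\times_{Y_s}\tcN$, and since $\pi\circ p_1=\pi\circ p_2$ on that support, the projection formula reduces $\pi_*\circ\cJ_{T_s}\cong\pi_*$ to the claim that $Rp_{1*}$ of the kernel is $\cO_{\tcN}$; but establishing this requires pinning down the precise (possibly non-reduced and/or line-bundle-twisted) scheme structure of the kernel used in~\cite{br}, together with a base change and cohomology-vanishing argument that is not automatic in positive characteristic. So the intertwining relation you rest everything on is plausible, but it is not a formal consequence of what you cite, and the justification you give does not go through as written.

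Two smaller caveats. Your wall-crossing relation $\tnex_{s\lambda}\cong\cJ_{T_s^{\varepsilon}}(\tnex_\lambda)$ silently assumes that the braid-group action of~\cite{br} and the coset convention for $W\lambda$ in \S\ref{sec:intexotic} line up so that $T_{w_{s\lambda}}$ equals $T_s^{\pm1}T_{w_\lambda}$ rather than $T_{w_\lambda}T_s^{\pm1}$; this must be checked against~\cite{br,mr}, since in one convention the wall-crossing functor ends up applied to $\cO_\tcN$ rather than to $\tnex_\lambda$. And in the last step, invoking ``$\pi_*$ realizes $\PCoh(\cN)_\bk$ as a quotient of the exotic heart'' to identify $\pi_*\tirrex_\lambda$ is circular: that quotient description is ordinarily a \emph{consequence} of the present lemma, not an input to it. Finally, for the record, the paper itself does not prove this statement; it cites it as \cite[Proposition~2.6]{achar}.
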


We define the \emph{reduced standard and costandard objects} in $\PCoh(\cN)_\bk$ by setting
\begin{equation}
\label{eqn:red-std}
\Dred_\lambda(\F) = \pi_*\tDred_{w_0\lambda}(\F)
\qquad\text{and}\qquad
\nred_\lambda(\F) = \pi_*\tnred_{w_0\lambda}(\F)
\end{equation}
for $\lambda \in \bX^+$. These objects are perverse-coherent since $\pi_*$ is t-exact.  Moreover, applying $\pi_*$ to the sequence \eqref{eqn:canonical-sequence} (for the weight $w_0\lambda$) provides a sequence of surjections and injections
\begin{equation}
\label{eqn:canonical-sequence2}
\Dpc_\lambda(\F) \twoheadrightarrow \Dred_\lambda(\F) \twoheadrightarrow
\Lpc_\lambda(\F)
\hookrightarrow \nred_\lambda(\F) \hookrightarrow \npc_\lambda(\F).
\end{equation}

%--------------------------------------------------------------------
\subsection{Statement}
\label{ss:statement-support-Dred}
%--------------------------------------------------------------------

In view of Theorem~\ref{thm:balanced-sections}, there exists a collection $(x_j : j \in J)$ of balanced nilpotent sections of $\fg$ and a collection $(\varphi_j : j \in J)$ of cocharacters $\Gm \to G$ such that
\begin{enumerate}
\item
for any $j \in J$, the cocharacter $\Spec(\F) \times_{\Spec(\O)} \varphi_j$, resp.~$\Spec(\ovK) \times_{\Spec(\O)} \varphi_j$, is associated with $x_{j,\F}$, resp.~$x_{j,\ovK}$;
\item
for any $j \in J$ we have $\BC(G_\ovK \cdot x_{j,\ovK}) = G_\F \cdot x_{j,\F}$;
\item
the set $(x_{j,\F} : j \in J)$, resp.~$(x_{j,\ovK} : j \in J)$, is a set of representatives for the nilpotent orbits of $G_\F$, resp.~$G_\ovK$.
\end{enumerate}
For $j \in J$, we denote by $\imath_j : \Spec(\O) \to \fg$ the inclusion of the point associated with $x_j$, and by $\imath_j^\F$ its base change to $\F$. 

We will use the representatives $(x_{j,\F} : j \in J)$, resp.~$(x_{j,\ovK} : j \in J)$, to define the Lusztig--Vogan bijections over $\F$ and $\ovK$ as in~\eqref{eqn:LV-bij}.
The goal of this section is to prove the following.

\begin{prop}
\label{prop:support}
For any $\lambda \in \bX^+$, we have
\[
\supp(\Dred_\lambda(\F)) = \supp(\nred_\lambda(\F)) = \overline{\BC(\scO_\lambda^{\ovK})}.
\]
Moreover, if $j \in J$ is such that $x_{j,\ovK} \in \scO_\lambda^{\ovK}$, then
\begin{enumerate}
\item
the complexes of $\O$-modules $(\imath_j)^* \tilde\pi_* \tirrex_\lambda(\O)$ and $(\imath_j)^* \tilde\pi_* \tirrex_\lambda^+(\O)$ are concentrated in degrees $\leq \frac{1}{2} \codim(\scO_\lambda^{\ovK})$;
\item
the $Z_G(x_j)$-modules
\[
\sH^{\frac{1}{2} \codim(\scO_\lambda^{\ovK})}((\imath_j)^* \tilde\pi_*\tirrex_\lambda(\O)) \quad \text{and} \quad \sH^{\frac{1}{2} \codim(\scO_\lambda^{\ovK})}((\imath_j)^* \tilde\pi_* \tirrex_\lambda^+(\O))
\]
are free over $\O$, and we have isomorphisms of $Z_G(x_j)_\ovK$-modules
\begin{align*}
\ovK \otimes_\O \sH^{\frac{1}{2} \codim(\scO_\lambda^{\ovK})}((\imath_j)^* \tilde\pi_* \tirrex_\lambda(\O)) &\cong L_\lambda(\ovK), \\
\ovK \otimes_\O \sH^{\frac{1}{2} \codim(\scO_\lambda^{\ovK})}((\imath_j)^* \tilde\pi_* \tirrex^+_\lambda(\O)) &\cong L_\lambda(\ovK)
\end{align*}
and isomorphisms of $Z_G(x_j)_\F$-modules
\begin{align*}
\F \otimes_\O \sH^{\frac{1}{2} \codim(\scO_\lambda^{\ovK})}((\imath_j)^* \tilde\pi_* \tirrex_\lambda(\O)) &\cong \sH^{\frac{1}{2} \codim(\scO_\lambda^{\ovK})}((\imath^\F_j)^* \Dred_\lambda(\F)), \\
\F \otimes_\O \sH^{\frac{1}{2} \codim(\scO_\lambda^{\ovK})}((\imath_j)^* \tilde\pi_* \tirrex_\lambda^+(\O)) &\cong \sH^{\frac{1}{2} \codim(\scO_\lambda^{\ovK})}((\imath^\F_j)^* \nred_\lambda(\F)).
\end{align*}
\end{enumerate}
\end{prop}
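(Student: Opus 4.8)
The plan is to reduce everything to a local computation on the integral Slodowy slice $\mathcal{S}_{x_j}$, using the exotic sheaf $\tirrex_\lambda(\O)$ over $\O$ as the crucial intermediary, and then to extract the statements over $\F$ and $\ovK$ by base change. First I would work over $\ovK$: by Lemma~\ref{lem:pi-texact} we know $\pi_* \tirrex_{w_0\lambda}(\ovK) \cong \Lpc_\lambda(\ovK)$ (after the reindexing forced by $w_0$), which by construction of the Lusztig--Vogan bijection is $\IC(\scO_\lambda^\ovK, L_\lambda^\ovK)$; hence $\supp(\pi_* \tirrex_{w_0\lambda}(\ovK)) = \overline{\scO_\lambda^\ovK}$, and by property~\eqref{it:irr-orbit} of the perverse-coherent t-structure the restriction $\sH^{\frac12\codim\scO_\lambda^\ovK}(\IC(\scO_\lambda^\ovK,L_\lambda^\ovK))|_{\St(\scO_\lambda^\ovK)}$ is $i_{\scO_\lambda^\ovK *} L_\lambda^\ovK$, while all lower cohomology sheaves of the restriction to the star vanish. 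Pulling back along $\imath_j^\ovK$ and using Corollary~\ref{cor:fiber-product-Slodowy} (which says the Slodowy slice pullback computes the restriction to the orbit through an honest flat base change), this gives that $(\imath_j^\ovK)^* \tilde\pi_* \tirrex_{w_0\lambda}(\ovK)$ is concentrated in degree $\frac12\codim\scO_\lambda^\ovK$ with cohomology $L_\lambda^\ovK$ as a $Z_G(x_j)_\ovK$-module. The same holds verbatim for $\tirrex^+$, since $\K(\tirrex_\lambda(\O)) \cong \K(\tirrex^+_\lambda(\O))$ and both are torsion-free, so they have the same image over $\ovK$.

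Next I would transport this to $\O$. The object $\tirrex_{w_0\lambda}(\O)$ is torsion-free (as recorded after~\eqref{eqn:canonical-sequence}), and its $\ovK$-base change is $\tirrex_{w_0\lambda}(\ovK)$. Consider the complex of $\O$-modules $C := (\imath_j)^* \tilde\pi_* \tirrex_{w_0\lambda}(\O)$. Since $\imath_j$ is the inclusion of a single $\O$-point and $\O$ is a DVR, $C$ is a bounded complex of finitely generated $\O$-modules; its $\ovK$-base change is $(\imath_j^\ovK)^* \tilde\pi_* \tirrex_{w_0\lambda}(\ovK)$, which by the previous paragraph is concentrated in degree $d := \frac12\codim\scO_\lambda^\ovK$. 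By the universal coefficient / base change spectral sequence over a DVR, this forces $\sH^i(C) = 0$ for $i > d$ and $\sH^i(C)$ to be torsion for $i < d$. To upgrade this to concentration in degrees $\le d$ with $\sH^d(C)$ free, I would invoke that $\F(C) \cong (\imath_j^\F)^* \Dred_\lambda(\F)$ is (up to shift by $\pi_*$ t-exactness and property~\eqref{it:pcoh-support}) the star-restriction of a perverse-coherent sheaf $\Dred_\lambda(\F)$, whose support I must control; here is where I would need the crucial input that $\Dred_\lambda(\F) \twoheadrightarrow \Lpc_\lambda(\F)$ together with a dimension/codimension estimate forces the $\F$-fiber to be concentrated in degree $\ge d$ as well (the surjection to $\IC$ contributes in exactly degree $d$, and reduced standard objects cannot have support strictly larger than that of $\Lpc_\lambda(\ovK)$ — this last point is presumably the content of the support-containment half of the Proposition, which I would prove first by a separate argument showing $\supp \tDred_{w_0\lambda}(\F) \subset \supp \tirrex_{w_0\lambda}(\K)$ via torsion-freeness and generic flatness). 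Combining "$\sH^{>d} = 0$ over $\O$" with "$\sH^{<d}(\F(C)) = 0$" and the long exact sequence relating $\sH^{d-1}(C)$, its torsion, and $\F(C)$ then yields $\sH^{d-1}(C)$ torsion-free hence zero, and inductively $C$ is concentrated in degree $d$ with $\sH^d(C)$ free over $\O$.

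Finally, with $C = \sH^d(C)[-d]$ a free $\O$-module carrying a $Z_G(x_j)$-action, the $\ovK$-base change identification from the first paragraph gives $\ovK \otimes_\O \sH^d(C) \cong L_\lambda^\ovK$ as $Z_G(x_j)_\ovK$-modules, and the $\F$-base change gives $\F \otimes_\O \sH^d(C) \cong \sH^d(\F(C)) = \sH^d((\imath_j^\F)^* \tilde\pi_* \tirrex_{w_0\lambda}(\F)) = \sH^d((\imath_j^\F)^* \Dred_\lambda(\F))$ as $Z_G(x_j)_\F$-modules, using that $\F(-)$ is exact on free modules and commutes with $\imath_j^*$, $\tilde\pi_*$, and with the construction $\F(\tirrex_\lambda(\O)) = \tDred_\lambda(\F)$. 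Running the identical argument with $\tirrex^+_{w_0\lambda}(\O)$ in place of $\tirrex_{w_0\lambda}(\O)$ handles the costandard statements, and the support equalities $\supp(\Dred_\lambda(\F)) = \supp(\nred_\lambda(\F)) = \overline{\BC(\scO_\lambda^\ovK)}$ follow: the containment $\subseteq \overline{\BC(\scO_\lambda^\ovK)}$ from the support-containment argument above (using $\dim\scO_\lambda^\ovK = \dim\BC(\scO_\lambda^\ovK)$ and the Bala--Carter poset isomorphism), and the reverse containment from the fact that $\sH^d((\imath_j^\F)^* \Dred_\lambda(\F)) \cong \F \otimes_\O \sH^d(C) \ne 0$ (since $\sH^d(C)$ is free and nonzero, as $L_\lambda^\ovK \ne 0$), so $x_{j,\F} \in \BC(\scO_\lambda^\ovK)$ lies in the support, and support is closed. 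The main obstacle is the middle step: proving that the integral fiber $C$ is concentrated in a single degree, which requires simultaneously controlling the support of $\Dred_\lambda(\F)$ from above — this is genuinely the heart of the proposition and is where the t-structure rigidity of exotic sheaves and the flat base change along Slodowy slices both get used in an essential way.
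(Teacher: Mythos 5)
Your proposal gathers the right ingredients (the integral Slodowy slice, torsion-freeness of $\tirrex_\lambda(\O)$, base change), but the opening computation is incorrect, and the two subsequent key steps rest on it. The claim that $(\imath_j^\ovK)^*\tilde\pi_*\tirrex_{w_0\lambda}(\ovK)$ is concentrated in the single degree $d=\tfrac12\codim\scO_\lambda^\ovK$ is false: $\imath_j$ is a non-flat immersion of a point, and the derived stalk is $t_\ovK^* t_{\ovK*}V[-d]$, a Koszul-type complex whose cohomology lives in a whole range of degrees $\le d$ --- which is precisely why the Proposition only asserts concentration in degrees $\le d$, not a single degree. Everything you deduce from this (that $\sH^{>d}(C)=0$, that $\sH^{<d}(C)$ is torsion, that one can force $\sH^d(C)$ to be free by playing the $\ovK$- and $\F$-sides against each other) then collapses. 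The object that \emph{is} concentrated in one degree is $\cG := \theta a^*\tilde\pi_*\cF[d]$, obtained after pulling back along the \emph{flat} map $a : G\times\mathcal{S}\to\fg$ (so $a^*$ is exact and introduces no Tor's) and passing through the induction equivalence $\theta$; the paper's proof works with this $\cG$, not with $(\imath_j)^*\tilde\pi_*\cF$ directly, and only at the very end writes $(\imath_j)^*\tilde\pi_*\cF\cong t^*t_*V_\O[-d]$. Moreover, you never invoke the contracting $\Gm$-action on $\mathcal{S}$, which the paper flags as the one essential use of $\Gm$-equivariance in the whole article: freeness of $\cG$ over $\O$ alone does not identify the $Z_G(x_j)$-module; it is the single-$\Gm$-weight argument that forces $\cG$ to be supported scheme-theoretically on $\{x_j\}$, i.e.\ $\cG\cong t_*V_\O$, from which the module isomorphisms follow.

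The second gap is the containment $\supp(\Dred_\lambda(\F))\subset\overline{\BC(\scO_\lambda^\ovK)}$, which you propose to get from torsion-freeness, generic flatness and a dimension count. That cannot work: the special fiber of a flat $\O$-family of closed subschemes can be reducible, and a dimension argument cannot rule out extra components of $\supp(\Dred_\lambda(\F))$ being closures of \emph{other} nilpotent orbits of the same dimension as $\BC(\scO_\lambda^\ovK)$ (such orbits generally exist). This is exactly where the slice is indispensable in the paper's argument: take any orbit $\scO'$ open in $\supp(\pi_*\F(\cF))$, pass to the slice at a balanced section $x_k$ for $\scO'$, use the perverse-coherent support property to see that $\F \lotimes_\O \cG$ is concentrated in degree $0$, conclude that $\cG$ is a free $\O$-module, hence $\ovK\otimes_\O\cG\neq 0$, hence $G_\ovK\cdot x_{k,\ovK}$ meets $\supp(\pi_*\ovK(\cF))=\overline{\scO}$, forcing $\scO'=\BC(\scO)$. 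Your outline acknowledges that "the middle step ... is genuinely the heart of the proposition," but the specific mechanism you offer to carry it out does not go through.
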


%--------------------------------------------------------------------
\subsection{Generalities on support}
%--------------------------------------------------------------------

We start with the following general remark. Let $X$ be a flat noetherian $\O$-scheme, and denote by $X_\F$ and $X_\ovK$ its base-change to $\F$ and $\ovK$ respectively. Then we have natural ``change of scalars'' functors
\[
\F : D^- \Coh(X) \to D^- \Coh(X_\F), \quad \ovK : D^- \Coh(X) \to D^- \Coh(X_\ovK),
\]
which can be described either as the derived functors of the functors $\Coh(X) \to \Coh(X_\F)$ and $\Coh(X) \to\Coh(X_\ovK)$ sending $\cF$ to $\F \otimes_\O \cF$ and $\ovK \otimes_\O \cF$ respectively, or as the derived pullbacks under the projection morphisms $X_\F \to X$ and $X_\ovK \to X$.

Let now $x$ be an $\O$-point of $X$, and denote by $x_\F$ and $x_\ovK$ the closed points in $X_\F$ and $X_\ovK$ obtained by base change.

\begin{lem}
\label{lem:support-K-F}
For any $\cF$ in $D^- \Coh(X)$, we have
\[
x_\ovK \in \supp(\ovK(\cF)) \implies x_{\F} \in \supp(\F(\cF)).
\]
\end{lem}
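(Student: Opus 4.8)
The plan is to pull the whole question back to the $\O$-point $x$ and reduce it to an elementary fact about complexes of $\O$-modules. Write $x$ also for the section $\Spec(\O)\to X$, let $q\colon\Spec(\F)\to\Spec(\O)$ and $r\colon\Spec(\ovK)\to\Spec(\O)$ be the structure morphisms, and recall that the $\F$- and $\ovK$-points are the base changes $x_\F\colon\Spec(\F)\to X_\F$ and $x_\ovK\colon\Spec(\ovK)\to X_\ovK$, so that $p\circ x_\F=x\circ q$ and $p'\circ x_\ovK=x\circ r$, where $p\colon X_\F\to X$ and $p'\colon X_\ovK\to X$ are the projections (and $\F(-)=\mathbf{L}p^*$, $\ovK(-)=\mathbf{L}p'^*$). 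Set $C:=\mathbf{L}x^*\cF$, a bounded-above complex of $\O$-modules with finitely generated cohomology (since $X$ is noetherian). First I would use functoriality of derived pullback to get canonical isomorphisms
\[
\mathbf{L}(x_\F)^*\,\F(\cF)\cong\F\lotimes_\O C,\qquad \mathbf{L}(x_\ovK)^*\,\ovK(\cF)\cong\ovK\lotimes_\O C.
\]

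Next I would recall that for $\cG\in D^-\Coh(Y)$ with $Y$ noetherian and a point $y\in Y$ with residue field $\kappa(y)$, one has $y\in\supp(\cG)$ if and only if $\cG_y\not\cong 0$, if and only if $\kappa(y)\lotimes_{\cO_{Y,y}}\cG_y\not\cong 0$ (the nontrivial implication follows by applying Nakayama's lemma to the top nonvanishing cohomology module of $\cG_y$). Since pulling back along the section $\Spec(\F)\to X_\F$ differs from pulling back along $\Spec(\kappa(x_\F))\to X_\F$ only by the faithfully flat base change $\Spec(\F)\to\Spec(\kappa(x_\F))$, this shows that $x_\F\in\supp(\F(\cF))$ if and only if $\mathbf{L}(x_\F)^*\F(\cF)\not\cong 0$, and similarly over $\ovK$. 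Combined with the displayed isomorphisms, the lemma is reduced to the assertion: \emph{if $\ovK\lotimes_\O C\not\cong 0$, then $\F\lotimes_\O C\not\cong 0$.}

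Finally I would establish this assertion, which is where the asymmetry between $\ovK$ and $\F$ is used. As $\ovK$ is flat over $\O$ (it is a field extension of the fraction field $\K$ of $\O$), we have $\ovK\lotimes_\O C=\ovK\otimes_\O C$ and $\sH^i(\ovK\otimes_\O C)\cong\ovK\otimes_\O\sH^i(C)$, so $\ovK\lotimes_\O C\not\cong 0$ forces $\sH^i(C)\neq 0$ for some $i$, hence $C\not\cong 0$. Let $m$ be the largest integer with $\sH^m(C)\neq 0$, which exists because $C$ is bounded above. Using that $\O$ has global dimension $1$, the universal-coefficient exact sequence
\[
0\to\F\otimes_\O\sH^m(C)\to\sH^m(\F\lotimes_\O C)\to\Tor^\O_1(\F,\sH^{m+1}(C))\to 0,
\]
together with $\sH^{m+1}(C)=0$, gives $\sH^m(\F\lotimes_\O C)\cong\F\otimes_\O\sH^m(C)$, which is nonzero by Nakayama's lemma applied to the nonzero finitely generated $\O$-module $\sH^m(C)$. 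Hence $\F\lotimes_\O C\not\cong 0$. I do not expect a genuine obstacle here: the whole argument is formal, and the only points requiring a little care are the compatibility of derived pullback with the change-of-scalars functors (pure functoriality of $\mathbf{L}(-)^*$) and the characterisation of supports via fibres (Nakayama's lemma on a top cohomology module).
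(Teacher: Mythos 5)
Your proof is correct and follows essentially the same route as the paper, which defers to the arguments of [AHR1, \S 4.4] and remarks that everything reduces to the fact that a finitely generated $\O$-module $M$ with $\ovK\otimes_\O M\neq 0$ must have $\F\otimes_\O M\neq 0$. Your reduction to $C=\mathbf{L}x^*\cF$, the identification of fibres via $\F\lotimes_\O C$ and $\ovK\lotimes_\O C$, and the Nakayama argument on the top cohomology of $C$ is exactly this computation spelled out in full (note only that $\kappa(x_\F)=\F$ automatically, since an $\F$-point of an $\F$-scheme has residue field $\F$, so no descent along $\Spec(\F)\to\Spec(\kappa(x_\F))$ is actually needed).
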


\begin{proof}
This claim follows from the arguments in~\cite[\S 4.4]{ahr}. (In the end, this proof boils down to the obvious fact that if $M$ is a finitely generated $\O$-module such that $\ovK \otimes_\O M \neq 0$, then $\F \otimes_\O M \neq 0$.)
\end{proof}

\begin{lem}
\label{lem:support-pcoh}
Let $\bk \in \{\F,\ovK\}$, and let $\cF \in \PCoh(\cN)_\bk$.  Its support $\supp(\cF)$ is the union of the orbit closures $\overline{\scO}$ where $\scO$ runs over the nilpotent $G_\bk$-orbits such that some $\IC(\scO,\cV)$ occurs as a composition factor of $\cF$.
\end{lem}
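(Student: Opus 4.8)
\emph{Strategy.} The plan is to bound $\supp(\cF)$ above by the union of the supports of the composition factors of $\cF$, and then to establish the reverse inclusion orbit by orbit, working at the maximal orbits. Throughout I shall use that the support of a $G_\bk$-equivariant bounded complex of coherent sheaves on $\cN_\bk$ is a closed $G_\bk$-stable subset --- hence a union of nilpotent orbit closures --- and that for a distinguished triangle $A \to B \to C \to A[1]$ one has $\supp(B) \subseteq \supp(A) \cup \supp(C)$ (localize the long exact sequence of cohomology sheaves at a point). As a preliminary observation, for $(\scO,\cV) \in \Omega_\bk$ the characterization in property~\eqref{it:irr-orbit} gives $\supp(\IC(\scO,\cV)) \subseteq \overline{\scO}$, while $\sH^{\frac{1}{2}\codim\scO}(\IC(\scO,\cV))|_{\St(\scO)} \cong i_{\scO*}\cV \neq 0$ forces the support to meet $\scO$; being closed and $G_\bk$-stable, it therefore equals $\overline{\scO}$. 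In particular $\scO$ is open in $\supp(\IC(\scO,\cV))$, so property~\eqref{it:pcoh-support} shows that $\IC(\scO,\cV)|_{\St(\scO)}$ is concentrated in cohomological degree $\frac{1}{2}\codim\scO$, where it is $i_{\scO*}\cV$.

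\emph{Upper bound.} By property~\eqref{it:pcoh-bounded} the object $\cF$ has finite length, so it admits a finite filtration by subobjects whose successive quotients are simple objects $\IC(\scO_1,\cV_1),\dots,\IC(\scO_n,\cV_n)$. Inducting along this filtration and using the subadditivity of supports under distinguished triangles, one obtains $\supp(\cF) \subseteq \bigcup_i \overline{\scO_i}$, which is precisely the union of the $\overline{\scO}$ over orbits $\scO$ such that some $\IC(\scO,\cV)$ is a composition factor of $\cF$.

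\emph{Lower bound.} Since each $\overline{\scO_i}$ is contained in $\overline{\scO}$ for some $\scO$ that is maximal, for the closure order, among $\{\scO_1,\dots,\scO_n\}$, it suffices to prove $\overline{\scO} \subseteq \supp(\cF)$ for such a maximal $\scO$. Apply the (exact) restriction functor $({-})|_{\St(\scO)}$ to the filtration of $\cF$. For each $i$, the subquotient $\IC(\scO_i,\cV_i)|_{\St(\scO)}$ vanishes unless $\scO \subseteq \overline{\scO_i}$; and when $\scO \subseteq \overline{\scO_i}$, maximality of $\scO$ among the $\scO_j$ forces $\scO_i = \scO$, so by the preliminary step this subquotient is concentrated in degree $d := \frac{1}{2}\codim\scO$ and equals $i_{\scO*}\cV_i$. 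A finite iterated extension of objects concentrated in a single degree $d$ is again concentrated in degree $d$, so $\cF|_{\St(\scO)} \cong \sH^d(\cF)|_{\St(\scO)}[-d]$, and in the abelian category of $G_\bk$-equivariant coherent sheaves on $\St(\scO)$ the sheaf $\sH^d(\cF)|_{\St(\scO)}$ acquires a finite filtration with subquotients the $i_{\scO*}\cV_i$ for which $\scO_i = \scO$. At least one of these is nonzero, because some $\IC(\scO,\cV)$ occurs as a composition factor of $\cF$; hence $\sH^d(\cF)|_{\St(\scO)} \neq 0$, and since it is supported on $\scO$ and is $G_\bk$-stable, its support is exactly $\scO$. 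Therefore $\scO \subseteq \supp(\cF)$, and taking the closure gives $\overline{\scO} \subseteq \supp(\cF)$, completing the proof.

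\emph{Main obstacle.} The one genuine point is the lower bound: a priori, composition factors supported on strictly larger orbit closures could contribute to $\sH^d(\cF)$ along $\scO$ and cancel the contribution of $\IC(\scO,\cV)$. Restricting to $\St(\scO)$ for a \emph{maximal} $\scO$ is exactly what makes all such larger orbits drop out, collapsing $\cF|_{\St(\scO)}$ to a single coherent sheaf that carries an honest filtration in which cancellation cannot occur; the remaining manipulations with supports are routine.
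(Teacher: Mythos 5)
Your proof is correct and uses the same essential ingredients as the paper's: the upper bound from property (5) and subadditivity of support along the composition series, and the lower bound by restricting to the star $\St(\scO)$ and invoking property (2) to collapse everything to a single cohomological degree $d = \frac12\codim\scO$, where filtered nonvanishing is manifest. The paper packages the lower bound as an induction on length (peeling off the top simple quotient and comparing $\sH^d(\cF')|_{\St(\scO)}$ and $\sH^d(\cF'')|_{\St(\scO)}$ via the long exact sequence), whereas you go directly to a closure-maximal orbit $\scO$ among the composition factors and observe that only the composition factors on $\scO$ itself survive restriction to $\St(\scO)$; these are two phrasings of the same argument, and yours is if anything a touch more streamlined.
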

\begin{proof}
We will use the various properties recalled in~\S\ref{ss:pcoh}.  Let $Z$ be the union of orbit closures described above. It is immediate from property~\eqref{it:irr-orbit} that $\supp(\cF) \subset Z$.  To prove equality, we proceed by induction on the length of a composition series of $\cF$.  If $\cF$ is simple, equality holds by property~\eqref{it:irr-orbit} again.  Otherwise, choose a short exact sequence
\[
0 \to \cF' \to \cF \to \cF'' \to 0
\]
with $\cF''$ simple, say $\cF'' = \IC(\scO'',\cV'')$.  Let $Z'$ be the set defined analogously to $Z$ for $\cF'$, so that $Z = Z' \cup \overline{\scO''}$.  By induction $\supp(\cF') = Z'$. Property~\eqref{it:pcoh-support} of~\S\ref{ss:pcoh} implies that for every orbit $\scO$ that is open in $Z$, at least one of $\sH^{\frac{1}{2}\codim \scO}(\cF')|_{\St(\scO)}$ or $\sH^{\frac{1}{2}\codim \scO}(\cF'')|_{\St(\scO)}$ is nonzero, and that the cohomology sheaves vanish on $\St(\scO)$ in all degrees other than $\frac{1}{2}\codim \scO$.  We deduce that $\sH^{\frac{1}{2}\codim \scO}(\cF)|_{\St(\scO)}\ne 0$, and hence that $\supp(\cF) = Z$.
\end{proof}

%--------------------------------------------------------------------
\subsection{Proof of Proposition~\ref{prop:support}}
%--------------------------------------------------------------------

We are now in a situation to prove Proposition~\ref{prop:support}. Note that the objects $\nred_\lambda(\F)$ and $\Dred_\lambda(\F)$ make sense in the $(G \times \Gm)_\F$-equivariant setting (cf.~\S\ref{ss:grading}).  Since they are also perverse-coherent, the claims in Proposition~\ref{prop:support} will follow from the following more general result. Here we denote by $Z_{G \times \Gm}(x_j)$ the centralizer of $x_j$ in $G \times \Gm$, where $\Gm$ acts on $\fg$ as in~\S\ref{ss:grading}. We also denote by $Z_{G \times \Gm}(x_j)_\F$, resp.~$Z_{G \times \Gm}(x_j)_\ovK$, the base change of this $\O$-group scheme to $\F$, resp.~$\ovK$. (The following proof is the one point in the paper where it is crucial to work with the extra $\Gm$-action.)

\begin{prop}
\label{prop:support-general}
Let $\cF \in \Db\Coh^{G \times \Gm}(\tcN)$ be an object such that both $\pi_*\ovK(\cF)$ and $\pi_*\F(\cF)$ are perverse-coherent sheaves, and such that there is a nilpotent $G_\ovK$-orbit $\scO$ with $\supp(\pi_*\ovK(\cF)) = \overline{\scO}$.  Then
\[
\supp(\pi_*\F(\cF)) = \overline{\BC(\scO)}.
\]
If moreover $\ovK(\cF) \cong \IC(\scO,\cV)$ for some simple $(G \times \Gm)_\ovK$-equivariant vector bundle $\cV$ on $\scO$, and if $j \in J$ is such that $x_{j,\ovK} \in \scO$, then the complex $(\imath_j)^* \tilde\pi_* \cF$ is concentrated in degrees $\leq \frac{1}{2} \codim(\scO)$, its cohomology in degree $\frac{1}{2} \codim(\scO)$ is free over $\O$, and we have an isomorphism of $Z_{G \times \Gm}(x_j)_\ovK$-modules
\begin{equation}
\label{eqn:isom-stalk-ovK}
\ovK \otimes_\O \sH^{\frac{1}{2} \codim(\scO)}((\imath_j)^* \tilde\pi_*\cF) \cong 
\sH^{0}(u^*\cV),
\end{equation}
where $u: \{x_{j,\ovK}\} \hookrightarrow \scO$ is the embedding, and an isomorphism of $Z_{G \times \Gm}(x_j)_\F$-modules
\begin{equation}
\label{eqn:isom-stalk-F}
\F \otimes_\O \sH^{\frac{1}{2} \codim(\scO)}((\imath_j)^* \tilde\pi_*\cF) \cong \sH^{\frac{1}{2} \codim(\scO)}((\imath^\F_j)^* \tilde\pi_*\F(\cF)).
\end{equation}
\end{prop}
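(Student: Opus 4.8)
The plan is to analyze the complex $(\imath_j)^* \tilde\pi_* \cF$ by transporting it to the integral Slodowy slice $\mathcal{S} := \mathcal{S}_{x_j}$ of~\S\ref{ss:slodowy}, whose contracting $\Gm$-action makes the relevant homological algebra over $\O$ rigid. Since $a := a_{x_j} : G \times \mathcal{S} \to \fg$ is flat (see~\S\ref{ss:flatness}), the functor $a^*$ is exact, and the $(G \times \Gm)$-equivariant structure on $G \times \mathcal{S}$ (for the action in the proof of Proposition~\ref{prop:Slodowy-flat}) identifies $(G \times \Gm)$-equivariant coherent sheaves on $G \times \mathcal{S}$ with $\Gm$-equivariant coherent sheaves on $\mathcal{S}$ for the contracting action; under this equivalence the $(G \times \Gm)$-orbit of $(1,x_j)$ is $G \times \{x_j\}$, so restriction to it followed by restriction to $\{x_j\}$ corresponds to restriction along $\jmath : \{x_j\} \hookrightarrow \mathcal{S}$. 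Writing $\mathcal{H} \in \Db\Coh^{\Gm}(\mathcal{S})$ for the object obtained from $a^* \tilde\pi_* \cF$, we obtain a canonical isomorphism $(\imath_j)^* \tilde\pi_* \cF \cong \jmath^* \mathcal{H}$ of complexes of $Z_{G \times \Gm}(x_j)_\O$-modules, and $\F(\mathcal{H})$, $\ovK(\mathcal{H})$ arise in the same way from $\tilde\pi_* \F(\cF)$, $\tilde\pi_* \ovK(\cF)$. Moreover, for $\bk \in \{\F, \ovK\}$, the Cartesian diagram of Corollary~\ref{cor:fiber-product-Slodowy} identifies $\bk(\mathcal{H})$ with the pullback of $\pi_* \bk(\cF)|_{\St(G_\bk \cdot x_{j,\bk})}$ along a smooth morphism, and Lemma~\ref{lem:intersection-Slodowy-orbit} shows that the pullback to $\mathcal{S}$ of a sheaf of the form $i_{\scO'*}(\text{equivariant vector bundle on }\scO')$ is a skyscraper at $\{x_j\}$ whose stalk is the corresponding $Z_{G \times \Gm}(x_{j,\bk})$-module. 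The one purely homological input I would isolate first is a \emph{graded Nakayama} statement: since $\cO(\mathcal{S})$ is non-negatively graded with degree-zero part $\O$, every finitely generated graded $\cO(\mathcal{S})$-module is bounded below, so the ideal $(p) + \cO(\mathcal{S})_{>0}$ satisfies Nakayama; combined with the universal-coefficient sequences over the discrete valuation ring $\O$ and a downward induction, this gives: \emph{if $\F(\mathcal{H})$ is concentrated in a single cohomological degree $n$, then so is $\mathcal{H}$, and $\sH^n(\mathcal{H})$ is $\O$-torsion-free}. This is the step for which the $\Gm$-action is indispensable (without it one cannot exclude $p$-divisible cohomology in the wrong degrees).

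For the support equality, the inclusion $\overline{\BC(\scO)} \subseteq \supp(\pi_*\F(\cF))$ follows from Lemma~\ref{lem:support-K-F} applied to $\tilde\pi_*\cF$ at the points $x_j$, together with the facts that $\BC$ preserves the closure order and that the $x_{j,\F}$ represent all nilpotent $G_\F$-orbits. For the reverse inclusion I would argue by contradiction: suppose some $G_\F$-orbit $\scO' = G_\F\cdot x_{j,\F}$ is open in $\supp(\pi_*\F(\cF))$ but not contained in $\overline{\BC(\scO)}$. Property~\eqref{it:pcoh-support} of~\S\ref{ss:pcoh} then forces $\F(\mathcal{H})$ to be concentrated in degree $n' := \tfrac12\codim\scO'$ and nonzero, so by the graded Nakayama statement $\mathcal{H} \cong \mathcal{M}'[-n']$ with $\mathcal{M}'$ a nonzero $\O$-torsion-free module. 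But if $x_{j,\ovK} \in \overline{\scO}$ then $\scO' = \BC(G_\ovK\cdot x_{j,\ovK}) \subseteq \overline{\BC(\scO)}$, a contradiction; and if $x_{j,\ovK} \notin \overline{\scO}$ then $\St(G_\ovK\cdot x_{j,\ovK}) \cap \overline{\scO} = \varnothing$ (by the closure-order description of the star), so $\pi_*\ovK(\cF)$ restricts to $0$ there, whence $\ovK(\mathcal{H}) = 0$ and $\ovK \otimes_\O \mathcal{M}' = 0$ — impossible for a nonzero torsion-free $\O$-module. Hence $\supp(\pi_*\F(\cF)) = \overline{\BC(\scO)}$.

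For the refined statements, assume $\pi_*\ovK(\cF) \cong \IC(\scO,\cV)$ and fix $j$ with $x_{j,\ovK} \in \scO$; set $n = \tfrac12\codim\scO = \tfrac12\codim\BC(\scO)$. Now $\scO$ is open in $\overline{\scO} = \supp(\pi_*\ovK(\cF))$, and by the support equality $\BC(\scO)$ is open in $\overline{\BC(\scO)} = \supp(\pi_*\F(\cF))$, so by perversity and graded Nakayama $\mathcal{H} \cong \mathcal{M}[-n]$ with $\mathcal{M}$ an $\O$-torsion-free graded $\cO(\mathcal{S})$-module. From $\pi_*\ovK(\cF)|_{\St(\scO)} \cong i_{\scO*}\cV[-n]$ and the slice dictionary, $\ovK(\mathcal{M})$ is a skyscraper at $\{x_j\}$ with stalk $\sH^0(u^*\cV)$; in particular $\cO(\mathcal{S})_{>0}\cdot\mathcal{M}$ dies after $\otimes_\O\ovK$, hence is $\O$-torsion, hence (being a submodule of the torsion-free module $\mathcal{M}$) is zero. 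Thus $\mathcal{M}$ is a finitely generated graded module over $\cO(\mathcal{S})/\cO(\mathcal{S})_{>0} = \O$, and being $\O$-torsion-free it is $\O$-free. Running the same dictionary over $\F$ — using that the top cohomology of the perverse-coherent sheaf $\pi_*\F(\cF)$ on $\St(\BC(\scO))$ is $i_{\BC(\scO)*}\cW$ for a $(G\times\Gm)_\F$-equivariant vector bundle $\cW$, and that $x_{j,\F} \in \St(\BC(\scO))$ — identifies $\F(\mathcal{M})$ with $\sH^0(u_\F^*\cW) \cong \sH^n((\imath_j^\F)^*\tilde\pi_*\F(\cF))$, where $u_\F : \{x_{j,\F}\} \hookrightarrow \BC(\scO)$. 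Finally, since $\mathcal{M}$ is a $\cO(\mathcal{S})$-module via the augmentation, $\jmath^*\mathcal{M}$ is computed by a Koszul complex $\mathcal{M} \otimes_\O \Lambda^\bullet$ placed in non-positive degrees with $\sH^0 = \mathcal{M}$; therefore $(\imath_j)^*\tilde\pi_*\cF \cong \jmath^*\mathcal{H}$ is concentrated in degrees $\le n$, its degree-$n$ cohomology is $\mathcal{M}$ (free over $\O$), and its base changes to $\ovK$ and $\F$ are $\sH^0(u^*\cV)$ and $\sH^n((\imath_j^\F)^*\tilde\pi_*\F(\cF))$, which is exactly~\eqref{eqn:isom-stalk-ovK} and~\eqref{eqn:isom-stalk-F}.

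The step I expect to be the main obstacle is the reverse inclusion in the support equality, i.e.\ excluding ``phantom'' support that becomes invisible after inverting $p$ — this is precisely what the contracting $\Gm$-action on the integral slice, through the graded Nakayama argument, is designed to handle — together with the bookkeeping in the slice-versus-star dictionary of Corollary~\ref{cor:fiber-product-Slodowy}, where one must carefully match the several change-of-groups identifications relating equivariant sheaves on $G \times \mathcal{S}$, on $\mathcal{S}$, and representations of $Z_{G \times \Gm}(x_j)$.
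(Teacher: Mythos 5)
Your proposal is correct and follows essentially the same strategy as the paper: reduce to the integral Slodowy slice via the smooth map $a_{x_j}$ and the induction equivalence, use the contracting $\Gm$-action together with a Nakayama/universal-coefficient argument over the DVR to exclude $p$-divisible cohomology (this is exactly the step where the paper also notes the $\Gm$-action is essential), show the resulting $\O$-module is a free skyscraper at $x_j$, and recover both stalk isomorphisms from the Koszul resolution of $\jmath^*$. The only cosmetic differences are that you establish the skyscraper property by observing that $\cO(\mathcal{S})_{>0}\cdot\mathcal{M}$ is a torsion submodule of a torsion-free module (hence zero), where the paper instead matches $\Gm$-weights explicitly, and that you organize the reverse support inclusion as a case split on whether $x_{j,\ovK}\in\overline{\scO}$, whereas the paper runs the argument uniformly: it shows $\cG$ is $\O$-free and nonzero, deduces $\ovK\otimes_\O\cG\ne 0$, and concludes $\St(\scO'')\cap\overline{\scO}\ne\varnothing$ directly.
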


\begin{proof}
Let us first show that $\supp(\pi_*\F(\cF)) \supset \overline{\BC(\scO)}$.  It is enough to show that $\supp(\tilde\pi_*\F(\cF)) \supset \overline{\BC(\scO)}$.  This follows from Lemma~\ref{lem:support-K-F} using $X = \fg$.

For the opposite containment, let $\scO'$ be a nilpotent $G_\F$-orbit that is open in $\supp(\pi_*\F(\cF))$.  For brevity, let $d = \frac{1}{2}\codim \scO'$.  Let $k \in J$ be such that $x_{k,\F} \in \scO'$, and consider an integral Slodowy slice $\mathcal{S}$ as constructed in~\S\ref{ss:slodowy} (for the balanced nilpotent section $x_k$ and the cocharacter $\varphi_k$).  Let $a: G \times \mathcal{S} \to \fg$ be the action map, and let $a^{\ovK} : G_\ovK \times \mathcal{S}^{\ovK} \to \fg_\ovK$, resp.~$a^\F : G_\F \times \mathcal{S}^\F \to \fg_\F$, be its base change to $\ovK$, resp.~to $\F$. We will make use of the natural induction equivalence 
\begin{equation}
\label{eqn:equivalence-equivariant-S}
\theta: \Db \Cohmix(G \times \mathcal{S}) \cong \Db\Coh^{\Gm}(\mathcal{S}),
\end{equation}
as well as the corresponding equivalences $\theta_\F$ and $\theta_\ovK$. (Here, $\Gm$ acts on $G \times \mathcal{S}$ and $\mathcal{S}$ as in Section~\ref{sec:balanced}.)

By property~\eqref{it:pcoh-support} from~\S\ref{ss:pcoh}, we know that $(\pi_*\F(\cF))|_{\St(\scO')}[d]$ is a coherent sheaf (i.e., a complex whose cohomology is concentrated in degree $0$) supported on $\scO'$.  From the diagram in Corollary~\ref{cor:fiber-product-Slodowy} and the flat base change theorem (which is applicable thanks to Proposition~\ref{prop:Slodowy-flat}), we conclude that $(a^\F)^*\tilde\pi_*\F(\cF)[d]$ is a coherent sheaf on $G_\F \times \mathcal{S}^\F$ supported (set-theoretically) on $G_\F \times \{x_{k,\F}\}$, and hence, after passing through~\eqref{eqn:equivalence-equivariant-S}, that the object
\[
\theta_\F(a^\F)^*\tilde\pi_*\F(\cF)[d] \cong \F \lotimes_\O (\theta a^* \tilde\pi_*\cF)[d]
\]
is a coherent sheaf supported (set-theoretically) on $x_{k,\F}$.

Let $\cG = \theta a^* \tilde\pi_*\cF[d] \in \Db \Coh^{\Gm}(\mathcal{S})$.  This can be thought of as a complex of finitely generated graded $\cO(\mathcal{S})$-modules. In particular, each graded component of this complex is bounded and finitely generated over $\O$.  Recall a bounded complex of finitely generated $\O$-modules $M$ is isomorphic to a free $\O$-module (considered as a complex concentrated in degree $0$) iff we have $\sH^{\neq 0}(\F \lotimes_\O M) = 0$.  The previous paragraph tells us that $\sH^{\neq 0}(\F \lotimes_\O \cG) = 0$, so $\cG$ belongs to $\Coh^{\Gm}(\mathcal{S})$, and is free as an $\O$-module.  It follows that $\ovK \otimes_\O \cG$ is nonzero.  Retracing the steps in the previous paragraph, we find that $(\pi_*\ovK(\cF))|_{\St(\scO'')} \ne 0$, where $\scO'':=G_\ovK \cdot x_{k,\ovK}$ is the orbit such that $\BC(\scO'')=\scO'$.  It follows that $\scO''=\scO$, i.e.~that $\scO' = \BC(\scO)$, and then that $\supp(\F(\cF)) = \overline{\BC(\scO)}$.

Suppose now that $\pi_*\ovK(\cF) = \IC(\scO,\cV)$ for some $\cV$ as in the statement, and that $k=j$.  Let $V$ be the (simple) $Z_{G \times \Gm}(x_j)_\ovK$-representation corresponding to $\cV$.  Recall that there is an isomorphism
\begin{equation}\label{eqn:gm-centralizer}
Z_{G \times \Gm}(x_j) \cong \Gm \ltimes Z_G(x_j),
\end{equation}
where $\Gm$ acts on $Z_G(x_j)$ by conjugation via the cocharacter $\varphi_j$ (see, for instance,~\cite[Eq.~(2.6)]{ah}).  Moreover, the induced action of $(\Gm)_\ovK$ on the reductive quotient of $Z_G(x_j)_\ovK$ is trivial, so the copy of $(\Gm)_\ovK$ on the base change to $\ovK$ of the right-hand side of~\eqref{eqn:gm-centralizer} acts on the irreducible representation $V$ by a single character.

Let $t: \{x_j\} \hookrightarrow \mathcal{S}$ be the inclusion map, and let $t_\ovK$, resp.~$t_\F$, be its base change to $\ovK$, resp.~$\F$.  In the notation of the statement of the proposition, we have
\begin{equation}\label{eqn:rep-local}
V = \sH^{0}(u^*\cV).
\end{equation}
By property~\eqref{it:irr-orbit} from~\S\ref{ss:pcoh}, our assumption implies that
\[
\ovK \otimes_\O \cG = 
\theta_\ovK (a^\ovK)^*\tilde\pi_*\ovK(\cF)[d] \cong t_{\ovK*}V.
\]
It is easily checked that the $(\Gm)_\ovK$-action on the left-hand side coming from~\eqref{eqn:equivalence-equivariant-S} is identified with the $(\Gm)_\ovK$-action on right-hand side coming from~\eqref{eqn:gm-centralizer}.  In particular, the graded $\cO(\mathcal{S}^\ovK)$-module $\ovK \otimes_\O \cG$ is concentrated in a single grading degree. Therefore, $\cG$ is also concentrated in a single grading degree, so it is supported scheme-theoretically on $x_j$: there is a free $\O$-module $V_\O$ such that $\cG \cong t_*V_\O$, and such that
\begin{equation}\label{eqn:rep-lattice}
V \cong \ovK \otimes_\O V_\O.
\end{equation}
Since the inclusion map $\imath_j$ factors as
\[
\{x_j\} \xrightarrow{t} \mathcal{S} \hookrightarrow G \times \mathcal{S} \xrightarrow{a} \fg,
\]
we see that
\[
(\imath_j)^*\tilde\pi_*\cF \cong t^*\cG \cong t^*t_*V_\O[-d].
\]
This object has cohomology only in degrees${}\le d$, and its cohomology in degree $d$ is identified with $V_\O$.  In view of~\eqref{eqn:rep-local} and~\eqref{eqn:rep-lattice}, we deduce~\eqref{eqn:isom-stalk-ovK}. Similarly we have
\[
(\imath_j^\F)^*\tilde\pi_*\F(\cF) \cong t_\F^*\F(\cG) \cong t_\F^*t_{\F*} (\F \otimes_\O V_\O)[-d],
\]
which implies~\eqref{eqn:isom-stalk-F}. We leave it to the reader to check that the isomorphisms constructed in this way are $Z_{G \times \Gm}(x_j)_\ovK$-equivariant and $Z_{G \times \Gm}(x_j)_\F$-equivariant respectively.
\end{proof}

\begin{rmk}\phantomsection
\label{rmk:support}
\begin{enumerate}
\item
\label{it:supp-orbit}
In the setting of Proposition~\ref{prop:support-general}, since $\pi_*\F(\cF)$ is a perverse-coherent sheaf whose support is $\overline{\BC(\scO)}$, its restriction to $\St(\BC(\scO))$ is a coherent sheaf placed in degree $\frac{1}{2} \codim(\scO)$, and supported on an infinitesimal neighborhood of $\scO':=\BC(\scO)$. We claim that this coherent sheaf is in fact supported scheme-theoretically on $\scO'$ (so that it coincides with the vector bundle associated with the module $\sH^{\frac{1}{2} \codim(\scO)}((\imath^\F_j)^* \tilde\pi_*\F(\cF))$). Indeed, if $\mathcal{U}$ is the image of $a^\F$, we have $\mathcal{U} \cap \cN_\F = \St(\scO')$ (see Corollary~\ref{cor:fiber-product-Slodowy}). Therefore, it suffices to show that the coherent sheaf $\tilde{\pi}_*\F(\cF)|_{\mathcal{U}}[\frac{1}{2} \codim(\scO')]$ is supported scheme-theoretically on $\scO'$. Now the map $a^\F : G_\F \times \mathcal{S}^\F \to \mathcal{U}$ is flat and surjective, and hence faithfully flat. Our proof shows that $(a^\F)^* \tilde{\pi}_*\F(\cF)|_{\mathcal{U}}[\frac{1}{2} \codim(\scO')]$ is supported on $G_\F \times \{x_{j,\F}\}$, which implies our claim.
\item
Under the identification provided by~\eqref{eqn:gm-centralizer}, the $(\Gm)_\F$- or $(\Gm)_\ovK$-action on the unipotent radical of $Z_G(x_j)_\F$ or $Z_G(x_j)_\ovK$ is contracting (see~\cite[Proposition~5.8]{jantzen-nilp} and~\cite[\S2]{ah}).  Since $\Gm$ acts on the module $V_\O$ constructed later in the proof with a single weight, the unipotent radical must act trivially on the resulting $(\Gm \ltimes Z_G(x_j))_\F$- or $(\Gm \ltimes Z_G(x_j))_\ovK$-representation.  In particular, in Proposition~\ref{prop:support}, the $Z_G(x_j)_\F$-modules\label{it:supp-factor}
\[
\sH^{\frac{1}{2} \codim(\scO_\lambda^{\ovK})}((\imath^\F_j)^* \Dred_\lambda(\F))
\qquad\text{and}\qquad
\sH^{\frac{1}{2} \codim(\scO_\lambda^{\ovK})}((\imath^\F_j)^* \nred_\lambda(\F))
\]
factor through modules for the reductive quotient of $Z_G(x_j)_\F$. (See~\cite[\S 4.2]{har:smcent} for similar arguments in a more general context.)
\end{enumerate}
\end{rmk}

%%%%%%%%%%%%%%%%%%%%%%%%%%%%%%%%%%%%%%%%%%%%%%%%%%%%%%%%%%%%%%%%%%%%%%%%%%%
\section{Agreement of the Lusztig--Vogan bijections}
\label{sec:proof-main}
%%%%%%%%%%%%%%%%%%%%%%%%%%%%%%%%%%%%%%%%%%%%%%%%%%%%%%%%%%%%%%%%%%%%%%%%%%%

%--------------------------------------------------------------------------
\subsection{Overview}
\label{ss:overview}
%--------------------------------------------------------------------------

Our goal in this section is to compare the Lusztig--Vogan bijections for $G_\ovK$ and $G_\F$. 
To make sense of such a comparison, one first needs to construct a bijection between the sets $\Omega_\ovK$ and $\Omega_\F$ introduced in~\S\ref{ss:pcoh}, which will occupy the first half of the section.

Recall that after choosing a representative for each nilpotent orbit over $\ovK$, resp.~over $\F$, the set $\Omega_\ovK$, resp.~$\Omega_\F$, gets identifies with a set denoted $\Omega'_\ovK$, resp.~$\Omega'_\F$ (see again~\S\ref{ss:pcoh}). To construct our bijection $\Omega_\ovK \overset{\sim}{\leftrightarrow} \Omega_\F$, we will make a coherent choice for these representatives, then construct a bijection between the associated sets $\Omega'_\ovK$ and $\Omega'_\F$.   
%and finally argue that the bijection we obtain does not depend on our choices.

More specifically,
recall that we have fixed in~\S\ref{ss:statement-support-Dred} balanced nilpotent sections $(x_j : j \in J)$, which provide in particular (by base change to $\F$ and $\ovK$) representatives $(x_{j,\F} : j \in J)$ and $(x_{j,\ovK} : j \in J)$ for the nilpotent orbits of $G_\F$ and $G_\ovK$ respectively. Using these choices of representatives we obtain sets $\Omega'_\F$ and $\Omega'_\ovK$. As explained above, we want to construct a bijection
\begin{equation}
\label{eqn:bijection-Omega'}
\Omega'_\ovK \simto \Omega'_\F
\end{equation}
which ``extends'' the Bala--Carter bijection from~\S\ref{ss:balanced}, in the sense that the first component of the image of a pair $(\scO,V)$ will be $\BC(\scO)$. Since $\BC(G_\ovK \cdot x_{j,\ovK}) = G_\F \cdot x_{j,\F}$ (see Theorem~\ref{thm:balanced-sections}), constructing such a bijection is equivalent to constructing, for any $j \in J$, a bijection
\begin{equation}
\label{eqn:bijection-simples}
\Irr(Z_G(x_j)_\ovK) \simto \Irr(Z_G(x_j)_\F)
\end{equation}
between the sets of isomorphism classes of simple modules for $Z_G(x_j)_\ovK$ and $Z_G(x_j)_\F$.
The construction of this bijection is explained (after some preliminaries) in~\S\ref{ss:bijection-xj} below.

%--------------------------------------------------------------------------
\subsection{Representation theory of centralizers}
\label{ss:labeling-simples}
%--------------------------------------------------------------------------

We now fix some $j \in J$.
The construction of~\eqref{eqn:bijection-simples} will involve replacing $\K$ by a finite extension $\K' \subset \ovK$.
Note that in this setting, if we
let $\O'$ be
the integral closure of $\O$ in $\K'$, then $\O'$ is a complete discrete valuation ring, which is finite (and free) as an $\O$-module, and has $\K'$ as fraction field (see~\cite[Chap.~II, Proposition~3]{serre}). In particular, since we assume that $\F$ is algebraically closed, the residue field of $\O'$ must still be $\F$. Of course, replacing $\O$ by $\O'$ does not change the field $\ovK$ either. For such datum, we will denote by $Z_G(x_j)_{\O'}$ and $Z_G(x_j)_{\K'}$ the base changes of $Z_G(x_j)$ to $\O'$ and $\K'$ respectively.  

Let $Z_G(x_j)_\ovK^{\mathrm{red}}$ and $Z_G(x_j)_\F^{\mathrm{red}}$ be the reductive quotients of $Z_G(x_j)_\ovK$ and $Z_G(x_j)_\F$ (i.e.~the quotients of these groups by their unipotent radical).  Because $p$ is assumed to be pretty good, it does not divide the order of the finite group $Z(G_\F)/Z(G_\F)^\circ$ (which is equal to the order of the torsion part of $\bX/ \Z R$).  Therefore, by~\cite[Lemma~6.2]{ahjr3}, we have
\begin{equation}\label{eqn:pnotdiv}
p \nmid |Z_G(x_j)_\F/Z_G(x_j)_\F^\circ| = |Z_G(x_j)^{\mathrm{red}}_\F/Z_G(x_j)^{\mathrm{red},\circ}_\F|,
\end{equation}
Next, since $\K'$ is a perfect field, the unipotent radical of $Z_G(x_j)_{\ovK}$ is defined over $\K'$, so that $Z_G(x_j)^{\mathrm{red}}_\ovK$ has a natural $\K'$-form $Z_G(x_j)^{\mathrm{red}}_{\K'}$, which is a connected reductive $\K'$-group. Moreover, for $\bk \in \{\F,\K',\ovK\}$ the pullback functor induces a bijection
\begin{equation}
\label{eqn:Irr-red}
\Irr(Z_G(x_j)^{\mathrm{red}}_\bk) \simto \Irr(Z_G(x_j)_\bk)
\end{equation}
between the corresponding sets of isomorphism classes of simple modules. Below we will simply identify these two sets via this bijection.

Let $\bk$ be either $\F$ or $\ovK$. Then $Z_G(x_j)^{\mathrm{red}}_\bk$ is a possibly disconnected reductive group over $\bk$. The representation theory of such groups is studied in the companion paper~\cite{ahr-disconn}; in particular, these results provide a combinatorial description of $\Irr(Z_G(x_j)^{\mathrm{red}}_\bk)$. We will not need to go into the details of this description. Indeed, we essentially just need one key fact from that paper: according to~\cite[Theorem~3.7]{ahr-disconn}, thanks to~\eqref{eqn:pnotdiv},  the category $\Rep(Z_G(x_j)^{\mathrm{red}}_\F)$ admits a natural structure of highest weight category.

In practice this means that the set $\Irr(Z_G(x_j)^{\mathrm{red}}_\F)$ carries a partial order $\preceq_j$ (defined explicitly in~\cite[\S 3.1]{ahr-disconn}), and that for each simple object $L$ we have a ``standard object'' $\Delta(L)$ and a ``costandard object'' $\nabla(L)$ with maps $\Delta(L) \twoheadrightarrow L \hookrightarrow \nabla(L)$ such that all the composition factors of the kernel, resp.~cokernel, of the first, resp.~second, map are strictly smaller than $L$ for the order $\preceq_j$.

%-------------------------------------------------------
\subsection{Identification of simple modules}
\label{ss:bijection-xj}
%-------------------------------------------------------

For $\bk \in \{\F,\K',\overline{\K}\}$ we will denote by $\mathsf{K}(Z_G(x_j)_\bk)$ the Grothendieck group of the category of finite-dimensional algebraic representations of $Z_G(x_j)_\bk$. This group is a free $\Z$-module, with a basis consisting of the classes of simple representations. Recall that, since $Z_G(x_j)_{\O'}$ is flat (see Theorem~\ref{thm:flatness}), one has a canonical ``decomposition map''
\[
d_{Z_G(x_j)_{\O'}} : \mathsf{K}(Z_G(x_j)_{\K'}) \to \mathsf{K}(Z_G(x_j)_\F),
\]
see~\cite[Theorem~2]{serre-Groth}. This map is compatible with field extensions (for $\K'$) in the obvious sense.

The construction of~\eqref{eqn:bijection-simples} will be given by the following result, which will be proved in~\S\ref{ss:comparison-labelings} below.

\begin{prop}
\label{prop:bijection-simples}
There exists a finite extension $\K_0 \subset \ovK$ of $\K$ such that if $\K'$ contains $\K_0$ then
\begin{enumerate}
\item
\label{it:bijection-simples-1}
the functor sending a $Z_G(x_j)_{\K'}$-module $V$ to $\ovK \otimes_{\K'} V$ (with its natural $Z_G(x_j)_\ovK$-module structure) induces a bijection
\[
\Irr(Z_G(x_j)_{\K'}) \simto \Irr(Z_G(x_j)_\ovK);
\]
\item
\label{it:bijection-simples-2}
$d_{Z_G(x_j)_{\O'}}$ is an isomorphism;
\item
\label{it:bijection-simples-3}
for any simple $Z_G(x_j)_\ovK$-module $V$, there exists a unique simple $Z_G(x_j)_\F$-module $V'$ such that the image under $d_{Z_G(x_j)_{\O'}}$ of the simple $Z_G(x_j)_{\K'}$-module corresponding to $V$ (under the bijection of~\eqref{it:bijection-simples-1}) is the class of $\Delta(V')$; in particular, $V'$ is the unique simple module whose class appears with nonzero coefficient in this image and is maximal for this property (with respect to the order $\preceq_j$);
\item
\label{it:bijection-simples-4}
the map $V \mapsto V'$ (with $V$, $V'$ as in~\eqref{it:bijection-simples-3}) is a bijection
\[
\Irr(Z_G(x_j)_\ovK) \simto \Irr(Z_G(x_j)_\F).
\]
\end{enumerate}
\end{prop}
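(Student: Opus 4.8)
\textit{The plan.} I would bootstrap the statement from two classical facts about decomposition maps---that for a split connected reductive group the decomposition map carries classes of Weyl modules to classes of Weyl modules, and that for a finite group of order prime to $p$ it is a bijection between simple modules---glued together by means of the highest weight structure on representations of disconnected reductive groups established in~\cite{ahr-disconn}. First, since every irreducible representation of $Z_G(x_j)_\bk$ (for $\bk \in \{\F,\K',\ovK\}$) factors through the reductive quotient, \eqref{eqn:Irr-red} allows us to replace $Z_G(x_j)$ everywhere by its reductive quotient $H := Z_G(x_j)^{\mathrm{red}}$, and the decomposition map descends to a map $\mathsf{K}(H_{\K'}) \to \mathsf{K}(H_\F)$, which we still denote $d_{Z_G(x_j)_{\O'}}$. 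Because $Z_G(x_j)$ is a \emph{smooth} $\O$-group scheme (Theorem~\ref{thm:flatness}), its relative identity component and its finite \'etale group scheme of components are well behaved over $\O$, and hence over $\O'$; combining this with Theorem~\ref{thm:flatness} and the structural results of~\cite{har:smcent} (see also~\cite{mcninch2016}), the connected reductive groups $H^\circ_\ovK$ and $H^\circ_\F$ have the same root datum, and there is a canonical isomorphism $\pi_0(H_\ovK) \cong \pi_0(H_\F)$ compatible with the actions on that root datum.

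To prove~\eqref{it:bijection-simples-1}, recall that a split connected reductive group has all of its irreducible $\ovK$-representations defined over the prime field; it therefore suffices to choose $\K_0$ large enough that $H^\circ$ becomes split over it and that the finite \'etale group scheme $\pi_0(H)$ becomes constant, and then to enlarge $\K_0$ by one further finite extension so that the finitely many finite groups and cohomology classes entering the Clifford-theoretic description of $\Irr(H_\ovK)$ from~\cite{ahr-disconn} are realized over it. For such $\K' \supseteq \K_0$ the functor $\ovK \otimes_{\K'}({-})$ is then a bijection $\Irr(H_{\K'}) \simto \Irr(H_\ovK)$.

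Now fix $\K' \supseteq \K_0$. By the above, $H_{\K'}$ and $H_\F$ are ``split'' disconnected reductive groups with the same root datum on the identity component and the same component group acting in the same way, so the combinatorial parameters attached in~\cite{ahr-disconn} to the simple objects, resp.~to the standard objects, of $\Rep(H_{\K'})$ and of $\Rep(H_\F)$ are canonically identified; write $V \mapsto V'$ for the resulting bijection $\Irr(H_{\K'}) \simto \Irr(H_\F)$. Everything then reduces to the identity
\[
d_{Z_G(x_j)_{\O'}}([V]) = [\Delta(V')] \qquad \text{for every } V \in \Irr(H_{\K'}),
\]
with $\Delta(V')$ the standard object of the highest weight category $\Rep(H_\F)$: granting it, part~\eqref{it:bijection-simples-3} is immediate (the maximality clause being the unitriangularity of $[\Delta(V')]$ with respect to $\preceq_j$ in the basis of classes of simples); part~\eqref{it:bijection-simples-2} follows because the classes of standard objects form a $\Z$-basis of $\mathsf{K}(H_\F)$, so $d_{Z_G(x_j)_{\O'}}$ sends a basis bijectively to a basis; and part~\eqref{it:bijection-simples-4} follows by composing $V \mapsto V'$ with the bijection of~\eqref{it:bijection-simples-1}.

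It remains to prove the displayed identity, and this is where the main difficulty lies. One reduces to the identity component: a simple $H_{\K'}$-module is obtained, via the Clifford-theoretic procedure underlying the parametrization of~\cite{ahr-disconn}, by inducing up a simple---hence Weyl---module of $H^\circ_{\K'}$; the standard object $\Delta(V')$ is built in the same fashion from the corresponding Weyl module of $H^\circ_\F$; and this induction procedure is compatible both with change of scalars and with the decomposition map. On $H^\circ$, the decomposition map takes the class of a Weyl module over $\K'$ to the class of the corresponding Weyl module over $\F$, by the classical behaviour of the decomposition map for split reductive groups; and the finite groups appearing in the component-group part have order prime to $p$ by~\eqref{eqn:pnotdiv}, so their decomposition maps are bijections on simple modules. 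Assembling these ingredients gives the identity. The principal obstacle is thus exactly this reconciliation: matching the construction of the simple modules over $\K'$ with that of the standard objects over $\F$ in~\cite{ahr-disconn}, and checking that $d_{Z_G(x_j)_{\O'}}$ intertwines them.
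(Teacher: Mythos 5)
Your overall strategy is close to the paper's: reduce to the reductive quotient, exploit the highest-weight structure on $\Rep(Z_G(x_j)^{\mathrm{red}}_\F)$, and reduce parts~(2)--(4) to the single identity $d_{Z_G(x_j)_{\O'}}([V]) = [\Delta(V')]$. Two substantive remarks.

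For part~(1) you are doing more work than is needed. The paper observes that~(1) holds for \emph{every} finite extension $\K'$, independently of $\K_0$: by the argument in~\cite[\S 3.6]{serre-Groth}, one needs only that each simple $Z_G(x_j)_\ovK$-module has a $\K'$-form, and this already follows from Proposition~\ref{prop:support} (which realizes each simple as $\ovK \otimes_\O$ of an $\O$-free $Z_G(x_j)$-module). You do not need to enlarge $\K_0$ to split the identity component or realize Clifford-theoretic data for this item.

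The more serious issue is in the passage where you assert that ``$H^\circ_\ovK$ and $H^\circ_\F$ have the same root datum, and there is a canonical isomorphism $\pi_0(H_\ovK) \cong \pi_0(H_\F)$ compatible with the actions on that root datum'' and then later treat $H_{\K'}$ and $H_\F$ as ``split disconnected reductive groups with the same root datum \dots\ and the same component group acting in the same way.'' This is precisely the nontrivial step, and the smooth $\O$-group scheme $Z_G(x_j)$ by itself does not hand it to you: the relative identity component $Z_G(x_j)^\circ$ has fibers with nontrivial unipotent radicals, so it is not a reductive group scheme and its two special fibers do not directly exhibit matching root data. The paper's key device, which your argument omits, is the Levi factor $Z_G^{\mathrm{Levi}}(x_j) = Z_{Z_G(x_j)}(\varphi_j)$ over $\O$ (shown smooth in Lemma~\ref{lem:Levi-smooth}), whose base change to $\O'$ admits a reductive identity component and a constant component group scheme $A_j$ (Lemma~\ref{lem:Levi-centralizer}). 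It is this flat $\O'$-model with reductive $\O'$-identity component that both justifies the ``same root datum / same component group action'' claim (the two fibers are base changes of the \emph{same} reductive $\O'$-group scheme) and makes the decomposition map computable. With it in hand, the paper simply invokes~\cite[Theorem~4.4 and Lemma~4.7]{ahr-disconn} for (2)--(4), whose proofs carry out the Clifford-theoretic/Weyl-module bookkeeping you sketch in your last paragraph. So the obstacle you flag at the end is real, and the integral Levi factor over $\O'$ is the missing ingredient that resolves it; without it, the matching of parameters and the identity $d([V]) = [\Delta(V')]$ are not established.
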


Gluing the bijections~\eqref{eqn:bijection-simples} over all $j \in J$, we deduce the sought-after bijection~\eqref{eqn:bijection-Omega'}, and hence finally a bijection
\begin{equation}
\label{eqn:bijection-Omega}
\Omega_\ovK \simto \Omega_\F.
\end{equation}
The construction of this bijection involves a choice of balanced nilpotent sections.  We will see in Theorem~\ref{thm:main} that the bijection is in fact independent of this choice.
%however the bijection itself does not depend on this choice thanks to the conjugacy result on balanced nilpotent sections recalled in Theorem~\ref{thm:balanced-sections}.

%--------------------------------------------------------------------------
\subsection{A Levi factor}
\label{ss:Levi-factor}
%--------------------------------------------------------------------------

We continue with the setting of~\S\S\ref{ss:labeling-simples}--\ref{ss:bijection-xj}. Proposition~\ref{prop:bijection-simples} will be deduced from the results of~\cite[Section~4]{ahr-disconn}. But, since that paper considers \emph{reductive} groups over $\O'$, we will need to consider some ``nice'' $\O'$-group scheme which specializes over $\F$ to $Z_G(x_j)^{\mathrm{red}}_\F$, and over $\ovK$ to $Z_G(x_j)^{\mathrm{red}}_\ovK$. (Note that there exists no notion of ``reductive quotient'' over $\O$.) This group scheme will be constructed as a kind of ``Levi factor'' in $Z_G(x_j)$.

More precisely, recall that we have also chosen some cocharacter $\varphi_j$.
We will denote by $Z_G^{\mathrm{Levi}}(x_j)$ the centralizer in $Z_G(x_j)$ of $\varphi_j$, and by $Z^{\mathrm{Levi}}_G(x_j)_\bk$ its base-change to $\bk$ (for $\bk \in \{\F,\O',\K',\overline{\K}\}$).
For $\bk \in \{\F,\K',\overline{\K}\}$, it is well known that $Z_G^{\mathrm{Levi}}(x_j)_\bk$ is a Levi factor of $Z_G(x_j)_\bk$; in other words the restriction to this subgroup of the quotient morphism $Z_G(x_j)_\bk \to Z_G(x_j)^{\mathrm{red}}_\bk$ is an isomorphism (see e.g.~\cite[Proposition~3.2.2]{mcninch2016}).

\begin{lem}
\label{lem:Levi-smooth}
The $\O$-group scheme $Z_G^{\mathrm{Levi}}(x_j)$ is smooth.
\end{lem}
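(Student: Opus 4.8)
The plan is to realize $Z_G^{\mathrm{Levi}}(x_j)$ as the scheme-theoretic fixed-point subgroup of a $\Gm$-action on the smooth affine $\O$-group scheme $Z_G(x_j)$, and then to invoke the general fact that the fixed-point subscheme of an action of a group of multiplicative type on a smooth affine group scheme is again smooth.

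First I would check that conjugation via $\varphi_j$ defines a $\Gm$-action on $Z_G(x_j)$. Since $\Spec(\ovK) \times_{\Spec(\O)} \varphi_j$ is associated with $x_{j,\ovK}$ (see~\cite[Definition~3.2.1]{mcninch2016} or~\cite[Definition~5.3]{jantzen-nilp}), the element $x_{j,\ovK}$ has weight $2$ for the grading on $\fg_\ovK$ induced by $\varphi_{j,\ovK}$; equivalently, $\mathrm{Ad}(\varphi_j(z)) \cdot x_j = z^2 x_j$ as elements of $\fg \otimes_\O \O[z,z^{-1}]$. This identity in fact holds over $\O$: the $\O$-module $\fg$ is free, so $\fg \otimes_\O \O[z,z^{-1}] \hookrightarrow \fg \otimes_\O \ovK[z,z^{-1}]$, and the identity holds after this base change by the definition of an associated cocharacter. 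Since scaling a section by the unit $z^2$ does not affect its centralizer, conjugation by $\varphi_j(z)$ carries $Z_G(x_j)$ isomorphically onto $Z_G(z^2 x_j) = Z_G(x_j)$; hence the conjugation action of $\varphi_j(\Gm)$ on $G$ restricts to a $\Gm$-action on $Z_G(x_j)$. By construction $Z_G^{\mathrm{Levi}}(x_j)$, the centralizer of $\varphi_j$ in $Z_G(x_j)$, is precisely the fixed-point subscheme $Z_G(x_j)^{\Gm}$ for this action.

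Now $Z_G(x_j)$ is affine over $\O$, being a closed subscheme of the affine $\O$-group scheme $G$, and it is smooth over $\O$ by Theorem~\ref{thm:flatness}. The fixed-point subscheme of an action by group automorphisms of a group of multiplicative type --- here the torus $\Gm$ --- on a smooth affine group scheme over an arbitrary base is again a closed smooth subgroup scheme; this is standard, and one may refer, e.g., to Conrad--Gabber--Prasad, \emph{Pseudo-reductive groups}, Proposition~A.8.10(2), or to Conrad's notes \emph{Reductive group schemes}, Proposition~2.1.2. Applied over $\Spec(\O)$, this yields the smoothness of $Z_G^{\mathrm{Levi}}(x_j)$. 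The one step that genuinely requires attention, rather than formal manipulation, is the passage from the fibers to $\O$ in the second paragraph: one must make sure the $\Gm$-action exists at the level of $\O$-schemes (not merely over the residue and fraction fields), which is exactly why the weight-$2$ identity is recorded over $\O$ above.
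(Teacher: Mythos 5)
Your proof is correct and is essentially the same as the paper's: both rest on the smoothness of fixed points (equivalently, centralizers) of an action of a multiplicative-type group on a smooth affine $\O$-group scheme. The paper phrases this via the semidirect product $Z_G(x_j)\rtimes\Gm$ and cites SGA3 Exp.~11, Cor.~5.3, while you go directly to the fixed-point subscheme $Z_G(x_j)^{\Gm}$ and cite the modern references (CGP Prop.~A.8.10(2) or Conrad's notes); these are just two phrasings of the same result. Your additional verification that the weight-$2$ identity $\mathrm{Ad}(\varphi_j(z))x_j = z^2 x_j$ holds over $\O$ (so that the $\Gm$-action is genuinely defined over $\O$ and not merely fiberwise) is a point the paper takes for granted, and it is a welcome bit of care.
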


\begin{proof}
Consider the semidirect product $Z_G(x_j) \rtimes \Gm$, where $\Gm$ acts on $Z_G(x_j)$ by conjugation via $\varphi_j$. Then the centralizer in $Z_G(x_j) \rtimes \Gm$ of the subgroup $\{1\} \rtimes \Gm$ is smooth by~\cite[Exp.~11, Corollaire~5.3]{sga3}. On the other hand, it is easy to check that this centralizer coincides with $Z_G^{\mathrm{Levi}}(x_j) \times \Gm$. Hence the latter group scheme is smooth. Using~\cite[Tag~02K5]{stacks-project}, we deduce that $Z_G^{\mathrm{Levi}}(x_j)$ is smooth.
\end{proof}

For $\bk \in \{\F,\overline{\K}\}$, the projection $Z_G(x_j)_\bk \to Z_G(x_j)^{\mathrm{red}}_\bk$ induces a bijection between the groups of connected components of $Z_G(x_j)_\bk$ and $Z_G(x_j)^{\mathrm{red}}_\bk$. Hence the same is true for the embedding $Z^{\mathrm{Levi}}_G(x_j)_\bk \to Z_G(x_j)_\bk$. In view of Theorem~\ref{thm:flatness}, it follows that the groups of connected components of $Z_G^{\mathrm{Levi}}(x_j)_\F$ and $Z_G^{\mathrm{Levi}}(x_j)_\ovK$ have the same cardinality, which we will denote by $m$.

The decomposition of $Z^{\mathrm{Levi}}_G(x_j)_\ovK$ into connected components defines a decomposition of the coordinate ring
\[
\mathcal{O}(Z_G^{\mathrm{Levi}}(x_j)_\ovK) = \bigoplus_{i=1}^m \mathcal{O}(Z_G^{\mathrm{Levi}}(x_j)_\ovK) \epsilon_i
\]
where $(\epsilon_1, \ldots, \epsilon_m)$ are mutually orthogonal idempotents. Let $\K_0 \subset \ovK$ be a finite extension of $\K$ such that all of these elements belong to $\K_0 \otimes_\O \cO(Z_G^{\mathrm{Levi}}(x_j))$. From now on we will assume that $\K'$ contains $\K_0$, and let as above $\O'$ be
the integral closure of $\O$ in $\K'$. 

Denote by $Z_G^{\mathrm{Levi}}(x_j)_{\O'}$ the base change of $Z_G^{\mathrm{Levi}}(x_j)$ to $\O'$. The arguments in~\cite[\S 3.3]{har:smcent} show that each $\epsilon_i$ belongs to $\cO(Z_G^{\mathrm{Levi}}(x_j)_{\O'})$. Then it is not difficult to check that the $\O'$-submodule $\bigoplus_{i=1}^m \O' \cdot \epsilon_i \subset \cO(Z_G^{\mathrm{Levi}}(x_j)_{\O'})$ is a Hopf subalgebra, and that it defines a constant finite $\O'$-group scheme $A_j$, endowed with a morphism $\varpi_j : Z_G(x_j)^{\mathrm{Levi}}_{\O'} \to A_j$. Moreover, the base change of this morphism to $\F$, resp.~$\ovK$, identifies with the projection from $Z_G^{\mathrm{Levi}}(x_j)_\F$, resp.~$Z_G^{\mathrm{Levi}}(x_j)_\ovK$, to its group of connected components. (See~\cite[\S 3.4]{har:smcent} for details.)

\begin{lem}
\label{lem:Levi-centralizer}
The kernel $Z_G^{\mathrm{Levi}}(x_j)_{\O'}^\circ$ of $\varpi$ is a reductive group scheme over $\O'$, and the morphism $\varpi$ identifies $A_j$ with the quotient $Z_G^{\mathrm{Levi}}(x_j)_{\O'} / Z_G^{\mathrm{Levi}}(x_j)_{\O'}^\circ$.
\end{lem}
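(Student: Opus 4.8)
The plan is to exploit that $A_j$ is \emph{\'etale} over $\O'$ --- indeed it is the constant group scheme on a finite group of order $m$, hence a disjoint union of $m$ copies of $\Spec(\O')$ --- and to read everything off from the two geometric fibres of $\varpi$. First I would observe that the unit section $\Spec(\O') \hookrightarrow A_j$ is an open and closed immersion (it is one of the $m$ connected components of $A_j$), so its preimage $K := Z_G^{\mathrm{Levi}}(x_j)_{\O'}^\circ = \ker(\varpi)$ is an open and closed subscheme of $Z_G^{\mathrm{Levi}}(x_j)_{\O'}$. The $\O$-group scheme $Z_G^{\mathrm{Levi}}(x_j)$ is smooth by Lemma~\ref{lem:Levi-smooth}, and affine since it is a closed subscheme of $Z_G(x_j) \subset G$; hence $Z_G^{\mathrm{Levi}}(x_j)_{\O'}$ is smooth and affine over $\O'$, and therefore so is its open and closed subscheme $K$.

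Next I would identify the geometric fibres of $K \to \Spec(\O')$. Formation of kernels commutes with base change, so $K_\F = \ker(\varpi_\F)$ and $K_\ovK = \ker(\varpi_\ovK)$; by the description of $\varpi$ recalled just before the lemma, $\varpi_\F$, resp.~$\varpi_\ovK$, is the projection of $Z_G^{\mathrm{Levi}}(x_j)_\F$, resp.~$Z_G^{\mathrm{Levi}}(x_j)_\ovK$, onto its group of connected components, so $K_\F = Z_G^{\mathrm{Levi}}(x_j)_\F^\circ$ and $K_\ovK = Z_G^{\mathrm{Levi}}(x_j)_\ovK^\circ$. Since for $\bk \in \{\F,\ovK\}$ the group $Z_G^{\mathrm{Levi}}(x_j)_\bk$ is isomorphic to the reductive quotient $Z_G(x_j)_\bk^{\mathrm{red}}$ (see~\S\ref{ss:Levi-factor}), it is a possibly disconnected reductive group, so its identity component is connected reductive. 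As $\F$ is algebraically closed and $\ovK$ is an algebraic closure of the fraction field of $\O'$, these are precisely the geometric fibres of $K$; being connected reductive, they show that $K$ is a reductive group scheme over $\O'$ in the sense of~\cite{sga3}.

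It remains to show that $\varpi$ identifies $A_j$ with the quotient $Z_G^{\mathrm{Levi}}(x_j)_{\O'}/K$; since $\varpi$ is a quasi-compact homomorphism with kernel $K$, it suffices to check that $\varpi$ is faithfully flat. As $\O'$ is a complete discrete valuation ring with residue field $\F$ and $Z_G^{\mathrm{Levi}}(x_j)_{\O'}$ is smooth over $\O'$, the reduction map $Z_G^{\mathrm{Levi}}(x_j)(\O') \to Z_G^{\mathrm{Levi}}(x_j)_\F(\F)$ is surjective (\cite[Th\'eor\`eme~18.5.17]{ega4}); composing with the surjection of $Z_G^{\mathrm{Levi}}(x_j)_\F(\F)$ onto the component group and using that $A_j(\O') = A_j(\F)$ (as $A_j$ is constant), I may pick for each $\gamma \in A_j(\O')$ a lift $g_\gamma \in Z_G^{\mathrm{Levi}}(x_j)(\O')$ of $\gamma$. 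Left translation by $g_\gamma$ is an $\O'$-automorphism of $Z_G^{\mathrm{Levi}}(x_j)_{\O'}$ carrying $K$ isomorphically onto the preimage under $\varpi$ of the $\gamma$-th copy of $\Spec(\O')$ in $A_j$. As $\gamma$ ranges over $A_j(\O')$, these $m$ preimages are pairwise disjoint and cover $Z_G^{\mathrm{Levi}}(x_j)_{\O'}$, so $\varpi$ is, up to these isomorphisms, a disjoint union of $m$ copies of the structure morphism $K \to \Spec(\O')$, which is faithfully flat (flat since $K$ is smooth, and surjective since both fibres $K_\F$ and $K_\ovK$ are nonempty). Hence $\varpi$ is faithfully flat, and the quotient statement follows.

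The only point requiring slight care is the faithful flatness of $\varpi$ (equivalently, that $Z_G^{\mathrm{Levi}}(x_j)_{\O'}/K$ is representable and computed fibrewise), which is handled by the lifting argument above and works because $Z_G^{\mathrm{Levi}}(x_j)_{\O'}$ is smooth over the complete local ring $\O'$; everything else is formal once Lemma~\ref{lem:Levi-smooth} and the fibrewise description of $\varpi$ from~\cite{har:smcent} are in hand. Note in particular that, as a reductive group scheme, $K$ need not be verified to be connected as a scheme --- only its geometric fibres must be.
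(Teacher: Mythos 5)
Your proof is correct, and it follows essentially the same strategy as the paper's: lift elements of the component group $\mathbf{A}_j$ to $\O'$-points of $Z_G^{\mathrm{Levi}}(x_j)_{\O'}$ via smoothness and completeness of $\O'$, use translation to control the fibres of $\varpi$, and verify reductivity of the kernel by inspecting the two geometric fibres. The paper organizes the argument a little differently — it establishes (citing the companion paper) an isomorphism of schemes $A_j \times Z_G^{\mathrm{Levi}}(x_j)_{\O'}^\circ \simto Z_G^{\mathrm{Levi}}(x_j)_{\O'}$, which requires choosing a group-theoretic splitting $\mathbf{A}_j \to Z_G^{\mathrm{Levi}}(x_j)_{\O'}(\O')$, and then reads off smoothness of the kernel, faithful flatness of $\varpi$, and the quotient statement from that decomposition. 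Your version is a touch more economical on this point: you only need arbitrary set-theoretic lifts $g_\gamma$ (not a homomorphic section), since left translation by each $g_\gamma$ already identifies $\varpi^{-1}(\gamma)$ with $K$; and you get smoothness of $K$ directly from the observation that $K$ is an open and closed subscheme of the smooth affine scheme $Z_G^{\mathrm{Levi}}(x_j)_{\O'}$ rather than as a byproduct of the product decomposition. Both routes are valid and differ only in bookkeeping.
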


\begin{proof}
First we note that $\cO(Z_G^{\mathrm{Levi}}(x_j)^\circ_{\O'})$ is a direct summand of $\cO(Z_G^{\mathrm{Levi}}(x_j)_{\O'})$ as an $\O'$-module, and hence is flat over $\O'$. Thus our group scheme is flat.

Let now $\mathbf{A}_j$ be the finite group associated with $A_j$. Then the arguments in the proof of~\cite[Lemma~4.2]{ahr-disconn} show that the morphism $Z_G^{\mathrm{Levi}}(x_j)_{\O'}(\O') \to \mathbf{A}_j$ defined by $\varpi$ is surjective. 
We choose a section $\mathbf{A}_j \to Z_G^{\mathrm{Levi}}(x_j)_{\O'}(\O')$ of this morphism, and denote by $\iota : A_j \to Z_G^{\mathrm{Levi}}(x_j)_{\O'}$ the associated scheme morphism.

The same arguments as in~\cite[Lemma~4.3]{ahr-disconn} show that the natural morphism of $\O'$-schemes $A_j \times Z_G^{\mathrm{Levi}}(x_j)_{\O'}^\circ \to Z_G^{\mathrm{Levi}}(x_j)_{\O'}$ is an isomorphism. It follows that $Z_G^{\mathrm{Levi}}(x_j)_{\O'}^\circ$ is a smooth $\O'$-group scheme whose base changes to $\F$ and $\ovK$ are connected reductive algebraic groups; in other words $Z_G^{\mathrm{Levi}}(x_j)_{\O'}^\circ$ is a reductive group scheme over $\O'$.

The final claim in the statement is also clear from these arguments.
\end{proof}

%--------------------------------------------------------------------------
\subsection{Proof of Proposition~\ref{prop:bijection-simples}}
\label{ss:comparison-labelings}
%--------------------------------------------------------------------------

We consider the setting of~\S\ref{ss:Levi-factor}, and define $\K_0$ as in~\S\ref{ss:Levi-factor}. We will prove that if $\K'$ contains $\K_0$, then Properties~\eqref{it:bijection-simples-1}--\eqref{it:bijection-simples-4} hold.

Property~\eqref{it:bijection-simples-1} is in fact true for any $\K'$ (regardless of whether it contains $\K_0$ or not): arguing as in~\cite[\S 3.6]{serre-Groth}, this property simply follows from the fact that any simple $Z_G(x_j)_\ovK$-modules admits a $\K'$-form, which follows from Proposition~\ref{prop:support}. (This property can also be deduced from the general results of~\cite{ahr-disconn} under the assumption that $\K'$ contains $\K_0$; see~\cite[\S 4.5]{ahr-disconn}.)

In order to prove~\eqref{it:bijection-simples-2}--\eqref{it:bijection-simples-4}, we consider the Grothendieck groups $\mathsf{K}(Z_G^{\mathrm{Levi}}(x_j)_{\K'})$ and $\mathsf{K}(Z_G^{\mathrm{Levi}}(x_j)_{\F})$ of the categories of finite-dimensional algebraic $Z_G^{\mathrm{Levi}}(x_j)_{\K'}$-modules and $Z_G^{\mathrm{Levi}}(x_j)_{\F}$-modules respectively. Since these subgroups are Levi factors in $Z_G(x_j)_{\K'}$ and $Z_G(x_j)_\F$ respectively, pullback induces isomorphisms
\begin{equation}
\label{eqn:K-Levi}
\mathsf{K}(Z_G^{\mathrm{Levi}}(x_j)_{\K'}) \simto \mathsf{K}(Z_G(x_j)_{\K'}), \quad \mathsf{K}(Z_G^{\mathrm{Levi}}(x_j)_{\F}) \simto \mathsf{K}(Z_G(x_j)_{\F}).
\end{equation}
Moreover, since $Z_G^{\mathrm{Levi}}(x_j)_{\O'}$ is smooth (see Lemma~\ref{lem:Levi-smooth}), we have a corresponding decomposition map $d_{Z_G^{\mathrm{Levi}}(x_j)_{\O'}}$, and going back to the definition of this map we see that the diagram
\[
\begin{tikzcd}[column sep=2cm]
\mathsf{K}(Z_G^{\mathrm{Levi}}(x_j)_{\K'}) \ar[r, "d_{Z_G^{\mathrm{Levi}}(x_j)_{\O'}}"] \ar[d, "\eqref{eqn:K-Levi}"] & \mathsf{K}(Z_G^{\mathrm{Levi}}(x_j)_{\F})  \ar[d, "\eqref{eqn:K-Levi}"] \\
\mathsf{K}(Z_G(x_j)_{\K'}) \ar[r, "d_{Z_G(x_j)_{\O'}}"] & \mathsf{K}(Z_G(x_j)_{\K'})
\end{tikzcd}
\]
commutes.

Now the results of~\cite[Section~4]{ahr-disconn} can be applied to the $\O'$-group scheme $Z_G^{\mathrm{Levi}}(x_j)_{\O'}$ thanks to Lemma~\ref{lem:Levi-centralizer}. With this in mind, Property~\eqref{it:bijection-simples-2} is an application of~\cite[Theorem~4.4]{ahr-disconn}, and Properties~\eqref{it:bijection-simples-3}--\eqref{it:bijection-simples-4} follow from~\cite[Lemma~4.7]{ahr-disconn}.

%--------------------------------------------------------------------------
\subsection{Statement}
\label{ss:statement-orbits}
%--------------------------------------------------------------------------

Having explained the construction of~\eqref{eqn:bijection-Omega'}, we can at last state the main result of this paper.

\begin{thm}
\label{thm:main}
The following diagram commutes, where the diagonal arrows are the Lusztig--Vogan bijections for $G_\ovK$ and $G_\F$:
\[
\begin{tikzcd}
&\bX^+ \ar[ld] \ar[rd] & \\
\Omega_{\ovK} \ar[rr, leftrightarrow, "\sim", "\eqref{eqn:bijection-Omega}"'] & & \Omega_\F.
\end{tikzcd}
\]
\end{thm}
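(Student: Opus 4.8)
The commutativity of the diagram is the assertion that for every $\lambda \in \bX^+$ one has $\scO_\lambda^\F = \BC(\scO_\lambda^\ovK)$ and $L_\lambda^\F = (L_\lambda^\ovK)'$, where $V \mapsto V'$ is the bijection~\eqref{eqn:bijection-simples} attached to the index $j \in J$ with $x_{j,\ovK} \in \scO_\lambda^\ovK$ (so also $x_{j,\F} \in \BC(\scO_\lambda^\ovK)$), and where we identify $\Omega_\bk$ with $\Omega'_\bk$ as in~\S\ref{ss:pcoh}. The plan is to prove these two equalities \emph{simultaneously}, by induction on $\lambda$ for the order $\le$ on $\bX^+$; this induction is well founded because $\{\mu \in \bX^+ : \mu \le \lambda\}$ is finite for each $\lambda$.

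So fix $\lambda$, set $\scO' = \BC(\scO_\lambda^\ovK)$, and let $j$ be as above. By Proposition~\ref{prop:support} we have $\supp(\Dred_\lambda(\F)) = \overline{\scO'}$, and there is a $Z_G(x_j)$-module $W_\lambda$, free of finite rank over $\O$, with $\ovK \otimes_\O W_\lambda \cong L_\lambda^\ovK$ and $\F \otimes_\O W_\lambda \cong \sH^{\frac12\codim(\scO')}((\imath_j^\F)^*\Dred_\lambda(\F)) =: N_\lambda^\F$; moreover, by Remark~\ref{rmk:support}, $N_\lambda^\F$ factors through $Z_G(x_j)^{\mathrm{red}}_\F$, and $\Dred_\lambda(\F)|_{\St(\scO')}$ is, up to the cohomological shift by $\frac12\codim(\scO')$, the $G_\F$-equivariant vector bundle on $\scO'$ corresponding to $N_\lambda^\F$ under the equivalence $\Coh^{G_\F}(\scO') \cong \Rep(Z_G(x_j)_\F)$. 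Since restriction to the open subset $\St(\scO')$ is exact and annihilates exactly those simple perverse-coherent sheaves whose support does not contain $\scO'$, while $\IC(\scO',\cV)|_{\St(\scO')}$ is $i_{\scO'*}\cV$ placed in degree $\frac12\codim(\scO')$, the composition factors of $N_\lambda^\F$ (transported through the equivalence above) are precisely the $\cV$ such that $\IC(\scO',\cV)$ occurs as a composition factor of $\Dred_\lambda(\F)$, with the same multiplicities.

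Next I would compute the class $[N_\lambda^\F]$ in $\mathsf{K}(Z_G(x_j)_\F) = \mathsf{K}(Z_G(x_j)^{\mathrm{red}}_\F)$. Enlarge $\K$ to a finite extension $\K' \subset \ovK$ containing the field $\K_0$ of Proposition~\ref{prop:bijection-simples}, with ring of integers $\O'$. Then $\K' \otimes_\O W_\lambda$ is a $\K'$-form of the simple $Z_G(x_j)_\ovK$-module $L_\lambda^\ovK$, hence is itself simple, so by Proposition~\ref{prop:bijection-simples}\eqref{it:bijection-simples-1} it is the simple $Z_G(x_j)_{\K'}$-module attached to $L_\lambda^\ovK$. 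As $\O' \otimes_\O W_\lambda$ is a $Z_G(x_j)_{\O'}$-lattice with generic fibre $\K'\otimes_\O W_\lambda$ and special fibre $N_\lambda^\F$, the compatibility of the decomposition map with base change and Proposition~\ref{prop:bijection-simples}\eqref{it:bijection-simples-3} give $[N_\lambda^\F] = d_{Z_G(x_j)_{\O'}}([\K'\otimes_\O W_\lambda]) = [\Delta((L_\lambda^\ovK)')]$; in particular $(L_\lambda^\ovK)'$ is a composition factor of $N_\lambda^\F$, so by the previous paragraph $\IC(\scO',\cV_{(L_\lambda^\ovK)'})$ is a composition factor of $\Dred_\lambda(\F)$, where $\cV_{(L_\lambda^\ovK)'}$ is the irreducible $G_\F$-equivariant vector bundle on $\scO'$ corresponding to $(L_\lambda^\ovK)'$. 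But~\eqref{eqn:canonical-sequence2} exhibits $\Dred_\lambda(\F)$ as a quotient of $\Dpc_\lambda(\F)$, all of whose composition factors are of the form $\Lpc_\nu(\F)$ with $\nu \le \lambda$; hence there is $\mu \in \bX^+$ with $\mu \le \lambda$ and $\Lpc_\mu(\F) \cong \IC(\scO',\cV_{(L_\lambda^\ovK)'})$, i.e. $\scO_\mu^\F = \BC(\scO_\lambda^\ovK)$ and $L_\mu^\F = (L_\lambda^\ovK)'$. If $\mu < \lambda$, the inductive hypothesis gives $\scO_\mu^\F = \BC(\scO_\mu^\ovK)$ and $L_\mu^\F = (L_\mu^\ovK)'$; injectivity of $\BC$ then forces $\scO_\mu^\ovK = \scO_\lambda^\ovK$ (so the bijection $(-)'$ relevant to $\mu$ is the one attached to the same $j$), injectivity of $(-)'$ forces $L_\mu^\ovK = L_\lambda^\ovK$, and therefore $(\scO_\mu^\ovK, L_\mu^\ovK) = (\scO_\lambda^\ovK, L_\lambda^\ovK)$; since the Lusztig--Vogan bijection for $G_\ovK$ is injective this gives $\mu = \lambda$, a contradiction. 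Thus $\mu = \lambda$, which closes the induction.

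I expect the main difficulty to be the bookkeeping in the two middle paragraphs: recognizing that the $\O$-lattice $W_\lambda$ produced by the Slodowy-slice analysis of Section~\ref{sec:reduced} is a $Z_G(x_j)_{\O'}$-lattice whose reduction modulo the maximal ideal computes the decomposition map used in Section~\ref{sec:proof-main}, while keeping straight the interplay between the $G$- and $G\times\Gm$-equivariant versions, the passage to the reductive quotient (Remark~\ref{rmk:support}), and the base change along $\O \to \O'$. Once this dictionary between perverse-coherent sheaves on $\cN_\F$ supported on an orbit closure and modules over the integral centralizer is in place, the orbit statement and the representation statement emerge together from the single induction above, with no input beyond Propositions~\ref{prop:support} and~\ref{prop:bijection-simples}.
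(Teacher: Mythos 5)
Your proof is correct and takes a genuinely different route from the paper's. The paper proceeds in two stages: first it establishes Proposition~\ref{prop:simples-centralizer}, which says that \emph{under the hypothesis} that $\scO_\mu^\F = G_\F\cdot x_{j,\F}$ for all $\mu \in \bX^+_j$, the assignment $\mu \mapsto L_\mu^\F$ is surjective onto $\Irr(Z_G(x_j)_\F)$ and compatible with~\eqref{eqn:bijection-simples}; it then proves the orbit equality $\scO_\lambda^\F = \BC(\scO_\lambda^\ovK)$ by induction on the nilpotent orbit in the closure order, using the support inclusion~\eqref{eqn:inclusion-orbits} together with the surjectivity statement of Proposition~\ref{prop:simples-centralizer} to rule out a strict drop. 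You instead run a single induction on $\lambda$ for the order $\le$ on $\bX^+$, proving the orbit statement and the representation statement simultaneously: from the lattice computation of Proposition~\ref{prop:support} you get the decomposition-map identity $[N_\lambda^\F] = [\Delta((L_\lambda^\ovK)')]$, then by exactness of restriction to $\St(\scO')$ and the scheme-theoretic support claim of Remark~\ref{rmk:support} you identify $\IC(\scO',\cV_{(L_\lambda^\ovK)'})$ as a composition factor of $\Dred_\lambda(\F)$, and finally the triangularity of composition factors of $\Dpc_\lambda$ together with the injectivity of $\BC$, of~\eqref{eqn:bijection-simples}, and of the Lusztig--Vogan bijection for $G_\ovK$ forces this factor to be $\Lpc_\lambda(\F)$ itself. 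Your approach bypasses Proposition~\ref{prop:simples-centralizer} entirely, trading the surjectivity argument there for a direct identification of one specific composition factor; it is arguably more streamlined, since both desired equalities fall out of one step. What the paper's route buys is the standalone surjectivity and unitriangularity statements (the families $([M_\lambda])$ and $([L_\lambda^\F])$ are related by a unitriangular change of basis), which the paper then reuses in the complement Proposition~\ref{prop:compl-std} to identify $M_\lambda$ with $\Delta(L_\lambda^\F)$ on the nose; your argument produces the same identity in $\mathsf{K}$-theory $[N_\lambda^\F] = [\Delta((L_\lambda^\ovK)')] = [\Delta(L_\lambda^\F)]$, but one would still need the highest-weight machinery of Proposition~\ref{prop:compl-std} to upgrade this to an isomorphism.
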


Recall that the construction of~\eqref{eqn:bijection-Omega} involves the choice of a set $(x_j: j \in J)$ of balanced nilpotent sections.  Theorem~\ref{thm:main} evidently implies that~\eqref{eqn:bijection-Omega} is in fact independent of these choices.

With respect to these choices, 
%Given our choice of representatives $\{x_{j,\ovK} : j \in J\}$ for the nilpotent $G_\ovK$-orbits and $\{x_{j,\F} : j \in J\}$ for the nilpotent $G_\F$-orbits,
Theorem~\ref{thm:main} is equivalent to the assertion that for any $\lambda \in \bX^+$ we have
\begin{enumerate}
\item
\label{it:coincidence-orbits}
$\mathscr{O}^\F_\lambda = \BC(\mathscr{O}^{\ovK}_\lambda)$;
\item
if $j \in J$ is such that $\mathscr{O}^\F_\lambda=G_\F \cdot x_{j,\F}$ (or equivalently such that $\scO_\lambda^\ovK = G_\ovK \cdot x_{j,\ovK}$), then
the simple $Z_G(x_j)_\F$-module $L_\lambda^\F$ corresponds to the simple $Z_G(x_j)_\ovK$-module $L_\lambda^\ovK$ under the bijection~\eqref{eqn:bijection-simples}.
\end{enumerate}

Concerning~\eqref{it:coincidence-orbits}, we note that
by definition we have
\[
\supp(\Lpc_\lambda(\F)) = \overline{\mathscr{O}^\F_\lambda}.
\]
On the other hand, in Proposition~\ref{prop:support} we have proved that
\[
\supp(\Dred_\lambda(\F)) = \overline{\BC(\mathscr{O}^\ovK_\lambda)}.
\]
Now we have a surjection $\Dred_\lambda(\F) \twoheadrightarrow \Lpc_\lambda(\F)$ (see~\eqref{eqn:canonical-sequence2}), so using Lemma~\ref{lem:support-pcoh} we deduce that
\begin{equation}
\label{eqn:inclusion-orbits}
\overline{\mathscr{O}^\F_\lambda} \subset \overline{\BC(\mathscr{O}^\ovK_\lambda)}.
\end{equation}
Hence all that remains to be proved to obtain~\eqref{it:coincidence-orbits} is the opposite containment.

%--------------------------------------------------------------------------
\subsection{Simple modules for centralizers}
\label{ss:simples-centralizers}
%--------------------------------------------------------------------------

In this subsection again we fix some $j \in J$, and set
\[
\bX^+_j := \{\lambda \in \bX^+ \mid \scO^{\ovK}_\lambda = G_\ovK \cdot x_{j,\ovK}\}.
\]
Then the Lusztig--Vogan bijection for $G_\ovK$ induces a bijection between $\bX^+_j$ and the set of isomorphism classes of simple modules for the centralizer $Z_G(x_j)_\ovK$, sending $\lambda$ to $L_\lambda^\ovK$.

\begin{prop}
\label{prop:simples-centralizer}
Assume that for any $\lambda \in \bX^+_j$ we have $\scO^{\F}_\lambda = G_\F \cdot x_{j,\F}$. Then the assignment $\lambda \mapsto L_\lambda^\F$ induces a bijection between $\bX^+_j$ and the set of isomorphism classes of simple modules for $Z_G(x_j)_\F$. Moreover the following commutes:
\[
\begin{tikzcd}
& \bX^+_j \ar[ld, "\lambda \mapsto L_\lambda^\ovK" description] \ar[rd, "\lambda \mapsto L_\lambda^\F" description] & \\
\Irr(Z_G(x_j)_\ovK) \ar[rr, "\sim", "\eqref{eqn:bijection-simples}"'] && \Irr(Z_G(x_j)_\F).
\end{tikzcd}
\]
\end{prop}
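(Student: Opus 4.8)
The proposition is essentially a reformulation of Propositions~\ref{prop:support} and~\ref{prop:bijection-simples}, and the plan is to deduce it from them. Write $V_\lambda' \in \Irr(Z_G(x_j)_\F)$ for the image of $L_\lambda^\ovK$ under the bijection~\eqref{eqn:bijection-simples}. Composing with the Lusztig--Vogan bijection for $G_\ovK$ (which restricts to a bijection $\bX_j^+ \overset{\sim}{\to} \Irr(Z_G(x_j)_\ovK)$, $\lambda \mapsto L_\lambda^\ovK$) and using Proposition~\ref{prop:bijection-simples}\eqref{it:bijection-simples-4}, we see that $\lambda \mapsto V_\lambda'$ is a bijection $\bX_j^+ \overset{\sim}{\to} \Irr(Z_G(x_j)_\F)$. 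Moreover, under the standing hypothesis the assignment $\lambda \mapsto L_\lambda^\F$ is well defined and injective on $\bX_j^+$: if $\lambda \in \bX_j^+$ then $\Lpc_\lambda(\F) \cong \IC(G_\F \cdot x_{j,\F}, \cV)$, where $\cV$ is the $G_\F$-equivariant vector bundle on $G_\F \cdot x_{j,\F}$ corresponding to $L_\lambda^\F$, and $\Lpc_\lambda(\F) \cong \Lpc_{\lambda'}(\F)$ forces $\lambda = \lambda'$ by the classification of simple perverse-coherent sheaves recalled in~\S\ref{ss:pcoh}. Consequently the whole proposition follows once we prove that $L_\lambda^\F = V_\lambda'$ for every $\lambda \in \bX_j^+$: the first assertion then holds because $\lambda \mapsto V_\lambda'$ is a bijection, and the square commutes by definition of $V_\lambda'$.

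Fix $\lambda \in \bX_j^+$ and set $N_\lambda := \sH^{\frac{1}{2}\codim(\scO^\ovK_\lambda)}\big((\imath_j^\F)^* \Dred_\lambda(\F)\big)$; by Remark~\ref{rmk:support}\eqref{it:supp-factor} this is a module for the reductive quotient of $Z_G(x_j)_\F$. The first step is the identity $[N_\lambda] = [\Delta(V_\lambda')]$ in $\mathsf{K}(Z_G(x_j)_\F)$, where $\Delta(V_\lambda')$ is the standard object attached to $V_\lambda'$ in the highest weight category $\Rep(Z_G(x_j)^{\mathrm{red}}_\F)$ of~\S\ref{ss:labeling-simples}. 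This combines two inputs. By the second part of Proposition~\ref{prop:support} there is a $Z_G(x_j)$-stable free $\O$-module $M_\lambda$ with $\ovK \otimes_\O M_\lambda \cong L_\lambda^\ovK$ and $\F \otimes_\O M_\lambda \cong N_\lambda$; extending scalars to the ring $\O'$ of~\S\ref{ss:labeling-simples}, the module $\O' \otimes_\O M_\lambda$ is an $\O'$-lattice in the $\K'$-form $\widetilde L$ of $L_\lambda^\ovK$ provided by Proposition~\ref{prop:bijection-simples}\eqref{it:bijection-simples-1} (as $\K' \otimes_\O M_\lambda$ becomes simple over $\ovK$), so that, the decomposition map being computed on an arbitrary lattice, $[N_\lambda] = d_{Z_G(x_j)_{\O'}}([\widetilde L])$. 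On the other hand Proposition~\ref{prop:bijection-simples}\eqref{it:bijection-simples-3} identifies $d_{Z_G(x_j)_{\O'}}([\widetilde L])$ with $[\Delta(V_\lambda')]$ and records that $V_\lambda'$ is the \emph{unique} composition factor of $N_\lambda$ maximal for the order $\preceq_j$, occurring with multiplicity one; in particular every composition factor of $N_\lambda$ is $\preceq_j V_\lambda'$.

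The second step extracts structural information about $N_\lambda$ from the perverse-coherent side. Since $\lambda \in \bX_j^+$, the hypothesis gives $\scO^\F_\lambda = G_\F \cdot x_{j,\F} = \BC(\scO^\ovK_\lambda)$, so by Proposition~\ref{prop:support} the support of $\Dred_\lambda(\F)$ is $\overline{G_\F \cdot x_{j,\F}}$, and by Remark~\ref{rmk:support}\eqref{it:supp-orbit} its restriction to $\St(G_\F \cdot x_{j,\F})$ is the pushforward of a genuine $G_\F$-equivariant vector bundle placed in degree $\frac{1}{2}\codim$, whose fibre at $x_{j,\F}$ is exactly $N_\lambda$. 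Using property~\eqref{it:pcoh-support} of~\S\ref{ss:pcoh}, the assignment $\cF \mapsto \cF|_{G_\F \cdot x_{j,\F}}[\tfrac{1}{2}\codim]$ defines an exact functor from perverse-coherent sheaves supported on $\overline{G_\F \cdot x_{j,\F}}$ to $\Rep(Z_G(x_j)_\F)$. Applying it to the surjection $\Dred_\lambda(\F) \twoheadrightarrow \Lpc_\lambda(\F)$ of~\eqref{eqn:canonical-sequence2}, and recalling (via~\eqref{eqn:canonical-sequence2} and the classification in~\S\ref{ss:pcoh}) that the kernel of this surjection has all composition factors of the form $\Lpc_\mu(\F)$ with $\mu < \lambda$, we obtain a surjection $N_\lambda \twoheadrightarrow L_\lambda^\F$ whose kernel has all composition factors of the form $L_\mu^\F$ with $\mu \in \bX^+$, $\mu < \lambda$ and $\scO^\F_\mu = G_\F \cdot x_{j,\F}$. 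In particular $L_\lambda^\F$ is a composition factor of $N_\lambda$ of multiplicity one ($L_\mu^\F \cong L_\lambda^\F$ for one of these $\mu$ would give $\Lpc_\mu(\F) \cong \Lpc_\lambda(\F)$, whence $\mu = \lambda$, absurd), and hence $L_\lambda^\F \preceq_j V_\lambda'$.

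The last step, which I expect to be the main obstacle, is to upgrade this to $V_\lambda' = L_\lambda^\F$, i.e.\ to exclude $L_\lambda^\F \prec_j V_\lambda'$. I plan to do this by induction on $\lambda$ for the root order on $\bX^+$, which is well-founded. If $L_\lambda^\F \neq V_\lambda'$, then, $V_\lambda'$ being a composition factor of $N_\lambda$ distinct from $L_\lambda^\F$, it must equal $L_\mu^\F$ for one of the kernel constituents above, with $\mu < \lambda$ and $\scO^\F_\mu = G_\F \cdot x_{j,\F}$. For such a $\mu$ lying in $\bX_j^+$, the inductive hypothesis gives $L_\mu^\F = V_\mu'$, and then injectivity of $\nu \mapsto V_\nu'$ together with $\mu \neq \lambda$ yields $V_\mu' \neq V_\lambda'$, a contradiction; it remains to rule out the constituents $L_\mu^\F$ with $\mu \notin \bX_j^+$, which reduces to showing that every $\mu \in \bX^+$ with $\scO^\F_\mu = G_\F \cdot x_{j,\F}$ in fact lies in $\bX_j^+$ --- a statement I intend to extract from the hypothesis together with the dimension inequality implied by~\eqref{eqn:inclusion-orbits} (which forces $\dim \scO^\ovK_\mu \geq \dim \scO^\ovK_\lambda$) and the fact that $\BC$ is an isomorphism of posets. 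Once $V_\lambda' = L_\lambda^\F$ is established for all $\lambda \in \bX_j^+$, the bijectivity of $\lambda \mapsto L_\lambda^\F$ and the commutativity of the diagram follow as explained in the first paragraph.
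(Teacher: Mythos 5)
Your first two paragraphs follow the paper's route closely: you introduce the same object (what the paper calls $M_\lambda$ and you call $N_\lambda$), establish the identity $[N_\lambda]=[\Delta(V_\lambda')]$ in $\mathsf{K}(Z_G(x_j)_\F)$ from Propositions~\ref{prop:support} and~\ref{prop:bijection-simples}, and extract the surjection $N_\lambda\twoheadrightarrow L_\lambda^\F$ with kernel constituents of the form $L_\mu^\F$, $\mu<\lambda$, from~\eqref{eqn:canonical-sequence2} and property~\eqref{it:simples-pc} of~\S\ref{ss:pcoh}. That part is sound.

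The last step is where you diverge from the paper, and there the plan does not close. You try to prove $L_\lambda^\F=V_\lambda'$ directly by induction on $\lambda$ for the root order, and you correctly reduce to excluding a kernel constituent $L_\mu^\F$ with $\mu<\lambda$, $\scO_\mu^\F=G_\F\cdot x_{j,\F}$, and $\mu\notin\bX_j^+$. But the claim ``every $\mu\in\bX^+$ with $\scO_\mu^\F=G_\F\cdot x_{j,\F}$ lies in $\bX_j^+$'' cannot be extracted from the proposition's hypothesis together with~\eqref{eqn:inclusion-orbits}. What~\eqref{eqn:inclusion-orbits} gives, when $\scO_\mu^\F=\BC(\scO_\lambda^\ovK)$, is the containment $\overline{\scO_\lambda^\ovK}\subset\overline{\scO_\mu^\ovK}$ (via $\BC$ being a poset isomorphism), hence the \emph{one-sided} inequality $\dim\scO_\mu^\ovK\geq\dim\scO_\lambda^\ovK$, exactly as you write; nothing forces equality, and a priori $\scO_\mu^\ovK$ could be a strictly larger orbit. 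Worse, the statement you need is equivalent to part of Theorem~\ref{thm:main} itself (the equality $\scO_\mu^\F=\BC(\scO_\mu^\ovK)$ for such $\mu$), and in the paper's proof of that theorem the case of the orbit $G_\ovK\cdot x_{j,\ovK}$ is handled precisely by \emph{appealing to the surjectivity part of Proposition~\ref{prop:simples-centralizer}} for other values of $j$. So proving Proposition~\ref{prop:simples-centralizer} by assuming this claim would be circular.

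The paper sidesteps this entirely by decoupling surjectivity from the identification $L_\lambda^\F=V_\lambda'$. It first uses Proposition~\ref{prop:bijection-simples}\eqref{it:bijection-simples-2} to conclude that $\{[M_\lambda]\}_{\lambda\in\bX_j^+}$ is a $\Z$-basis of $\mathsf{K}(Z_G(x_j)_\F)$, and the upper-triangularity of $[M_\lambda]$ against the $[L_\mu^\F]$ with $\mu\in\bX_j^+$ (which \emph{is} available from your second paragraph, without needing to control constituents outside $\bX_j^+$). Expanding the class of an arbitrary simple $V$ in the basis $\{[M_\lambda]\}$ and picking a $<$-maximal $\lambda$ with nonzero coefficient then shows $V\cong L_\lambda^\F$, giving surjectivity of $\lambda\mapsto L_\lambda^\F$. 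Only after this does the paper run a (short) induction with respect to $\preceq_j$, using that both $\lambda\mapsto L_\lambda^\F$ and $\lambda\mapsto V_\lambda'$ are bijections onto $\Irr(Z_G(x_j)_\F)$ with $L_\lambda^\F\preceq_j V_\lambda'$, to conclude $L_\lambda^\F=V_\lambda'$. That $K$-theory change-of-basis step is the key ingredient you are missing; without it, the final induction cannot be made to work.
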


\begin{proof}
Under our assumption, each $L_\lambda^\F$ is indeed a simple $Z_G(x_j)_\F$-module, and our assignment is injective because it is obtained by restricting the Lusztig--Vogan bijection for $G_\F$. What remains to be proved is surjectivity (and the commutativity of the diagram).

Let us choose a finite extension $\K'$ of $\K$ as in Proposition~\ref{prop:bijection-simples}.
If we denote by $M_\lambda$ the $Z_G(x_j)_\F$-module $\sH^{\frac{1}{2} \codim(\scO_\lambda^\ovK)}((\imath^\F_j)^* \Dred_\lambda(\F))$, then Proposition~\ref{prop:support} implies that the image under the decomposition map $d_{Z_G(x_j)_{\O'}}$ of the class of the simple $Z_G(x_j)_{\K'}$-module corresponding to $L_\lambda^{\ovK}$ under the bijection of Proposition~\ref{prop:bijection-simples}\eqref{it:bijection-simples-1} is $[M_\lambda]$. Using Proposition~\ref{prop:bijection-simples}\eqref{it:bijection-simples-2}--\eqref{it:bijection-simples-3}, we deduce that the classes $([M_\lambda] : \lambda \in \bX_j^+)$ form a $\Z$-basis of $\mathsf{K}(Z_G(x_j)_\F)$, and moreover that for any $\lambda \in \bX_j^+$ the class $[M_\lambda]$ coincides with the class of the standard $Z_G(x_j)^{\mathrm{red}}_\F$-module whose top is the image of $L_\lambda^\ovK$ under~\eqref{eqn:bijection-simples}.

On the other hand, the isomorphism classes of simple $Z_G(x_j)_\F$-modules also form a basis of $\mathsf{K}(Z_G(x_j)_\F)$, and the classes $([L_\lambda^\F] : \lambda \in \bX^+_j)$ form a subfamily of this basis. Moreover, since $\Dred_\lambda(\F)$ is a quotient of $\Dpc_\lambda(\F)$ (see~\eqref{eqn:canonical-sequence2}), the composition factors of the kernel $\mathcal{K}$ of the surjection $\Dred_\lambda(\F) \twoheadrightarrow \Lpc_\lambda(\F)$ are of the form $\Lpc_\mu(\F)$ with $\mu < \lambda$ (see Property~\eqref{it:simples-pc} in~\S\ref{ss:pcoh}). The support of these composition factors is contained in $\overline{\scO_\lambda^\F}$ (by Proposition~\ref{prop:support} and our assumption), so that $\sH^{\frac{1}{2} \codim(\scO_\lambda^{\ovK}) + 1}((\imath^\F_j)^* \mathcal{K})=0$. Hence we have an exact sequence 
\[
\sH^{\frac{1}{2} \codim(\scO_\lambda^{\ovK})}((\imath^\F_j)^* \mathcal{K}) \to M_\lambda \to L_\lambda^\F \to 0.
\]
In particular, $[M_\lambda]$ has coefficient $1$ on $[L_\lambda^\F]$ and, for $\mu \in \bX^+_j \smallsetminus \{\lambda\}$, if the coefficient of $[M_\lambda]$ on $[L_\mu^\F]$ is nonzero, then $\mu < \lambda$.

If now $V$ is a $Z_G(x_j)_\F$-module, there exist coefficients $(a_\lambda : \lambda \in \bX^+_j)$ in $\Z$ (almost all zero) such that
\[
[V] = \sum_{\lambda \in \bX^+_j} a_\lambda \cdot [M_\lambda].
\]
If $\lambda$ is maximal among the elements such that $a_\lambda \neq 0$, then the remarks above show that the coefficient of $[V]$ on $[L_\lambda^\F]$ (in the basis consisting of classes of simples modules) is $a_\lambda$, and thus nonzero. If we assume that $V$ is simple, this implies that $V \cong L_\lambda^\F$, which concludes the proof of surjectivity.

Finally, since $L_\lambda^\F$ is a composition factor of $M_\lambda$, the remarks above show that it is smaller than the image of $L_\lambda^\ovK$ under~\eqref{eqn:bijection-simples} (with respect to the order $\preceq_j$).
Then a straightforward induction argument (with respect to this order) implies that these modules are in fact isomorphic.
\end{proof}

%-----------------------------------------------------
\subsection{Proof of Theorem~\ref{thm:main}}
%-----------------------------------------------------

In view of Proposition~\ref{prop:simples-centralizer}, all that remains to be proved is that the inclusion~\eqref{eqn:inclusion-orbits} is an equality.
First we observe that if $\scO_\lambda^{\ovK}$ is the zero orbit, then $\BC(\scO_\lambda^{\ovK})$ is also the zero orbit, so that the inclusion~\eqref{eqn:inclusion-orbits} must be an equality.

Let now $\scO \subset \cN_\ovK$ be an orbit, and assume the claim is known for any $\mu \in \bX^+$ such that $\scO_\mu^\ovK \subset \overline{\scO} \smallsetminus \scO$. Then if $\lambda \in \bX^+$ is such that $\scO_\lambda^\ovK = \scO$ and if the embedding $\supp(\Lpc_\lambda(\F)) \subset \supp(\Dred_\lambda(\F))$ is strict, then there exists some orbit $\scO' \subset \overline{\scO} \smallsetminus \scO$ such that $\scO_\lambda^\F = \BC(\scO')$. If $j \in J$ is such that $x_{j,\ovK} \in \scO'$, then $L_\lambda^\F$ is a simple $Z_G(x_j)_\F$-module, which cannot be isomorphic to any $L_\mu^\F$ with $\mu \in \bX_j^+$ (because the Lusztig--Vogan bijection \emph{is} a bijection). But there exists no such module by
Proposition~\ref{prop:simples-centralizer} applied to this choice of $j$. (This proposition is applicable thanks to our induction hypothesis.)

%---------------------------------------------------------------
\subsection{Complement: identification of \texorpdfstring{$M_\lambda$}{Mlambda}}
%---------------------------------------------------------------

Let $\lambda \in \bX^+_j$, and
recall the $Z_G(x_j)_\F$-module $M_\lambda$ introduced in the proof of Proposition~\ref{prop:simples-centralizer}. In the course of this proof we observed that 
the class of $M_\lambda$ is the class of a standard $Z_G(x_j)^{\mathrm{red}}_\F$-module, which can now be identified with $\Delta(L_\lambda^\F)$ thanks to Theorem~\ref{thm:main}.

In this subsection we note that one can say more about this module.

\begin{prop}\label{prop:compl-std}
For any $j \in J$ and $\lambda \in \bX^+_j$,
there exists an isomorphism of $Z_G(x_j)_\F$-modules
\[
\Delta(L_\lambda^\F) \simto M_\lambda.
\]
\end{prop}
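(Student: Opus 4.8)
The plan is to upgrade the identity $[M_\lambda] = [\Delta(L_\lambda^\F)]$ in the Grothendieck group $\mathsf{K}(Z_G(x_j)_\F)$ — established in the course of the proof of Proposition~\ref{prop:simples-centralizer}, using Theorem~\ref{thm:main} — to an isomorphism. That identity already records that $L_\lambda^\F$ occurs in $M_\lambda$ with multiplicity one and that every other composition factor of $M_\lambda$ is $\prec_j L_\lambda^\F$. By Remark~\ref{rmk:support}\eqref{it:supp-factor}, $M_\lambda$ is a module for the reductive quotient $Z_G(x_j)^{\mathrm{red}}_\F$, to which the highest weight structure of~\cite{ahr-disconn} applies. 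Granting in addition that $M_\lambda$ has simple top $L_\lambda^\F$, the proposition follows formally: $M_\lambda$ is then a quotient of the projective cover $P(L_\lambda^\F)$ in $\Rep(Z_G(x_j)^{\mathrm{red}}_\F)$, and since all of its composition factors are $\preceq_j L_\lambda^\F$, the surjection $P(L_\lambda^\F)\twoheadrightarrow M_\lambda$ factors through the largest quotient of $P(L_\lambda^\F)$ with that property, which is $\Delta(L_\lambda^\F)$ (see~\cite[\S3.5]{ahr-disconn}); comparing classes, the induced map $\Delta(L_\lambda^\F)\twoheadrightarrow M_\lambda$ is an isomorphism. So the whole problem reduces to showing that $M_\lambda$ has simple top $L_\lambda^\F$; since by Proposition~\ref{prop:simples-centralizer} the simple $Z_G(x_j)_\F$-modules are exactly the $L_\mu^\F$ with $\mu\in\bX^+_j$, and $[M_\lambda:L_\lambda^\F]=1$, this amounts to $\Hom(M_\lambda, L_\mu^\F) = 0$ for every $\mu\in\bX^+_j\smallsetminus\{\lambda\}$.

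I would prove this vanishing by transporting it to $\PCoh(\cN)_\F$. By Proposition~\ref{prop:support} and Theorem~\ref{thm:main}, $\Dred_\lambda(\F)$ is supported on $\overline{\scO_\lambda^\F} = \overline{G_\F\cdot x_{j,\F}}$, and by Remark~\ref{rmk:support}\eqref{it:supp-orbit} the exact functor given by restriction to the open subset $\St(\scO_\lambda^\F)$ followed by the equivalence between perverse-coherent sheaves on $\St(\scO_\lambda^\F)$ supported on the closed orbit $\scO_\lambda^\F$ and $\Rep(Z_G(x_j)_\F)$ sends $\Dred_\lambda(\F)$ to $M_\lambda$ and each $\Lpc_\mu(\F)$ with $\mu\in\bX^+_j$ to $L_\mu^\F$. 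The right adjoint of this functor sends an irreducible bundle to the corresponding costandard perverse-coherent sheaf (see~\cite{ab:pcs}), so
\[
\Hom(M_\lambda, L_\mu^\F) \cong \Hom_{\PCoh(\cN)_\F}\bigl(\Dred_\lambda(\F),\, \IC^\nabla(\scO_\lambda^\F,\cV_\mu)\bigr),
\]
where $\cV_\mu$ is the irreducible bundle corresponding to $L_\mu^\F$ and $\IC^\nabla(\scO_\lambda^\F,\cV_\mu)$ denotes that costandard object. Now $\IC^\nabla(\scO_\lambda^\F,\cV_\mu)$ has socle $\Lpc_\mu(\F)$, and all of its remaining composition factors are supported on orbits strictly contained in $\overline{\scO_\lambda^\F}$; in particular, for $\mu\neq\lambda$ the simple $\Lpc_\lambda(\F) = \IC(\scO_\lambda^\F,\cV_\lambda)$ is not among them. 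Hence, assuming $\Dred_\lambda(\F)$ has simple top $\Lpc_\lambda(\F)$, the image of any nonzero map $\Dred_\lambda(\F)\to\IC^\nabla(\scO_\lambda^\F,\cV_\mu)$ would be a subobject of $\IC^\nabla(\scO_\lambda^\F,\cV_\mu)$ with top $\Lpc_\lambda(\F)$, which is impossible; so the $\Hom$-space vanishes.

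It remains to establish that $\Dred_\lambda(\F) = \pi_*\tDred_{w_0\lambda}(\F)$ has simple top $\Lpc_\lambda(\F)$. I would deduce this from the fact, recorded in Section~\ref{sec:intexotic}, that $\tDred_{w_0\lambda}(\F)$ has simple top $\tirrex_{w_0\lambda}(\F)$, together with: $\pi_*$ is t-exact (Lemma~\ref{lem:pi-texact}); $\pi_*\pi^!\cong\id$, since $\pi_*\cO_\tcN\cong\cO_\cN$ and $\omega_\tcN\cong\pi^*\omega_\cN$, so that $\pi^!\cong\pi^*$; and $\pi^!$ is left t-exact. For $\mu\neq\lambda$ one then has
\[
\Hom_{\PCoh(\cN)_\F}(\Dred_\lambda(\F),\Lpc_\mu(\F)) \cong \Hom\bigl(\tDred_{w_0\lambda}(\F),\, H^0(\pi^!\Lpc_\mu(\F))\bigr),
\]
where $H^0$ and the last $\Hom$ are taken for the exotic t-structure; since $\pi_*(H^0(\pi^!\Lpc_\mu(\F))) \cong \Lpc_\mu(\F)$ while $\pi_*\tirrex_\nu(\F)$ equals $\Lpc_{w_0\nu}(\F)$ when $\nu\in-\bX^+$ and $0$ otherwise, the object $H^0(\pi^!\Lpc_\mu(\F))$ contains $\tirrex_{w_0\mu}(\F)$ with multiplicity one and all its other composition factors are killed by $\pi_*$; in particular $\tirrex_{w_0\lambda}(\F)$, whose parameter $w_0\lambda$ lies in $-\bX^+$, is not a composition factor of it, so the $\Hom$-space vanishes by simplicity of the top of $\tDred_{w_0\lambda}(\F)$. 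Finally, $\Dred_\lambda(\F)$ is a quotient of $\Dpc_\lambda(\F)$, so $[\Dred_\lambda(\F):\Lpc_\lambda(\F)]=1$; a nonzero module with multiplicity-one $\Lpc_\lambda(\F)$ and no simple quotient other than $\Lpc_\lambda(\F)$ must have simple top $\Lpc_\lambda(\F)$.

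The one genuinely delicate point — and the main obstacle — is the bookkeeping in the last two paragraphs: neither $\pi_*$ nor restriction to the star is fully faithful, so the simple-top property cannot simply be pushed forward along them; instead one must route everything through the adjunctions $\pi_*\dashv\pi^!$ and (restriction)${}\dashv{}$(costandard) and track composition factors, using at each stage that a module with no simple quotient outside a prescribed set and the right multiplicity of the distinguished simple has simple top. Everything else is formal.
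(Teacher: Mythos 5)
Your high-level plan coincides with the paper's: once one knows $M_\lambda$ has simple top $L_\lambda^\F$, the isomorphism follows from the class identity $[M_\lambda]=[\Delta(L_\lambda^\F)]$ and the fact that $\Delta(L_\lambda^\F)$ is the projective cover of $L_\lambda^\F$ in the truncated category, by a length count. The divergence is in how the simple top of $M_\lambda$ is established. The paper builds the short exact sequence $0\to\cF\to\Dred_\lambda(\F)\to\IC(\scO,\mathcal{M}_\lambda)\to 0$ directly via $j_{!*}$, then observes that if $\cV$ is the top of $M_\lambda$ the induced map onto the semisimple object $\IC(\scO,\cV)$ realizes the latter as a semisimple quotient of $\Dred_\lambda(\F)$, forcing $\IC(\scO,\cV)\cong\Lpc_\lambda(\F)$ since $\Dred_\lambda(\F)$ has simple top. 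You argue on the dual side, showing $\Hom(M_\lambda,L_\mu^\F)=0$ for $\mu\neq\lambda$ by transporting via the recollement adjunction $j^*\dashv j_*$ to $\Hom(\Dred_\lambda(\F),\,j_*i_{\scO*}\cV_\mu)$ and using the socle of $j_*$. Both routes rest on the scheme-theoretic support input from Remark~\ref{rmk:support}\eqref{it:supp-orbit} and on $\Dred_\lambda(\F)$ having simple top; yours additionally needs the perverse-coherent pushforward $j_*$ (not merely $j_{!*}$) together with its socle description, and you slightly misstate where the remaining composition factors of $j_*i_{\scO*}\cV_\mu$ live — they are supported on $\cN_\F\smallsetminus\St(\scO_\lambda^\F)$, not on orbits strictly inside $\overline{\scO_\lambda^\F}$, though the argument still goes through since $\Lpc_\lambda(\F)$ belongs to neither set. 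The most significant inefficiency is your third paragraph: the simple top of $\Dred_\lambda(\F)$ requires neither $\pi^!$, the exotic t-structure, nor the composition-factor bookkeeping you describe. It follows immediately from~\eqref{eqn:canonical-sequence2}, which exhibits $\Dred_\lambda(\F)$ as a quotient of $\Dpc_\lambda(\F)$ surjecting onto $\Lpc_\lambda(\F)$, and $\Dpc_\lambda(\F)$ has simple top $\Lpc_\lambda(\F)$ because $\Hom(\Dpc_\lambda(\F),\Lpc_\mu(\F))$ injects into $\Hom(\Dpc_\lambda(\F),\npc_\mu(\F))$, which vanishes for $\mu\neq\lambda$ by the standard properties recalled in~\S\ref{ss:pcoh}.
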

\begin{rmk}
Recall that the module $M_\lambda$ in Proposition~\ref{prop:compl-std} was defined in terms of the perverse-coherent sheaf $\Dred_\lambda(\F)$.  If one starts with $\nred_\lambda(\F)$ instead, the reasoning below can be used to show that the resulting $Z_G(x_j)_\F$-module is isomorphic to $\nabla(L_\lambda^\F)$.
\end{rmk}

\begin{proof}
To fix notation, we set $\scO=\scO^\F_\lambda$.

As seen in the course of the proof of Proposition~\ref{prop:simples-centralizer}, there exists a surjection
$M_\lambda \twoheadrightarrow L_\lambda^\F$.
Our first observation is that in fact $L_\lambda^\bk$ is the top of $M_\lambda$.

For this, let $j: \St(\scO) \hookrightarrow \cN_\F$ be the inclusion, and recall from~\cite[\S4]{ab:pcs} that there is a fully faithful functor $j_{!*}: \PCoh(\St(\scO)) \to \PCoh(\cN_\F)$ whose image is the full subcategory consisting of objects with no nontrivial subobject or quotient supported on $\cN_\F \smallsetminus \St(\scO)$.  Let $i_\scO: \scO \hookrightarrow \St(\scO)$ be the embedding of $\scO$ as a reduced closed subscheme of $\St(\scO)$.  For any vector bundle $\cV$ on $\scO$, we have $\IC(\scO,\cV) = j_{!*} i_{\scO*}\cV$.

Since $\Lpc_\lambda(\F)$ is the top of $\Dred_\lambda(\F)$, it is clear that $\Dred_\lambda(\F)$ has no nonzero quotient supported on $\cN_\F \smallsetminus \St(\scO)$.  Let $\cF$ be the unique maximal subobject of $\Dred_\lambda(\F)$ supported on $\cN_\F \smallsetminus \St(\scO)$.  Then the cokernel of $\cF \hookrightarrow \Dred_\lambda(\F)$ must lie in the essential image of $j_{!*}$; in fact, it is identified with $j_{!*}(\Dred_\lambda(\F)|_{\St(\scO)})$.  According to Remark~\ref{rmk:support}\eqref{it:supp-orbit}, $\Dred_\lambda(\F)|_{\St(\scO)}$ is supported scheme-theoretically on $\scO$.  We therefore have a short exact sequence
\[
0 \to \cF \to \Dred_\lambda(\F) \to \IC(\scO,\mathcal{M}_\lambda) \to 0,
\]
where $\mathcal{M}_\lambda$ is the vector bundle on $\scO$ corresponding to $M_\lambda$.    Now, let $V$ be the top of $M_\lambda$, and let $\cV$ be the corresponding vector bundle.  The quotient map $\mathcal{M}_\lambda \to \cV$ gives rise to map $\IC(\scO,\mathcal{M}_\lambda) \to \IC(\scO,\cV)$.  Here, $\IC(\scO,\cV)$ is a semisimple perverse-coherent sheaf.  The map is nonzero on every summand, so it is surjective.  Composing with $\Dred_\lambda(\F) \to \IC(\scO,\mathcal{M}_\lambda)$, we find that $\IC(\scO,\cV)$ is a semisimple quotient of $\Dred_\lambda(\F)$.  But since the latter has a simple top, we must have $\IC(\scO,\cV) \cong \Lpc_\lambda(\F)$, hence $V \cong L_\lambda^\F$.

Let $\mathscr{C}$ be the Serre subcategory of the category of finite-dimensional algebraic $Z_G(x_j)_\F^{\mathrm{red}}$-modules generated by the simple objects which are smaller than $L_\lambda^\F$ (with respect to $\preceq_j$). Since $[M_\lambda] = [\Delta(L_\lambda^\F)]$, every composition factor of $M_\lambda$ satisfies this condition, so in view of Remark~\ref{rmk:support}\eqref{it:supp-factor} we have $M_\lambda \in \mathscr{C}$.  On the other hand, by the general theory of highest-weight categories, the standard object $\Delta(L_\lambda^\F)$ also belongs to $\mathscr{C}$, and is the projective cover of $L_\lambda^\F$ in this subcategory.  Therefore, there exists a map $\Delta(L_\lambda^\F) \to M_\lambda$ whose composition with the surjection $M_\lambda \twoheadrightarrow L_\lambda^\F$ is surjective. It follows that
this map is surjective. Since $M_\lambda$ and $\Delta(L_\lambda^\F)$ have the same number of composition factors (because they have the same class in $\mathsf{K}$-theory), it must be an isomorphism.
\end{proof}

\appendix
%%%%%%%%%%%%%%%%%%%%%%%%%%%%%%%%%%%%%%%%%%%%%%%%%%%%%%%%%%%%%%%%%%%%%%%%%%%
\section{Exceptional sequences with coefficients in a complete local principal ideal domain}
\label{sec:exc-pid}
%%%%%%%%%%%%%%%%%%%%%%%%%%%%%%%%%%%%%%%%%%%%%%%%%%%%%%%%%%%%%%%%%%%%%%%%%%%

\newcommand{\sT}{\mathcal{T}}
\newcommand{\il}{\iota^{\mathrm{L}}}
\newcommand{\ir}{\iota^{\mathrm{R}}}
\newcommand{\pil}{\Pi^{\mathrm{L}}}
\newcommand{\pir}{\Pi^{\mathrm{R}}}

Let $\bE$ be a complete local principal ideal domain (i.e.~either a field or a complete discrete valuation ring), and let $\varpi$ be a generator of its unique maximal ideal.  Let $\sT$ be an $\bE$-linear triangulated category.  Throughout this section, we impose the following assumptions on $\sT$:
\begin{itemize}
\item $\sT$ is equipped with a \emph{Tate twist}, i.e., an autoequivalence of triangulated categories $\la 1\ra: \sT \to \sT$.
\item $\sT$ is \emph{graded $\Hom$-finite}, i.e., for any two objects $X, Y \in \sT$, the $\bE$-module
\[
\bigoplus_{k \in \Z} \Hom(X, Y\la k\ra)
\]
is a finitely generated $\bE$-module.
\end{itemize}
Note that the second assumption implies that no nonzero power of $\la 1\ra$ is the identity functor, and that no nonzero object is isomorphic to a Tate twist of itself.

In the case when $\bE$ is a field, there exists a well-known theory of \emph{(graded) exceptional sequences} in such triangulated categories, exposed e.g.~in~\cite{bez,bezru} building in particular on constructions of Bondal--Kapranov~\cite{bk}. One important feature of this construction is that it allows the construction of a t-structure on $\sT$  using the \emph{recollement} formalism of~\cite{bbd}. Our aim in this appendix is to extend this theory to the setting when $\bE$ is a general ring as above. This extension does not require new ideas, but only some care in dealing with new technical difficulties.

%The aim of this section is to develop a theory of t-structures associated to exceptional sequences in an $\bE$-linear triangulated category.  In the case where $\bE$ is a field, these constructions are well known (see, for instance~\cite{bez,bezru}). 

%--------------------------------------------------------------------------
\subsection{Noetherian t-structures and recollement}
%--------------------------------------------------------------------------

We begin by studying triangulated categories generated by a single object.  For field coefficients, the following statement can be found in~\cite[Corollary~1]{bez}.  Recall that an abelian category is said to be \emph{noetherian} if every object in it is noetherian, i.e., if every object satisfies the ascending chain condition on subobjects.

\begin{prop}\label{prop:stratum-tstruc}
Let $\sT$ be a $\bE$-linear triangulated category equipped with a Tate twist $\la 1\ra: \sT \to \sT$.  Assume that $\sT$ is graded $\Hom$-finite, and that there exists an object $N \in \sT$ with the following properties:
\begin{enumerate}
\item $\sT$ is generated (as a triangulated category) by objects of the form $N\la k\ra$.
\item We have
\begin{equation}\label{eqn:hom-n-hyp}
\Hom(N, N[n]\la k\ra) =
\begin{cases}
0 & \text{if $n < 0$, or if $n = 0$ and $k \ne 0$,} \\
\bE & \text{if $n = k = 0$,} \\
\text{a free $\bE$-module} & \text{if $n = 1$ (for any $k \in \Z$).}
\end{cases}
\end{equation}
\end{enumerate}
Define an object $\bar N$ as follows:
\[
\bar N = 
\begin{cases}
\mathrm{cone}(N \xrightarrow{\varpi \cdot \id} N) &
\text{if $\bE$ is not a field,} \\
0 & \text{otherwise.}
\end{cases}
\]
Finally, let $\sA \subset \sT$ be the smallest full subcategory that is closed under extensions (in particular, under finite direct sums) and contains the objects
\begin{equation}\label{eqn:heart-gen}
0,
\qquad N\la k\ra,
\qquad \bar N\la k\ra
\qquad\text{for all $k \in \Z$.}
\end{equation}
Then $\sA$ is the heart of a bounded t-structure $(\sT^{\le 0}, \sT^{\ge 0})$ on $\sT$, given by
\begin{align*}
\sT^{\le 0} &=
\begin{array}{@{}l@{}}
\text{the subcategory generated under extensions by objects of the form}\\
\text{$N[n]\la k\ra$ with $n \ge 0$ and $k \in \Z$,}
\end{array},
\\
\sT^{\ge 0} &=
\begin{array}{@{}l@{}}
\text{the subcategory generated under extensions by objects of the form}\\
\text{$N[n]\la k\ra$ and $\bar N[n]\la k\ra$ with $n \le 0$ and $k \in \Z$.}
\end{array}.
\end{align*}
Moreover, $\sA$ is a noetherian category.  If $\bE$ is a field, it is also artinian.
\end{prop}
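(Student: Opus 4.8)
The plan is to deduce the proposition from the standard mechanism that produces a bounded t-structure out of a generating subcategory with no negative self-extensions, treating the two cases of the statement ($\bE$ a field and $\bE=\O$ a complete discrete valuation ring) uniformly. When $\bE$ is a field one has $\bar N=0$ and the statement is exactly \cite[Corollary~1]{bez}, so the real content is the case $\bE=\O$, with uniformizer $\varpi$. The first step is to record the relevant $\Hom$-vanishing for $\bar N$: applying $\Hom(N\la j\ra,-)$ and $\Hom(-,N\la l\ra)$ to the defining triangle $N\xrightarrow{\varpi\cdot\id}N\to\bar N\xrightarrow{[1]}$ and using~\eqref{eqn:hom-n-hyp}—in particular that $\varpi$ acts injectively on the free modules $\Hom(N,N[1]\la k\ra)$ and on $\Hom(N,N)\cong\O$—one gets $\Hom(A\la j\ra,B\la l\ra[n])=0$ for all $n<0$ and all $j,l\in\Z$ whenever $A,B\in\{N,\bar N\}$, together with $\Hom(\bar N\la j\ra,\bar N\la l\ra)\cong\F$ for $j=l$ and $0$ otherwise, and $\Hom(\bar N\la j\ra,N\la l\ra)=0$. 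Rotating the same triangle exhibits $\bar N$ as an extension of $N[1]$ by $N$, so $\bar N\in\sT^{\le0}$; and $\bar N\in\sT^{\ge0}$ by definition.

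Next I would invoke the general principle behind \cite[Corollary~1]{bez}: if $\sA\subseteq\sT$ is a strictly full subcategory, stable under $\la1\ra$ and closed under extensions, such that $\Hom(A,B[n])=0$ for all $A,B\in\sA$ and $n<0$, and such that every object of $\sT$ is built from objects of $\bigcup_{n\in\Z}\sA[n]$ by finitely many extensions, then $\sA$ is the heart of a bounded t-structure whose two aisles are the subcategories generated under extensions by $\bigcup_{n\ge0}\sA[n]$ and by $\bigcup_{n\le0}\sA[n]$ respectively. The argument is purely formal: orthogonality of the aisles is immediate from the $\Hom$-vanishing, the truncation triangles are produced by induction on the number of extension steps using the octahedral axiom and this orthogonality, and boundedness follows from the finiteness of the filtrations. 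I would then verify the two hypotheses with $\sA$ as in the statement. The $\Hom$-vanishing for $\sA$ follows because $\sA$ is generated under extensions by the twists of $N$ and $\bar N$, which have no negative $\Hom$'s among themselves by the previous paragraph, whence neither does any object of $\sA$ by the long exact sequences in both variables. The filtration condition holds because $\sT$ is generated as a triangulated category by the objects $N\la k\ra\in\sA$, while the class of objects admitting such a filtration contains all shifts of the generators and is closed under cones (concatenate filtrations along $Y\to\mathrm{cone}\to X[1]$) and under direct summands.

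It then remains to match the output with the statement. Since $\bar N\in\sT^{\le0}$ already, adjoining the twists of $\bar N$ to the generating set of $\sT^{\le0}$ changes nothing, so the subcategory generated under extensions by $\bigcup_{n\ge0}\sA[n]$ is precisely the one generated by the $N[n]\la k\ra$ with $n\ge0$; the analogous bookkeeping identifies $\bigcup_{n\le0}\sA[n]$-generated category with the one described for $\sT^{\ge0}$. Hence $(\sT^{\le0},\sT^{\ge0})$ is the asserted t-structure and $\sA=\sT^{\le0}\cap\sT^{\ge0}$ is its heart.

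For the final assertions: when $\bE$ is a field every object of $\sA$ is a finite iterated extension of the simple objects $N\la k\ra$, hence of finite length, so $\sA$ is noetherian and artinian. When $\bE=\O$, I would first note that $\bar N\la k\ra$ is simple—any simple quotient of it is isomorphic to $\bar N\la k\ra$ through a scalar by the computation above, which forces its radical, and hence a maximal proper subobject, to be $0$—and that $\varpi\cdot\id$ is a monomorphism on $N\la k\ra$ with cokernel $\bar N\la k\ra$ (read off the defining triangle in the heart). The key geometric fact is then that every $\varpi$-divisible object $Z$ of $\sA$ vanishes: if $Z\neq0$ it has a nonzero quotient which is a twist of $N$ or $\bar N$, whence $Z/\varpi Z\neq0$; in particular $\bigcap_{m\ge0}\varpi^mN\la k\ra=0$. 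An iterated Nakayama argument now classifies the subobjects of $N\la k\ra$ as exactly $0$ and the $\varpi^mN\la k\ra$: a subobject $Y$ either surjects onto $N\la k\ra/\varpi N\la k\ra$, and then $N\la k\ra/Y$ is $\varpi$-divisible so $Y=N\la k\ra$, or $Y\subseteq\varpi N\la k\ra\cong N\la k\ra$ and one recurses, landing either at some $\varpi^mN\la k\ra$ or at $Y\subseteq\bigcap_m\varpi^mN\la k\ra=0$. Thus $N\la k\ra$ is noetherian, hence so is every object of $\sA$. The main work specific to the discrete valuation ring case is this package—the (tedious but routine) $\Hom$-computations for $\bar N$ and the Krull-intersection/subobject classification needed for noetherianity—while the existence of the t-structure itself is formal, and the summand-closure of the filtration class in the second hypothesis is the only subtlety in that formal part.
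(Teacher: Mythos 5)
The proposal has a genuine gap at the central step: the ``general principle'' you invoke to produce the t-structure is false as stated, and what it omits is precisely the main content of the paper's proof.

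You claim that if $\sA$ is a full subcategory closed under extensions with $\Hom(A,B[n])=0$ for $A,B\in\sA$ and $n<0$, and with every object of $\sT$ a finite iterated extension of objects from $\bigcup_n \sA[n]$, then $\sA$ is the heart of a bounded t-structure. This is not true. Take $\sT = D^b(\mathrm{mod}\text{-}\Lambda)$ for $\Lambda$ a finite-dimensional hereditary algebra (say the path algebra of the $A_2$ quiver $1\to 2$, with indecomposable projectives $P_1,P_2$) and $\sA = \mathrm{add}(\Lambda)$. Then $\Hom(\sA,\sA[n])=0$ for $n\neq 0$, $\sA$ is closed under extensions (they all split), and since $\Lambda$ has finite global dimension every object of $\sT$ has a finite filtration with subquotients shifted projectives. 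But $\sA$ is not abelian: the inclusion $P_2\hookrightarrow P_1$ is both a monomorphism and an epimorphism in $\sA$ (its categorical cokernel there is $0$, since $\Hom(P_1,P_2)=0$) yet is not an isomorphism. So $\sA$ is not the heart of any t-structure. The hypothesis that actually makes [BBD, Prop.~1.3.13] applicable is that $\sA$ be an \emph{admissible} abelian subcategory, equivalently (by [BBD, \S1.3.11(ii)]) that $\sA * \sA[1] \subset \sA[1] * \sA$; this is a genuine additional condition, not a formal consequence of the vanishing of negative $\Hom$s, and it is exactly what the example above fails.

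Your informal claim that ``the truncation triangles are produced by induction on the number of extension steps using the octahedral axiom and this orthogonality'' hides the problem. In the inductive step, when you attach a new subquotient $A[n]$ with $n\ge 0$ to an object with known truncation $Y'\to Y\to Y''$, you must move $A[n]$ past $Y''\in\sT^{\ge 1}$; for $n=0$ the connecting map $A[-1]\to Y''$ need not vanish, and the octahedral rearrangement requires exactly the $*$-commutativity above. The paper's Steps 3--5 verify this commutativity for $\sA^1=\{0, N\langle k\rangle, \bar N\langle k\rangle\}$ from the explicit $\Hom$ computations (which you do carry out in your first paragraph) by exhaustively checking the five shapes a morphism in $\sA^1$ can take, and then propagating by the associativity of $*$. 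This verification is not optional and is not formal; it is where the hypotheses \eqref{eqn:hom-n-hyp} on $N$ are actually used, and its absence is the gap. Your first paragraph contains the raw $\Hom$ data needed to run that argument, but you never run it. (The noetherianity discussion at the end is closer to the paper's Steps 7--9 and is essentially sound, though the claim $\bigcap_m \varpi^m N\langle k\rangle = 0$ should be grounded by applying Krull's intersection theorem to the finitely generated $\bE$-module $\Hom(Y,N\langle k\rangle)$ rather than asserted as a lattice-theoretic intersection in $\sA$.)
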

\begin{proof}
We will prove this in the case where $\bE$ is not a field.  The field case is considerably easier; the appropriate modifications are left to the reader.

We will make extensive use of the ``$*$'' operation from~\cite[\S1.3.9]{bbd}. Recall that this operation is associative, see~\cite[Lemme~1.3.10]{bbd}. Let $\sA^1$ be the full subcategory of $\sT$ consisting of the objects listed in~\eqref{eqn:heart-gen}.  We also set
\[
\sA^k = \underbrace{\sA^1 * \cdots * \sA^1}_{\text{$k$ factors}}.
\]
By definition, we have $\sA = \bigcup_{k \ge 1} \sA^k$. 

\textit{Step 1. The cone of any nonzero morphism $N \to N$ lies in $\sA$.}  Any such morphism is  a scalar multiple of the identity by assumption.  After composing with an automorphism of $N$ (multiplication by a suitable unit in $\bE$), we may assume that the morphism is multiplication by $\varpi^k$ for some $k \ge 0$.  We will now prove the claim by induction on $k$.  For $k = 0$, it is trivial.  For $k \ge 1$, we claim more precisely that 
\begin{equation}\label{eqn:cone-pik}
\mathrm{cone}(\varpi^k) \in \underbrace{\bar N * \cdots * \bar N}_{\text{$k$ factors}} \subset \sA.
\end{equation}
For $k = 1$, this holds by definition.  For $k \ge 2$, factor the map as 
\[
N \xrightarrow{\varpi^{k-1}} N \xrightarrow{\varpi} N.
\]
The octahedral axiom shows that $\mathrm{cone}(\varpi^k) \in \mathrm{cone}(\varpi^{k-1}) * \mathrm{cone}(\varpi)$, and then~\eqref{eqn:cone-pik} follows by induction.

\textit{Step 2. Calculations of $\Hom$-groups among objects of $\sA^1$.}  We will compute various $\Hom$-spaces involving the objects in~\eqref{eqn:heart-gen}.  Note that $\Hom(N,N\la k\ra)$ has been described in the assumptions of the proposition.  

We begin with $\Hom(N, \bar N\la k\ra)$.  We have an exact sequence
\begin{multline*}
\cdots \to \Hom(N ,N\la k\ra) \xrightarrow{\varpi}  \Hom(N, N\la k\ra) \to  \Hom(N, \bar N\la k\ra)\\
 \to  \Hom(N, N[1]\la k\ra) \xrightarrow{\varpi} \Hom(N, N[1]\la k\ra) \to \cdots.
\end{multline*}
Since $\Hom(N, N[1]\la k\ra)$ is a free $\bE$-module, the map between the fourth and fifth terms is injective.  It follows that
\begin{equation}\label{eqn:hom-n-barn}
\Hom(N, \bar N\la k\ra) \cong
\begin{cases}
\bE/\varpi & \text{if $k = 0$,} \\
0 & \text{if $k \ne 0$.}
\end{cases}
\end{equation}

Next, we determine $\Hom(\bar N, N[n]\la k\ra)$ for $n \in \{0,1\}$.  We have an exact sequence
\begin{multline}\label{eqn:hom-barn-n-les}
\cdots \to \Hom(N, N[n-1]\la k\ra) \xrightarrow{\varpi} \Hom(N, N[n-1]\la k\ra) \\
\to  \Hom(\bar N, N[n]\la k\ra)
 \to  \Hom(N, N[n]\la k\ra) \xrightarrow{\varpi} \Hom(N, N[n]\la k\ra) \to \cdots.
\end{multline}
When $n = 0$, the first two terms vanish by assumption, so $\Hom(\bar N, N\la k\ra)$ is the kernel of multiplication by $\varpi$ on $\Hom(N, N\la k\ra)$.  We conclude that
\begin{equation}\label{eqn:hom-barn-n}
\Hom(\bar N, N\la k\ra) = 0 \qquad\text{for all $k \in \Z$.}
\end{equation}
On the other hand, when $n = 1$, both maps labelled $\varpi$ in~\eqref{eqn:hom-barn-n-les} are injective, so $\Hom(\bar N, N[1]\la k\ra)$ is the cokernel of the first such map.  We conclude that 
\begin{equation}\label{eqn:ext-barn-n}
\Hom(\bar N, N[1]\la k\ra) \cong
\begin{cases}
\bE/\varpi & \text{if $k = 0$,} \\
0 & \text{if $k \ne 0$.}
\end{cases}
\end{equation}

Finally, let us compute $\Hom(\bar N, \bar N\la k\ra)$.  Consider the sequence
\begin{multline*}
\cdots \to \Hom(\bar N, N\la k\ra) \to  \Hom(\bar N, \bar N\la k\ra)\\
 \to  \Hom(\bar N, N[1]\la k\ra) \xrightarrow{\varpi} \Hom(\bar N, N[1]\la k\ra) \to \cdots.
\end{multline*}
Using~\eqref{eqn:hom-barn-n} and~\eqref{eqn:ext-barn-n}, we see that the first term always vanishes, and that the map on the second line is zero.  Therefore, $\Hom(\bar N, \bar N\la k\ra) \cong \Hom(\bar N, N[1]\la k\ra)$, so
\begin{equation}\label{eqn:hom-barn-barn}
\Hom(\bar N, \bar N\la k\ra) \cong
\begin{cases}
\bE/\varpi & \text{if $k = 0$,} \\
0 & \text{if $k \ne 0$.}
\end{cases}
\end{equation}

\textit{Step 3. We have $\sA^1 * (\sA^1[1]) \subset (\sA^1[1]) * \sA$.} An object $C$ belongs to $\sA^1 * (\sA^1[1])$ if and only if it occurs in a distinguished triangle $X \to C \to Y[1] \to $ with $X, Y \in \sA^1$.  In other words, $C$ is the cone of some map $f: Y \to X$ in $\sA^1$.  Let us consider all the possibilities for $X$ and $Y$, and show that in each case, $C$ lies in $(\sA^1[1]) * \sA$:
\begin{enumerate}
\item If $f = 0$, then $C \cong X \oplus Y[1]$, so the claim is clear.  In particular, this applies if either $X$ or $Y$ is $0$.
\item Suppose $X= N\la m\ra$ and $Y = N\la k\ra$.  If $m \ne k$, then $f = 0$, and we are done.  If $m = k$, the claim follows from Step~1.
\item Suppose $X=\bar N\la m\ra$ and $Y = N\la k\ra$.  If $m \ne k$, then by~\eqref{eqn:hom-n-barn} we have $f = 0$, and we are done.  Suppose now that $m = k$.  In this case, $\Hom(Y,X) \cong \bE/\varpi$ is a field, so if $f$ is nonzero, then it must be the composition of the canonical map $N\la m\ra \to \bar N\la m\ra$ with an automorphism of $\bar N\la m\ra$.  By the definition of $\bar N$, the cone of the canonical map $N\la m\ra \to \bar N\la m\ra$ is $N[1]\la m\ra$.
\item Suppose $X = N\la m\ra$ and $Y = \bar N\la k\ra$.  By~\eqref{eqn:hom-barn-n}, $f = 0$.
\item Suppose $X = \bar N\la m\ra$ and $Y = \bar N\la k\ra$.  If $m \ne k$, then by~\eqref{eqn:hom-barn-barn} we have $f = 0$, and we are done.  Suppose now that $m = k$. Since $\Hom(Y,X) \cong \bE/\varpi$ is a field, any nonzero morphism $Y \to X$ is an isomorphism.  The claim follows.
\end{enumerate}

\textit{Step 4. We have $\sA * (\sA^1[1]) \subset (\sA^1[1]) * \sA$.}  It is enough to show that $\sA^k * (\sA^1[1]) \subset (\sA^1[1]) * \sA$ for all $k \ge 1$. We proceed by induction on $k$.  The case where $k = 1$ has been done in Step~3.  For $k > 1$, we have
\begin{multline*}
\sA^k * (\sA^1[1]) = \sA^1 * (\sA^{k-1} *(\sA^1[1])) 
\subset (\sA^1 * (\sA^1[1])) * \sA  \\
\subset (\sA^1[1]) * \sA * \sA
\subset (\sA^1[1]) * \sA.
\end{multline*}
Here the first inclusion uses the induction hypothesis, and the second one the result of Step~3.

\textit{Step 5. We have $\sA * (\sA[1]) \subset (\sA[1]) *\sA$.}  Again, it is enough to show that $\sA * (\sA^k[1]) \subset (\sA^k[1]) * \sA$ for all $k \ge 1$.  For $k = 1$, this has been done in Step~4.  For $k > 1$, we have
\begin{multline*}
\sA * (\sA^k[1]) = (\sA * (\sA^{k-1}[1])) * \sA^1[1] 
\subset (\sA^{k-1}[1]) * (\sA * (\sA^1[1])) \\
\subset \sA^{k-1}[1] * (\sA^1[1] )* \sA = (\sA^k[1]) * \sA,
\end{multline*}
as desired.

\textit{Step 6. The category $\sA$ is the heart of a bounded t-structure on $\sT$ as claimed in the statement of the proposition.}
In~\cite[\S 1.2.3]{bbd}, the authors define the notion of \emph{admissible} morphisms with respect to a full subcategory. The precise definition of this notion will not be important for us, since
according to~\cite[\S1.3.11(ii)]{bbd} the statement proved in Step~5 is equivalent to the assertion that every morphism in $\sA$ is admissible (with respect to $\sA$). According to~\cite[Proposition~1.2.4]{bbd}, this implies that $\sA$ is an admissible abelian subcategory of $\sT$ in the sense of~\cite[D\'efinition~1.2.5]{bbd}. Finally, since $N$ generates $\sT$, applying~\cite[Proposition~1.3.13]{bbd} we obtain that $\sA$ is the heart of a (unique) $t$-structure on $\sT$.

An explicit description of this t-structure appears in the paragraph preceding~\cite[Proposition~1.3.13]{bbd}: $\sT^{\le 0}$ and $\sT^{\ge 0}$ are the categories generated under extensions by $\sA[n]$ with $n \ge 0$ and $n \le 0$, respectively.  Of course, we may replace $\sA$ by $\sA^1$.  For $\sT^{\ge 0}$, the resulting description is as in the statement of the present proposition.  For $\sT^{\le 0}$, we may further omit $\bar N$ from the description, since $\bar N \in N * N[1]$.

\textit{Step 7. Every object $M \in \sA$ admits a filtration $0 = M_0 \subset M_1 \subset \cdots \subset M_n = M$ such that each subquotient $M_i/M_{i-1}$ is isomorphic to either $N\la k\ra$ or $\bar N\la k\ra$ for some $k \in \Z$.} This is just a restatement of the fact that $\sA$ is generated under extensions by the objects $N\la k\ra$ and $\bar N\la k\ra$, translated into the language of abelian categories.

\textit{Step 8. Let $M$ be a nonzero subobject of $N$.  Then $M$ contains a subobject isomorphic to $N$.}  Choose a filtration of $M$ as in Step~7.  The first step in this filtration, $M_1$, is a subobject of $M$ and of $N$ that is isomorphic to some $N\la k\ra$ or $\bar N\la k\ra$.  But by~\eqref{eqn:hom-n-hyp} and~\eqref{eqn:hom-barn-n}, we must have $M_1 \cong N$.

\textit{Step 9. The category $\sA$ is noetherian.}  In view of Step~7, it is enough to prove that the objects $N$ and $\bar N$ are noetherian.  We actually claim that $\bar N$ is a simple object.  To prove this, it is enough to show that any nonzero map $Y \to \bar N$ in $\sA$ is surjective.  Suppose first that $Y = N\la k\ra$ or $Y = \bar N\la k\ra$.  If $k \ne 0$, there is no nonzero map $Y \to \bar N$; if $k = 0$, we saw in Step~3 that the cone of any nonzero map $Y \to \bar N$ lies in $\sA[1]$, so the map is surjective.  For general $Y \in \sA$, the claim then follows by induction on the length of the filtration from Step~7.

It remains to show that $N$ is noetherian.  Suppose we have an ascending chain of subobjects $M_1 \subset M_2 \subset \cdots $ in $N$.  By Step~8, $M_1$ contains a subobject $Q$ that is isomorphic to $N$.  The composition of the inclusion maps $Q \hookrightarrow M_1 \hookrightarrow N$ may be identified with $\varpi^k: N \to N$ for some $k \ge 0$.  To show that our ascending chain is eventually constant, it is enough to show that the chain of subobjects $M_1/Q \subset M_2/Q \subset \cdots $ in $\mathrm{cok}(\varpi^k: N \to N)$ is eventually constant.  The cokernel of $\varpi^k: N \to N$ is described in~\eqref{eqn:cone-pik}: it is a finite extension of simple objects, so it is noetherian.
\end{proof}

\begin{rmk}
In the setting of Proposition~\ref{prop:stratum-tstruc}, suppose we assume in addition that $\Hom(N,N[1]\la k\ra) = 0$ and that $\Hom(N,N[2]\la k\ra)$ is a free $\bE$-module (for all $k \in \Z$).  One can then show that $N$ is a projective object in $\sA$, and that the functor
\[
\bigoplus_{k \in \Z} \Hom(N\la -k\ra, {-}): \sA \to \bE\lgmod
\]
is an equivalence of categories, where $\bE\lgmod$ is the category of finitely generated graded $\bE$-modules.
\end{rmk}

The following fact is probably well-known, but we could not find a reference, so we include a proof.

\begin{lem}\label{lem:glue-noeth}
Let $\sT_F$, $\sT$, and $\sT_U$ be triangulated categories, and suppose we have a recollement diagram
\[
\begin{tikzcd}
\sT_F \ar[r, "\iota" description] &
\sT \ar[r, "\Pi" description] \ar[l, bend left, "\il"] \ar[l, bend right, "\ir"'] &
\sT_U \ar[l, bend left, "\pil"] \ar[l, bend right, "\pir"']
\end{tikzcd}
.
\]
Suppose $\sT_F$ and $\sT_U$ are equipped with t-structures, and let $\sA_F$ and $\sA_U$ be their hearts, respectively.  Let $\sA$ be the heart of the t-structure on $\sT$ obtained by recollement.  If $\sA_F$ and $\sA_U$ are noetherian categories, then $\sA$ is as well.
\end{lem}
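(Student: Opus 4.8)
The plan is to pass to hearts, recognize the resulting data as a recollement of \emph{abelian} categories, and then prove the noetherian statement at that level. First I would recall the explicit description of the glued t-structure: its heart $\sA$ consists of those $X \in \sT$ with $\Pi X \in \sA_U$, $\il X \in \sT_F^{\le 0}$ and $\ir X \in \sT_F^{\ge 0}$. From the construction of this t-structure (cf.\ \cite[\S 1.4]{bbd}) one extracts the following: the functor $\iota$ restricts to an exact, fully faithful functor $\sA_F \to \sA$ whose essential image is a Serre subcategory $\sB \subseteq \sA$; the functor $\Pi$ restricts to an exact, essentially surjective functor $Q := \Pi|_\sA \colon \sA \to \sA_U$ with kernel exactly $\sB$, so that $Q$ exhibits $\sA_U$ as the Serre quotient $\sA/\sB$; and $Q$ admits a fully faithful right adjoint $s \colon \sA_U \to \sA$, given by $s = {}^{p}H^0 \circ \pir$ (this uses that objects of $\sA$ lie in $\sT^{\le 0}$ while $\pir$ maps $\sA_U$ into $\sT^{\ge 0}$), with $Q s \cong \id$.

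Granting this, the remaining content is the general fact: if $\sB$ is a Serre subcategory of an abelian category $\sA$ such that the quotient functor $Q \colon \sA \to \sA/\sB$ admits a fully faithful right adjoint $s$, and both $\sB$ and $\sA/\sB$ are noetherian, then $\sA$ is noetherian. I would prove this as follows. The section $s$ produces, for each $Y \in \sA$, a largest subobject of $Y$ lying in $\sB$: the triangle identity together with full faithfulness of $s$ forces the unit map $Q\eta_Y$ to be an isomorphism, so $t(Y) := \ker(\eta_Y \colon Y \to sQY)$ lies in $\ker Q = \sB$; and if $Z \subseteq Y$ with $Z \in \sB$, then by naturality $\eta_Y|_Z$ factors through $sQZ = 0$, so $Z \subseteq t(Y)$. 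Now take an ascending chain $X_1 \subseteq X_2 \subseteq \cdots$ of subobjects of a fixed $X \in \sA$. Applying the exact functor $Q$ and using that $\sA/\sB$ is noetherian, the chain $QX_1 \subseteq QX_2 \subseteq \cdots$ stabilizes, say $QX_n = QX_N$ for all $n \ge N$. For such $n$ we get $Q(X_n/X_N) = 0$, so $X_n/X_N$ is a subobject of $X/X_N$ lying in $\sB$, whence $X_n/X_N \subseteq t(X/X_N)$. Since $t(X/X_N) \in \sB$ and $\sB$ is noetherian, the chain $(X_n/X_N)_{n \ge N}$ stabilizes, and therefore so does $(X_n)_{n \ge N}$. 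Applying this with $\sB = \iota(\sA_F)$ and $\sA/\sB \cong \sA_U$ (both noetherian by hypothesis) gives the lemma.

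The step I expect to be the main obstacle is exactly the construction of the radical $t$ — the one place where the recollement, through the adjoint $\pir$ descending to $s$, is genuinely used. Without a section, the naive assertion ``$\sB$ and $\sA/\sB$ noetherian $\Rightarrow$ $\sA$ noetherian'' is delicate: an ascending chain of $\sB$-subobjects of an object $Y \notin \sB$ need not lie inside any single noetherian object, and a priori there is no largest $\sB$-subobject to house it. A few routine points would also need care in the writeup, chiefly the exactness of $\iota|_\sA$ and $\Pi|_\sA$ on hearts (equivalently, the t-exactness of $\iota$ and $\Pi$ for the glued t-structure) and the verification that $s = {}^{p}H^0 \circ \pir$ is right adjoint to $\Pi|_\sA$ with $Q s \cong \id$; both are standard consequences of the recollement axioms together with the definition of the glued t-structure.
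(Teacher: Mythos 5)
Your argument is correct, and it takes a noticeably different route from the paper's. The paper first uses the exact sequences of~\cite[Proposition~1.4.17(ii)]{bbd} to reduce noetherianity of an arbitrary $X\in\sA$ to noetherianity of the intermediate extension $\Pi_{!*}(\Pi X)$, and then handles $\Pi_{!*}(Y)$ directly by pushing an ascending chain into $\sA_U$, stabilizing, identifying the next layer with $\Pi_{!*}(Y')$ via~\cite[Corollaire~1.4.25]{bbd}, and trapping the remaining quotients inside a noetherian object of $\sA_F$. You instead pass entirely to the abelian level: you isolate the general statement that if a Serre subcategory $\sB\subseteq\sA$ has a noetherian quotient and the quotient functor $Q$ admits a fully faithful right adjoint $s$, then $\sA$ is noetherian, with the right adjoint furnishing the ``torsion radical'' $t(Y)=\ker(Y\to sQY)$ as the maximal $\sB$-subobject. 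This avoids the detour through $\Pi_{!*}$ and packages the key mechanism — push to $\sA_U$, stabilize, then house the residue in a single noetherian $\sA_F$-object — into one clean pass. It is worth noting that your $t$ and the paper's use of $H^0\iota\ir$ are really the same functor (the left exact sequence from BBD identifies $H^0\iota\ir X$ with $\ker(X\to H^0\pir\Pi X)$, and this agrees with $\ker\eta_X$ once one checks $s={}^{p}H^0\circ\pir$ is the right adjoint, as you do), so the two proofs ultimately lean on the same structural fact; you simply found the more direct route to deploy it. Your proof has the mild extra overhead of verifying the adjunction $Q\dashv s$ and $Qs\cong\id$, but these are quick from the t-exactness of $\Pi$ and the left t-exactness of $\pir$, exactly as you indicate, and in exchange you get a statement of independent interest about Serre subcategories with a section.
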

\begin{proof}
As explained in~\cite[\S1.4.17.1]{bbd}, the functor $\iota$ identifies $\sA_F$ with a Serre subcategory of $\sA$.  In particular, any object of $\sA$ that is in the image of $\sA_F$ is noetherian.

Let $X \in \sA$.  We will show that $X$ is noetherian.  By~\cite[Proposition~1.4.17(ii)]{bbd}, we have a right exact sequence
\[
H^0(\pil\Pi(X)) \to X \to H^0(\iota\il(X)) \to 0.
\]
As explained above, the last object is noetherian, so it is enough to prove that $H^0(\pil\Pi(X))$ is noetherian.  Apply~\cite[Proposition~1.4.17(ii)]{bbd} again to obtain a left exact sequence
\[
0 \to H^0(\iota\ir H^0(\pil\Pi(X))) \to H^0(\pil\Pi(X)) \to H^0(\pir\Pi H^0(\pil\Pi(X))).
\]
Here, the first term is noetherian.  Since $\Pi$ is t-exact, and $\Pi \circ \pil \cong \id$, the last term can be identified with $H^0(\pir\Pi(X))$.  We have reduced the problem to showing that the image of $H^0(\pil\Pi(X)) \to H^0(\pir\Pi(X))$ is noetherian.  More generally, we will show that for any $Y \in \sA_U$, the image of the natural map
\[
H^0(\pil Y) \to H^0(\pir Y)
\]
is noetherian.  Following~\cite[D\'efinition~1.4.22]{bbd}, we denote this image by $\Pi_{!*}(Y)$.

Let $Z_1 \subset Z_2 \subset \cdots$ be an ascending chain of subobjects of $\Pi_{!*}(Y)$.  Then $\Pi(Z_1) \subset \Pi(Z_2) \subset \cdots $ is an ascending chain of subobjects of $Y \in \sA_U$.  Since $Y$ is noetherian, this chain is eventually constant: there is a subobject $Y' \subset Y$ such that $\Pi(Z_k) = Y'$ for all $k \gg 0$.  By discarding finitely many terms from the beginning of our sequence, we may assume that $\Pi(Z_k) = Y'$ for all $k \ge 1$.

By adjunction, for each $k$, we have a map
\begin{equation}\label{eqn:noeth-im}
H^0(\pil Y') \to Z_k.
\end{equation}
According to~\cite[Proposition~1.4.17(i)]{bbd}, the image of this map has no nonzero quotient in $\sA_F$.  On the other hand, $Z_k$, as a subobject of $\Pi_{!*}(Y)$, has no nonzero subobject in $\sA_F$, and hence neither does the image of~\eqref{eqn:noeth-im}.  By~\cite[Corollaire~1.4.25]{bbd}, we conclude that the image of~\eqref{eqn:noeth-im} is canonically identified with $\Pi_{!*}(Y')$.

Let $Z'_k = Z_k/\Pi_{!*}(Y')$.  To prove that $Z_1 \subset Z_2 \subset \cdots$ is eventually constant, it is enough to show that
\[
Z'_1 \subset Z'_2 \subset \cdots \subset \Pi_{!*}(Y)/\Pi_{!*}(Y')
\]
is eventually constant.  By construction, we have $\Pi(Z'_k) \cong Y'/Y' = 0$, so each $Z'_k$ lies in (the essential image of) $\sA_F$.  By adjunction, the inclusion map $Z'_k \to \Pi_{!*}(Y)/\Pi_{!*}(Y')$ factors through $H^0(\iota \ir (\Pi_{!*}(Y)/\Pi_{!*}(Y')))$.  Denote the latter object by $Y''$, and rewrite the chain of subobjects as
\[
Z'_1 \subset Z'_2 \subset \cdots \subset Y''.
\]
Since $Y'' \in \sA_F$, it is noetherian, and this chain of subobjects is eventually constant.
\end{proof}

%--------------------------------------------------------------------------
\subsection{Exceptional sequences and their duals}
%--------------------------------------------------------------------------

The following notion is the main focus of this appendix.  We continue to assume that $\sT$ is $\bE$-linear, equipped with a Tate twist, and that it is graded $\Hom$-finite.

\begin{defn}
\label{defn:exc-sequence}
Let $(I, \le)$ be an ordered set that is isomorphic to a subset of $(\Z_{\ge 0},\le)$.  An \emph{$\bE$-linear graded exceptional sequence} in $\sT$ is a collection of objects $\{\nabla_i\}_{i \in I}$ such that the following conditions hold:
\begin{enumerate}
\item If $i < j$, then $\Hom(\nabla_i,\nabla_j[n]\la k\ra) = 0$ for all $n, k \in \Z$.
\item We have $\Hom(\nabla_i,\nabla_i[n]\la k\ra) = 0$ unless $n = k = 0$, and $\End(\nabla_i) \cong \bE$.
\item The collection of objects $\{\nabla_i\la k\ra\}_{i \in I, k \in \Z}$ generates $\sT$ as a triangulated category.
\end{enumerate}
\end{defn}

There is an ungraded variant of this notion as well (applicable to categories without a Tate twist), obtained by simply omitting all mentions of $\la k\ra$ from the three axioms.  All the results in this section are stated in the graded case, but the corresponding statements in the ungraded case also hold (with the same proofs).

Given a graded exceptional sequence $\{\nabla_i\}_{i \in I}$ in $\sT$ and an element $i \in I$, we let
\[
\sT_{< i}, \qquad\text{resp.}\qquad
\sT_{\le i}
\]
denote the full triangulated subcategory of $\sT$ generated by the objects of the form $\nabla_j\la k\ra$ with $k \in \Z$ and $j < i$, resp.~$j \le i$.  Let
\[
\Pi_i: \sT_{\le i} \to \sT_{\le i}/\sT_{<i}
\]
be the Verdier quotient functor.  It is clear that the quotient category $\sT_{\le i}/\sT_{<i}$ is generated by the objects of the form $\Pi_i(\nabla_i)\la k\ra$.

\begin{defn}
\label{defn:exc-dual}
Let $\{\nabla_i\}_{i \in I}$ be a graded exceptional sequence in $\sT$, and let $\{\Delta_i\}_{i \in I}$ be another collection of objects indexed by $I$.  The set $\{\Delta_i\}_{i \in I}$ is said to be a \emph{dual sequence} to $\{\nabla_i\}_{i \in I}$ if for each $i \in I$, we have
\begin{enumerate}
\item If $i < j$, then $\Hom(\Delta_j, \nabla_i[n]\la k\ra) = 0$ for all $n, k \in \Z$.
\item For each $i \in I$, we have $\Delta_i \in \sT_{\le i}$ and $\Pi_i(\Delta_i) \cong \Pi_i(\nabla_i)$.
\end{enumerate}
\end{defn}

The exceptional sequence $\{\nabla_i\}_{i \in I}$ is said to be \emph{dualizable} if there exists some dual sequence to it. (It is easily seen using Lemma~\ref{lem:exc-basic} below that a dual sequence is unique if it exists, which justifies the terminology.)

\begin{lem}
\label{lem:exc-basic}
Let $\{\nabla_i\}_{i \in I}$ be a graded exceptional sequence, and let $\{\Delta_i\}_{i \in I}$ be a dual sequence.  
\begin{enumerate}
\item If $X \in \sT_{< i}$ then $\Hom(X, \nabla_i[n]\la k\ra) = 0$ and $\Hom(\Delta_i[n]\la k\ra, X) = 0$ for all $n,k \in \Z$.\label{it:exc-van}
\item For all $X \in \sT_{\le i}$, the natural maps
\begin{align*}
\Hom(X,\nabla_i[n]\la k\ra) &\to \Hom(\Pi_i(X), \Pi_i(\nabla_i)[n]\la k\ra), \\
\Hom(\Delta_i[n]\la k\ra,X) &\to \Hom(\Pi_i(\Delta_i)[n]\la k\ra, \Pi_i(X))
\end{align*}
are isomorphisms for all $n, k \in \Z$.\label{it:exc-pi}
\item If $i \ne j$, we have $\Hom(\Delta_i, \nabla_j[n]\la k\ra) = 0$ for all $n,k \in \Z$.\label{it:exc-orth}
\item For all $i \in \Z$, there are natural isomorphisms\label{it:exc-quot}
\begin{multline*}
\Hom(\nabla_i, \nabla_i[n]\la k\ra)
\cong \Hom(\Delta_i, \Delta_i[n]\la k\ra)
\cong \Hom(\Delta_i, \nabla_i[n]\la k\ra) \\
\cong \Hom(\Pi_i(\nabla_i), \Pi_i(\nabla_i)[n]\la k\ra)
\cong
\begin{cases}
\bE & \text{if $n = k = 0$,} \\
0 & \text{otherwise.}
\end{cases}
\end{multline*}
\end{enumerate}
\end{lem}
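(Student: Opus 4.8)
The plan is to establish the four parts in the stated order, since each feeds into the next; everything is a formal consequence of the axioms in Definitions~\ref{defn:exc-sequence} and~\ref{defn:exc-dual}, so no step should be genuinely hard. \emph{For part~(1)} I would argue that both vanishings hold for structural reasons: for a fixed test object, "being orthogonal to it in all internal degrees'' cuts out a Tate-twist-stable triangulated subcategory. Concretely, fix $i$; the full subcategory $\{X\in\sT \mid \Hom(X,\nabla_i[n]\la k\ra)=0 \text{ for all } n,k\}$ is closed under shifts, cones and Tate twists, hence is a triangulated subcategory of $\sT$. By condition~(1) of Definition~\ref{defn:exc-sequence} it contains $\nabla_j\la k\ra$ for all $j<i$ and all $k$, so it contains the subcategory $\sT_{<i}$ they generate, which gives the first vanishing. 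For the second, I would run the identical argument with $\{X\in\sT \mid \Hom(\Delta_i,X[n]\la k\ra)=0 \text{ for all } n,k\}$, which by condition~(1) of Definition~\ref{defn:exc-dual} (applied with the roles of the two indices exchanged) again contains all $\nabla_j\la k\ra$ with $j<i$, hence $\sT_{<i}$.

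\emph{Part~(2)} I would deduce from the standard interaction between Verdier localization and orthogonality: if $\mathcal{N}$ is a triangulated subcategory of a triangulated category $\mathcal{S}$ and $Y\in\mathcal{S}$ satisfies $\Hom(Z,Y)=0$ for all $Z\in\mathcal{N}$, then the quotient functor $\mathcal{S}\to\mathcal{S}/\mathcal{N}$ induces an isomorphism $\Hom_{\mathcal{S}}(X,Y)\simto\Hom_{\mathcal{S}/\mathcal{N}}(X,Y)$ for every $X$, and dually when $\Hom(Y,Z)=0$ for all $Z\in\mathcal{N}$. (This follows from the calculus of fractions: the transition maps in the filtered colimit computing $\Hom$ in the quotient are isomorphisms, since they occur in long exact sequences whose remaining terms have the form $\Hom(Z,Y)$ with $Z\in\mathcal{N}$.) I would apply this with $\mathcal{S}=\sT_{\le i}$ and $\mathcal{N}=\sT_{<i}$. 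Both $\nabla_i$ and $\Delta_i$ lie in $\sT_{\le i}$ — the former by construction, the latter by condition~(2) of Definition~\ref{defn:exc-dual} — and by part~(1), inside $\sT_{\le i}$ the object $\nabla_i$ is right orthogonal to $\sT_{<i}$ while $\Delta_i$ is left orthogonal; the two asserted isomorphisms follow, and the maps in question are precisely those induced by $\Pi_i$.

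\emph{Parts~(3) and~(4)} are then bookkeeping on top of (1) and (2). For~(3): if $i>j$ then $\Hom(\Delta_i,\nabla_j[n]\la k\ra)=0$ is exactly condition~(1) of Definition~\ref{defn:exc-dual}; and if $i<j$ then $\sT_{\le i}\subseteq\sT_{<j}$, because each generator $\nabla_l\la k\ra$ ($l\le i$) of the former is a generator of the latter, so $\Delta_i\in\sT_{<j}$ by condition~(2) of Definition~\ref{defn:exc-dual}, and part~(1) (with $j$ in place of $i$) gives the vanishing. For~(4): I would apply the first isomorphism of part~(2) once with $X=\nabla_i$ and once with $X=\Delta_i$, the second isomorphism with $X=\Delta_i$, and the fixed identification $\Pi_i(\Delta_i)\cong\Pi_i(\nabla_i)$ from condition~(2) of Definition~\ref{defn:exc-dual}; this identifies each of $\Hom(\nabla_i,\nabla_i[n]\la k\ra)$, $\Hom(\Delta_i,\Delta_i[n]\la k\ra)$ and $\Hom(\Delta_i,\nabla_i[n]\la k\ra)$ — compatibly, through $\Pi_i$, so that the composite identifications are natural — with $\Hom(\Pi_i(\nabla_i),\Pi_i(\nabla_i)[n]\la k\ra)$, which equals $\Hom(\nabla_i,\nabla_i[n]\la k\ra)$ by part~(2) once more; and the latter is $\bE$ for $n=k=0$ and $0$ otherwise by condition~(2) of Definition~\ref{defn:exc-sequence}.

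\emph{Main obstacle.} There is essentially no substantial obstacle: the whole lemma is formal, and the only content is the localization statement used in part~(2), which is routine. The two points I would be careful about are that all the orthogonality assertions feeding part~(2) must be taken inside $\sT_{\le i}$ (not inside $\sT$), and that one really gets the \emph{natural} maps named in the lemma — those induced by $\Pi_i$ — so that part~(4) yields natural isomorphisms rather than merely abstract ones. One should also note that nothing above requires $\sT_{<i}$ to be a thick subcategory, only a triangulated one.
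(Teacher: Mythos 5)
Your proposal is correct and follows essentially the same route as the paper: part (1) by reducing to the generators $\nabla_j\langle m\rangle$ of $\sT_{<i}$, part (2) by the standard Verdier-localization fact (the paper simply cites Verdier's book where you state and sketch the fact), and parts (3) and (4) by the same bookkeeping. Your slightly more explicit phrasing of (1) — that the orthogonal complement is a triangulated, Tate-twist-stable subcategory — is a cleaner way to say what the paper leaves implicit, and your caution about working inside $\sT_{\le i}$ and about naturality is well placed.
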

\begin{proof}
\eqref{it:exc-van}~It is enough to check this when $X$ belongs to some class of objects that generate $\sT_{<i}$.  For instance, it is enough to prove it in the case where $X = \nabla_j\la m\ra$ for some $j < i$.  In this case, the claim holds by definition.

\eqref{it:exc-pi}~This follows from part~\eqref{it:exc-van} by~\cite[Proposition~2.3.3(a), parts (iii) and~(v)]{verdier}.

\eqref{it:exc-orth}~If $i > j$, this holds by definition.  If $i < j$, then $\Delta_i \in \sT_{< j}$, so this follows from part~\eqref{it:exc-van}.

\eqref{it:exc-quot}~Identify $\Pi_i(\Delta_i)$ with $\Pi_i(\nabla_i)$. Part~\eqref{it:exc-pi} tells us that each of the first three $\Hom$-spaces is naturally isomorphic to the fourth one.  The space $\Hom(\nabla_i, \nabla_i[n]\la k\ra)$ is as described by definition.
\end{proof}

\begin{rmk}
\label{rmk:Hom-finite}
In this appendix we assume throughout that the category $\sT$ is graded $\Hom$-finite. However, if we are given a sequence $\{\nabla_i\}_{i \in I}$ of objects in a triangulated category $\sT$ (assumed only $\bE$-linear and equipped with a Tate twist) satisfying the properties in Definition~\ref{defn:exc-sequence} and a sequence $\{\Delta_i\}_{i \in I}$ of objects satisfying the conditions of Definition~\ref{defn:exc-dual}, then $\sT$ automatically satisfies a stronger finiteness property; namely, for any objects $X,Y$ the $\bE$-module
\[
\bigoplus_{n,m \in \Z} \Hom_{\sT}(X,Y \langle m \rangle [n])
\]
is finitely generated. In fact, Lemma~\ref{lem:exc-basic}\eqref{it:exc-orth}--\eqref{it:exc-quot} (whose proof does not involve the ``graded $\Hom$-finite'' condition) shows that this condition holds when $X=\Delta_i$ and $Y=\nabla_j$; the general case follows since the collections $\{\nabla_i \langle k \rangle\}_{i \in I, k \in \Z}$ and $\{\Delta_i \langle k \rangle\}_{i \in I, k \in \Z}$ both generate $\sT$ as a triangulated category.
\end{rmk}

Note that the proof of the first isomorphism in Lemma~\ref{lem:exc-basic}\eqref{it:exc-pi} does not involve the dual sequence in any way; it holds even if the exceptional sequence is not assumed to be dualizable.

\begin{lem}\label{lem:dual-crit}
Let $\{\nabla_i\}_{i \in I}$ be a graded exceptional sequence in $\sT$, and let $\{\Delta_i\}_{i \in I}$ be another sequence of objects in $\sT$.  This sequence is a dual sequence to $\{\nabla_i\}_{i \in I}$ if and only if we have
\[
\Hom(\Delta_i, \nabla_j[n]\la k\ra) \cong
\begin{cases}
\bE & \text{if $i = j$ and $n = k = 0$,} \\
0 & \text{otherwise.}
\end{cases}
\]
\end{lem}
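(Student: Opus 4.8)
The plan is to handle the two implications separately; essentially all the content is in the ``if'' direction.

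The ``only if'' direction is immediate: if $\{\Delta_i\}_{i\in I}$ is a dual sequence, then parts~\eqref{it:exc-orth} and~\eqref{it:exc-quot} of Lemma~\ref{lem:exc-basic} say precisely that $\Hom(\Delta_i,\nabla_j[n]\la k\ra)=0$ for $i\ne j$, and that $\Hom(\Delta_i,\nabla_i[n]\la k\ra)$ is $\bE$ for $n=k=0$ and $0$ otherwise. For the ``if'' direction, suppose the displayed $\Hom$-vanishing holds. Condition~(1) of Definition~\ref{defn:exc-dual} (that $\Hom(\Delta_j,\nabla_i[n]\la k\ra)=0$ when $i<j$) is contained in the hypothesis, so what remains is condition~(2): that $\Delta_i\in\sT_{\le i}$ and $\Pi_i(\Delta_i)\cong\Pi_i(\nabla_i)$. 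My approach is to use Verdier quotients to ``see'' $\Delta_i$ and then argue by dévissage. Two elementary observations will be used repeatedly. First, since $\sT$ is generated by the $\nabla_i\la k\ra$, every object lies in some $\sT_{\le N}$ (a given object involves only finitely many indices, which have a maximum since $I$ embeds in $\Z_{\ge0}$). Second, for a triangulated subcategory $\sS\subseteq\sT$ generated by some $\nabla_l\la k\ra$'s and any object $Y$ with $\Hom(\sS,Y[n]\la k\ra)=0$ for all $n,k$, the quotient functor $\Pi\colon\sT\to\sT/\sS$ induces isomorphisms $\Hom(X,Y[n]\la k\ra)\simto\Hom(\Pi X,\Pi Y[n]\la k\ra)$ for all $X$, by \cite[Proposition~2.3.3(a)]{verdier} (this is the content of the first isomorphism in Lemma~\ref{lem:exc-basic}\eqref{it:exc-pi}, which, as noted after that lemma, does not use dualizability); in the cases needed below, the vanishing hypothesis is supplied by the first condition of Definition~\ref{defn:exc-sequence} together with a dévissage in the first variable.

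First I would show $\Delta_i\in\sT_{\le i}$. Pick $N_0\ge i$ in $I$ with $\Delta_i\in\sT_{\le N_0}$, and form the Verdier quotient $\Pi'\colon\sT_{\le N_0}\to\sT_{\le N_0}/\sT_{\le i}$; this quotient is generated by the images $\Pi'(\nabla_j)\la k\ra$ with $i<j\le N_0$. Since $\Hom(\sT_{\le i},\nabla_j[n]\la k\ra)=0$ for $j>i$, the second observation gives $\Hom(\Pi'\Delta_i,\Pi'\nabla_j[n]\la k\ra)\cong\Hom(\Delta_i,\nabla_j[n]\la k\ra)=0$ for all such $j$ and all $n,k$ (the hypothesis applies because $j\ne i$). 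As the $\Pi'(\nabla_j)\la k\ra$ generate the quotient, a dévissage in the second variable shows $\Hom(\Pi'\Delta_i,Z)=0$ for every object $Z$, in particular $\Hom(\Pi'\Delta_i,\Pi'\Delta_i)=0$, so $\Pi'(\Delta_i)=0$ and hence $\Delta_i\in\ker\Pi'=\sT_{\le i}$.

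Next, for $\Pi_i(\Delta_i)\cong\Pi_i(\nabla_i)$: set $E_i=\Pi_i(\nabla_i)$, so that $\sT_{\le i}/\sT_{<i}$ is generated by the twists of $E_i$, and the second observation (with $\sS=\sT_{<i}$, $Y=\nabla_i$, using $\Hom(\sT_{<i},\nabla_i[n]\la k\ra)=0$) yields $\Hom(E_i,E_i[n]\la k\ra)\cong\Hom(\nabla_i,\nabla_i[n]\la k\ra)$ and $\Hom(\Pi_i\Delta_i,E_i[n]\la k\ra)\cong\Hom(\Delta_i,\nabla_i[n]\la k\ra)$, each being $\bE$ for $n=k=0$ and $0$ otherwise. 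Choosing $\phi\colon\Pi_i(\Delta_i)\to E_i$ a generator of the free rank-one $\bE$-module $\Hom(\Pi_i\Delta_i,E_i)$, the map $\phi^*$ sends $\id_{E_i}$ to $\phi$ and is therefore an isomorphism in every bidegree; applying $\Hom(-,E_i[n]\la k\ra)$ to the triangle $\Pi_i\Delta_i\xrightarrow{\phi}E_i\to C\to\Pi_i\Delta_i[1]$ then forces $\Hom(C,E_i[n]\la k\ra)=0$ for all $n,k$, whence $C=0$ by a final dévissage, so $\phi$ is an isomorphism. This establishes condition~(2), and with it the ``if'' direction.

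The step I expect to be the main obstacle is the structural input used above, namely that $\sT_{\le i}$ is a \emph{thick} subcategory of $\sT_{\le N_0}$ (equivalently, that the triangulated subcategories cut out by an exceptional collection are idempotent-complete), which is what makes $\ker\Pi'=\sT_{\le i}$ rather than merely its thick closure; the analogous point underlies the identification of $\sT_{\le i}/\sT_{<i}$ as generated by a single exceptional object. This is a general feature of exceptional collections, but over a complete local principal ideal domain, and with only the weak ``graded $\Hom$-finite'' hypothesis in force (rather than the stronger finiteness of Remark~\ref{rmk:Hom-finite}), it has to be set up with some care; it is really part of the basic theory that this appendix develops. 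Granting it, both clauses of Definition~\ref{defn:exc-dual} drop out of the dévissage arguments sketched here.
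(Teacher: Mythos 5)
Your argument is correct and follows essentially the same route as the paper. The ``only if'' direction is identical. In the ``if'' direction, your verification that $\Pi_i(\Delta_i) \cong \Pi_i(\nabla_i)$ mirrors the paper's: the paper forms the cone $K$ of $c\colon\Delta_i\to\nabla_i$ inside $\sT_{\le i}$, shows $\Hom(K,\nabla_i[n]\la k\ra)=0$, and then passes to the quotient to deduce $\Pi_i(K)=0$; you form the cone $C$ of $\phi=\Pi_i(c)$ directly in the quotient and run the same vanishing argument there --- a cosmetic reshuffling. The one place you genuinely depart from the paper is in establishing $\Delta_i\in\sT_{\le i}$: you pass to the large quotient $\sT_{\le N_0}/\sT_{\le i}$ and show $\Delta_i$ dies there, whereas the paper takes $j$ minimal with $\Delta_i\in\sT_{\le j}$ and derives $j=i$ from the single-stratum quotient $\sT_{\le j}/\sT_{<j}$. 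These are parallel; both conclude by identifying the kernel of the relevant quotient functor with the subcategory itself rather than its thick closure. You are right to flag this identification (``$\ker\Pi'=\sT_{\le i}$'') as the delicate structural input, but note that the paper's proof leans on it just as heavily, in the step ``Since $\Delta_i\notin\sT_{<j}$, we must have $\Pi_j(\Delta_i)\ne 0$,'' which is precisely the contrapositive of $\ker\Pi_j\subseteq\sT_{<j}$. Neither argument proves this thickness statement; the paper takes it for granted while you call it out explicitly. So you have not introduced a gap the paper avoids --- you have merely been more candid about a shared dependence on a background fact about triangulated subcategories generated by exceptional collections.
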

\begin{proof}
If $\{\Delta_i\}_{i \in I}$ is a dual sequence, the $\Hom$-groups are as described by parts~\eqref{it:exc-orth} and~\eqref{it:exc-quot} of Lemma~\ref{lem:exc-basic}.

For the opposite implication, the first condition in Definition~\ref{defn:exc-dual} holds by assumption; we need only prove the second condition.  For each $i \in I$, there exists some $j$ such that $\Delta_i \in \sT_{\le j}$.  Assume that $j$ is minimal with respect to this property, i.e., that $\Delta_i \notin \sT_{< j}$.  If $j < i$, our assumptions would imply that $\Hom(\Delta_i,X) = 0$ for all $X \in \sT_{\le j}$, which is absurd.  We therefore have $j \ge i$.  Since $\Delta_i \notin \sT_{<j}$, we must have $\Pi_j(\Delta_i) \ne 0$.  Since the quotient category $\sT_{\le j}/\sT_{<j}$ is generated by the objects $\Pi_j(\nabla_j)\la k\ra$, we must have
\[
\Hom(\Pi_j(\Delta_i), \Pi_j(\nabla_j)[n]\la k\ra) \ne 0
\]
for some integers $n, k \in \Z$.  As noted above, we may use the first isomorphism in Lemma~\ref{lem:exc-basic}\eqref{it:exc-pi} even without the assumption that $\{\nabla_i\}_{i \in I}$ is dualizable.  That isomorphism tells us that
\[
\Hom(\Delta_i, \nabla_j[n]\la k\ra) \ne 0.
\]
We therefore have $j = i$, i.e., $\Delta_i \in \sT_{\le i}$.  

Next, choose a map $c: \Delta_i \to \nabla_i$ corresponding to a generator of the free $\bE$-module $\Hom(\Delta_i,\nabla_i)$.  Let $K$ be the cone of this map, and consider the long exact sequence
\begin{multline*}
\cdots \to \Hom(\nabla_i,\nabla_i[n-1]\la k\ra) \to \Hom(\Delta_i,\nabla_i[n-1]\la k\ra) \to \\
\Hom(K, \nabla_i[n]\la k\ra) \to \Hom(\nabla_i, \nabla_i[n]\la k\ra) \to \Hom(\Delta_i,\nabla_i[n]\la k\ra) \to \cdots.
\end{multline*}
If $k \ne 0$, or if $n \ne 0,1$, then the first, second, fourth, and fifth terms vanish, so $\Hom(K,\nabla_i[n]\la k\ra) = 0$ as well.  If $k = 0$ and $n = 0$, the first two terms vanish, and the last two terms are isomorphic (the map between them sends $\id \in \Hom(\nabla_i,\nabla_i)$ to the generator $c \in \Hom(\Delta_i,\nabla_i)$), so $\Hom(K, \nabla_i) = 0$.  If $k = 0$ and $n = 1$, similar reasoning with the first two terms yields $\Hom(K, \nabla_i[1]) = 0$.

We have shown that $\Hom(K,\nabla_i[n]\la k\ra) = 0$ for all $n,k \in \Z$.  By construction, $K \in \sT_{\le i}$.  Apply Lemma~\ref{lem:exc-basic}\eqref{it:exc-pi} again to conclude that $\Hom(\Pi_i(K), \Pi_i(\nabla_i)[n]\la k\ra) = 0$ for all $n,k \in \Z$.  It follows that $\Pi_i(K) = 0$, and hence that $c: \Delta_i \to \nabla_i$ becomes an isomorphism in $\sT_{\le i}/\sT_{< i}$, as desired.
\end{proof}

\begin{rmk}
Lemma~\ref{lem:dual-crit} implies that the property of being dualizable, and the dual sequence, do not depend on the order on $I$; i.e.~if a collection of objects parametrized by a set $I$ is exceptional for two different orders $\leq$ and $\preceq$ on $I$, then it is dualizable as a sequence parametrized by $(I,\leq)$ iff it is dualizable as a sequence parametrized by $(I,\preceq)$, and in this case the dual sequences agree.
\end{rmk}

%--------------------------------------------------------------------------
\subsection{The t-structure associated with an exceptional collection}
%--------------------------------------------------------------------------

\begin{prop}\label{prop:exc-recolle}
Let $\{\nabla_i\}_{i \in I}$ be a dualizable graded exceptional sequence in $\sT$.  For each $i \in I$, the quotient functor $\Pi_i: \sT_{\le i} \to \sT_{\le i}/\sT_{< i}$ and the inclusion functor $\iota_i: \sT_{<i} \to \sT_{\le i}$ both admit left and right adjoints.  Together, these functors give a recollement diagram
\[
\begin{tikzcd}
\sT_{<i} \ar[r, "\iota_i" description] &
\sT_{\le i} \ar[r, "\Pi_i" description] \ar[l, bend left, "\il_i"] \ar[l, bend right, "\ir_i"'] &
\sT_{\le i}/\sT_{< i} \ar[l, bend left, "\pil_i"] \ar[l, bend right, "\pir_i"']
\end{tikzcd}
.
\]
\end{prop}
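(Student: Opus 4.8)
The plan is to prove that $\sT_{<i}$ is both left and right admissible in $\sT_{\le i}$, and then to read off the recollement from this by the standard formalism. First I would record two facts. On the one hand, $\sT_{\le i}$ is generated as a triangulated category by $\sT_{<i}$ together with the twists $\nabla_i\la k\ra$, $k \in \Z$ (immediate from the definitions of $\sT_{<i}$ and $\sT_{\le i}$), and also by $\sT_{<i}$ together with the twists $\Delta_i\la k\ra$: the triangulated subcategory $\sT'$ generated by the latter collection satisfies $\Pi_i(\Delta_i\la k\ra) \cong \Pi_i(\nabla_i\la k\ra)$, and since these generate $\sT_{\le i}/\sT_{<i}$, the functor $\Pi_i$ is essentially surjective on $\sT'$; a triangulated subcategory containing $\sT_{<i}$ on which $\Pi_i$ is essentially surjective is all of $\sT_{\le i}$, so $\sT' = \sT_{\le i}$. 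On the other hand, Lemma~\ref{lem:exc-basic}\eqref{it:exc-van} says precisely that $\nabla_i\la k\ra \in \sT_{<i}^{\perp}$ and $\Delta_i\la k\ra \in {}^{\perp}\sT_{<i}$ for all $k$.

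The heart of the argument is the following claim: the class $\mathcal{C}$ of objects $X \in \sT_{\le i}$ admitting a distinguished triangle $N \to X \to Q \xrightarrow{[1]}$ with $N \in \sT_{<i}$ and $Q \in \sT_{<i}^{\perp}$ is a triangulated subcategory. Closure under shifts and Tate twists is clear; closure under cones is the one nontrivial point. Given $f \colon X \to X'$ with truncation triangles $N \to X \to Q$ and $N' \to X' \to Q'$ as above, the composite $N \to X \to X'$ becomes zero after projection to $Q'$ (as $\Hom(N, Q') = 0$), hence factors through $N' \to X'$; completing the resulting square to a morphism of triangles and applying the "nine lemma" for triangulated categories yields a distinguished triangle $\mathrm{cone}(N \to N') \to \mathrm{cone}(f) \to \mathrm{cone}(Q \to Q') \xrightarrow{[1]}$ whose outer terms lie in $\sT_{<i}$ and $\sT_{<i}^{\perp}$ respectively (both are triangulated subcategories), so $\mathrm{cone}(f) \in \mathcal{C}$. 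Since $\mathcal{C}$ contains $\sT_{<i}$ (take $Q=0$) and all $\nabla_i\la k\ra$ (take $N=0$), the first paragraph forces $\mathcal{C} = \sT_{\le i}$. Moreover the assignment $X \mapsto N$ is functorial, the lift $N \to N'$ being unique because $\Hom(N, Q'[-1]) = 0$, and it defines a right adjoint $\ir_i$ to $\iota_i$: for $M \in \sT_{<i}$ the triangle gives $\Hom(M, N) \xrightarrow{\sim} \Hom(M, X)$ since $\Hom(M, Q[n]) = 0$ for all $n$. Running the same argument with ${}^{\perp}\sT_{<i}$, triangles $Q' \to X \to N' \xrightarrow{[1]}$, and the generators $\Delta_i\la k\ra$ in place of $\nabla_i\la k\ra$, produces the left adjoint $\il_i$ to $\iota_i$.

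It then remains to produce the adjoints of $\Pi_i$, which is formal. Right admissibility of $\sT_{<i}$ implies that $\Pi_i$ restricts to an equivalence $\sT_{<i}^{\perp} \xrightarrow{\sim} \sT_{\le i}/\sT_{<i}$: full faithfulness holds because morphisms into objects of $\sT_{<i}^{\perp}$ are unchanged under the Verdier localization, and essential surjectivity holds because $\Pi_i X \cong \Pi_i Q$ for the truncation triangle of any $X$. Composing a quasi-inverse with the inclusion $\sT_{<i}^{\perp} \hookrightarrow \sT_{\le i}$ gives a fully faithful functor which is right adjoint to $\Pi_i$, the desired $\pir_i$; symmetrically, ${}^{\perp}\sT_{<i}$ yields $\pil_i$. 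One also sees $\ker(\Pi_i) = \sT_{<i}$ (if $\Pi_i X = 0$, truncating $X$ shows its $\sT_{<i}^{\perp}$-part vanishes, so $X \in \sT_{<i}$), so $\iota_i$ is the inclusion of $\ker \Pi_i$; together with $\Pi_i \circ \iota_i = 0$, the full faithfulness of $\iota_i, \pil_i, \pir_i$, and the two gluing triangles — which are exactly the truncation triangles constructed in the second paragraph — this is a recollement, as in~\cite[\S1.4.3--1.4.4]{bbd}. The main obstacle is the octahedral bookkeeping in the second paragraph: checking that the "nine lemma" triangle is genuinely distinguished and that both families of truncations are functorial; everything else is either immediate from the definitions or a routine application of the recollement formalism.
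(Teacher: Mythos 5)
Your proof is correct, but it goes the other way around the diagram from the paper. The paper's proof starts by observing that $\Pi_i$ restricted to the full triangulated subcategory $\sT^\nabla_i \subset \sT_{\le i}$ generated by the $\nabla_i\la k\ra$ is an equivalence onto $\sT_{\le i}/\sT_{<i}$ (a direct consequence of Lemma~\ref{lem:exc-basic}\eqref{it:exc-pi}), and dually for $\sT^\Delta_i$; this hands over $\pir_i$ and $\pil_i$ for free as compositions, with the adjunction verified by a single $\Hom$-isomorphism. Only afterwards does the paper produce $\ir_i$ and $\il_i$, each by completing one adjunction morphism $X \to \pir_i\Pi_i(X)$ (or its mirror) to a triangle and invoking~\cite[Propositions~1.1.9--1.1.10]{bbd} for functoriality. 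You instead establish right- and left-admissibility of $\sT_{<i}$ directly: you show the class of objects admitting a truncation triangle $N \to X \to Q$ with $N \in \sT_{<i}$, $Q \in \sT_{<i}^\perp$ is triangulated via a $3\times 3$ argument, obtain $\ir_i$ (and dually $\il_i$) from the resulting semiorthogonal decomposition, and then recover $\pir_i$, $\pil_i$ from the induced equivalences $\sT_{<i}^\perp \simto \sT_{\le i}/\sT_{<i}$ and ${}^\perp\sT_{<i} \simto \sT_{\le i}/\sT_{<i}$. Both routes are valid. The paper's choice has the practical advantage of bypassing the nine-lemma bookkeeping entirely — the only octahedral-style input it needs is the easy functoriality criterion of \cite[Proposition~1.1.9]{bbd} — whereas your argument, while conceptually closer to the general theory of admissible subcategories, does rely on the genuine but fiddly fact that a $3\times 3$ diagram can be completed with all rows and columns distinguished (\cite[Proposition~1.1.11]{bbd}); as you yourself note, that point should be cited or verified rather than left implicit. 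One small structural bonus of your argument is that it identifies $\sT_{<i}^\perp$ and ${}^\perp\sT_{<i}$ with $\sT^\nabla_i$ and $\sT^\Delta_i$ as a byproduct, which the paper only establishes indirectly.
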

\begin{proof}
For brevity, in the proof we will omit the subscript ``$i$'' from the names of the various functors in the diagram above.

\textit{Step 1. The functor $\Pi$ admits a right adjoint $\pir$.}
Let $\sT^\nabla_i \subset \sT_{\le i}$ be the full triangulated subcategory generated by the objects of the form $\nabla_i\la k\ra$ with $k \in \Z$.  We claim that the functor
\[
\Pi|_{\sT^\nabla_i}: \sT^\nabla_i \to \sT_{\le i}/\sT_{< i}
\]
is an equivalence of categories.  Indeed, lemma~\ref{lem:exc-basic}\eqref{it:exc-pi} implies that this functor is fully faithful, and since $\sT_{\le i}/\sT_{< i}$ is generated by the objects $\Pi(\nabla_i)\la k\ra$, it is also essentially surjective.

Let $\pir$ denote the composition
\[
\sT_{\le i}/\sT_{< i} \xrightarrow{(\Pi|_{\sT^\nabla_i})^{-1}} \sT^\nabla_i
\xrightarrow{\text{inclusion}}
\sT_{\le i}.
\]
Lemma~\ref{lem:exc-basic}\eqref{it:exc-pi} again implies that for any $X \in \sT_{\le i}$ and $Y \in \sT^\nabla_i$, the map
\begin{equation}\label{eqn:recolle2}
\Hom(X, Y) \to \Hom(\Pi(X), \Pi(Y))
\end{equation}
is an isomorphism.  Now let $Y' = \Pi(Y)$.  Then~\eqref{eqn:recolle2} can be rewritten as a natural isomorphism
\[
\Hom(X,\pir(Y')) \cong \Hom(\Pi(X), Y'),
\]
so $\pir$ is right adjoint to $\Pi$.

\textit{Step 2. The functor $\Pi$ admits a left adjoint $\pil$.}
This is very similar to Step~1. Let $\sT^\Delta_i \subset \sT_{\le i}$ be the full subcategory generated by objects of the form $\Delta_i\la k\ra$ with $k \in \Z$, and then define $\pil$ to be the composition
\[
\sT_{\le i}/\sT_{< i} \xrightarrow{(\Pi|_{\sT^\Delta_i})^{-1}} \sT^\Delta_i
\xrightarrow{\text{inclusion}}
\sT_{\le i}.
\]
We omit further details.

\textit{Step 3. For $X \in \sT_{\le i}/\sT_{< i}$, the adjunction maps $\Pi(\pir(X)) \to X$ and $X \to \Pi(\pil(X))$ are isomorphisms.}
This is immediate from the construction of $\pir$ and $\pil$.

\textit{Step 4. The functor $\iota$ admits a right adjoint $\ir$.  Moreover, for any $X \in \sT_{\le i}$, there is a functorial distinguished triangle
\[
\iota\ir(X) \to X \to \pir\Pi(X) \xrightarrow{[1]},
\]
where the first two maps are adjunction maps.}
Complete the adjunction map $X \to \pir\Pi(X)$ to a distinguished triangle $X' \to X \to \pir\Pi(X) \to$, and then apply $\Pi$:
\[
\Pi(X') \to \Pi(X) \to \Pi(\pir(\Pi(X))) \xrightarrow{[1]}.
\]
Step~3 implies that $\Pi(X) \to \Pi(\pir(\Pi(X)))$ is an isomorphism, so $\Pi(X') = 0$.  We conclude that $X'$ lies in $\sT_{<i}$.  We may rewrite it as $X' = \iota(X')$.  By adjunction, we have
\[
\Hom(\iota(X'), \pir\Pi(X)[-1]) = 0.
\]
Then~\cite[Proposition~1.1.9]{bbd} (see also~\cite[Corollaire~1.1.10]{bbd}) implies that the triangle $\iota(X') \to X \to \pir\Pi(X) \xrightarrow{[1]}$ is functorial in $X$.  In particular, there is a functor $\ir: \sT_{\le i} \to \sT_{<i}$ such that $X' = \ir(X)$.

Now let $Y \in \sT_{<i}$, and apply $\Hom(\iota Y,{-})$ to our distinguished triangle $\iota\ir(X) \to X \to \pir\Pi(X) \to$.  We obtain the long exact sequence
\begin{multline*}
\cdots \to \Hom(\iota Y, \pir\Pi(X)[-1]) \to \Hom(\iota Y, \iota\ir(X)) \to \Hom(\iota Y,X) \\
\to \Hom (\iota Y, \pir\Pi(X)) \to \cdots.
\end{multline*}
The first and last terms vanish, so the middle two are naturally isomorphic.  This shows that $\ir$ is right adjoint to $\iota$.

\textit{Step 5. The functor $\iota$ admits a right adjoint $\il$.  Moreover, for any $X \in \sT_{\le i}$, there is a functorial distinguished triangle
\[
\pil\Pi(X) \to X \to \iota\il(X) \xrightarrow{[1]},
\]
where the first two maps are adjunction maps.}
This is very similar to Step~4 and is left to the reader.

\textit{Step 6. For $X \in \sT_{< i}$, the adjunction maps $X \to \ir\iota(X)$ and $\il\iota(X) \to X$ are isomorphisms.}  For the first claim, it is enough to prove that $\iota(X) \to \iota \ir\iota(X)$ is an isomorphism.  Since the composition $\iota(X) \to \iota\ir\iota(X) \to \iota(X)$ is the identity map, we may instead show that $\iota\ir\iota(X) \to \iota(X)$ is an isomorphism.  For this, we apply the distinguished triangle from Step~4 and use the observation that $\Pi\iota(X) = 0$.  The proof of the second claim is similar.

We have now checked all the conditions in~\cite[\S1.4.3]{bbd}, so the proof is complete.
\end{proof}

\begin{thm}\label{thm:exc-tstruc}
Let $\sT$ be an $\bE$-linear triangulated category with a Tate twist.  Assume that $\sT$ is graded $\Hom$-finite, and that it is equipped with a dualizable graded exceptional sequence $\{\nabla_i\}_{i \in I}$.  For each $i \in I$, let
\[
\bar\nabla_i =
\begin{cases}
\mathrm{cone}(\nabla_i \xrightarrow{\varpi\cdot id} \nabla_i) & \text{if $\bE$ is not a field,} \\
0 & \text{if $\bE$ is a field.}
\end{cases}
\]
Then the categories $(\sT^{\le 0}, \sT^{\ge 0})$ given by
\begin{align*}
\sT^{\le 0} &=
\begin{array}{@{}l@{}}
\text{the subcategory generated under extensions by objects of the form}\\
\text{$\Delta_j[n]\la k\ra$ with $j \in I$, $n \ge 0$, and $k \in \Z$,}
\end{array}
\\
\sT^{\ge 0} &=
\begin{array}{@{}l@{}}
\text{the subcategory generated under extensions by objects of the form}\\
\text{$\nabla_j[n]\la k\ra$ and $\bar\nabla_j[n]\la k\ra$ with $j \in I$, $n \ge 0$ and $k \in \Z$.}
\end{array}
\end{align*}
form a t-structure on $\sT$.  The heart $\sA = \sT^{\le 0} \cap \sT^{\ge 0}$ of this t-structure is noetherian.  If $\bE$ is a field, then $\sA$ is both noetherian and artinian.
\end{thm}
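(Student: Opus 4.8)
The plan is to build the t-structure by induction along $I$, gluing along recollements, and then to pass to a colimit. Since $(I,\le)$ is (isomorphic to) a subset of $(\Z_{\ge 0},\le)$, every non-minimal $i \in I$ has an immediate predecessor $i^-$, so $\sT_{<i}=\sT_{\le i^-}$; and every object of $\sT$ lies in some $\sT_{\le i}$ (as $\sT$ is generated by the $\nabla_j\la k\ra$ and any two indices have an upper bound in $I$), so $\sT=\bigcup_{i\in I}\sT_{\le i}$. It thus suffices to put on each $\sT_{\le i}$ a t-structure such that the one on $\sT_{\le i'}$ restricts to the one on $\sT_{\le i}$ for $i<i'$; the unions of the aisles and co-aisles are then a t-structure on $\sT$, the verification of orthogonality and of the truncation triangles reducing to a single $\sT_{\le m}$ with $m$ large.

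For the inductive step, fix $i$ and assume $\sT_{<i}$ carries a t-structure with heart $\sA_{<i}$ (base case $\sT_{<i}=0$). First one checks that $\sT_{\le i}/\sT_{<i}$, generated by the Tate twists of $N_i:=\Pi_i(\nabla_i)$, is graded $\Hom$-finite — its $\Hom$-groups between $\Pi_i$-images of objects of the subcategory $\sT^\nabla_i$ generated by the $\nabla_i\la k\ra$ agree with those in $\sT$ by Lemma~\ref{lem:exc-basic}\eqref{it:exc-pi} — and that, by Lemma~\ref{lem:exc-basic}\eqref{it:exc-quot}, the object $N_i$ satisfies the hypotheses of Proposition~\ref{prop:stratum-tstruc}. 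That proposition yields a bounded t-structure on $\sT_{\le i}/\sT_{<i}$ with noetherian (artinian if $\bE$ is a field) heart, whose aisle, resp.\ co-aisle, is generated under extensions by the $N_i[n]\la k\ra$ with $n\ge 0$, resp.\ by the $N_i[n]\la k\ra$ and $\overline{N_i}[n]\la k\ra$ with $n\le 0$. Gluing it to the given t-structure on $\sT_{<i}$ along the recollement of Proposition~\ref{prop:exc-recolle}, following \cite[\S1.4.10]{bbd}, produces a t-structure on $\sT_{\le i}$ for which $\iota_i$ is t-exact; this gives the compatibility with $\sT_{<i}$, hence (by induction) with all smaller $\sT_{\le j}$.

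Next one identifies $\sT^{\le 0}$ and $\sT^{\ge 0}$. Using the recollement triangle $\pil_i\Pi_i(X)\to X\to\iota_i\il_i(X)\xrightarrow{[1]}$ of Proposition~\ref{prop:exc-recolle}, the aisle of the glued t-structure on $\sT_{\le i}$ is the extension-closure of $\iota_i\bigl((\sT_{<i})^{\le 0}\bigr)$ together with $\pil_i$ of the aisle of $\sT_{\le i}/\sT_{<i}$; dually (with $\il_i,\pil_i$ replaced by $\ir_i,\pir_i$) for the co-aisle. Since $\Pi_i(\Delta_i)\cong\Pi_i(\nabla_i)=N_i$ and $\pil_i$, resp.\ $\pir_i$, is $(\Pi_i|_{\sT^\Delta_i})^{-1}$, resp.\ $(\Pi_i|_{\sT^\nabla_i})^{-1}$, followed by inclusion, one gets $\pil_i(N_i)\cong\Delta_i$, $\pir_i(N_i)\cong\nabla_i$, and $\pir_i(\overline{N_i})\cong\bar\nabla_i$ (the last because $\pir_i$ is triangulated and $\overline{N_i}=\Pi_i(\bar\nabla_i)$). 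An immediate induction on $i$ then shows that the aisle of $\sT_{\le i}$ is the extension-closure of the $\Delta_j[n]\la k\ra$ with $j\le i$, $n\ge 0$, and its co-aisle that of the $\nabla_j[n]\la k\ra$ and $\bar\nabla_j[n]\la k\ra$ with $j\le i$, $n\le 0$; taking unions over $I$ gives the asserted descriptions.

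Finally, Lemma~\ref{lem:glue-noeth} applied at each inductive step shows the heart $\sA_{\le i}$ of $\sT_{\le i}$ is noetherian, and (passing to opposite categories, using the artinian half of Proposition~\ref{prop:stratum-tstruc}) also artinian when $\bE$ is a field. To descend to $\sA=\bigcup_i\sA_{\le i}$ one observes that each $\sA_{\le i}$ is a Serre subcategory of $\sA$: it is closed under extensions, and if $A\hookrightarrow B$ or $B\twoheadrightarrow A$ in $\sA$ with $B\in\sA_{\le i}$ then, choosing $m$ with $A,B\in\sT_{\le m}$, if $m>i$ the t-exactness of $\Pi_m$ forces $\Pi_m(A)=0$ (since $B\in\sT_{\le i}\subseteq\sT_{<m}$ gives $\Pi_m(B)=0$), so $A\in\sT_{<m}=\sT_{\le m^-}$, and iterating puts $A$ in $\sT_{\le i}$, i.e.\ in $\sA_{\le i}$. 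As every object of $\sA$ lies in some $\sA_{\le i}$ and has the same subobject lattice there as in $\sA$, the category $\sA$ is noetherian, and artinian when $\bE$ is a field. The step I expect to require the most care is the coherent bookkeeping of this inductive construction and colimit for infinite $I$ — especially the mutual compatibility of the t-structures on the $\sT_{\le i}$ and the claim that their union is a t-structure — together with the precise matching of the glued aisles to the $\Delta$-, $\nabla$- and $\bar\nabla$-extension-closures, which depends on correctly tracking the functors $\pil_i$ and $\pir_i$ from Proposition~\ref{prop:exc-recolle}.
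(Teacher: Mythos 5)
Your proof follows the same strategy as the paper: build the t-structure on each $\sT_{\le i}$ by induction, putting a t-structure on the stratum $\sT_{\le i}/\sT_{<i}$ via Proposition~\ref{prop:stratum-tstruc} applied to $N_i=\Pi_i(\nabla_i)$, gluing along the recollement of Proposition~\ref{prop:exc-recolle}, matching the glued aisle/co-aisle to the $\Delta$/$\nabla,\bar\nabla$ extension-closures, and invoking Lemma~\ref{lem:glue-noeth} (and its opposite-category form) before taking the union over $I$. You are slightly more explicit than the paper's one-line ``union of noetherian categories is noetherian'' remark, spelling out why each $\sA_{\le i}$ is a Serre subcategory of $\sA$ whose subobject lattices agree; this is a welcome refinement, not a different route.
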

\begin{proof}
Given $i \in I$, let $\sT^{\le 0}_{\le i}$ and $\sT^{\ge 0}_{\le i}$ be defined as above, but allowing only $\Delta_j$, $\nabla_j$, and $\bar\nabla_j$ with $j \le i$.

We will first show that $(\sT^{\le 0}_{\le i}, \sT^{\ge 0}_{\le i})$ is a t-structure on $\sT_{\le i}$ whose heart is noetherian.  We proceed by induction on $i$.  If $i$ is the minimal element of $I$, the claim holds by Proposition~\ref{prop:stratum-tstruc}.  Suppose now that $i$ is not minimal.  The claim holds for $(\sT^{\le 0}_{<i}, \sT^{\ge 0}_{< i})$ by induction.  We can also equip the quotient category $\sT_{\le i}/\sT_{<i}$ with a t-structure by Proposition~\ref{prop:stratum-tstruc}, using $N = \Pi_i(\Delta_i) \cong \Pi_i(\nabla_i)$.  That proposition tells us that the heart is noetherian; if $\bE$ is a field, it is also artinian.

By recollement, the following categories give a t-structure on $\sT_{\le i}$:
\begin{align*}
'\sT^{\le 0}_{\le i} &= \{ X \in \sT_{\le i} \mid \text{$\il_i(X) \in \sT^{\le 0}_{< i}$ and $\Pi(X) \in (\sT_{\le i}/\sT_{<i})^{\le 0}$} \}, \\
'\sT^{\ge 0}_{\le i} &= \{ X \in \sT_{\le i} \mid \text{$\ir_i(X) \in \sT^{\ge 0}_{< i}$ and $\Pi(X) \in (\sT_{\le i}/\sT_{<i})^{\ge 0}$} \}.
\end{align*}
By Lemma~\ref{lem:glue-noeth}, the heart of $('\sT^{\le 0}_{\le i}, '\sT^{\ge 0}_{\le i})$ is noetherian (and artinian if $\bE$ is a field). It remains to prove that $'\sT^{\le 0}_{\le i} = \sT^{\le 0}_{\le i}$ and $'\sT^{\ge 0}_{\le i} = \sT^{\ge 0}_{\le i}$.  If $X \in{} '\sT^{\le 0}_{\le i}$, consider the distinguished triangle
\[
\iota_i\il_i(X) \to X \to \pil_i\Pi(X) \to.
\]
The first term clearly lies in $\sT^{\le 0}_{\le i}$.  The explicit construction of $\pil_i$ in Proposition~\ref{prop:exc-recolle} shows that the last term does as well.  We conclude that $'\sT^{\le 0}_{\le i} \subset \sT^{\le 0}_{\le i}$.  For the opposite containment, it is enough to check that $\Delta_j \in{} '\sT^{\le 0}_{\le i}$ for all $j \le i$.  This is clear if $j < i$, and it again follows from the construction in Proposition~\ref{prop:exc-recolle} for $j = i$.  The proof that $'\sT^{\ge 0}_{\le i} = \sT^{\ge 0}_{\le i}$ is similar and will be omitted.

By construction, we have
\[
\sT^{\le 0} = \bigcup_{i \in I} \sT^{\le 0}_{\le i}
\qquad\text{and}\qquad
\sT^{\ge 0} = \bigcup_{i \in I} \sT^{\ge 0}_{\le i}.
\]
Since every object of $\sT$ belongs to some $\sT_{\le i}$, it is easy to see that $(\sT^{\le 0}, \sT^{\ge 0})$ is indeed a t-structure.  Its heart is a union of noetherian abelian categories, so it is noetherian (and, similarly, also artinian if $\bE$ is a field).
\end{proof}

\begin{rmk}\phantomsection
\label{rmk:thm-t-structure}
\begin{enumerate}
\item
For the applications in the present paper, the t-structures arising from Theorem~\ref{thm:exc-tstruc} have the following important additional property: the $\Delta_i$ and $\nabla_i$ lie in the heart.  In the case where $\bE$ is a field, it is well known that this implies that the heart is a highest weight category.
\item
\label{it:independence}
It is clear from construction that the t-structure considered in Theorem~\ref{thm:exc-tstruc} does not depend on the order $\leq$ on $I$.
\end{enumerate}
\end{rmk}

%--------------------------------------------------------------------------
\subsection{Special objects in the heart}
\label{ss:special-heart}
%--------------------------------------------------------------------------

By adjunction, the isomorphism $\Pi_i(\Delta_i) \cong \Pi_i(\nabla_i)$ gives rise to a canonical map $\Delta_i \to \nabla_i$.  Let
\begin{equation}\label{eqn:li-defn}
L_i = \im(H^0(\Delta_i) \to H^0(\nabla_i)).
\end{equation}
In the case where $\bE$ is a field,~\cite[Proposition~1.4.26]{bbd} tells us that up to Tate twist, the $L_i$ are precisely the simple objects of $\sA$. 

In the case where $\bE$ is a complete discrete valuation ring, recall that an object $X$ in an $\bE$-linear abelian category is said to be \emph{torsion} if $\varpi^n \cdot \id_X = 0$ for some $n \ge 1$, and \emph{torsion-free} if $\varpi \cdot \id_X$ is injective.  Note that if $X$ is torsion-free, then for any other object $Y$, $\Hom(Y,X)$ is a torsion-free $\bE$-module.

\begin{lem}\label{lem:head-socle}
Assume that $\bE$ is a field.  Then $H^0(\Delta_i)$ has a simple head, and $H^0(\nabla_i)$ has a simple socle (both isomorphic to $L_i$).
\end{lem}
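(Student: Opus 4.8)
The plan is to deduce the statement from the classification of the simple objects of $\sA$ recalled just above (namely, from \cite[Proposition~1.4.26]{bbd}, every simple object of $\sA$ is of the form $L_j\la k\ra$ with $j \in I$ and $k \in \Z$), together with the $\Hom$-vanishing of Lemma~\ref{lem:exc-basic}. Since $\bE$ is a field, $\sA$ is both noetherian and artinian by Theorem~\ref{thm:exc-tstruc}, so $H^0(\Delta_i)$ has a head (largest semisimple quotient), $H^0(\nabla_i)$ has a socle (largest semisimple subobject), and $L_i$ is simple; moreover, by the definition of $L_i$ there are a surjection $H^0(\Delta_i) \twoheadrightarrow L_i$ and an injection $L_i \hookrightarrow H^0(\nabla_i)$.

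The first step is to record two elementary consequences of the truncation triangles. Since $\la 1\ra$ is t-exact and $\Delta_j \in \sT^{\le 0}$, $\nabla_j \in \sT^{\ge 0}$, the triangles $\tau^{\le -1}\Delta_j \to \Delta_j \to H^0(\Delta_j) \xrightarrow{+1}$ and $H^0(\nabla_j) \to \nabla_j \to \tau^{\ge 1}\nabla_j \xrightarrow{+1}$, combined with the vanishing $\Hom(\sT^{\le a},\sT^{\ge b}) = 0$ for $a < b$, give, for every $Y \in \sT^{\ge 0}$, a natural injection $\Hom_\sT(H^0(\Delta_j), Y) \hookrightarrow \Hom_\sT(\Delta_j, Y)$ and, when moreover $Y = \nabla_i\la k\ra$, a natural isomorphism $\Hom_\sT(H^0(\Delta_j), \nabla_i\la k\ra) \cong \Hom_\sT(H^0(\Delta_j), H^0(\nabla_i)\la k\ra)$. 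Feeding the defining maps $L_j \hookrightarrow H^0(\nabla_j)$ and $H^0(\Delta_j) \twoheadrightarrow L_j$ into these, one obtains for all $j, k$ natural injections
\[
\Hom_\sA(H^0(\Delta_i), L_j\la k\ra) \hookrightarrow \Hom_\sT(\Delta_i, \nabla_j\la k\ra), \qquad \Hom_\sA(L_j\la k\ra, H^0(\nabla_i)) \hookrightarrow \Hom_\sT(\Delta_j, \nabla_i\la -k\ra).
\]
By Lemma~\ref{lem:exc-basic}\eqref{it:exc-orth}--\eqref{it:exc-quot}, both right-hand sides vanish unless $j = i$ and $k = 0$, in which case each is isomorphic to $\bE$.

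From here the argument should close quickly. Every simple quotient of $H^0(\Delta_i)$ is of the form $L_j\la k\ra$ with $\Hom_\sA(H^0(\Delta_i), L_j\la k\ra) \neq 0$, hence is $\cong L_i$; therefore the head of $H^0(\Delta_i)$ is $\cong L_i^{\oplus m}$, and since every morphism $H^0(\Delta_i) \to L_i$ factors through it, $\End_\sA(L_i)^{\oplus m} \cong \Hom_\sA(H^0(\Delta_i), L_i)$, which is nonzero (because $H^0(\Delta_i)\twoheadrightarrow L_i$) and embeds in $\bE$; this forces $m = 1$ and $\End_\sA(L_i) = \bE$. Dually, by the second injection above every simple subobject of $H^0(\nabla_i)$ is $\cong L_i$, so its socle is $\cong L_i^{\oplus m'}$; since every morphism $L_i \to H^0(\nabla_i)$ has image in the socle, $\Hom_\sA(L_i, H^0(\nabla_i)) \cong \bE^{\oplus m'}$ is nonzero and embeds in $\bE$, so $m' = 1$. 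As $L_i = \im\bigl(H^0(\Delta_i) \to H^0(\nabla_i)\bigr)$, the simple head and the simple socle are both $\cong L_i$.

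The step I expect to require the most care is the bookkeeping with the truncation triangles in the second paragraph: one must check that each triangle really produces an injection, respectively an isomorphism, on the relevant $\Hom$-groups, which amounts to the orthogonality of $\tau^{\le -1}\Delta_j$ and $\tau^{\ge 1}\nabla_j$ (after a shift by $[\pm 1]$) against the heart object under consideration and is otherwise formal. Alternatively, the whole proof can be routed through the recollement of Proposition~\ref{prop:exc-recolle}, identifying $H^0(\Delta_i)$, $H^0(\nabla_i)$ and $L_i$ with $H^0(\pil_i N)$, $H^0(\pir_i N)$ and the intermediate extension of the simple object $N = \Pi_i(\nabla_i) \cong \Pi_i(\Delta_i)$ of the (semisimple) heart of $\sT_{\le i}/\sT_{<i}$, and invoking the standard properties of intermediate extensions; but the direct route seems shorter.
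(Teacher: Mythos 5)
Your proof is correct, but it takes a different route than the paper: the paper disposes of the lemma in one line by citing~\cite[Proposition~2.28]{juteau} (Juteau's recollement formalism), whereas you give a direct, self-contained argument from the orthogonality of the exceptional sequence and its dual. Your reductions of $\Hom_\sA(H^0(\Delta_i), L_j\la k\ra)$ and $\Hom_\sA(L_j\la k\ra, H^0(\nabla_i))$ to $\Hom_\sT(\Delta_i, \nabla_j\la k\ra)$ and $\Hom_\sT(\Delta_j, \nabla_i\la -k\ra)$ via the two truncation triangles are valid (the relevant orthogonalities $\Hom(\sT^{\le a},\sT^{\ge b})=0$, $a<b$, are exactly what you need, and $\la 1\ra$ is t-exact for this t-structure), and the conclusion via Lemma~\ref{lem:exc-basic}\eqref{it:exc-orth}--\eqref{it:exc-quot} and finite length is sound. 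The one piece of input you rely on that the paper also asserts only by citation is the classification of simple objects of $\sA$ as the $L_j\la k\ra$ (from~\cite[Proposition~1.4.26]{bbd}); this is fine since the paper makes exactly that appeal in the sentence preceding the lemma, and your proof is not circular given it. What your approach buys is independence from the abelian-recollement machinery of~\cite{juteau}, at the cost of redoing a computation that is standard in that theory; what the paper's approach buys is brevity. Your closing remark that one could instead identify $L_i$ with the intermediate extension $\Pi_{!*}$ of $\Pi_i(\nabla_i)$ and invoke the usual head/socle properties of $\Pi_{!*}$ is indeed essentially what the cited reference does.
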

This is a standard fact in the theory of recollement. For a proof, see~\cite[Proposition~2.28]{juteau}.

\begin{lem}\label{lem:li-tf}
Assume that $\bE$ is a complete discrete valuation ring. For each $i \in I$, the objects $H^0(\nabla_i)$ and $L_i$ are torsion-free.
\end{lem}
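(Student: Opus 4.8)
The plan is to deduce both assertions from the distinguished triangle
\[
\nabla_i \xrightarrow{\varpi\cdot\id} \nabla_i \to \bar\nabla_i \xrightarrow{[1]}
\]
defining $\bar\nabla_i$ in Theorem~\ref{thm:exc-tstruc}, together with the observation that $\bar\nabla_i$ lies in $\sT^{\ge 0}$. The latter is immediate from the construction: $\bar\nabla_i$ is, by definition, one of the objects generating $\sT^{\ge 0}$ under extensions, so $H^{-1}(\bar\nabla_i)=0$.

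First I would apply the cohomology functors $H^n$ of the t-structure $(\sT^{\le 0},\sT^{\ge 0})$ to the triangle above. Since each $H^n$ is an $\bE$-linear functor, it carries $\varpi\cdot\id_{\nabla_i}$ to $\varpi\cdot\id_{H^n(\nabla_i)}$, so the associated long exact cohomology sequence contains the segment
\[
H^{-1}(\bar\nabla_i) \longrightarrow H^0(\nabla_i) \xrightarrow{\varpi\cdot\id} H^0(\nabla_i).
\]
Because the first term vanishes, the map $\varpi\cdot\id_{H^0(\nabla_i)}$ is injective, which is precisely the statement that $H^0(\nabla_i)$ is torsion-free.

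For $L_i=\im\bigl(H^0(\Delta_i)\to H^0(\nabla_i)\bigr)$, I would note that, being the image of a morphism with target $H^0(\nabla_i)$, the object $L_i$ is canonically a subobject of $H^0(\nabla_i)$, and then invoke the elementary fact that a subobject of a torsion-free object is torsion-free: the composite of $\varpi\cdot\id_{L_i}$ with the inclusion $L_i\hookrightarrow H^0(\nabla_i)$ equals the composite of that inclusion with the monomorphism $\varpi\cdot\id_{H^0(\nabla_i)}$, hence is a monomorphism, so $\varpi\cdot\id_{L_i}$ is a monomorphism. This yields the second claim.

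I do not anticipate a genuine obstacle here. The only points requiring a moment's attention are that $\bar\nabla_i\in\sT^{\ge 0}$ and that $H^n$ intertwines multiplication by $\varpi$ on source and target, both of which are formal. The one thing to be careful about is that in the generality of the appendix $\nabla_i$ need not lie in the heart $\sA$, so the statement is genuinely about $H^0(\nabla_i)$ rather than $\nabla_i$ itself; routing the argument through the triangle defining $\bar\nabla_i$ is exactly what lets us avoid assuming $\nabla_i \in \sA$.
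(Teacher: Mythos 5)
Your proof is correct and follows exactly the same strategy as the paper: apply the long exact cohomology sequence to the triangle $\nabla_i \xrightarrow{\varpi} \nabla_i \to \bar\nabla_i \xrightarrow{[1]}$, use $H^{-1}(\bar\nabla_i)=0$ to conclude injectivity of $\varpi\cdot\id$ on $H^0(\nabla_i)$, and then note that $L_i$ is a subobject of a torsion-free object. Your remark that $\nabla_i$ need not itself lie in the heart, so the statement must be phrased in terms of $H^0(\nabla_i)$, is a correct and worthwhile observation.
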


\begin{proof}
Both $\nabla_i$ and $\bar\nabla_i$ lie in $\sT^{\ge 0}$, so the long exact sequence in cohomology associated with the triangle $\nabla_i \xrightarrow{\varpi} \nabla_i \to \bar\nabla_i \xrightarrow{[1]}$ shows that $\varpi\cdot\id: H^0(\nabla_i) \to H^0(\nabla_i)$ is injective.  Since $L_i$ is a subobject of a torsion-free object, it is torsion-free as well.
\end{proof}

\begin{lem}\label{lem:liplus}
Assume that $\bE$ is a complete discrete valuation ring. For each $i$, there is a unique maximal subobject $L_i^+ \subset H^0(\nabla_i)$ that contains $L_i$, and such that $L_i^+/L_i$ is torsion.  Moreover, $L_i^+$ and $H^0(\nabla_i)/L_i^+$ are both torsion-free.
\end{lem}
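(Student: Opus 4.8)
The plan is to work in the quotient object $Q = H^0(\nabla_i)/L_i$, which is noetherian since the heart $\sA$ is noetherian by Theorem~\ref{thm:exc-tstruc}, and to exhibit its largest torsion subobject before pulling it back. First I would consider, for each $n \geq 1$, the subobject $Q[\varpi^n] := \ker(\varpi^n \cdot \id_Q) \subseteq Q$; these form an ascending chain, which must stabilize by noetherianity, say $Q[\varpi^n] = Q[\varpi^N]$ for all $n \geq N$. I claim that $T := Q[\varpi^N]$ is the largest torsion subobject of $Q$: it is torsion since $\varpi^N \cdot \id$ annihilates it, and conversely any torsion subobject $S \subseteq Q$ is annihilated by some $\varpi^m$, hence lies in $Q[\varpi^m] \subseteq Q[\varpi^{\max(m,N)}] = T$.

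I would then let $L_i^+ \subseteq H^0(\nabla_i)$ be the preimage of $T$ under the quotient map $H^0(\nabla_i) \twoheadrightarrow Q$. By construction $L_i \subseteq L_i^+$ and $L_i^+/L_i \cong T$ is torsion; and if $L_i \subseteq M \subseteq H^0(\nabla_i)$ is any subobject with $M/L_i$ torsion, then $M/L_i$ is a torsion subobject of $Q$, hence contained in $T$, hence $M \subseteq L_i^+$. This gives existence, maximality, and uniqueness of $L_i^+$. For the torsion-freeness assertions: $L_i^+$ is a subobject of $H^0(\nabla_i)$, which is torsion-free by Lemma~\ref{lem:li-tf}, so $L_i^+$ is torsion-free; and to see that $H^0(\nabla_i)/L_i^+ \cong Q/T$ is torsion-free, I would show that the kernel $K$ of $\varpi \cdot \id_{Q/T}$ is zero. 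Its preimage $\tilde K \subseteq Q$ satisfies $\varpi(\tilde K) \subseteq T$ (its image in $Q/T$ being $\varpi \cdot K = 0$), so $\varpi^{N+1} \cdot \id_{\tilde K} = 0$; thus $\tilde K$ is torsion, whence $\tilde K \subseteq T$ and $K = 0$.

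I do not anticipate a serious obstacle; the one place that genuinely uses an input from the theory is the existence of a largest torsion subobject of $Q$, which rests on the noetherianity of the heart established in Theorem~\ref{thm:exc-tstruc} --- without it the chain $Q[\varpi] \subseteq Q[\varpi^2] \subseteq \cdots$ need not terminate. The remaining steps are routine manipulations in the abelian category $\sA$, using only that a subobject of a torsion-free object is torsion-free and the compatibility of $\varpi \cdot \id$ with the quotient $Q \twoheadrightarrow Q/T$.
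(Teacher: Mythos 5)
Your proof is correct and rests on the same essential input as the paper's, namely the noetherianity of $\sA$ established in Theorem~\ref{thm:exc-tstruc}. The mechanism differs slightly: where you identify the largest torsion subobject of $Q = H^0(\nabla_i)/L_i$ explicitly as the stabilizing term $T = Q[\varpi^N]$ of the ascending chain of kernels, the paper instead notes that if $M, M' \subseteq H^0(\nabla_i)$ both contain $L_i$ with $M/L_i$ and $M'/L_i$ torsion, then so is $(M+M')/L_i$; noetherianity then supplies a maximal such $M$, and closure under sums makes it the unique largest one. For the torsion-freeness of $H^0(\nabla_i)/L_i^+$, the paper argues by contradiction from maximality (a nonzero torsion subobject of the quotient would pull back to a strictly larger subobject with torsion quotient over $L_i$, since an extension of torsion by torsion is again torsion), whereas you verify directly that $\ker(\varpi\cdot\id_{Q/T})=0$ by bounding the annihilator of its preimage by $\varpi^{N+1}$. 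These are interchangeable standard maneuvers; the paper's phrasing is slightly more abstract, while yours makes the bound $N$ and the torsion computation explicit. Both are complete.
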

\begin{proof}
Since $\sA$ is noetherian, the existence of $L_i^+$ is a consequence of the following observation: if $M, M' \subset H^0(\nabla_i)$ are two subobjects that both contain $L_i$ and such that $M/L_i$ and $M'/L_i$ are both torsion, then $(M+M')/L_i$ is again torsion.

Since $L_i^+$ is a subobject of a torsion-free object, it is torsion-free.  If $H^0(\nabla_i)/L_i^+$ were not torsion-free, it would have a nonzero torsion subobject $M$.  The preimage of $M$ in $H^0(\nabla_i)$ would enjoy the defining properties of $L_i^+$, contradicting the maximality of $L_i^+$.
\end{proof}

%--------------------------------------------------------------------------
\subsection{Change of scalars}
\label{ss:exc-scalar}
%--------------------------------------------------------------------------

Let $\O$ be a complete discrete valuation ring, with fraction field $\K$ and residue field $\F$.  Assume that we are given the following:
\begin{enumerate}
\item a graded $\Hom$-finite $\O$-linear triangulated category $\sT_\O$ with a graded exceptional sequence $\{\nabla^\O_i\}_{i \in I}$ and with a dual sequence $\{\Delta^\O_i\}_{i \in I}$;
\item a graded $\Hom$-finite $\K$-linear triangulated category $\sT_\K$, and a triangulated functor $\K({-}): \sT_\O \to \sT_\K$ which induces an isomorphism
\[
\K \otimes_\O \Hom(X,Y) \cong \Hom(\K(X),\K(Y))
\]
for all $X, Y \in \sT_\O$;
\item a graded $\Hom$-finite $\F$-linear triangulated category $\sT_\F$, and a triangulated functor $\F({-}): \sT_\O \to \sT_\F$ such that for all $X, Y \in \sT_\O$, there is a natural short exact sequence
\begin{equation}\label{eqn:hom-o-f-ses}
\F \otimes_\O \Hom(X,Y) \hookrightarrow \Hom(\F(X), \F(Y)) \twoheadrightarrow \Tor^\O_1(\F,\Hom(X,Y[1]))
\end{equation}
where the first map induced by the functor $\F(-)$.
\end{enumerate}
To this, we add the following assumption:
\begin{enumerate}
\setcounter{enumi}{3}
\item For $\bk \in \{\K,\F\}$, the sequence $\{ \bk(\nabla^\O_i)\}_{i \in I}$ is a graded exceptional sequence in $\sT_\bk$.
\end{enumerate}

\begin{lem}
\label{lem:exc-sequence-bk}
For $\bk \in \{\K,\F\}$, let
\[
\nabla_i^\bk := \bk(\nabla_i^\O)
\qquad\text{and}\qquad
\Delta_i^\bk := \bk(\Delta_i^\O).
\]
Then $\{\Delta^\bk_i\}_{i \in I}$ is a dual sequence to $\{\nabla^\bk_i\}$.
\end{lem}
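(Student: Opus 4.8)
The plan is to reduce everything to the characterization of dual sequences given by Lemma~\ref{lem:dual-crit}: it suffices to check that for $\bk \in \{\K,\F\}$ and all $i,j \in I$, $n,k \in \Z$ one has
\[
\Hom(\Delta_i^\bk, \nabla_j^\bk[n]\la k\ra) \cong \begin{cases} \bk & \text{if $i=j$ and $n=k=0$,}\\ 0 & \text{otherwise.}\end{cases}
\]
Since $\{\Delta_i^\O\}_{i\in I}$ is by hypothesis a dual sequence to $\{\nabla_i^\O\}_{i\in I}$, parts~\eqref{it:exc-orth} and~\eqref{it:exc-quot} of Lemma~\ref{lem:exc-basic} tell us that the corresponding $\O$-modules $\Hom(\Delta_i^\O,\nabla_j^\O[n]\la k\ra)$ equal $\O$ when $i=j$ and $n=k=0$, and vanish in every other case. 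The strategy is simply to transport this computation through the change-of-scalars functors, using hypotheses (2) and (3) of~\S\ref{ss:exc-scalar} (together with the fact, built into the graded setting and automatic in all our applications, that $\K(-)$ and $\F(-)$ commute with the shift and with $\la 1\ra$, so that $\bk(\nabla_j^\O[n]\la k\ra) \cong \nabla_j^\bk[n]\la k\ra$ and likewise for $\Delta$).

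For $\bk = \K$ this is immediate. Applying the isomorphism of hypothesis (2) with $X = \Delta_i^\O$ and $Y = \nabla_j^\O[n]\la k\ra$ yields
\[
\Hom(\Delta_i^\K, \nabla_j^\K[n]\la k\ra) \cong \K \otimes_\O \Hom(\Delta_i^\O, \nabla_j^\O[n]\la k\ra),
\]
which has exactly the shape required above, so Lemma~\ref{lem:dual-crit} applies directly.

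For $\bk = \F$ I would feed the same pair $X = \Delta_i^\O$, $Y = \nabla_j^\O[n]\la k\ra$ into the short exact sequence~\eqref{eqn:hom-o-f-ses}, whose outer terms are $\F \otimes_\O \Hom(\Delta_i^\O,\nabla_j^\O[n]\la k\ra)$ and $\Tor_1^\O(\F,\Hom(\Delta_i^\O,\nabla_j^\O[n+1]\la k\ra))$. When $i \ne j$ both outer terms vanish, hence so does the middle term $\Hom(\Delta_i^\F,\nabla_j^\F[n]\la k\ra)$. When $i = j$, the left term is $\F$ if $n=k=0$ and $0$ otherwise; the right term is $\Tor_1^\O(\F,M)$ where $M = \Hom(\Delta_i^\O,\nabla_i^\O[n+1]\la k\ra)$ is either $\O$ (precisely when $n=-1$, $k=0$) or $0$, and in the first case $\Tor_1^\O(\F,\O)=0$ since $\O$ is free over itself, while in the second case the $\Tor$ vanishes trivially. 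So the right term of~\eqref{eqn:hom-o-f-ses} is identically zero in the diagonal case, and we conclude that $\Hom(\Delta_i^\F,\nabla_i^\F[n]\la k\ra)$ is $\F$ for $n=k=0$ and $0$ otherwise. A second application of Lemma~\ref{lem:dual-crit} then finishes the proof.

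The only step demanding any attention — and the "main obstacle" such as it is — is the bookkeeping in the $\bk=\F$, $i=j$ case: one must track that the shifted Hom-module $\Hom(\Delta_i^\O,\nabla_i^\O[n+1]\la k\ra)$ entering the $\Tor_1$ term of~\eqref{eqn:hom-o-f-ses} is nonzero exactly for $(n,k)=(-1,0)$, and then invoke flatness of $\O$ over itself so that this single potentially troublesome contribution still vanishes. Everything else is a routine substitution into the hypotheses of~\S\ref{ss:exc-scalar}.
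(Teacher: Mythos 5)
Your proof is correct. You reduce the claim to the $\Hom$-space criterion of Lemma~\ref{lem:dual-crit}, and the computations — the isomorphism of hypothesis (2) for $\bk=\K$, and the short exact sequence~\eqref{eqn:hom-o-f-ses} for $\bk=\F$ — do verify exactly what that criterion requires, with the $\Tor_1$ contribution correctly killed by freeness of $\O$ over itself.

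The paper's own proof takes a slightly different route: rather than invoking Lemma~\ref{lem:dual-crit}, it verifies the two conditions of Definition~\ref{defn:exc-dual} directly. Condition (1) (the one-sided $\Hom$-vanishing for $i > j$) is read off from the corresponding vanishing over $\O$ via the same change-of-scalars statements you use; condition (2) ($\Delta_i^\bk \in \sT_{\bk,\le i}$ and $\Pi_i(\Delta_i^\bk) \cong \Pi_i(\nabla_i^\bk)$) is obtained by observing that $\bk(-)$ descends to a functor on the quotient categories $\sT_{\O,\le i}/\sT_{\O,<i} \to \sT_{\bk,\le i}/\sT_{\bk,<i}$, and chasing a commutative square. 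Your version buys you a single uniform $\Hom$-computation and avoids having to manipulate the quotient categories at all; the paper's version is a touch more structural in that it transports the defining isomorphism $\Pi_i(\Delta_i) \cong \Pi_i(\nabla_i)$ directly. Both arguments are short and rest on the same two ingredients (the behavior of $\Hom$ under $\K(-)$ and $\F(-)$, plus the $\O$-level computations from Lemma~\ref{lem:exc-basic}), so the difference is essentially a matter of which reformulation of "dual sequence" one chooses to check.
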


\begin{proof}
The fact that $\Delta^\bk_i \in \sT_{\bk,\le i}$ and that $\Hom(\Delta^\bk_i, \nabla^\bk_j[n]\la k\ra) = 0$ for $i > j$ follow from the corresponding facts over $\O$.  Next, note that $\bk({-})$ induces a functor of quotient categories $\sT_{\O,\le i}/\sT_{\O,< i} \to \sT_{\bk,\le i}/\sT_{\bk,< i}$. We then deduce the fact that $\Pi_i(\Delta^\bk_i) \cong \Pi_i(\nabla^\bk_i)$ from the commutativity of the following diagram:
\[
\begin{tikzcd}
\sT_{\O,\le i} \ar[r] \ar[d] & \sT_{\O,\le i}/\sT_{\O,< i} \ar[d] \\
\sT_{\bk,\le i} \ar[r] & \sT_{\bk,\le i}/\sT_{\bk,< i}
\end{tikzcd}
\qedhere
\]
\end{proof}

Thanks to Lemma~\ref{lem:exc-sequence-bk}, each of $\sT_\O$, $\sT_\K$, and $\sT_\F$ is equipped with a t-structure provided by Theorem~\ref{thm:exc-tstruc}.  Denote their hearts by $\sA_\O$, $\sA_\K$, and $\sA_\F$, respectively. 

\begin{lem}\phantomsection
\label{lem:exc-scalar}
\begin{enumerate}
\item 
\label{it:exc-scalar-K}
The functor $\K({-}): \sT_\O \to \sT_\K$ is t-exact.  For $X \in \sA_\O$, we have $\K(X) = 0$ if and only if $X$ is torsion.
\item 
\label{it:exc-scalar-f}
The functor $\F({-}): \sT_\O \to \sT_\F$ is right t-exact.  For $X \in \sA_\O$, we have $H^i(\F(X)) = 0$ for all $i \le -2$.  Moreover, $\F(X) \in \sA_\F$ if and only if $X$ is torsion-free.
\end{enumerate}
\end{lem}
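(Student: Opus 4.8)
The plan is to read everything off from the explicit descriptions of $\sT^{\le 0}$ and $\sT^{\ge 0}$ provided by Theorem~\ref{thm:exc-tstruc}, together with the behaviour of the objects $\Delta^\O_i$, $\nabla^\O_i$, $\bar\nabla^\O_i$ under $\K({-})$ and $\F({-})$. Since $\sT_\O^{\le 0}$ is generated under extensions by the $\Delta^\O_i[n]\la k\ra$ with $n\ge 0$, and both $\K({-})$ and $\F({-})$ are triangulated with $\K(\Delta^\O_i)=\Delta^\K_i$ and $\F(\Delta^\O_i)=\Delta^\F_i$, both functors send $\sT_\O^{\le 0}$ into $\sT_\K^{\le 0}$, resp.\ $\sT_\F^{\le 0}$; this gives right t-exactness in both cases. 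For the left t-exactness of $\K({-})$ I would compute it on the generators $\nabla^\O_i[n]\la k\ra$, $\bar\nabla^\O_i[n]\la k\ra$ ($n\le 0$) of $\sT_\O^{\ge 0}$: one has $\K(\nabla^\O_i)=\nabla^\K_i\in\sT_\K^{\ge 0}$, while $\K(\bar\nabla^\O_i)=\mathrm{cone}(\varpi\cdot\id_{\nabla^\K_i})=0$ because $\varpi$ is a unit in $\K$; hence $\K(\sT_\O^{\ge 0})\subseteq\sT_\K^{\ge 0}$ and $\K({-})$ is t-exact. The same computation for $\F({-})$ instead gives $\F(\bar\nabla^\O_i)\cong\nabla^\F_i\oplus\nabla^\F_i[1]$, since $\varpi$ acts by zero on the $\F$-linear category and so $\F(\bar\nabla^\O_i)$ is the cone of the zero endomorphism of $\nabla^\F_i$; this lies in $\sT_\F^{\ge -1}$, so $\F({-})$ carries $\sT_\O^{\ge 0}$ into $\sT_\F^{\ge -1}$, and combined with right t-exactness we obtain $\F(X)\in\sT_\F^{[-1,0]}$ for $X\in\sA_\O$, i.e.\ $H^i(\F(X))=0$ for $i\le -2$.

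For the torsion statement in~\eqref{it:exc-scalar-K}: if $X$ is torsion then $\varpi^n\cdot\id_X=0$ for some $n$, whence $\varpi^n\cdot\id_{\K(X)}=0$ and $\K(X)=0$ as $\varpi\in\K^\times$. Conversely, assuming $\K(X)=0$, I would first split off the largest torsion subobject: $\sA_\O$ is noetherian and torsion subobjects are stable under finite sums, so there is a short exact sequence $0\to X_{\mathrm{tors}}\to X\to X/X_{\mathrm{tors}}\to 0$ with $X/X_{\mathrm{tors}}$ torsion-free; applying the functor $\K({-})$ (which is exact on hearts by t-exactness) and using $\K(X_{\mathrm{tors}})=0$ reduces us to the case $X$ torsion-free. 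Then $\Hom_{\sT_\O}(X,X)$ is finitely generated over $\O$ (graded $\Hom$-finiteness) and $\K\otimes_\O\Hom_{\sT_\O}(X,X)\cong\Hom_{\sT_\K}(\K(X),\K(X))=0$, so it is a finitely generated torsion $\O$-module, killed by some power of $\varpi$; this forces $\id_X=0$, hence $X=0$, and in general $X=X_{\mathrm{tors}}$ is torsion.

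Finally, for~\eqref{it:exc-scalar-f}: given the amplitude bound, $\F(X)\in\sA_\F$ is equivalent to $H^{-1}(\F(X))=0$, i.e.\ to $\F(X)\in\sT_\F^{\ge 0}$. Writing $\sT_\F^{\ge 0}$ as the right orthogonal of the objects $\Delta^\F_i[n]\la k\ra$ with $n\ge 1$, only $n=1$ is not forced to vanish for degree reasons, and the sequence~\eqref{eqn:hom-o-f-ses}---whose first term $\F\otimes_\O\Hom(\Delta^\O_i\la k\ra,X[-1])$ vanishes because $\Delta^\O_i\la k\ra\in\sT_\O^{\le 0}$ and $X[-1]\in\sT_\O^{\ge 1}$---identifies $\Hom(\Delta^\F_i[1]\la k\ra,\F(X))$ with $\Tor^\O_1(\F,\Hom(\Delta^\O_i\la k\ra,X))$, i.e.\ with the $\varpi$-torsion submodule of $\Hom(\Delta^\O_i\la k\ra,X)$. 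Hence $\F(X)\in\sA_\F$ if and only if each $\Hom(\Delta^\O_i\la k\ra,X)$ is $\varpi$-torsion-free. If $X$ is torsion-free, $\varpi\cdot\id_X$ is a monomorphism, so left-exactness of $\Hom(Y,{-})$ makes all these groups torsion-free, giving one implication. For the converse, suppose $T:=X_{\mathrm{tors}}\ne 0$; applying $\F({-})$ to $0\to T\to X\to X/T\to 0$ and using the amplitude bound together with $\F(X/T)\in\sA_\F$ (the case just proved) yields $H^{-1}(\F(X))\cong H^{-1}(\F(T))$, so it suffices to see $H^{-1}(\F(T))\ne 0$. Here is the key point: applying~\eqref{eqn:hom-o-f-ses} to the pair $(T,T[-1])$ and using $\Hom_{\sT_\O}(T,T[-1])=0$ (negative self-$\Ext$ in a heart) gives $\Hom_{\sT_\F}(\F(T),\F(T)[-1])\cong\Tor^\O_1(\F,\Hom_{\sT_\O}(T,T))$, which is nonzero because $\Hom_{\sT_\O}(T,T)$ is a nonzero torsion $\O$-module; but if $\F(T)$ lay in $\sA_\F$ this $\Ext$-group would vanish, so $\F(T)\notin\sA_\F$, and as $\F(T)\in\sT_\F^{[-1,0]}$ this means exactly $H^{-1}(\F(T))\ne 0$. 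I expect this last step---the identification of $\Hom(\F(T),\F(T)[-1])$ with a $\Tor$-group and the ensuing contradiction with the vanishing of negative $\Ext$'s in $\sA_\F$---to be the only genuinely nonformal ingredient; everything else is bookkeeping with the generation statements of Theorem~\ref{thm:exc-tstruc} and the change-of-scalars sequence~\eqref{eqn:hom-o-f-ses}.
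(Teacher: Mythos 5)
Your proof is correct and rests on the same two pillars as the paper's: the explicit description of the t-structures from Theorem~\ref{thm:exc-tstruc}, and the change-of-scalars sequence~\eqref{eqn:hom-o-f-ses}. The differences are in bookkeeping rather than substance. For the $\K$-torsion criterion you first split off $X_{\mathrm{tors}}$, but this reduction is superfluous: your own intermediate conclusion that $\End(X)$ is killed by a power of $\varpi$ already says $\varpi^n\cdot\id_X=0$, i.e.\ that $X$ is torsion, which is the paper's direct argument (via injectivity of $\O\cdot\id\hookrightarrow\End(X)$). For the amplitude bound you compute $\F(\bar\nabla^\O_j)\cong\nabla^\F_j\oplus\nabla^\F_j[1]$ and argue on generators of $\sT_\O^{\ge 0}$, whereas the paper checks $\Hom(\Delta_i^\F[n]\la k\ra,\F(X))=0$ for $n\ge 2$ via~\eqref{eqn:hom-o-f-ses}; these are two views of the same fact. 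Finally, for the converse of the torsion-freeness criterion the paper applies~\eqref{eqn:hom-o-f-ses} to $(X'[1],X)$ with $X'\subset X$ any nonzero torsion subobject, getting $\Hom(\F(X')[1],\F(X))\ne 0$ and hence $\F(X)\notin\sT_\F^{\ge 0}$; you instead reduce via the long exact sequence to $T=X_{\mathrm{tors}}$ and detect $H^{-1}(\F(T))\ne 0$ from the negative self-$\Ext$ $\Hom(\F(T),\F(T)[-1])\cong\Tor^\O_1(\F,\End(T))$. Both arguments are valid; your version isolates a clean conceptual point (a nonzero torsion object never has $\F$-image in the heart), at the cost of one extra reduction step.
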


\begin{proof}
The t-exactness properties of $\K({-})$ and $\F({-})$ follow immediately from their behavior on the exceptional sequence and its dual, combined with the description of the t-structures from Theorem~\ref{thm:exc-tstruc}.

Next, let $X \in \sA_\O$, and consider the map $\O\cdot\id \to \End(X)$.  If $X$ is torsion, then after we tensor with $\K$, we get the zero map $\K\cdot\id \to \K\otimes\End(X) \cong \End(\K(X))$.  That is, the identity map of $\K(X)$ is zero, so $\K(X) = 0$.  Conversely, if $X$ is not torsion, the map $\O\cdot\id \to \End(X)$ is injective, and hence so is $\K\cdot\id \to \End(\K(X))$.  Since $\End(\K(X)) \ne 0$, we have $\K(X) \ne 0$.

To show that $H^i(\F(X)) = 0$ for $i \le -2$, or equivalently that $\F(X) \in \sT^{\ge -1}$, it is enough to show that $\Hom(\Delta_i^\F[n]\la k\ra, \F(X)) =0$ for $n \ge 2$.  This follows from~\eqref{eqn:hom-o-f-ses} and the fact that $\Hom(\Delta_i^\O[n]\la k\ra, X) = 0$ for $n \ge 1$.

Finally, if $X$ is torsion-free, then $\Hom(\Delta_i^\O\la k\ra,X) \cong \Hom(H^0(\Delta_i^\O)\la k\ra,X)$ is a torsion-free $\O$-module, so $\Tor_1^\O(\F, \Hom(\Delta_i^\O[1]\la k\ra, X[1])) = 0$.  We see from~\eqref{eqn:hom-o-f-ses} then that $\Hom(\Delta_i^\F[1]\la k\ra, \F(X)) = 0$, so $\F(X) \in \sA_\F$.  Conversely, if $X$ is not torsion-free, then it has a nonzero torsion subobject $X' \subset X$.  Moreover, $\Hom(X',X)$ is a torsion $\O$-module, so $\Tor_1^\O(\F, \Hom(X'[1],X[1])) \ne 0$. In this case,~\eqref{eqn:hom-o-f-ses} shows that $\Hom(\F(X')[1], \F(X)) \ne 0$, which implies that $H^{-1}(\F(X)) \ne 0$.
\end{proof}

To distinguish the various versions of~\eqref{eqn:li-defn}, we now include the coefficient ring in the notation, as follows:
\[
L_i(\K) \in \sA_\K,
\qquad
L_i(\O), L_i^+(\O) \in \sA_\O,
\qquad
L_i(\F) \in \sA_\F.
\]

\begin{lem}\phantomsection
\label{lem:li-change}
\begin{enumerate}
\item For all $i \in I$, we have\label{it:li-k}
\[
\K(L_i(\O)) \cong \K(L_i^+(\O)) \cong L_i(\K).
\]
\item The objects $\F(L_i(\O))$ and $\F(L_i^+(\O))$ lie in $\sA_\F$.  Moreover, $\F(L_i(\O))$ has a simple head, and $\F(L_i^+(\O))$ has a simple socle, both isomorphic to $L_i(\F)$.\label{it:li-f}
\end{enumerate}
\end{lem}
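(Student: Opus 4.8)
The proof of part (1) will use that $\K({-})$ is t-exact (Lemma~\ref{lem:exc-scalar}\eqref{it:exc-scalar-K}), so that it commutes with the cohomology functor $H^0$ and, being exact on hearts, with the formation of images in $\sA_\O$. First I would observe that the canonical map $c_i\colon\Delta_i^\O\to\nabla_i^\O$ generates the free rank-one $\O$-module $\Hom(\Delta_i^\O,\nabla_i^\O)$ and that $\K\otimes_\O({-})$ carries a generator of a free rank-one module to a generator, so $\K(c_i)$ agrees with the canonical map $\Delta_i^\K\to\nabla_i^\K$ up to a unit of $\K$. Then
\[
\K(L_i(\O))=\K\bigl(\im(H^0(c_i))\bigr)=\im\bigl(H^0(\K(c_i))\bigr)\cong L_i(\K),
\]
and in particular $L_i(\O)\neq 0$. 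For $L_i^+(\O)$, the quotient $L_i^+(\O)/L_i(\O)$ is torsion by Lemma~\ref{lem:liplus}, hence killed by $\K$ (Lemma~\ref{lem:exc-scalar}\eqref{it:exc-scalar-K}); applying $\K$ to $0\to L_i(\O)\to L_i^+(\O)\to L_i^+(\O)/L_i(\O)\to 0$ then gives $\K(L_i^+(\O))\cong\K(L_i(\O))\cong L_i(\K)$.

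For part (2), the objects $L_i(\O)$ and $L_i^+(\O)$ are torsion-free (by Lemma~\ref{lem:li-tf} and Lemma~\ref{lem:liplus} respectively), so Lemma~\ref{lem:exc-scalar}\eqref{it:exc-scalar-f} gives $\F(L_i(\O)),\F(L_i^+(\O))\in\sA_\F$; both are nonzero since $\F({-})$ kills no nonzero object (it sends $\id_X$ to $\id_{\F(X)}$, which is nonzero by the injectivity in~\eqref{eqn:hom-o-f-ses} together with the fact that $\F\otimes_\O M\neq 0$ for any nonzero finitely generated $\O$-module $M$). For the head and socle statements, the plan is to produce in $\sA_\F$ a surjection $H^0(\Delta_i^\F)\twoheadrightarrow\F(L_i(\O))$ and a monomorphism $\F(L_i^+(\O))\hookrightarrow H^0(\nabla_i^\F)$, and then invoke Lemma~\ref{lem:head-socle}: since $\sA_\F$ is artinian (Theorem~\ref{thm:exc-tstruc}), a nonzero quotient of $H^0(\Delta_i^\F)$, which has simple head $L_i(\F)$, again has simple head $L_i(\F)$, and a nonzero subobject of $H^0(\nabla_i^\F)$, which has simple socle $L_i(\F)$, again has simple socle $L_i(\F)$.

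To build the surjection I would apply $\F$ to the surjection $H^0(\Delta_i^\O)\twoheadrightarrow L_i(\O)$ in $\sA_\O$ with kernel $K$: the triangle $\F(K)\to\F(H^0(\Delta_i^\O))\to\F(L_i(\O))\to$ has $\F(K)\in\sT_\F^{\le 0}$ by right t-exactness of $\F$, so $H^1(\F(K))=0$ and the long exact sequence gives a surjection $H^0(\F(H^0(\Delta_i^\O)))\twoheadrightarrow\F(L_i(\O))$. Next I would identify $H^0(\F(H^0(\Delta_i^\O)))$ with $H^0(\Delta_i^\F)$ by applying $\F$ to the truncation triangle $\tau^{\le-1}\Delta_i^\O\to\Delta_i^\O\to H^0(\Delta_i^\O)\to$ (valid since $\Delta_i^\O\in\sT_\O^{\le 0}$) and noting that $\F(\tau^{\le-1}\Delta_i^\O)\in\sT_\F^{\le -1}$, so its cohomology in degrees $0$ and $1$ vanishes. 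Dually, I would apply $\F$ to the inclusion $L_i^+(\O)\hookrightarrow H^0(\nabla_i^\O)$, whose cokernel $Q$ is torsion-free by Lemma~\ref{lem:liplus}; since $\F(L_i^+(\O))$ and $\F(Q)$ lie in $\sA_\F$, the long exact sequence of $\F(L_i^+(\O))\to\F(H^0(\nabla_i^\O))\to\F(Q)\to$ produces a monomorphism $\F(L_i^+(\O))\hookrightarrow H^0(\F(H^0(\nabla_i^\O)))$. Finally, applying $\F$ to the truncation triangle $H^0(\nabla_i^\O)\to\nabla_i^\O\to\tau^{\ge1}\nabla_i^\O\to$ (valid since $\nabla_i^\O\in\sT_\O^{\ge 0}$) and using that $\F$ carries $\sT_\O^{\ge1}$ into $\sT_\F^{\ge0}$ (because $\F$ sends an object of $\sA_\O$ into $\sT_\F^{\ge-1}$, by Lemma~\ref{lem:exc-scalar}\eqref{it:exc-scalar-f}, and this bound shifts accordingly) yields $H^0(\F(H^0(\nabla_i^\O)))\hookrightarrow H^0(\nabla_i^\F)$; composing gives the required monomorphism $\F(L_i^+(\O))\hookrightarrow H^0(\nabla_i^\F)$.

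I expect the main obstacle to be precisely this bookkeeping with the one-sided t-exactness of $\F({-})$, compounded by the fact that in the generality of this appendix $\Delta_i^\O$ and $\nabla_i^\O$ need not lie in $\sA_\O$, so that one must consistently pass through the truncations $H^0(\Delta_i^\O)$ and $H^0(\nabla_i^\O)$ rather than working with $\Delta_i^\O$ and $\nabla_i^\O$ directly. (When $\Delta_i^\O,\nabla_i^\O\in\sA_\O$, as in all applications in the body of the paper, these truncation steps are trivial and the argument shortens considerably.)
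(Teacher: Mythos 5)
Your proof is correct and follows essentially the same route as the paper's: t-exactness of $\K({-})$ and its compatibility with $H^0$ and images for part~(1), and for part~(2) the truncation-triangle arguments that exhibit $\F(L_i(\O))$ as a quotient of $H^0(\Delta_i^\F)$ and $\F(L_i^+(\O))$ as a subobject of $H^0(\nabla_i^\F)$, combined with the torsion-freeness statements from Lemmas~\ref{lem:li-tf} and~\ref{lem:liplus}. The two extra checks you include---that $\K(c_i)$ is a unit multiple of the canonical map over $\K$, and that $\F(L_i(\O))$ and $\F(L_i^+(\O))$ are nonzero before invoking Lemma~\ref{lem:head-socle}---are genuinely needed and are left implicit in the paper.
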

\begin{proof}
\eqref{it:li-k}~Since $\K({-})$ is t-exact, it commutes with $H^0$, and it takes the image of a morphism in $\sA_\O$ to the image of the corresponding morphism in $\sA_\K$.  It follows immediately that $\K(L_i(\O)) \cong L_i(\K)$.  Next, we have a short exact sequence $0 \to L_i(\O) \to L_i^+(\O) \to T \to 0$, where $T$ is a torsion object.  Since $\K(T) = 0$ by Lemma~\ref{lem:exc-scalar}\eqref{it:exc-scalar-K}, we conclude that $\K(L_i(\O)) \cong \K(L_i^+(\O))$.

\eqref{it:li-f}~The first assertion follows from the fact that $L_i(\O)$ and $L_i^+(\O)$ are both torsion-free (see Lemma~\ref{lem:li-tf} and Lemma~\ref{lem:liplus}) and Lemma~\ref{lem:exc-scalar}\eqref{it:exc-scalar-f}.  By definition, $L_i(\O)$ is a quotient of $H^0(\Delta_i^\O)$.  Since $\F({-})$ is right t-exact, we have an induced surjective map
\[
H^0(\F(H^0(\Delta_i^\O))) \to \F(L_i(\O)).
\]
The right t-exactness of $\F({-})$ also implies that $H^0(\F(H^0(\Delta_i^\O))) \cong H^0(\F(\Delta_i^\O)) \cong H^0(\Delta_i^\F)$.  That is, $\F(L_i(\O))$ is a quotient of $H^0(\Delta_i^\F)$.  Since the latter has a simple head (isomorphic to $L_i(\F)$), so does the former.

Next, we claim that $\F(H^0(\nabla_i^\O))$ is a subobject of $H^0(\nabla_i^\F)$.  (Note that this lies in $\sA_\F$ because $H^0(\nabla_i^\O)$ is torsion-free by Lemma~\ref{lem:li-tf}, see Lemma~\ref{lem:exc-scalar}\eqref{it:exc-scalar-f}.) Indeed, consider the truncation distinguished triangle $H^0(\nabla_i^\O) \to \nabla_i^\O \to \tau^{\ge 1}\nabla_i^\O \to$.  Apply $\F({-})$ to obtain the triangle
\[
\F(H^0(\nabla_i^\O)) \to \nabla_i^\F \to \F(\tau^{\ge 1}\nabla_i^\O) \to.
\]
Lemma~\ref{lem:exc-scalar}\eqref{it:exc-scalar-f} implies that the third term lies in $\sT_\F^{\ge 0}$.  Therefore, the long exact sequence in cohomology shows that we have an injective map $\F(H^0(\nabla_i^\O)) \to H^0(\nabla_i^\F)$.

Finally, consider the short exact sequence
\[
0 \to L_i^+(\O) \to H^0(\nabla_i^\O) \to H^0(\nabla_i^\O)/L_i^+(\O) \to 0.
\]
We have seen in Lemma~\ref{lem:liplus} that all three terms are torsion-free, so applying $\F$ yields a short exact sequence in $\sA_\F$.  In particular, $\F(L_i^+(\O))$ is a subobject of $\F(H^0(\nabla_i^\O))$, and hence (by the previous paragraph) of $H^0(\nabla_i^\F)$.  Since the latter has a simple socle (isomorphic to $L_i(\F)$), so does $\F(L_i^+(\O))$.
\end{proof}

%%%%%%%%%%%%%%%%%%%%%%%%%%%%%%%%%%%%%%%%%%%%%%%%%%%%%%%%%%

\end{document}